\newtheorem{THEOREM}{Theorem}
\newtheorem{thm}{Theorem}[section]
\newtheorem{lem}[thm]{Lemma}
\newtheorem{prop}[thm]{Proposition}
\newtheorem{definition}[thm]{Definition}
\def \para{\refstepcounter{thm} \par\medskip\noindent
                \textbf{\thethm .} }
\def \remark{\refstepcounter{thm} \par\medskip\noindent
                \textbf{Remark \thethm .} }
\numberwithin{equation}{thm}
\newenvironment{red}
{\relax\color{red}}
{\hspace*{.5ex}\relax}
\newcommand{\ber}{\begin{red}}
\newcommand{\er}{\end{red}}
\newcommand\NN{\mathbb N}
\newcommand\ZZ{\mathbb Z}
\newcommand\bC{\mathbf C}
\newcommand\bD{\mathbf D}
\newcommand\cC{\mathcal{C}}
\newcommand\cT{\mathcal{T}}
\newcommand\fs{\mathfrak s}
\newcommand\ft{\mathfrak t}
\newcommand\fu{\mathfrak u}
\newcommand\fv{\mathfrak v}
\renewcommand\a{\alpha}  %#####
\renewcommand\b{\beta}   %#####
\newcommand\g{\gamma}  
\renewcommand\d{\delta}  %#####
\newcommand\la{\lambda}
\renewcommand\xi{\xi}
\renewcommand\pi{\pi}
\newcommand\D{\Delta}
\renewcommand\Xi{\Xi}
\renewcommand\Pi{\Pi}
\newcommand\vL{\varLambda}
\newcommand\wh{\widehat}
\newcommand\ol{\overline}
\newcommand\ra{\rightarrow}
\newcommand\lan{\langle}
\newcommand\ran{\rangle}
\newcommand\Hom{\operatorname{Hom}}
\newcommand\End{\operatorname{End}}
\newcommand\Ext{\operatorname{Ext}}
\newcommand\Top{\operatorname{Top}}
\newcommand\Soc{\operatorname{Soc}}
\newcommand\cha{\operatorname{char}}
\newcommand\rad{\operatorname{rad}}
\newcommand\cmod{\operatorname{-mod}}
\newcommand{\isom}{\,\raise2pt\hbox{$\underrightarrow{\sim}$}\,}
\newcounter{ichi}
\newcommand{\roi}{\roman{ichi}}
\newcounter{ni}
\newcommand{\roii}{\roman{ni}}
\newcounter{san}
\newcommand{\roiii}{\roman{san}}
\newcounter{yon}
\newcommand{\roiv}{\roman{yon}}
\newcounter{go}
\newcounter{roku}
\newcounter{nana}
\newcounter{hachi}
\newcounter{kyu}
\begin{document}

\setlength{\baselineskip}{4.9mm}
\setlength{\abovedisplayskip}{4.5mm}
\setlength{\belowdisplayskip}{4.5mm}

%%%%%%%%%%%%%%%%%%%%%%%%%%%%%%%%%%%%%%%%%%%%

\renewcommand{\theenumi}{\roman{enumi}}
\renewcommand{\labelenumi}{(\theenumi)}
\renewcommand{\thefootnote}{\fnsymbol{footnote}}
\renewcommand{\thefootnote}{\fnsymbol{footnote}}
\parindent=20pt

%%%%%%%%%%%%%%%%%%%%%%%%%%%%%%%%%%%%%%%%%%%%

\setcounter{section}{0}

%%%%%%%%%%%%%%%%%%%%%%%%%%%%%%%%%%%%%%%%%%%%

%%%%%%%%%%%%%%%%%%%%%%%%%%%%%%%%%%%%%%%%%%%%%%%%%%%%%%%%%%%%%%%

%%%%%%%%%%%%%%%%%%%%%%%%%%%%%%%%%%%%%%%%%%%%%%%%%%%%%%%%%%%%%%%

\title[Self-injective cellular algebras]{Self-injective cellular algebras of polynomial growth representation type}
\keywords{self-injective cellular algebra, domestic type, polynomial growth type}

\address{S.Ariki : Department of Pure and Applied Mathematics, Graduate School of Information Science and Technology, Osaka University, 1-5 Yamadaoka, Suita, Osaka 565-0871, Japan}
\thanks{$^1$S.A. is supported by the JSPS Grant-in-Aid for Scientific Research 15K04782.}
\email{ariki@ist.osaka-u.ac.jp}

\address{R.Kase : Faculty of Informatics, Okayama University of Science, 1-1 Ridaicho, Kita-ku, Okayama-shi 700-0005, Japan}
\email{r-kase@mis.ous.ac.jp}

\address{K.Miyamoto :  Department of Pure and Applied Mathematics, Graduate School of Information Science and Technology, Osaka University, 1-5 Yamadaoka, Suita, Osaka 565-0871, Japan}
\email{k-miyamoto@ist.osaka-u.ac.jp}

\address{K.Wada : Department of Mathematics, Faculty of Science, Shinshu University, 
		Asahi 3-1-1, Matsumoto 390-8621, Japan}
\thanks{$^2$K.W. is supported by the JSPS Grant-in-Aid for Scientific Research 16K17565.}
\email{wada@math.shinshu-u.ac.jp} 

\maketitle

%%%%%%%%%%%%%%%%%%%%%%%%%%%%%%%%%%%%%%%%%%%%%%%%%%%%%%%%%%%%%%%
\begin{center}
S.Ariki$^1$, R.Kase, K.Miyamoto and K.Wada$^2$
\end{center}
%%%%%%%%%%%%%%%%%%%%%%%%%%%%%%%%%%%%%%%%%%%%%%%%%%%%%%%%%%%%%%%

\begin{abstract}
We classify Morita equivalence classes of indecomposable self-injective cellular algebras 
which have polynomial growth representation type, 
assuming that the base field has an odd characteristic. 
This assumption on the characteristic is for the cellularity to be
a Morita invariant property.
\end{abstract}

%%%%%%%%%%%%%%%%%%%%%%%%%%%%%%%%%%%%%%%%%%%%%%%%%%%%%%%%%%%%%%%
\section{Introduction}
%%%%%%%%%%%%%%%%%%%%%%%%%%%%%%%%%%%%%%%%%%%%%%%%%%%%%%%%%%%%%%%

Cellular algebras often appear as finite dimensional algebras arising from various setting in Lie theory. 
A fundamental example is a block algebra of the Hecke algebra associated with a finite Weyl group 
-- in both classical and exceptional type \cite{G},
or the cyclotomic Hecke algebra associated with the complex reflection group $G(m,1,n)$. 
There are more examples from diagram algebras. 
Among them, algebras of tame representation type can receive a detailed study. 
For example, if we consider tame block algebras of the Hecke algebras associated with the symmetric group,
\footnote{They appear only when the quantum characteristic of the Hecke algebras is $2$.}
we have only two Morita equivalence classes
\footnote{They are Brauer graph algebras whose Brauer graphs are straight lines of length two 
such that two of the three vertices are exceptional vertices and their multiplicities are both two.},
which are derived equivalent. Note that those block algebras are symmetric cellular algebras. 

A natural question is whether this classification is achievable only because we consider algebras of very special kind, or 
the property of being symmetric cellular of tame type is strong enough to determine 
Morita equivalence classes of algebras. 
To assure that cellularity is a Morita invariant property, 
we assume that the base field has an odd characteristic throughout the paper.

If we consider block algebras of finite type, we meet Brauer tree algebras and their Brauer trees are always straight lines. 
The second half is explained by a result from K\"onig and Xi \cite{KX1}, which states that a Brauer tree algebra is cellular 
if and only if the Brauer tree is a straight line (with at most one exceptional vertex). 
Indeed, we prove that the property of being indecomposable self-injective cellular algebras 
of finite type is already a sufficient condition to be Brauer tree algebras whose Brauer trees are straight lines. 
Hence, that block algebras of Hecke algebras are algebras of very special kind plays almost 
no role for the classification of Morita equivalence classes of the block algebras of finite type.
\footnote{Note however that we do not know whether it is possible to determine the exceptional vertex of the Brauer tree algebras 
without appealing to results which are developed for Hecke algebras.}

After considering finite representation type, we investigate how far the property of being indecomposable self-injective cellular algebras 
of tame type put restrictions. As we do not know how to analyze algebras of tame type in general, 
we focus on tame of polynomial growth in this paper, and
we classify Morita equivalence classes of indecomposable self-injective cellular algebras of polynomial growth 
assuming that the base field has an odd characteristic. Our main result is as follows.
The algebras (1) are from Theorem \ref{F Theorem}, the algebra (2) is from Proposition \ref{D prop local} and Proposition \ref{D I-2 cellular}, 
the algebra (3) is from Proposition \ref{D I-2 cellular}, the algebras (4) are from Proposition \ref{D II} 
and Lemma \ref{D V-5}, the algebra (5) is from Lemma \ref{D V-5}, 
the algebras (6) are from Proposition \ref{D IV}, and the classification results from (2)-(6) are
summarized in Theorem \ref{D main result}. The classification (7) is Theorem \ref{P main}. 
It is worth mentioning that they are all standard algebras. 

\begin{THEOREM}
Let $K$ be an algebraically closed field of odd characterisitic, and let $A$ be an indecomposable self-injective cellular algebra over $K$. 
If $A$ is tame of polynomial growth, then
$A$ is Morita equivalent to one of the following symmetric algebras. Further, the algebras (1) are of finite type, 
the algebras (2)-(6) are of domestic infinite type, the algebras (7) are nondomestic of polynomial growth.

\begin{itemize}
\item[(1)]
A Brauer tree algebra whose Brauer tree is a straight line with at most one exceptional vertex. 
\item[(2)]
The Kronecker algebra $K[X,Y]/(X^2,Y^2)$.
\item[(3)]
The algebra $K Q/I$, where the quiver $Q$ is
\[
\begin{xy}
(0,0) *[o]+{\circ}="A", (10,0) *[o]+{\circ}="B",
\ar @(ur,dr) "B";"B"^{\gamma}
\ar @<1mm> "A";"B"^\alpha
\ar @<1mm> "B";"A"^\beta
\end{xy}
\]
and the relations given by the admissible ideal $I$ are
\[
\alpha\beta=0, \;\; \gamma^2=0, \;\;
\alpha\gamma\beta\alpha=\beta\alpha\gamma\beta=0, \;\; \gamma\beta\alpha=\beta\alpha\gamma.
\]
\item[(4)]
The algebras $KQ/I$, where the quiver $Q$ is
\[
\begin{xy}
(0,0) *[o]+{\circ}="A", (10,0) *[o]+{\circ}="B", (20,0) *[o]+{\circ}="C",
\ar @<1mm> "A";"B"^{\alpha}
\ar @<1mm> "B";"C"^{\beta}
\ar @<1mm> "B";"A"^{\delta}
\ar @<1mm> "C";"B"^{\gamma}
\end{xy}
\]
and the relations given by the admissible ideal $I$ is either (a) or (b) given below. (The relations 
(a) and (b) define different algebras.)

\begin{itemize}
\item[(a)]
$\alpha\beta\gamma\delta\alpha=\beta\gamma\delta\alpha\beta=\gamma\delta\alpha\beta\gamma=\delta\alpha\beta\gamma\delta=0$, 
$\gamma\beta=0$, $\alpha\delta\alpha=\alpha\beta\gamma$ and $\delta\alpha\delta=\beta\gamma\delta$.
\item[(b)]
$\alpha(\delta\alpha)^2=\gamma(\delta\alpha)^2=0$, $(\delta\alpha)^2\delta=(\delta\alpha)^2\beta=0$, 
$\alpha\delta\alpha\beta=0$, $\gamma\beta\gamma\delta=0$ and $\delta\alpha=\beta\gamma$.
\end{itemize}
\item[(5)]
The algebra $KQ/I$, where the quiver $Q$ is
\[
\begin{xy}
(0,0) *[o]+{\circ}="A", (10,0) *[o]+{\circ}="B", (20,0) *[o]+{\circ}="C",
\ar @<1mm> "A";"B"^{\alpha}
\ar @<1mm> "B";"C"^{\beta}
\ar @<1mm> "B";"A"^{\delta}
\ar @<1mm> "C";"B"^{\gamma}
\ar @(ru,lu) "B";"B"_{\epsilon}
\end{xy}
\]
and the relations given by the admissible ideal $I$ are
\begin{center}
$\alpha\beta=\gamma\delta=0$, $\alpha\epsilon=\epsilon\beta=\gamma\epsilon=\epsilon\delta=0$ and $\delta\alpha=\epsilon^2=\beta\gamma$.
\end{center}
\item[(6)]
A Brauer graph algebra whose Brauer graph is a straight line with exactly two exceptional vertices such that 
their multiplicities are both two.
\item[(7)]
The algebras $A_1(\la), A_2(\la)$, where $\la\in K\setminus\{0,1\}$, and $A_4, A_7, A_{11}$ 
from the table in Theorem \ref{P theorem standard nondomestic}.
\end{itemize}

In particular, any indecomposable self-injective cellular algebra of polynomial growth is of the form $fM_n(A)f$, 
where $A$ is an algebra from the above list, $n\in\NN$, and $f$ is an idempotent of the matrix algebra $M_n(A)$.
\end{THEOREM}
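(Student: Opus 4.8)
The plan is to assemble the classification from the partial results cited in the statement, organized by representation type. The first step is to recall the trichotomy for self-injective algebras of polynomial growth: finite type, domestic infinite type, and nondomestic polynomial growth. For an indecomposable self-injective cellular algebra $A$, one first reduces to the basic algebra $A_0$ Morita equivalent to $A$; since the base field has odd characteristic, cellularity is a Morita invariant, so $A_0$ is again cellular, and it suffices to classify the basic ones. The final sentence "$A \cong fM_n(A_0)f$" is then just the standard reformulation of Morita equivalence: every algebra Morita equivalent to a basic algebra $A_0$ is of the form $\mathrm{End}_{A_0}(P)^{\mathrm{op}}$ for a projective generator $P$, and such endomorphism algebras are exactly the corners $fM_n(A_0)f$ cut out by an idempotent $f$ whose diagonal blocks account for the multiplicities of the indecomposable projectives.

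Next I would treat the finite type case: by the assertion made in the introduction (that an indecomposable self-injective cellular algebra of finite type is a Brauer tree algebra whose tree is a straight line with at most one exceptional vertex, using the K\"onig--Xi criterion \cite{KX1}), this yields family (1). For domestic infinite type, I would invoke Theorem \ref{F Theorem}, Proposition \ref{D prop local}, Proposition \ref{D I-2 cellular}, Proposition \ref{D II}, Lemma \ref{D V-5}, Proposition \ref{D IV}, all summarized in Theorem \ref{D main result}: these together say that a basic indecomposable self-injective cellular domestic algebra is Morita equivalent to one of (2)--(6). Here the underlying input is Bialkowski--Holm--Skowro\'nski-type classifications of self-injective algebras of domestic type by quiver and relations, intersected with the cellularity constraint; the cited results do exactly this bookkeeping. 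Finally, for nondomestic polynomial growth, Theorem \ref{P main} (with the explicit list in Theorem \ref{P theorem standard nondomestic}) gives family (7): one runs through Skowro\'nski's classification of nondomestic self-injective algebras of polynomial growth — necessarily standard in odd characteristic — and checks cellularity for each, retaining $A_1(\lambda), A_2(\lambda), A_4, A_7, A_{11}$.

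The step I expect to be the main obstacle is not the assembly itself but the verification of cellularity (or its failure) for each candidate algebra on the Bialkowski--Holm--Skowro\'nski and Skowro\'nski lists — i.e., the content hidden inside the cited Propositions \ref{D I-2 cellular}, \ref{D II}, \ref{D IV}, Lemma \ref{D V-5}, and Theorems \ref{F Theorem}, \ref{P main}. Exhibiting a cellular structure is a positive, hands-on task (constructing a cellular basis and an anti-involution compatible with it, or recognizing the algebra as $fM_n(B)f$ for a known cellular $B$), while proving non-cellularity requires a genuine obstruction. The natural obstruction to use, and the one that makes the odd-characteristic hypothesis essential, is the constraint coming from the theory of cellular algebras: a cellular algebra carries an anti-involution $\ast$ fixing a full set of primitive idempotents up to conjugacy, and its Cartan matrix must be symmetric and, more sharply, of the form $D^{t}D$ for an integer "decomposition" matrix $D$; combined with the self-injective (indeed symmetric, by the list) structure and the explicit Cartan matrices read off from the quivers with relations, this rules out all algebras not on the list. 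Granting the cited results, the remaining work is purely organizational: collate families (1)--(7), observe they are pairwise non-Morita-equivalent (distinct quivers/relations or distinct Cartan data), note that all are symmetric and standard as claimed, and conclude with the Morita reformulation $A \cong fM_n(A_0)f$.
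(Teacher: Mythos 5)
Your proposal matches the paper's own proof, which is exactly this assembly: reduce to the basic algebra via the Morita invariance of cellularity in odd characteristic, then split into finite type (Theorem \ref{F Theorem}), domestic infinite type (Theorem \ref{D main result}) and nondomestic polynomial growth (Theorem \ref{P main}), with the $fM_n(A)f$ clause being the standard Morita reformulation. One correction to your narrative: nonstandard algebras are \emph{not} automatically excluded in odd characteristic --- the nonstandard domestic algebras $\Omega(T)$ and the nonstandard nondomestic algebras $\Lambda_1,\Lambda_2$ of polynomial growth (which occur precisely in characteristic $3$) genuinely arise and must be, and in the paper are, shown to be non-cellular (Lemma \ref{D nondom} and Theorem \ref{P main}(ii)); your appeal to Theorems \ref{D main result} and \ref{P main} in full does cover this, but the parenthetical ``necessarily standard in odd characteristic'' is false as stated.
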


\medskip
{\bf Notation :} 
For a finite dimensional algebra $A$ over a field $K$, 
we denote  the category of finite dimensional left $A$-modules by $A \cmod$. 
% For  $M \in A \cmod$ , we denote by $[M]$ the image of $M$ in the Grothendieck group of $A\cmod$. 
For an $A$-module $M$ and a simple $A$-module $L$, 
we denote the composition multiplicity of $L$ in $M$ by $[M:L]$. 

Let $Q =(Q_0,Q_1)$ be a finite quiver, 
where $Q_0$ is the set of vertices and $Q_1$ is the set of arrows. 
Let $I$ be an admissible ideal of the path algebra $K Q$ of $Q$ over a field $K$, so that $A:=K Q/I$ is a finite dimensional $K$-algebra.
For a vertex $i \in Q_0$, we denote by $e_i$ the corresponding primitive idempotent of $A$, 
and by $L_i$ (resp. $P_i$) the corresponding simple (resp. indecomposable projective) $A$-module.

%%%%%%%%%%%%%%%%%%%%%%%%%%%%%%%%%%%%%%%%%%%%%%%%%%%%%%%%%%%%%%%
\section{Cellular algebras} 
%%%%%%%%%%%%%%%%%%%%%%%%%%%%%%%%%%%%%%%%%%%%%%%%%%%%%%%%%%%%%%%

The notion of cellular algebra was introduced in \cite{GL},  
and basic properties of the cellular algebra were studied in \cite{GL}, \cite{KX1}, \cite{KX2} \cite{KX3},  \cite{KX4} and \cite{C}. 
Those properties are summarized in \cite{M} and \cite{X}. 
In this paper, we follow the definition given in \cite{M}: note that the partial order in \cite{GL} is the reversed one.
In this section, we recall the definition and the basic properties of cellular algebras.

%%%%%

\begin{definition}
An associative algebra $A$ over a field $K$ is called a \emph{cellular algebra} with cell datum $(\vL, \cT, \cC, \iota)$ 
if the following conditions are satisfied: 
\begin{description}
\item[(C1)] 
$\vL$ is a finite partially ordered set with a partial order $\geq$.  
For each $\la \in \vL$, there exists a finite set $\cT(\la)$ such that
the algebra $A$ has a $K$-basis 
\begin{align*} 
\cC =\{c_{\fs \ft}^{\la} \mid \fs, \ft \in \cT(\la), \, \la \in \vL\}. 
\end{align*}

\item[(C2)] 
The linear map $\iota : A \ra A$ which sends $c_{\fs \ft}^{\la}$ to $c_{\ft \fs}^{\la}$ ($\fs, \ft \in \cT(\la)$, $\la \in \vL$) 
gives an algebra anti-automorphism of $A$. 
(Clearly, $\iota$ is an involution.)

\item[(C3)] 
For any $\la \in \vL$, $\fs \in \cT(\la)$ and $a \in A$, there exist $r_{\fu}^{(a, \fs)} \in K$ ($\fu\in \cT(\la)$) such that 
\begin{align}
\label{C a cst}
a c_{\fs \ft}^{\la} \equiv \sum_{\fu \in\cT(\la)} r_{\fu}^{(a, \fs)} c_{\fu \ft}^{\la} \mod A^{>\la}, 
\end{align}
where $A^{> \la}$ is a subspace of $A$ spanned by 
$\{ c_{\fs' \ft'}^{\la'} \mid \fs', \ft' \in \cT(\la'), \,  \la' > \la, \la' \in \vL\}$, 
and $r_{\fu}^{(a,\fs)}$ does not depend on $\ft$. 
\end{description}
The above basis $\cC$ is called a \emph{cellular basis}. 
\end{definition}

Applying the anti-involution $\iota$ to \eqref{C a cst}, we have 
\begin{align}
\label{C cts a} 
c_{\ft \fs}^{\la} a \equiv \sum_{\fu \in \cT(\la)} r_{\fu}^{(\iota(a), \fs)} c_{\ft \fu}^{\la} \mod A^{>\la}.
\end{align}
%%%%%

\para {\bf Cell modules.} 
Let $A$ be a cellular algebra with cell datum $(\vL, \cT, \cC, \iota)$. 
For each $\la \in \vL$,  
let $\D(\la)$ be a $K$-vector space with a basis $\{c_{\fs}^{\la} \mid \fs \in \cT(\la)\}$. 
We can define an action of $A$ on $\D(\la)$ by 
\begin{align*}
a \cdot c_{\fs}^{\la} = \sum_{\fu \in \cT(\la)} r_{\fu}^{(a, \fs)} c_{\fu}^{\la}, 
\end{align*}
where $r_{\fu}^{(a, \fs)} \in K$ are those coefficients given in \eqref{C a cst}. 
We call the $A$-modules $\D(\la)$ ($\la \in \vL$) \emph{cell modules}. 

By \eqref{C a cst} and \eqref{C cts a}, 
for $\la \in \vL$ and $\fs, \ft, \fu, \fv \in \cT(\la)$, 
we see that  
\begin{align*}
c_{\fu \fs}^{\la} c_{\ft \fv}^{\la} \equiv r_{\fs \ft} c_{\fu \fv}^{\la} \mod A^{> \la}, 
\end{align*}
where $r_{\fs \ft}\in K $ does not depend on $\fu$ and $\fv$. 
Then we define a bilinear form $\lan \, , \, \ran$ on $\D(\la)$ by $\lan c_{\fs}^{\la}, c_{\ft}^{\la} \ran = r_{\fs \ft}$.
Let $\rad_{\lan \,,\,\ran} \D(\la)$ be the radical of the bilinear form $\lan \,,\, \ran$ on $\D(\la)$. 
Then $\rad_{\lan \,,\, \ran} \D(\la)$ is an $A$-submodule of $\D(\la)$ by \cite[Proposition 3.2]{GL}. 
Put $L(\la) = \D(\la) / \rad_{\lan \,,\, \ran} \D(\la)$, 
and $ \vL^+ = \{\la \in \vL \mid L(\la) \not=0\}$. 
Then we have the following proposition. 

%%%%%

\begin{prop}[{\cite[Proposition 3.2, Theorem 3.4]{GL}, \cite[Lemma 2.5]{C}}]\ 
\label{C Prop simple}
\begin{enumerate}
\item 
For $\la \in \vL^+$, $L(\la)$ is absolutely irreducible. 

\item 
The set $\{L(\la) \mid \la \in \vL^+\}$ is a complete set of representatives of isomorphism classes of simple $A$-modules. 

\item 
For $\la, \mu \in \vL^+$, we have 
$\dim_K \Ext_{A}^i (L(\la), L(\mu)) = \dim_K \Ext_{A}^i (L(\mu), L(\la))$ for all $i \geq 0$. 

\end{enumerate}
\end{prop}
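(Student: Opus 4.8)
The plan is to treat Proposition~\ref{C Prop simple} as three separate but related claims, all of which are essentially consequences of the cellular structure together with the standard theory of quasi-hereditary/cellular algebras; I would follow the argument of Graham--Lehrer \cite{GL} and Cao \cite{C}, which the cited references point to, but reorganised around the bilinear form $\lan\,,\,\ran$ on each cell module $\D(\la)$.

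First I would record the basic multiplicative structure of the cellular basis. From \eqref{C a cst} and \eqref{C cts a} one obtains, for $\la\in\vL$ and $\fs,\ft,\fu,\fv\in\cT(\la)$, the congruence $c_{\fu\fs}^{\la}c_{\ft\fv}^{\la}\equiv r_{\fs\ft}\,c_{\fu\fv}^{\la}\bmod A^{>\la}$, which is already stated in the excerpt; this is the engine for everything. For~(i), I would show $L(\la)$ is absolutely irreducible when $L(\la)\neq0$: the radical $\rad_{\lan\,,\,\ran}\D(\la)$ is an $A$-submodule (quoted from \cite[Proposition~3.2]{GL}), so $L(\la)$ is a genuine $A$-module; to see it is (absolutely) irreducible, pick $\fs$ with $\lan c_{\fs}^{\la},c_{\ft}^{\la}\ran\neq0$ for some $\ft$, and use the displayed product formula to show that for any nonzero $v=\sum a_\fu c_{\fu}^{\la}+\rad$ in $L(\la)$ there is an element of $A$ carrying $v$ to a nonzero scalar multiple of $c_{\fs}^{\la}+\rad$, hence $Av=L(\la)$; absoluteness follows because the same computation is valid over any field extension (the structure constants $r_{\fu}^{(a,\fs)}$, $r_{\fs\ft}$ lie in $K$ and the argument is field-independent), or equivalently $\End_A(L(\la))=K$.

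For~(ii), I would argue that the $L(\la)$ with $\la\in\vL^+$ are pairwise non-isomorphic and exhaust the simple modules. Non-isomorphism: if $L(\la)\cong L(\mu)$ with $\la\neq\mu$, then using the filtration of $A$ by the ideals $A^{\geq\la}$ and the fact that $A^{\geq\la}/A^{>\la}$ is built from copies of $\D(\la)$, one shows $\mu\not>\la$ and $\la\not>\mu$ leads to a contradiction via the action of $A^{>\mu}$ annihilating $L(\mu)$ but not $L(\la)$ (this is the standard ``highest weight'' separation argument). Exhaustion: any simple $S$ appears as a composition factor of $A$ itself, hence (by the cell filtration) as a composition factor of some $\D(\la)$; the top of $\D(\la)$ is $L(\la)$ and any simple quotient of $\D(\la)$ must be $L(\la)$ because $\rad_{\lan\,,\,\ran}\D(\la)$ is the unique maximal submodule once $L(\la)\neq0$, so $S\cong L(\la)$ and in particular $\la\in\vL^+$. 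Part~(iii), the $\Ext$-symmetry, is the cleanest: the anti-involution $\iota$ induces a contravariant duality $M\mapsto M^{\circledast}:=\Hom_K(M,K)$ with $A$-action twisted by $\iota$ on $A\cmod$; one checks $\D(\la)^{\circledast}$ has the same composition factors and, because $L(\la)$ is absolutely irreducible (part~(i)), $L(\la)^{\circledast}\cong L(\la)$; an exact contravariant self-duality fixing all simples sends a projective resolution of $L(\la)$ to an injective coresolution, whence $\Ext_A^i(L(\la),L(\mu))\cong\Ext_A^i(L(\mu)^{\circledast},L(\la)^{\circledast})\cong\Ext_A^i(L(\mu),L(\la))$ for all $i\geq0$.

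The main obstacle is the irreducibility/uniqueness bookkeeping in parts~(i) and~(ii): one has to be careful that the product formula $c_{\fu\fs}^{\la}c_{\ft\fv}^{\la}\equiv r_{\fs\ft}c_{\fu\fv}^{\la}$ holds only modulo $A^{>\la}$, so all the module-theoretic statements must be made inside $\D(\la)$ (where $A^{>\la}$ has already been quotiented out in a suitable sense) rather than inside $A$ directly, and the separation of the $L(\la)$ requires comparing the annihilators $\{a\in A:aL(\la)=0\}$ across the partial order. Everything else — the submodule property of the radical, absolute irreducibility once a nonzero form value is found, and the duality argument for~(iii) — is formal. Since the statement is attributed verbatim to \cite{GL} and \cite{C}, in the paper itself it would suffice to cite those; the sketch above indicates the route one would reconstruct.
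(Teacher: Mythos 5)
The paper offers no proof of this proposition at all: it is quoted verbatim from Graham--Lehrer and Cao, so the only thing to assess is whether your reconstruction of their argument is sound. The overall route is the right one, and your treatment of (i) is fine, but two steps, as you have written them, do not follow.

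In the exhaustion half of (ii) you pass from ``$S$ is a composition factor of some $\D(\la)$'' to ``$S\cong L(\la)$'' via ``any simple quotient of $\D(\la)$ must be $L(\la)$.'' A composition factor need not be a quotient; indeed $\D(\la)$ typically has composition factors $L(\mu)$ with $\mu<\la$ (this is Proposition \ref{C Prop decom matrix}(i)), so the implication is false as stated, and you also never verify that the $\la$ you land on lies in $\vL^+$ (if the form on $\D(\la)$ vanishes identically, $\rad_{\lan\,,\,\ran}\D(\la)=\D(\la)$ and the ``unique maximal submodule'' claim is vacuous). The standard repair: $S$ is a quotient of the regular module, so one chooses $\la$ \emph{maximal} in the partial order with $c^{\la}_{\fs\ft}S\neq 0$ for some $\fs,\ft$; maximality makes $c^{\la}_{\fu}\mapsto c^{\la}_{\fu\ft}m$ a well-defined surjection $\D(\la)\twoheadrightarrow S$, and the computation $c^{\la}_{\fs\ft}ac^{\la}_{\fs\ft}\equiv\lan c^{\la}_{\ft},ac^{\la}_{\fs}\ran c^{\la}_{\fs\ft} \bmod A^{>\la}$ forces the form to be nonzero, i.e.\ $\la\in\vL^+$; only then does $S\cong L(\la)$ follow.

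In (iii) you justify $L(\la)^{\circledast}\cong L(\la)$ ``because $L(\la)$ is absolutely irreducible.'' That is a non sequitur: for a general algebra with anti-involution, the duality $M\mapsto M^{\circledast}$ is an exact contravariant self-equivalence that may permute the absolutely irreducible modules nontrivially, and absolute irreducibility says nothing about which simple $L(\la)^{\circledast}$ is. The correct reason is the contravariance of the cellular form, $\lan ax,y\ran=\lan x,\iota(a)y\ran$, which descends to a nondegenerate form on $L(\la)=\D(\la)/\rad_{\lan\,,\,\ran}\D(\la)$ and hence furnishes an isomorphism $L(\la)\to L(\la)^{\circledast}$. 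Once that is in place, your duality argument for the $\Ext$-symmetry is correct. Both gaps are standard to fill, but as written neither step is justified.
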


Proposition \ref{C Prop simple}(iii) implies that if a cellular algebra $A$ has the bounded quiver presentation $KQ_A/I$ 
for the Gabriel quiver $Q_A$ of $A$ and an admissible ideal $I$, 
then each arrow of $Q_A$ must have the inverse arrow. We note that the five classes of algebras we will consider in \S3 are 
defined in the form $KQ/I$ but $I$ are not admissible in the first place, so that we must rewrite them when we apply Proposition \ref{C Prop simple}(iii).
However, in our application, we will see that the rewriting is merely deletion of the loops in the quiver $Q$ which come from vertices of valency 
one in Brauer graphs we start with.
Hence, Proposition \ref{C Prop simple}(iii) implies that if one of the algebras we consider in \S3 is cellular, then 
either there is no branch in the Brauer graph or 
there is one loop and the endpoint of the loop is the only branch in the Brauer graph whose valency at most four.
We use this criterion intensively in \S3.

%%%%%

\para {\bf The decomposition matrix and the Cartan matrix.}
For $\la \in \vL$ and $\mu \in \vL^+$, 
let $d_{\la\mu}=[\D(\la):L(\mu)]$ be the composition multiplicity of $L(\mu)$ in $\D(\la)$, 
and we call them \emph{decomposition numbers} of the cellular algebra $A$. 
Put $\bD=(d_{\la \mu})_{\la \in \vL, \mu \in \vL^+}$, and we call it the \emph{decomposition matrix}. 
For $\la\in \vL^+$, let $P(\la)$ be the projective cover of $L(\la)$. 
Let $c_{\la\mu} = [P(\la) : L(\mu)]$, for $\la, \mu \in \vL^+$, and put $\bC =(c_{\la \mu})_{\la,\mu \in \vL^+}$. 
Then $\bC$ is the \emph{Cartan matrix} of $A$. 

We may construct a sequence of two-sided ideals for each of the linear extensions of the partial order. First, 
we enumerate the elements of $\vL$ as $\{\mu_1,\mu_2,\dots \}$ following a chosen linear extension. Hence $i < j$ if $\mu_i < \mu_j$. Next,
let $A(\mu_i)$ be the subspace of $A$ spanned by $\{c_{\fs \ft}^{\mu_k} \mid \fs, \ft \in \cT(\mu_k), \, k \geq i\}$, for $\mu_i \in \vL$. 
Then $A(\mu_i)$, for $1\le i \le |\vL|$, where $|\vL|$ is the size of $\vL$, is a two-sided ideal of $A$ by \eqref{C a cst} and \eqref{C cts a}. Defining $ A(\mu_{|\vL|+1})$ to be $0$, 
we have the following sequence of two-sided ideals associated with the linear extension
\begin{align}
\label{C seq A}
A= A(\mu_1) \supset A(\mu_2) \supset \dots \supset A(\mu_{|\vL|}) \supset A(\mu_{|\vL|+1})=0.
\end{align}  
For each $\la \in \vL^+$, applying the exact functor $- \otimes_{A} P(\la)$ to \eqref{C seq A}, 
we have a sequence of $A$-submodules 
\begin{align} 
\label{C seq Pla}
P(\la) = P(\la)_{\mu_1} \supset P(\la)_{\mu_2} \supset \dots \supset P(\la)_{\mu_{|\vL|}} \supset P(\la)_{\mu_{|\vL|}+1}=0, 
\end{align}
where we put $P(\la)_{\mu_i} = A(\mu_i) \otimes_{A} P(\la)$. 
Then we have the following proposition. 

%%%%%

\begin{prop}[{\cite[Proposition 3.6, Theorem 3.7]{GL}, \cite[Proposition 1.2]{KX3}}]\ 
\label{C Prop decom matrix}
\begin{enumerate} 
\item 
For $\la \in \vL$ and $\mu \in \vL^+$, 
if $d_{\la\mu} \not=0$, then we have $\la \geq \mu$. 
Moreover, if $\la \in \vL^+$, we have $d_{\la \la} =1$. 

\item 
For $\la \in \vL^+$ and the $A$-submodules $P(\la)_{\mu_i}$ in the sequence \eqref{C seq Pla}, we have 
$P(\la)_{\mu_i}/ P(\la)_{\mu_{i+1}} \cong \D(\mu_i)^{\oplus d_{\mu_i \la}}$ 
as $A$-modules. 

\item 
We have 
$\bC= \bD^T \bD$, where $\bD^T$ is the transpose of $\bD$. 

\item 
We have $\det \bC >0$. 

\end{enumerate}
\end{prop}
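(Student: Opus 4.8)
The plan is to reduce everything to the filtration \eqref{C seq A} of $A$ by two-sided ideals, together with the basic properties of the bilinear forms $\lan\,,\,\ran$ on the cell modules; parts (i)--(iv) are then essentially a bookkeeping exercise with this filtration. For (ii), I would first observe that tensoring the exact sequence of right $A$-modules $0\to A(\mu_{i+1})\to A(\mu_i)\to A(\mu_i)/A(\mu_{i+1})\to 0$ with the projective (hence flat) left module $P(\la)$ preserves exactness, which already gives the sequence \eqref{C seq Pla} and the identification of the subquotients with $\bigl(A(\mu_i)/A(\mu_{i+1})\bigr)\otimes_A P(\la)$. The heart of (ii) is the claim $A(\mu_i)/A(\mu_{i+1})\cong \D(\mu_i)^{\oplus|\cT(\mu_i)|}$ as an $(A,A)$-bimodule when one forgets the right structure down to the action by which it is generated: concretely, fixing $\ft\in\cT(\mu_i)$, the span of $\{c_{\fs\ft}^{\mu_i}\mid \fs\in\cT(\mu_i)\}$ in the quotient is a left $A$-submodule isomorphic to $\D(\mu_i)$ via $c_{\fs\ft}^{\mu_i}\mapsto c_{\fs}^{\mu_i}$, by \eqref{C a cst}; summing over $\ft$ gives the whole quotient as $\D(\mu_i)^{\oplus|\cT(\mu_i)|}$. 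Applying $-\otimes_A P(\la)$ to a direct sum of copies of $\D(\mu_i)$ and using that, for the cell module $\D(\mu_i)$ generated in the appropriate way, $\D(\mu_i)\otimes_A P(\la)\cong \D(\mu_i)^{\oplus d_{\mu_i\la}}$ — this multiplicity being exactly $[\D(\mu_i):L(\la)]=d_{\mu_i\la}$ after one checks $\Hom_A(P(\la),\D(\mu_i))$ has that dimension and that the tensor collapses accordingly — yields the stated isomorphism. I expect the careful handling of which side the modules live on, and the verification that the right-module structure used to form $A(\mu_i)\otimes_A P(\la)$ matches the left-module structure on $\D(\mu_i)$ used to compute multiplicities, to be the only genuinely delicate point; everything else is formal. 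Since we are allowed to invoke \cite[Proposition 3.6, Theorem 3.7]{GL} and \cite[Proposition 1.2]{KX3}, I would in practice quote (ii) from there and only sketch the bimodule identification above as motivation.

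For (i), the inequality $d_{\la\mu}\neq 0\Rightarrow \la\geq\mu$ follows from Proposition \ref{C Prop simple} combined with the fact that $L(\mu)$ is the head of $\D(\mu)$ and that the composition factors $L(\nu)$ of $\D(\la)$ satisfy $\nu\leq\la$: any composition factor of $\D(\la)$ other than $L(\la)$ itself lies in $\rad_{\lan\,,\,\ran}\D(\la)$, and an induction on the poset $\vL$, using that $\rad_{\lan\,,\,\ran}\D(\la)$ has a filtration by images of cell modules $\D(\nu)$ with $\nu<\la$, closes the argument. The normalization $d_{\la\la}=1$ for $\la\in\vL^+$ is immediate from $\D(\la)/\rad_{\lan\,,\,\ran}\D(\la)=L(\la)$ and the first part. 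Part (iii), $\bC=\bD^T\bD$, is then a direct consequence of (ii): from \eqref{C seq Pla} and (ii) we get $[P(\la):L(\nu)]=\sum_{\mu\in\vL}d_{\mu\la}\,[\D(\mu):L(\nu)]=\sum_{\mu}d_{\mu\la}d_{\mu\nu}$, which is exactly the $(\la,\nu)$ entry of $\bD^T\bD$ (the sum over $\mu\in\vL$ restricts to $\vL^+$ since $d_{\mu\la}=0$ unless $L(\la)\neq 0$, but one keeps all $\mu\in\vL$ as rows of $\bD$).

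Finally, for (iv), $\det\bC>0$: by (i) and the normalization $d_{\la\la}=1$, if we restrict $\bD$ to its square submatrix $\bD^+=(d_{\la\mu})_{\la,\mu\in\vL^+}$ and order $\vL^+$ compatibly with the partial order, then $\bD^+$ is upper unitriangular, so $\det\bD^+=1$; writing $\bD=\binom{\bD^+}{\bD'}$ where $\bD'$ records the rows indexed by $\la\in\vL\setminus\vL^+$, the Cauchy--Binet formula gives $\det\bC=\det(\bD^T\bD)=\sum_{S}(\det\bD_S)^2$ over size-$|\vL^+|$ row subsets $S$, and the term $S=\vL^+$ already contributes $(\det\bD^+)^2=1>0$ while all other terms are $\geq 0$; hence $\det\bC>0$. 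The main obstacle, such as it is, is purely expository: making the filtration/tensor-product argument for (ii) precise about left versus right module structures; once (ii) is in hand, (i), (iii), (iv) are short. As all four statements are available in the cited literature, I would present (i)--(iv) in the order above, giving full detail only where it clarifies the later use of these facts in \S3.
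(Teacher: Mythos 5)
The paper offers no proof of this proposition --- it is quoted from \cite[Proposition 3.6, Theorem 3.7]{GL} and \cite[Proposition 1.2]{KX3} --- so your sketch can only be measured against the arguments in those sources. Your treatments of (iii) and (iv) are correct and standard; in particular the Cauchy--Binet expansion of $\det(\bD^T\bD)$ with the unitriangular block $(d_{\la\mu})_{\la,\mu\in\vL^+}$ is exactly the usual proof that $\det\bC>0$. The genuine gap is in (i): you deduce $d_{\la\mu}\ne0\Rightarrow\la\ge\mu$ by an induction that uses the claim that $\rad_{\lan\,,\,\ran}\D(\la)$ has a filtration by images of cell modules $\D(\nu)$ with $\nu<\la$. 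That claim is not an available ingredient --- its composition-factor content is precisely the statement being proved, so the induction is circular. The argument in \cite{GL} runs in the opposite direction: if $L(\mu)$ is a composition factor of $\D(\la)$ with $\mu\in\vL^+$, choose $\fs,\ft\in\cT(\mu)$ with $\lan c^{\mu}_{\fs},c^{\mu}_{\ft}\ran\ne0$; then $c^{\mu}_{\ft\fs}\cdot c^{\mu}_{\ft}=\lan c^{\mu}_{\fs},c^{\mu}_{\ft}\ran c^{\mu}_{\ft}\ne0$ shows $c^{\mu}_{\ft\fs}$ acts nontrivially on $L(\mu)$, hence on $\D(\la)$; but $c^{\mu}_{\ft\fs}$ lies in the two-sided ideal spanned by the $c^{\la'}_{\fs'\ft'}$ with $\la'\ge\mu$, while the action on $\D(\la)$ is computed in the $\la$-layer modulo $A^{>\la}$, so nonvanishing forces $\la\ge\mu$.

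A secondary defect is in (ii): the expression $\D(\mu_i)\otimes_A P(\la)$ does not typecheck, since $\D(\mu_i)$ is a left module, and the decomposition of $A(\mu_i)/A(\mu_{i+1})$ into the left submodules spanned by $\{c^{\mu_i}_{\fs\ft}\mid\fs\}$ for fixed $\ft$ is not stable under the right $A$-action, so one cannot apply $-\otimes_A P(\la)$ summand by summand. The clean statement is the bimodule isomorphism $A(\mu_i)/A(\mu_{i+1})\cong\D(\mu_i)\otimes_K\iota(\D(\mu_i))$, after which $\iota(\D(\mu_i))\otimes_A P(\la)$ is a $K$-vector space of dimension $[\D(\mu_i):L(\la)]=d_{\mu_i\la}$ and the asserted $\D(\mu_i)^{\oplus d_{\mu_i\la}}$ follows. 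You flag this side-switching as the delicate point and say you would in any case quote (i) and (ii) from the references, as the paper itself does; that is a legitimate course, but as written your proof of (i) would not stand on its own.
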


%%%%%

We also use the following results proved in \cite{KX2} and \cite{KX4}. 

\begin{prop}[{\cite[Theorem 8.1]{KX2}, \cite[Theorem 1.1]{KX4}}] 
\label{C Prop Morita equiv} 
Let $A$ be a cellular algebra over a field $K$. 
Then we have the following. 
\begin{enumerate} 
\item
If $\cha K \not=2$ then the basic algebra $B$ of $A$ is also cellular and 
there is a Morita equivalence between $A$ and $B$ which respects the cellular structure.

\item 
If $A$ is self-injective then $A$ is weakly symmetric. 

\end{enumerate}
\end{prop}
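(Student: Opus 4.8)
The statement splits into the two independent assertions (i) and (ii), and both will be driven by the $\iota$-duality $D=\Hom_K(-,K)$, where the left $A$-action on $D(M)$ is twisted through $\iota$; this is a contravariant exact self-equivalence of $A\cmod$ with $D^2\cong\id$. The first thing I would record is that $D$ fixes every simple module: the Gram form $\lan\,,\,\ran$ on $\D(\la)$ is symmetric and $\iota$-contravariant, so it descends to a nondegenerate symmetric contravariant form on $L(\la)$, giving $D(L(\la))\cong L(\la)$ for every $\la\in\vL^+$ (compare Proposition \ref{C Prop simple}(iii)). Equivalently, $\iota$ fixes every isomorphism class of simple modules, and hence every conjugacy class of primitive idempotents of $A$. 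This single observation feeds both parts.

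For (i) the plan is to produce a basic idempotent that is fixed by $\iota$. Starting from any primitive idempotent $f$ with $Af\cong P(\la)$, one checks that $A\iota(f)=\iota(fA)$ again has top $L(\la)$, using that $\iota$ fixes simples, so $\iota(f)$ is conjugate to $f$. The key technical step, and the place where $\cha K\ne2$ enters, is a symmetrization lemma: given $\iota(f)=ufu^{-1}$ one adjusts the conjugator to an $\iota$-symmetric one and replaces $f$ by a conjugate primitive idempotent $f'$ with $\iota(f')=f'$; the averaging that makes this work requires $\tfrac12\in K$ and genuinely fails in characteristic $2$. Carrying this out one class at a time and arranging orthogonality, I would obtain pairwise orthogonal $\iota$-fixed primitive idempotents $e_\la$ ($\la\in\vL^+$), one per class, and set $e=\sum_{\la}e_\la$, so that $AeA=A$, $B:=eAe$ is basic and Morita equivalent to $A$, and $\iota(e)=e$ forces $\iota$ to restrict to an involutory anti-automorphism $\iota_B$ of $B$. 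It then remains to transport the cell datum along the Morita equivalence $e(-)$: the nonzero elements among $\{e\,c^\la_{\fs\ft}\,e\}$ should form a cellular basis of $(B,\iota_B)$, and I would verify (C1)--(C3) directly, the involution matching $\iota_B$ by construction so that the equivalence respects the cellular structure.

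For (ii) I would argue entirely on the level of the decomposition matrix $\bD$ and the Cartan matrix $\bC=\bD^T\bD$. Since $A$ is self-injective, indecomposable projectives are indecomposable injectives; write $\nu$ for the Nakayama permutation, so $\Top P(\la)=L(\la)$ and $\Soc P(\la)=L(\nu(\la))$, i.e.\ $P(\la)\cong I(\nu(\la))$. Because simples are self-dual, $D$ sends $P(\sigma)$ to the indecomposable injective with socle $L(\sigma)$, namely $D(P(\sigma))\cong I(\sigma)$; dualizing the cell filtration of Proposition \ref{C Prop decom matrix}(ii) then shows that each $I(\sigma)$ carries a co-cell filtration by $\nabla(\mu):=D(\D(\mu))$ with multiplicity $d_{\mu\sigma}$. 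Computing $[P(\la):L(\kappa)]$ once as a projective, via its cell filtration, and once as the injective $I(\nu(\la))$, via its co-cell filtration, yields for all $\kappa\in\vL^+$
\[
c_{\la\kappa}=\sum_{\mu}d_{\mu\la}d_{\mu\kappa}=\sum_{\mu}d_{\mu\nu(\la)}d_{\mu\kappa}=c_{\nu(\la)\kappa},
\]
so that columns $\la$ and $\nu(\la)$ of $\bD$ have the same inner product with every column of $\bD$. Since $\det\bC>0$ by Proposition \ref{C Prop decom matrix}(iv), the columns of $\bD$ are linearly independent, whence $\bD_\la-\bD_{\nu(\la)}$, lying in their span and orthogonal to each of them, must vanish. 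Finally, the triangularity of Proposition \ref{C Prop decom matrix}(i) ($d_{\mu\mu}=1$ and $d_{\la\mu}\ne0\Rightarrow\la\ge\mu$) shows that each column of $\bD$ attains its first nonzero entry, equal to $1$, at the diagonal index, so $\bD_\la=\bD_{\nu(\la)}$ forces $\la=\nu(\la)$. Hence $\nu=\id$ and $A$ is weakly symmetric.

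The main obstacle in (i) is the characteristic-$2$-sensitive construction of an $\iota$-invariant basic idempotent, together with the bookkeeping needed to verify (C3) for the transported basis; in (ii) the only subtle point is the passage from $D(P(\sigma))\cong I(\sigma)$ and the co-cell filtration of injectives to the numerical identity displayed above, after which the conclusion is pure linear algebra resting on $\det\bC>0$ and the unitriangularity of $\bD$.
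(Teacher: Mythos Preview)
The paper does not prove this proposition; it simply records the two results with citations to K\"onig--Xi \cite{KX2,KX4}. Your sketch is therefore not being compared to an argument in the paper but to the original proofs in those references, and on that score your outline is essentially the K\"onig--Xi strategy: for (i), produce an $\iota$-fixed basic idempotent $e$ (using $\cha K\ne2$ to average) and transport the cell structure to $eAe$; for (ii), exploit self-duality of simples together with $\bC=\bD^T\bD$ and $\det\bC>0$ to force the Nakayama permutation to be trivial.

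One caveat on (i): the transported basis is not literally the set of nonzero $e\,c^\la_{\fs\ft}\,e$; these elements need not be linearly independent, nor will they in general satisfy (C3) on the nose. The actual argument in \cite{KX2} works instead through their ``inflation'' description of cellular algebras and shows that $eAe$ inherits an iterated-inflation structure, from which cellularity follows. Your plan is correct at the level of strategy, but the verification step you flag as ``bookkeeping'' is where the real content lies and requires the inflation machinery rather than a direct check on the compressed basis. Part (ii) is fine as written.
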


%%%%%

Thanks to Proposition \ref{C Prop Morita equiv} (\roi), 
it is enough to consider basic algebras in order to determine Morita equivalence classes of cellular algebras  
if the characteristic of the base field is odd.  
Another useful result is the following lemma. 

\begin{lem}[{\cite[Corollary 8.3]{KX2}}]
\label{C Lem idem}
Suppose that $A$ is a cellular algebra over a field $K$ of odd characteristic. If $P$ is a finitely generated projective $A$-module, 
then $\End_A(P)$ is a cellular algebra.
\end{lem}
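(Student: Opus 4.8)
The plan is to transport the cellular structure of $A$ to $\End_A(P)$ in three moves: reduce to a basic algebra, pass to a matrix algebra, and finally cut down by an idempotent fixed under the anti-involution. First I would reduce to the case that $A$ is basic. By Proposition~\ref{C Prop Morita equiv}(i) the basic algebra $B$ of $A$ is cellular and there is a Morita equivalence $F\colon A\cmod\to B\cmod$ respecting the cellular structure; since $F$ sends projectives to projectives and $\End_A(P)\cong\End_B(F(P))$, it is enough to treat basic $A$, with cell datum $(\vL,\cT,\cC,\iota)$ and cell modules $\D(\la)$.

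Writing $P\cong\bigoplus_{\la\in\vL^+}(Ae_\la)^{\oplus m_\la}$ and $n=\sum_\la m_\la$, one has $\End_A(P)\cong(\varepsilon M_n(A)\varepsilon)^{\mathrm{op}}$, where $\varepsilon\in M_n(A)$ is the diagonal idempotent carrying $m_\la$ copies of $e_\la$ down its diagonal. I would make $M_n(A)$ cellular using the anti-involution $\tau(X)=\iota(X)^T$ (apply $\iota$ entrywise, then transpose): a short check gives $\tau(XY)=\tau(Y)\tau(X)$ and $\tau^2=\id$, and $\{c^\la_{\fs\ft}E_{ij}\}$ is a cellular basis with index sets $\cT(\la)\times\{1,\dots,n\}$. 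Since $\iota$ identifies any cellular algebra with its opposite, cellularity is stable under $(-)^{\mathrm{op}}$, so it suffices to prove that $\varepsilon M_n(A)\varepsilon$ is cellular.

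The structural heart is the claim that $eBe$ is cellular whenever $e$ is an idempotent of a cellular algebra $(B,\sigma)$ with $\sigma(e)=e$. For this I would invoke the cell-chain (iterated inflation) description of cellularity from \cite{KX2}: there is a chain $0=J_0\subset\dots\subset J_m=B$ of $\sigma$-stable ideals whose sections are $\sigma$-equivariant inflations $J_i/J_{i-1}\cong\D(\mu_i)\ot_K\sigma\,\D(\mu_i)$. Applying the exact functor $e(-)e$ produces $\sigma|_{eBe}$-stable ideals $eJ_ie\subseteq eBe$ with sections $e(J_i/J_{i-1})e\cong(e\D(\mu_i))\ot_K\sigma(e\D(\mu_i))$, the identity $\sigma\,\D(\mu_i)\,e=\sigma(e\,\D(\mu_i))$ being exactly where $\sigma(e)=e$ is used. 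Dropping the layers with $e\D(\mu_i)=0$, this is a cell chain for $eBe$ with cell modules $e\D(\mu_i)$, so $eBe$ is cellular.

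It remains to arrange that the diagonal idempotent $\varepsilon$ is $\tau$-fixed, and this is the one place where $\cha K$ odd is indispensable; I expect it to be the main obstacle. It suffices to choose each primitive idempotent $e_\la$ of the basic algebra $A$ to satisfy $\iota(e_\la)=e_\la$, for then $\tau(\varepsilon)=\varepsilon$ by the diagonal shape. Now $\iota$ interchanges the left and right projective covers of the self-dual simple $L(\la)$ (self-duality underlies Proposition~\ref{C Prop simple}(iii)), whence $A\iota(e_\la)\cong Ae_\la$, so $\iota(e_\la)$ is conjugate to $e_\la$. In odd characteristic one can then conjugate $e_\la$ to an idempotent satisfying $\iota(e_\la)=e_\la$, by the usual symmetrization of an idempotent whose image under an involution is conjugate to it; the invertibility of $2$ (through extraction of suitable square roots, legitimate as $K$ is algebraically closed) is exactly what this step requires. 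Carrying this out for all $e_\la$ simultaneously, while keeping them orthogonal, is the delicate point of the argument. Once $\tau(\varepsilon)=\varepsilon$ is secured, $\varepsilon M_n(A)\varepsilon$ is cellular by the previous paragraph, and hence so is $\End_A(P)\cong(\varepsilon M_n(A)\varepsilon)^{\mathrm{op}}$.
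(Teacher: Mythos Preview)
Your proposal is correct and follows essentially the same route as the paper's own remark (which sketches the argument from \cite{KX2}): both reduce to showing that $eBe$ is cellular whenever $e$ is an idempotent fixed by the anti-involution, establish this via the cell-chain/inflation description, observe that matrix algebras over cellular algebras are cellular, and invoke \cite[Proposition~8.2]{KX2} to choose $\iota$-fixed primitive idempotents in odd characteristic. The only cosmetic difference is the order of reductions---you first pass to a basic $A$ and then form $\varepsilon M_n(A)\varepsilon$, while the paper first forms $eAe$ inside the original $A$ and then realizes $\End_A(P)$ as $fM_n(eAe)f$; the content is the same. One small remark: your aside that extracting square roots requires $K$ to be algebraically closed is not actually needed for \cite[Proposition~8.2]{KX2}, which produces $\iota$-fixed orthogonal idempotents over any field of characteristic $\neq 2$.
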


\remark
For the proof, let $Q$ be the direct sum of pairwise inequivalent indecomposable projective $A$-modules that appear 
as direct summands of $P$. Then the opposite algebra of $\End_A(Q)$ is of the form $eAe$, for an idempotent $e\in A$. 
We may prove that $eAe$ is a cellular algebra. As $\End_A(Q)$ is the basic algebra of $\End_A(P)$, we have the isomorphism of algebras
$\End_A(P)\cong fM_n(\End_A(Q))f$, for an idempotent $f$ of the matrix algebra $M_n(\End_A(Q))$. 
Noting that a matrix algebra over a cellular algebra is a cellular algebra, we deduce that $\End_A(P)$ is a cellular algebra by the same argument 
which we used to prove that $eAe$ is a cellular algebra. 

%%%%%

\bigskip

Lemma \ref{C Lem idem} has the following three consequences. We will frequently use (a) to prove that an algebra is not cellular. 

\begin{itemize}
\item[(a)]
If $A$ is a cellular algebra over a field of odd characteristic, then $e A e$ is a cellular algebra for any idempotent $e \in A$.  
\item[(b)]
Let $A$ be a finite dimensional algebra over a field of odd characteristic, 
and suppose that $eAe$, for an idempotent $e \in A$, and $A$ are Morita equivalent. 
Then, $A$ is a cellular algebra if $eAe$ is a cellular algebra. 
\item[(c)]
Suppose that $\cha K \not=2$ and let $A$ be a finite dimensional $K$-algebra. Then, $A$ is a cellular algebra if and only if 
its basic algebra is a cellular algebra.
\end{itemize}
For the second statement, we recall that the functor $F:=eA\otimes_A-$ induces 
the equivalence and $Ae\otimes_{eAe}eA\cong A$ holds. Thus, $eA=F(A)$ is a finitely generated projective $eAe$-module and 
$\End_{eAe}(eA)\cong \End_A(A)\cong A^{\rm op}$. 
The third statement follows from Proposition \ref{C Prop Morita equiv} (\roi) and (b).

%%%%%

\bigskip
In this paper, we are interested in self-injective cellular algebras. 
For a self-injective cellular algebra, we have the following lemma. 

\begin{lem} 
\label{C Lem min}
Let $A$ be a cellular algebra over a field $K$ with cell datum $(\vL, \cT, \cC, \iota)$. 
Then we have the following. 
\begin{enumerate}
\item 
If $\la \in \vL$ is minimal, 
then $\la \in \vL^+$, $d_{\la \la }=1$ and $d_{\la \mu}=0$ for all $\mu \in \vL^+ \setminus \{\la\}$. 

\item 
Assume that $A$ is self-injective. 
If $\la \in \vL$ is maximal, 
then there exists $\mu \in \vL^+$ such that 
$d_{\la \mu}=1$ and $d_{\la \nu}=0$ for all $\nu \in \vL^+ \setminus \{ \mu\}$. 
In particular, if $\la \in \vL^+$ then $d_{\la \nu}=0$ for all $\nu \in \vL^+ \setminus \{ \la\}$.

\end{enumerate} 
\end{lem}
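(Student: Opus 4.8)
The plan is to exploit the factorization $\bC = \bD^T\bD$ from Proposition \ref{C Prop decom matrix}(iii) together with the constraint on $\bD$ coming from the partial order, and the extra rigidity that self-injectivity imposes on projectives. For part (i), suppose $\la \in \vL$ is minimal. By Proposition \ref{C Prop decom matrix}(i), $d_{\la\mu}\neq 0$ forces $\la \geq \mu$; since $\la$ is minimal this gives $\mu = \la$, so the only possible nonzero decomposition number in the row of $\la$ is $d_{\la\la}$. Now the cell module $\D(\la)$ is nonzero (it has basis $\{c^\la_\fs\}$), so it has at least one composition factor, hence $d_{\la\la}\neq 0$; in particular $L(\la)\neq 0$, i.e. $\la\in\vL^+$, and then Proposition \ref{C Prop decom matrix}(i) gives $d_{\la\la}=1$. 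This settles (i) cleanly, and it is the easy half.

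For part (ii), let $\la\in\vL$ be maximal and assume $A$ is self-injective. First I would observe that for the maximal element, $A^{>\la}=0$, so the two-sided ideal $A(\la)$ sitting at the top of the filtration \eqref{C seq A} is spanned by $\{c^\la_{\fs\ft}\}$ and, by \eqref{C a cst}, is isomorphic as a left $A$-module to a direct sum of copies of $\D(\la)$ — indeed $A(\la)\cong \D(\la)^{\oplus |\cT(\la)|}$. So $\D(\la)$ is (isomorphic to) a submodule of the left regular module ${}_AA$. Since $A$ is self-injective, ${}_AA$ is an injective module, and each indecomposable summand $P(\nu)$ has simple socle $L(\nu')$ for a unique $\nu'$ (weak symmetry from Proposition \ref{C Prop Morita equiv}(ii) in fact gives $\nu'=\nu$). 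The key step is then to show that $\D(\la)$ has simple socle. Here I would use the contravariant duality $\fri$ on $A\cmod$ induced by the anti-involution $\iota$, under which the cell module $\D(\la)$ is sent to the ``dual cell module'' $\D(\la)^\circ$ whose radical (under the natural pairing) corresponds to the socle picture; more directly, the bilinear form $\lan\,,\,\ran$ together with \eqref{C cts a} identifies the image of $\D(\la)$ inside ${}_AA$, and any nonzero submodule of $\D(\la)$ must meet the socle of that injective hull. Alternatively, and perhaps more robustly, use $\bC=\bD^T\bD$: in the row indexed by the maximal $\la$, weak symmetry forces the structure of $P(\la)$ to have socle $L(\la)$, and by counting via the Cartan pairing $c_{\mu\nu}=\sum_{\tau}d_{\tau\mu}d_{\tau\nu}$ restricted using that the $\la$-row of $\bD$ can contribute only through entries $d_{\la\mu}$ with $\mu\le\la$.

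The cleanest route for (ii), which I would try first: since $A$ is self-injective, the injective hull of the simple module $L(\mu)$ is $P(\mu)$, and the socle of $\D(\la)$ — being a submodule of ${}_AA$ — is a direct sum $\bigoplus_\mu L(\mu)^{\oplus m_\mu}$; embedding $\D(\la)$ into $\bigoplus_\mu P(\mu)^{\oplus m_\mu}$ and using that the top of $\D(\la)$ is $L(\la)$ (the quotient $\D(\la)/\rad_{\lan\,,\,\ran}\D(\la)$ when $\la\in\vL^+$; when $\la\notin\vL^+$ one argues via $\rad\D(\la) = \rad_{\lan\,,\,\ran}\D(\la)$-considerations), I would deduce that exactly one $m_\mu=1$ and the rest vanish, because a map from an indecomposable-top module cannot ``spread'' across several injective summands without the top splitting. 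So $\soc\D(\la)=L(\mu)$ is simple for a unique $\mu\in\vL^+$. Since $L(\mu)\hookrightarrow\D(\la)$ and composition factors of $\D(\la)$ satisfy $d_{\la\nu}\neq 0 \Rightarrow \nu\le\la$, the only composition factor of $\D(\la)$ forced to appear is dictated by this socle. To pin down $d_{\la\mu}=1$ and $d_{\la\nu}=0$ for $\nu\neq\mu$, I would combine: $\det\bC>0$ (Proposition \ref{C Prop decom matrix}(iv)) with weak symmetry, which makes the Cartan matrix symmetric, together with the triangular shape of $\bD$ with respect to any linear extension of $\geq$ — the row of the maximal $\la$ is then the ``last'' row and must be a unit vector for the determinant count to work out. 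The finishing line ``if $\la\in\vL^+$ then $d_{\la\nu}=0$ for $\nu\neq\la$'' follows because then $d_{\la\la}=1$ (Proposition \ref{C Prop decom matrix}(i)), so the unique $\mu$ above equals $\la$.

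\textbf{Main obstacle.} The delicate point is showing $\D(\la)$ has \emph{simple} socle when $\la$ is only assumed maximal (not necessarily in $\vL^+$); the submodule-of-injective argument needs the fact that $\D(\la)$ has indecomposable — or at least ``thin enough'' — top, which is automatic when $\la\in\vL^+$ but needs a separate (still elementary) argument using the anti-involution $\iota$ and the nondegeneracy structure of $\lan\,,\,\ran$ otherwise. I expect this to be the only step requiring genuine care; everything else is bookkeeping with $\bD$, the partial order, and weak symmetry.
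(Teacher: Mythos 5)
Part (i) of your proposal is correct and is essentially the paper's argument: minimality plus Proposition \ref{C Prop decom matrix}(i) kills all $d_{\la\mu}$ with $\mu\ne\la$, and $\D(\la)\ne 0$ forces $\la\in\vL^+$ with $d_{\la\la}=1$.

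Part (ii) has a genuine gap, and it sits exactly where you flagged your ``main obstacle.'' Your central step is to deduce that $\D(\la)$ has simple socle from the fact that it embeds into ${}_AA$ and has (in good cases) simple top, via the claim that ``a map from an indecomposable-top module cannot spread across several injective summands without the top splitting.'' That claim is false: a module with simple top can perfectly well have a non-simple socle (e.g.\ ${}_AA$ for $A=K[x,y]/(x^2,xy,y^2)$ has simple top and two-dimensional socle), so embedding into an injective module places no restriction of the kind you want. Your fallback for $\la\notin\vL^+$ (``$\rad\D(\la)=\rad_{\lan\,,\,\ran}\D(\la)$-considerations'') is vacuous there, since $\rad_{\lan\,,\,\ran}\D(\la)=\D(\la)$ when $L(\la)=0$; and the suggestion that $\det\bC>0$ forces the $\la$-row of $\bD$ to be a unit vector is also not true (already $\bD=\left(\begin{smallmatrix}1&1\\0&1\end{smallmatrix}\right)$ gives $\det\bC=1$). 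Finally, even if you had simple socle, that alone would not yield $d_{\la\nu}=0$ for $\nu\ne\mu$, which requires $\D(\la)$ itself to be simple.

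The missing ingredient is Proposition \ref{C Prop decom matrix}(ii) applied to a linear extension of the partial order that ends with the maximal element $\la$: then $P(\mu)_{\la}\cong\D(\la)^{\oplus d_{\la\mu}}$ is a submodule of the \emph{single} indecomposable projective $P(\mu)$, not merely of ${}_AA$. Since $A$ is weakly symmetric (Proposition \ref{C Prop Morita equiv}(ii)), $\Soc P(\mu)\cong L(\mu)$ is simple, so $d_{\la\mu}\ge 2$ is impossible, and if $d_{\la\mu}=d_{\la\nu}=1$ for $\mu\ne\nu$ then $\Soc\D(\la)$ would have to be isomorphic to both $L(\mu)$ and $L(\nu)$. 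Together with $\D(\la)\ne 0$ this gives exactly one $\mu$ with $d_{\la\mu}=1$. Your observation that $A(\la)\cong\D(\la)^{\oplus|\cT(\la)|}$ embeds in ${}_AA$ is true but strictly weaker: it gives no control over how $\D(\la)$ distributes among the indecomposable projective summands, which is precisely what the argument needs.
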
 

\begin{proof} 
(\roi) follows from Proposition \ref{C Prop decom matrix} (\roi) immediately. 
We prove (\roii). 
Since we assume that $A$ is a self-injective cellular algebra, 
$A$ is a weakly symmetric algebra by Proposition \ref{C Prop Morita equiv} (\roii). 
Let $\la \in \vL$ be maximal and choose a linear extension which ends with $\la$. 
Assume that there exists $\mu \in \vL^+$ such that $d_{\la \mu} \geq 2$. 
Then, by Proposition \ref{C Prop decom matrix} (\roii), 
we see that $P(\mu)$ contains $\D(\la)^{\oplus d_{\la \mu}}$ as an $A$-submodule, 
which implies that $\Soc P(\mu)$ is not irreducible. It contradicts with 
$\Top P(\mu) \cong \Soc P(\mu) \cong L(\mu)$. 
Thus, we have $d_{\la \mu} \leq 1$ for all $\mu \in \vL^+$. 
Next, assume that there exist $\mu, \nu \in \vL^+$ ($\mu \not=\nu$) 
such that $d_{\la \mu}= d_{\la \nu}=1$. 
Then, by the same reasoning, both $P(\mu)$ and $P(\nu)$ contain $\D(\la)$ as an $A$-submodule. It follows that 
we have 
$L(\mu) \cong \Soc P(\mu) \cong \Soc \D(\la)$ 
and 
$L(\nu) \cong \Soc P(\nu) \cong \Soc \D(\la)$. 
However, $L(\mu)$ and $L(\nu)$ are inequivalent $A$-modules by Proposition \ref{C Prop simple} (ii).
Thus, we have (\roii). 
\end{proof}

%%%%%

\para {\bf Strategy.} 
The aim of this paper is to classify self-injective cellular algebras of polynomial growth 
over an algebraically closed field $K$ of $\cha K \not=2$ up to Morita equivalence. 
By the properties of cellular algebras given above, 
we have the following strategy for the classification. 

\begin{description} 
\item[Strategy 1]
By Proposition \ref{C Prop Morita equiv} (\roi) and our assumption that $\cha K \not=2$, 
it is enough to classify basic self-injective cellular algebras of polynomial growth 
up to isomorphism of algebras.  
Since basic self-injective algebras over an algebraically closed field of polynomial growth 
have been classified by several authors, 
and they are listed in \cite{S}, 
it is enough to specify cellular algebras in the list of algebras from \cite{S}.

\item[Strategy 2] 
In order to prove that a basic algebra listed in \cite{S} is cellular,
it is enough to give a cellular basis explicitly. 

\item[Strategy 3] 
In order to prove that a basic algebra listed in \cite{S} is not cellular, 
it is enough to give a contradiction to a property of cellular algebras given in this section 
for a cellular algebra with a small number of simple modules which is obtained from the original basic algebra 
by applying Lemma \ref{C Lem idem}.

\end{description}

Throughout the paper, 
we always assume that $K$ is an algebraically closed field of $\cha K \not=2$.

%%%%%%%%%%%%%%%%%%%%%%%%%%%%%%%%%%%%%%%%%%%%%%%%%%%%%%%%%%%%%%%
\section{Self-injective cellular algebras of finite type} 
%%%%%%%%%%%%%%%%%%%%%%%%%%%%%%%%%%%%%%%%%%%%%%%%%%%%%%%%%%%%%%%
In this section, 
we classify basic self-injective cellular algebras of finite type over $K$. 
In fact, it is known that a weakly symmetric algebra of finite type is symmetric (see Proposition \ref{F Prop weak}), 
and classification of basic symmetric cellular algebras of finite type was obtained in \cite{O}. 
However, the paper \cite{O} is written in Japanese, and it will not appear anywhere. Because of this, 
we give a proof for the classification of basic self-injective cellular algebras of finite type, for the readers' convenience. 

By \cite[Theorem 3.8]{S} and $\cha K \not=2$, the self-injective algebras we consider in this section are standard, 
that is, it admits a simply connected Galois covering whose Galois group is an admissible torsion-free group of $K$-linear automorphisms.

%%%%%

\para 
First, we classify basic symmetric cellular algebras of finite type over $K$. 
By \cite[Theorem 3.11]{S}, 
a nonsimple basic standard symmetric algebra of finite type is isomorphic to one of the following algebras: 
\begin{itemize}
\item 
A Brauer tree algebra whose Brauer tree has at most one exceptional vertex. 

\item 
A modified Brauer tree algebra $D(T_S)$ 
associated to a Brauer tree $T_S$ 
with at least two edges and an extreme exceptional vertex $S$ of multiplicity $2$. 

\item 
A trivial extension $T(B)$ of a tilted algebra $B$ of Dynkin type. 
\end{itemize}
For the definitions of a Brauer tree algebra, which we denote by $A(T^m_S)$ if it has an exceptional vertex $S$ 
of multiplicity $m\ge2$, and a modified Brauer tree algebra $D(T_S)$, 
see \cite[paragraphs 2.8 and 3.6]{S}. The assumptions on the Brauer trees are stated there. 
For the Brauer tree algebras, we know when it admits a cellular algebra structure as follows.

%%%%%

\begin{prop}[{\cite[Proposition 5.3]{KX1}}] 
\label{F Prop BT}
A Brauer tree algebra is cellular 
if and only if
the Brauer tree is a straight line with at most one exceptional vertex. 
\end{prop}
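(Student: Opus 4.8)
The plan is to prove both directions by exploiting the structure theory of Brauer tree algebras together with the cellular-algebra criteria assembled in \S2. For the ``only if'' direction, suppose $A(T_S^m)$ is cellular; by Proposition \ref{C Prop simple}(iii) the Gabriel quiver $Q_A$ must have an inverse arrow for each arrow, and for a Brauer tree algebra $Q_A$ is obtained from the tree $T$ by replacing each edge with a pair of opposite arrows and adding a loop at each vertex of valency one. So the real constraint from Proposition \ref{C Prop simple}(iii) is on the valencies: after deleting the loops coming from valency-one vertices, every remaining arrow must come in an inverse pair, which it does automatically, so this condition alone is not enough. The sharper tool is Lemma \ref{C Lem idem}(a): if $A$ is cellular then $eAe$ is cellular for every idempotent $e$. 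If the Brauer tree $T$ is \emph{not} a straight line, then it contains a vertex $v$ of valency $\ge 3$; I would choose $e$ to be the sum of the primitive idempotents corresponding to the three (or more) edges incident to $v$, so that $eAe$ is a Brauer tree algebra (or a closely related self-injective algebra) whose Brauer tree is a ``star'' with three edges at a central vertex. The hard part is then to show directly that this small algebra — with at most three or four simple modules — is not cellular, by contradicting one of the numerical constraints: either $\det \bC > 0$ (Proposition \ref{C Prop decom matrix}(iv)) fails to be compatible with the Cartan matrix of the star algebra, or the decomposition-number restrictions in Lemma \ref{C Lem min} (minimal and maximal cell indices force rows of $\bD$ with a single nonzero entry) cannot be met given the known Cartan matrix of a star Brauer tree algebra. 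I expect the star with three edges and no exceptional vertex, whose Cartan matrix is the $3\times 3$ (or $4\times4$ with the center) matrix with $2$'s on the diagonal and $1$'s off, to be the decisive case: one checks that no matrix $\bD$ with entries in $\{0,1\}$ (in fact no integer matrix with the required zero pattern from Lemma \ref{C Lem min}) satisfies $\bC = \bD^T\bD$, so $A$ cannot be cellular.

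For the ``if'' direction, suppose the Brauer tree is a straight line $T$ with edges $1,2,\dots,n$ in order, and with at most one exceptional vertex of multiplicity $m$. Here I would simply exhibit a cellular basis explicitly, following Strategy 2 from the introduction. A straight-line Brauer tree algebra is a well-understood Nakayama-type algebra (a ``gluing'' of a string/serial structure), and its indecomposable projectives $P_i$ are uniserial or biserial with composition factors running through consecutive edges; a $K$-basis is given by the paths $p_{i\to j}$ along the line, suitably truncated by the relations and the exceptional multiplicity. I would take the poset $\vL$ to be indexed by the edges with the natural linear order reading from one end to the other (with the exceptional vertex end being the minimal or maximal element), with $|\cT(\lambda)|$ small — typically $1$ or $2$ — so that the cell modules $\Delta(\lambda)$ are the ``interval modules'' supported on consecutive edges. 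The anti-involution $\iota$ is the one that reverses paths (``transpose'' of the string), which exists precisely because the line is symmetric under reversal at the level of the quiver-with-relations presentation of a straight-line Brauer tree algebra; the axioms (C1)–(C3) are then a direct, if slightly tedious, verification that multiplying a basis path on the left stays within the span of basis paths ending at the same point, modulo paths supported higher in the order.

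The main obstacle, as I see it, is the ``only if'' direction — specifically, making the reduction to a small algebra clean and then closing the numerical contradiction. One subtlety is that $eAe$ for the star idempotent need not literally be a Brauer tree algebra in the exact normalized form of \cite{S}; it is some explicit self-injective algebra with three or four simple modules, and I would need to compute its Cartan matrix (or at least its determinant and the socle/top structure of its projectives) by hand and then invoke Proposition \ref{C Prop decom matrix}(iii),(iv) and Lemma \ref{C Lem min} to rule out $\bC = \bD^T\bD$. A secondary concern is the exceptional vertex: if it sits at a branch point of valency $\ge 3$ the multiplicity inflates the Cartan entries, and one has to be sure the contradiction still goes through (it should, since inflating entries only makes $\det \bC = \det(\bD^T\bD)$ harder to realize with the constrained $\bD$). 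For the ``if'' direction I anticipate only bookkeeping: choosing the linear extension so that the minimal element corresponds to the edge at the exceptional-vertex end, so that Lemma \ref{C Lem min}(i) is automatically consistent, and checking (C3) on generators of $A$ rather than all of $A$.
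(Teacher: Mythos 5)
First, note that the paper offers no proof of this proposition: it is quoted verbatim from \cite[Proposition 5.3]{KX1}. So the comparison can only be with the mechanism the paper uses for the closely analogous statements (\ref{F BD}, Lemma \ref{F Lemma TA}, Lemma \ref{D lem I-1}), and there your proposal goes wrong at the decisive step. Your ``only if'' direction rests on a false description of the Brauer quiver: the quiver of a Brauer tree algebra is \emph{not} obtained by doubling the edges of the tree. Its vertices are the edges of the tree, and each vertex $v$ of the tree of valency $k$ contributes an \emph{oriented} $k$-cycle through the $k$ incident edges. For $k=2$ this is a pair of inverse arrows, but for a branch vertex ($k\ge 3$) it is an oriented cycle of length $\ge 3$ with no inverse arrows, so $\dim\Ext^1(L(\la),L(\mu))\ne\dim\Ext^1(L(\mu),L(\la))$ for two consecutive edges around the branch. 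Thus Proposition \ref{C Prop simple}(iii) -- which you dismiss as ``not enough'' -- is in fact exactly the tool that kills every branch vertex in one line; this is precisely how the paper argues in \ref{F BD} and in Lemma \ref{F Lemma TA}.

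The fallback you propose instead does not close. For the $3$-star the truncated algebra is the Nakayama algebra with uniserial projectives $L_i,L_{i+1},L_{i+2},L_i$ and Cartan matrix $\bC$ with $2$'s on the diagonal and $1$'s off; one has $\det\bC=4>0$, and the factorization $\bC=\bD^T\bD$ \emph{does} admit a solution compatible with Proposition \ref{C Prop decom matrix}(i) and Lemma \ref{C Lem min}, namely the $4\times 3$ ``sunflower''
\[
\bD=\begin{pmatrix}1&1&1\\1&0&0\\0&1&0\\0&0&1\end{pmatrix}
\]
with partial order $\la_2,\la_3<\la_1<\la_4$ (rows $2,3$ minimal, row $4$ maximal, all rows of minimal/maximal elements being unit vectors). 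So the purely numerical contradiction you ``expect'' does not exist; to rule out this $\bD$ you would need a genuine module-theoretic argument (e.g.\ that $\Delta(\la_1)$ would have to be simultaneously isomorphic to $\rad P_1$, $\rad P_2$ and $\rad P_3$, which have pairwise non-isomorphic socles), at which point the Ext-quiver argument is both simpler and what the paper actually relies on. Your ``if'' direction -- exhibiting a cellular basis with interval cell modules and the path-reversal anti-involution for a straight line -- is the right plan and matches the constructions the paper carries out explicitly for its domestic algebras, modulo bookkeeping (the poset has $n+1$ elements for $n$ edges, not $n$, and the algebra is special biserial rather than Nakayama).
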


In subsequent subsections, we examine when a modified Brauer tree algebra or the trivial extension of 
a tilted algebra of Dynkin type admits a cellular algebra structure.

\remark 
In \cite[Proposition 5.3]{KX1}, 
Brauer trees are allowed to have an arbitrary number of exceptional vertices. 
However, in the above classification of nonsimple basic standard symmetric algebras of finite type, 
we consider only  Brauer trees with at most one exceptional vertex $S$. 

%%%%%

\para 
\label{F BD}
We consider a modified Brauer tree algebra $D(T_S)$. 
By the definition of $D(T_S)$, 
if the Brauer tree $T_S$ has a branch, 
then the quiver of $D(T_S)$ contains a cycle whose length is at least $3$, 
and this cycle has no inverse arrow. 
This contradicts Proposition \ref{C Prop simple} (\roiii) if $D(T_S)$ is cellular. 
Thus, we see that 
the Brauer tree $T_S$ is a straight line if $D(T_S)$ is cellular.   

Suppose that the Brauer tree $T_S$ is a straight line with $n$ edges. 
Then the quiver $Q_{T_S}$ of $D(T_S)$ is given by 
\begin{align}
\label{F QTS}
\xymatrix{
Q_{T_S} = & 1 \ar@(ul,dl)_{\g } \ar[r]<0.5 ex>^{\a_1}  & 2 \ar[r]<0.5 ex>^{\a_2}   \ar[l]<0.5 ex>^{\b_1} 
	& 3 \ar[r]<0.5 ex>^{\a_3}  \ar[l]<0.5 ex>^{\b_2} & \cdots \ar[r]<0.5ex>^{\a_{n-1}} \ar[l]<0.5 ex>^{\b_3} 
		& n \ar[l]<0.5 ex>^{\b_{n-1}}.
} 
\end{align} 
We define $I_{T_S}$ to be the two-sided ideal of $K Q_{T_S}$ generated by 
\begin{align}
\label{F ITS}
\begin{split}
&\a_i \a_{i+1}, \, \b_{i+1} \b_i \, (1 \leq i \leq n-2), \, 
\b_1 \a_1, \, \a_{n-1} \b_{n-1} \a_{n-1}, \, \b_{n-1} \a_{n-1} \b_{n-1}, \, 
\\
&\g^2 - \a_1 \b_1, \, \a_2 \b_2 - \b_1 \g \a_1, \, \b_i \a_i - \a_{i+1} \b_{i+1}\, (2 \leq  i \leq n-2). 
\end{split}
\end{align}
Then we have $D(T_S) = K Q_{T_S} / I_{T_S}$. 
In the case where $n=2$, we have the following lemma.

%%%%%

\begin{lem} 
\label{F Lemma DB 2}
Let $A=KQ / I$ where
\[
\xymatrix{Q = & 1 \ar@(ul,dl)_{\g } \ar[r]<0.5 ex>^{\a_1}  & 2 \ar[l]<0.5 ex>^{\b_1}}
\]
and $I$ is the two-sided ideal generated by 
$\{ \b_1 \a_1, \, \g^2 - \a_1 \b_1\}$. 
Then, $\dim A = 10$ and $A$ is not cellular.
\end{lem}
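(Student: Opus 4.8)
The plan is to first verify $\dim A = 10$ by exhibiting an explicit $K$-basis of $A = KQ/I$, and then to show $A$ is not cellular by a structural argument, most likely via the consequence (a) of Lemma~\ref{C Lem idem}: the algebra $e_2 A e_2$ would have to be cellular, and we will derive a contradiction there (a local algebra is easy to analyze), or alternatively via a Cartan-matrix/positivity obstruction coming from Proposition~\ref{C Prop decom matrix}. First I would compute the paths in $KQ$ modulo $I$. The arrows are the loop $\g$ at vertex $1$ and the pair $\a_1 : 1\to 2$, $\b_1 : 2\to 1$. The relations $\b_1\a_1 = 0$ and $\g^2 = \a_1\b_1$ let one reduce every path: at vertex $1$ the nonzero paths are $e_1, \g, \g^2 = \a_1\b_1, \g^3 = \g\a_1\b_1$ (and one must check $\g^4 = \g^2\a_1\b_1 = (\a_1\b_1)^2 = \a_1(\b_1\a_1)\b_1 = 0$), giving $e_1$-to-$e_1$ part of dimension $4$; the $e_2$-to-$e_2$ part is spanned by $e_2$ and $\b_1\a_1$-type words starting and ending at $2$, i.e. $e_2$ and $\b_1\g^k\a_1$ reduced using the relations — here $\b_1\a_1 = 0$ kills the length-two word, and $\b_1\g\a_1$, $\b_1\g^2\a_1 = \b_1\a_1\b_1\a_1 = 0$, so only $e_2$ and $\b_1\g\a_1$ survive, dimension $2$; the $1\to 2$ part is spanned by $\a_1, \a_1\g$ reduced — $\a_1$, $\a_1\g$ (wait, $\a_1$ goes $1\to2$, so we want $2\to1$ or $1\to2$ words) — more carefully, $e_2\,?\,e_1$ is spanned by $\b_1, \b_1\g, \b_1\g^2 = 0$, dimension $2$; and $e_1\,?\,e_2$ is spanned by $\a_1, \g\a_1, \g^2\a_1 = \a_1\b_1\a_1 = 0$, dimension $2$. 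Total $4+2+2+2 = 10$, confirming $\dim A = 10$. (I would present this as a clean basis list rather than grinding through the reductions.)

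For non-cellularity, the cleanest route I expect is the following. By Lemma~\ref{C Lem idem}(a), if $A$ were cellular then $B := e_2 A e_2$ would be cellular. From the basis above, $e_2 A e_2$ has $K$-basis $\{e_2, \b_1\g\a_1\}$, so $B \cong K[x]/(x^2)$ — that IS cellular, so this choice of idempotent gives no contradiction. So instead I would work with $A$ itself, or rewrite $Q$ to an admissible presentation and invoke Proposition~\ref{C Prop simple}(iii): the Gabriel quiver of $A$ is obtained by deleting the loop $\g$ (since $\g \in \mathrm{rad}\, A$ but the relation $\g^2 = \a_1\b_1$ shows $\g$ is not needed as a generator of $\mathrm{rad}/\mathrm{rad}^2$ — one must check $\g \notin \mathrm{rad}^2 + \cdots$; actually here $\g$ is a genuine generator since no relation expresses it in terms of $\a_1,\b_1$ alone at the level of $\mathrm{rad}/\mathrm{rad}^2$, so the Gabriel quiver has the loop and the two arrows). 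Then Proposition~\ref{C Prop simple}(iii) is automatically satisfied (every arrow has a reverse: $\a_1\leftrightarrow\b_1$, and $\g$ is its own reverse direction), so that criterion alone does not kill it. The real obstruction I would use is the Cartan-matrix positivity or the decomposition-matrix constraints: compute $\dim_K P_1 = 4 + 2 = 6$ (the $e_1 A$ part: $e_1 A e_1$ dim $4$, $e_1 A e_2$ dim... wait, $P_1 = Ae_1$, so entries $e_1 A e_1$ and $e_2 A e_1$, dim $4+2=6$) and $\dim_K P_2 = e_1 A e_2 \oplus e_2 A e_2$ has dim $2 + 2 = 4$, so the Cartan matrix is $\begin{pmatrix} ? & ? \\ ? & ? \end{pmatrix}$ with $[P_1:L_1] = ?$; from the loop structure $P_1$ has Loewy layers $L_1 / (L_1\oplus L_2)/ \cdots$ and one finds $\bC = \begin{pmatrix} 3 & 1 \\ 1 & 1 \end{pmatrix}$ or similar, with $\det \bC = 2 > 0$ — consistent. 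So positivity of $\det\bC$ does not suffice either.

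**The main obstacle**, therefore, is that the obvious numerical invariants (dimension counts, $\det\bC>0$, the reverse-arrow criterion, small idempotent truncations) are all consistent with cellularity, so the contradiction must come from a finer structural feature — most plausibly from Lemma~\ref{C Lem min}: $A$ is self-injective (one checks it is weakly symmetric, or notes it is a modified Brauer tree algebra $D(T_S)$ for the straight line with $n=2$ edges, which is known self-injective), $\vL$ has a maximal element $\la$, and Lemma~\ref{C Lem min}(ii) forces $\D(\la)$ to have simple socle and the decomposition matrix to have a very rigid shape; combined with $\bC = \bD^T\bD$ and the explicit Cartan matrix $\bC$ of $A$ computed above, one shows no nonnegative integer matrix $\bD$ of the right size satisfies $\bD^T\bD = \bC$ together with the triangularity $d_{\la\mu}\neq 0 \Rightarrow \la\geq\mu$, $d_{\la\la}=1$, and the socle constraint from Lemma~\ref{C Lem min}(ii). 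So the plan's crux is: (1) pin down $\bC$ exactly from the $10$-dimensional basis; (2) enumerate candidate decomposition matrices $\bD$ (finitely many, by triangularity and the entries of $\bC$ on the diagonal); (3) rule each one out using Lemma~\ref{C Lem min} and Proposition~\ref{C Prop decom matrix}. The hard part will be step (2)–(3): making the enumeration genuinely exhaustive and clean rather than ad hoc — but since there are only two simple modules and $\bC$ has small entries, this should be short once $\bC$ is nailed down.
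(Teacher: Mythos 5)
Your dimension count is correct and matches the paper's ($e_1Ae_1$ has dimension $4$, the other three corners dimension $2$ each, total $10$), and your observation that the obvious obstructions (the reverse-arrow criterion, $\det\bC>0$, the corner algebra $e_2Ae_2\cong K[x]/(x^2)$) all fail to give a contradiction is also right. However, your concrete plan for the non-cellularity — pin down $\bC$, enumerate all $\bD$ with $\bD^T\bD=\bC$, and rule each one out by Lemma \ref{C Lem min} and Proposition \ref{C Prop decom matrix} — has a genuine gap: it terminates with a surviving candidate. The correct Cartan matrix is $\bC=\left(\begin{smallmatrix}4&2\\2&2\end{smallmatrix}\right)$ (your tentative $\left(\begin{smallmatrix}3&1\\1&1\end{smallmatrix}\right)$ is inconsistent with your own computation $\dim P_1=6$), and there \emph{is} a nonnegative integer solution, unique up to row permutation: the $4\times 2$ matrix with rows $(1,1),(1,1),(1,0),(1,0)$. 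This $\bD$ is fully compatible with the triangularity conditions, with $d_{\la\la}=1$, and with Lemma \ref{C Lem min} (assign the two rows $(1,0)$ to the minimal and the maximal element). So no amount of combinatorics on $\bD$ alone can finish the proof.

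The missing idea is to exploit the cell filtration by two-sided ideals $A(\mu_i)$ from \eqref{C seq A} against the explicit multiplication in $A$. The paper fixes the forced linear extension $\la_3<\la_1<\la_2<\la_4$ with $\mu_1=\la_3$, $\mu_2=\la_1$, shows $A(\la_4)=K\g^3$, and then determines $A(\la_2)e_1$ and $A(\la_2)e_2$ explicitly modulo $A(\la_4)$ (e.g.\ $\g^2\in A(\la_2)e_1$ and $a\b_1+b\b_1\g\in A(\la_2)e_1$ for some $(a,b)\ne(0,0)$). Acting by $\a_1$ forces $a=0$, and then $\Soc\D(\la_2)$ computed from $(A(\la_2)/A(\la_4))e_1$ is $L(\mu_1)\oplus L(\mu_2)$ while computed from $(A(\la_2)/A(\la_4))e_2$ it is $L(\mu_2)$ — a contradiction. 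Your write-up gestures at "a finer structural feature" and "the socle constraint from Lemma \ref{C Lem min}(ii)", but that lemma by itself does not rule out the surviving $\bD$; the argument genuinely needs the explicit element-level analysis of the ideal filtration, which your plan does not contain.
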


\begin{proof} 
We see that 
\begin{align*}
& A e_1 = K e_1 \oplus K \g \oplus K \b_1 \oplus K \g^2 \oplus K \b_1 \g \oplus K \g^3, 
\\
& A e_2 = K e_2 \oplus K \a_1 \oplus K \g \a_1 \oplus K \b_1 \g \a_1, 
\end{align*} 
and the Cartan matrix of $A$ is
\[
\bC= \begin{pmatrix} 4 & 2 \\ 2 & 2 \end{pmatrix}. 
\]
Suppose that $A$ is a cellular algebra with cell datum $(\vL, \cT, \cC, \iota)$. 
Let $\bD$ be the decomposition matrix of the cellular structure of $A$. 
Then, by Proposition \ref{C Prop decom matrix} (\roiii), 
we have 
\[
\bD^T \bD = \begin{pmatrix} 4 & 2 \\ 2 & 2 \end{pmatrix}. 
\]
As Proposition \ref{C Prop decom matrix} (\roi) tells us that every column of $D$ must contain $1$ as an entry, this equation implies that 
\begin{align*}
\bD= (d_{ij})_{1\leq i \leq 4, 1 \leq j \leq 2}=\begin{pmatrix} 1 & 1 \\ 1 & 1 \\ 1& 0 \\ 1 & 0 \end{pmatrix}
\end{align*} 
modulo rearrangement of rows.  
Let $\vL=\{\la_1,\la_2, \la_3,\la_4\}$ and $\vL^+=\{ \mu_1, \mu_2\}$ be  such that 
$[ \D(\la_i) : L(\mu_j) ] = d_{ij}$. If $\la_1$ or $\la_2$ was maximal or minimal with respect to the partial order, then 
the corresponding row of $D$ would be a unit vector by Lemma \ref{C Lem min}, a contradiction. Thus, 
we can assume that 
a total order on $\vL$ which extends the partial order is 
$3<1<2<4$. In particular, we do not have $\la_1>\la_2$ in the partial order. 
We also note that $\mu = \min\{ \la \in \vL \mid d_{\la\mu}\ne0 \}$, for $\mu\in \vL^+$, by 
Proposition \ref{C Prop decom matrix} (\roi). As $d_{\la_i\mu_2}\ne0$ only for $i=1,2$, $\mu_2\in\{\la_1,\la_2\}$, but
$\mu_2=\la_2$ contradicts $\la_1\not>\la_2$, so that $\mu_2=\la_1$. The minimality of $\la_3$ implies
$\mu_1=\la_3$ by Lemma \ref{C Lem min}(i).

We determine $A(\la_4)$ and $A(\la_2)$. By Proposition \ref{C Prop decom matrix} (\roii), 
we see that 
\begin{align}
\label{F DB 2}
A(\la_4) e_1 \cong P(\mu_1)_{\la_4} \cong \D(\la_4), 
\quad 
A(\la_4) e_2 \cong P(\mu_2)_{\la_4} =0. 
\end{align}
As $A (\la_4)$ is a two-sided-ideal and $\D(\la_4) \cong L(\mu_1)$, 
\eqref{F DB 2} implies that $A(\la_4) = K \g^3$. 
In a similar way, we see that 
\begin{align}
\label{F DB 2-2}
\begin{split}
&(A(\la_2)/ A(\la_4) )e_1 \cong P(\mu_1)_{\la_2} / P(\mu_1)_{\la_4} \cong \D(\la_2), 
\\
&(A(\la_2) / A(\la_4)) e_2 \cong P(\mu_2)_{\la_2} / P(\mu_2)_{\la_4} \cong \D(\la_2). 
\end{split}
\end{align}
Note that $A (\la_2)$ is a two-sided-ideal and $[ \D(\la_2) ] = [L(\mu_1)] + [L(\mu_2)]$.  
Then, since $\dim e_1(A(\la_2)/ A(\la_4) )e_1 = 1$, there is a nonzero element $ae_1+b\g+c\g^2$ in $A(\la_2)e_1$. If $a\ne0$ 
then $a\g+b\g^2 \in A(\la_2)e_1$, which implies $\dim e_1(A(\la_2)/ A(\la_4) )e_1>1$. 
Thus, $a=0$. The same argument shows $b=0$ and we conclude that $\g^2\in A(\la_2)e_1$. On the other hand, $\dim e_2(A(\la_2)/ A(\la_4) )e_1 = 1$
implies that there is a nonzero element $a\b_1+b\b_1\g$ in $A(\la_2)e_1$. 
We may also prove $\beta_1\gamma\alpha_1 \in A(\la_2)e_2$ and $\gamma\alpha_1 \in A(\la_2)e_2$ in the similar manner. Hence,
\begin{align*}
&(A(\la_2)/ A(\la_4)) e_1 \equiv K \g^2 \oplus K (a\b_1 + b\b_1\g) \mod A (\la_4), 
\\ 
&(A(\la_2)/ A(\la_4))e_2 \equiv K \g \a_1 \oplus K \b_1 \g \a_1 \mod A (\la_4).
\end{align*}
We observe that $\a_1$ acts as zero on $(A(\la_2)/ A(\la_4))e_2$. Then, $\a_1(a\b_1 + b\b_1\g) \equiv a\g^2$ implies $a=0$. 
However, if $a=0$ then we meet another contradiction
\begin{align*}
&\Soc \D(\la_2) \cong \Soc (A(\la_2)/ A(\la_4)) e_1 \cong L(\mu_1) \oplus L(\mu_2), 
\\ 
&\Soc \D(\la_2) \cong  \Soc (A(\la_2)/ A(\la_4))e_2 \cong  L(\mu_2).
\end{align*}
We have proved that $A$ is not cellular. 
\end{proof}

%%%%%

Lemma \ref{F Lemma DB 2} implies the following proposition. 

\begin{prop}
\label{F Prop MBD}
A modified Brauer tree algebra $D(T_S)$ 
associated to any Brauer tree $T_S$ 
with at least two edges and an extreme exceptional vertex $S$ of multiplicity $2$ 
is not cellular. 
\end{prop}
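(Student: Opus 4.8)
The plan is to deduce Proposition \ref{F Prop MBD} from Lemma \ref{F Lemma DB 2} together with the idempotent-truncation principle recorded in consequence (a) of Lemma \ref{C Lem idem}. First I would observe, as already noted in paragraph \ref{F BD}, that if $D(T_S)$ is cellular then Proposition \ref{C Prop simple}(\roiii) forces the Brauer tree $T_S$ to be a straight line: a branch would produce a cycle of length at least $3$ in the quiver with no inverse arrow. So it suffices to treat the case where $T_S$ is a straight line with $n\ge 2$ edges, and the quiver $Q_{T_S}$ and ideal $I_{T_S}$ are given by \eqref{F QTS} and \eqref{F ITS}.

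Next I would reduce the general $n\ge 2$ case to the $n=2$ case handled in Lemma \ref{F Lemma DB 2}. The idea is to pick the idempotent $e = e_1 + e_2 \in D(T_S)$ corresponding to the two end vertices of the loop $\g$, and to identify the truncated algebra $e\,D(T_S)\,e$ with the algebra $A$ of Lemma \ref{F Lemma DB 2}. Concretely, the nonzero paths in $e\,D(T_S)\,e$ are exactly the paths supported on vertices $1$ and $2$: using the relations \eqref{F ITS} (in particular $\a_i\a_{i+1}=0$ and $\b_{i+1}\b_i=0$ for $1\le i\le n-2$, together with $\b_1\a_1=0$ and $\g^2=\a_1\b_1$), one checks that any path that leaves vertex $2$ to the right and comes back contributes nothing modulo $I_{T_S}$, so that $e\,D(T_S)\,e$ is spanned by $e_1,\g,\g^2,\g^3,\b_1,\b_1\g$ at vertex $1$ and $e_2,\a_1,\g\a_1,\b_1\g\a_1$ at vertex $2$; comparing with the explicit bases of $Ae_1$ and $Ae_2$ computed in the proof of Lemma \ref{F Lemma DB 2}, this is precisely a $10$-dimensional algebra isomorphic to $KQ/I$ of that lemma, the isomorphism sending the generators to the same-named generators. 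Hence $e\,D(T_S)\,e$ is not cellular by Lemma \ref{F Lemma DB 2}.

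Finally I would invoke consequence (a) of Lemma \ref{C Lem idem}: if $D(T_S)$ were a cellular algebra over a field of odd characteristic, then $e\,D(T_S)\,e$ would be cellular for every idempotent $e$, contradicting the previous paragraph. Therefore $D(T_S)$ is not cellular, which is the assertion of Proposition \ref{F Prop MBD}.

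The main obstacle is the bookkeeping in the middle step: one must verify carefully, from the relations \eqref{F ITS}, that the idempotent truncation $e\,D(T_S)\,e$ at the two loop-end vertices really collapses to the $n=2$ algebra and does not pick up extra basis elements coming from long detours through vertices $3,\dots,n$. This is a routine but slightly delicate computation with paths modulo $I_{T_S}$; once it is done, the rest is an immediate application of the already-established Lemma \ref{F Lemma DB 2} and the cellularity-is-inherited-by-idempotent-truncation principle.
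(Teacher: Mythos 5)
Your proposal is correct and follows essentially the same route as the paper's own proof: reduce to the straight-line case via Proposition \ref{C Prop simple}(\roiii), cut down by the idempotent $e=e_1+e_2$ to identify $e\,D(T_S)\,e$ with the $10$-dimensional algebra $A$ of Lemma \ref{F Lemma DB 2} (the paper does this by exhibiting a surjection $A\to e\,D(T_S)\,e$ and comparing dimensions, which amounts to the same path computation you describe), and conclude by Lemma \ref{C Lem idem}. No gaps.
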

\begin{proof}
If $D (T_S)$ is a cellular algebra, 
the Brauer tree $T_S$ has to be a straight line by the argument in \ref{F BD}. 
On the other hand, 
if $T_S$ is a straight line with $n$ edges, 
then we have $D(T_S) = KQ_{T_S}/ I_{T_S}$, 
where $Q_{T_S}$ and $I_{T_S}$ are given by \eqref{F QTS} and \eqref{F ITS} respectively. 
Moreover, if we denote the algebra from Lemma \ref{F Lemma DB 2} by $A$,
we see that $(e_1+e_2)D(T_S)(e_1+e_2) \cong A$. 
Indeed, if we compute bases of $D(T_S)e_1$ and $D(T_S)e_2$ explicitly, we know that 
$\dim (e_1+e_2)D(T_S)(e_1+e_2) = 10$ 
and that we have a surjective algebra homomorphism $A \to (e_1+e_2)D(T_S)(e_1+e_2)$. 
Thus, $(e_1+e_2)D(T_S)(e_1+e_2)$ is not cellular by Lemma \ref{F Lemma DB 2}, 
so that $D(T_S)$ is not cellular by Lemma \ref{C Lem idem}. 
\end{proof}

%%%%%

\para 
We consider the trivial extension $T(B)$ of a tilted algebra $B$ of Dynkin type. 
By \cite[Corollary 3.4 in Chapter VIII]{ASS}, 
The quiver of $B$ is acyclic. 
Thus, we consider the trivial extension $T(A)$ of a finite dimensional basic algebra $A$ whose quiver is acyclic. 

%%%%%

\begin{lem}
\label{F Lemma TA}
Let $Q$ be a finite acyclic quiver, 
$I$ an admissible ideal of $K Q$. 
Put $A= KQ/I$. 
If the trivial extension $T(A)$ of $A$ is cellular, 
then we have $I= R^2_Q$, 
where $R_Q$ is the arrow ideal of $KQ$. 
\end{lem}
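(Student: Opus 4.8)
The plan is to analyze the structure of the trivial extension $T(A) = A \ltimes DA$ and use the cellular constraints, especially the fact that every arrow in the Gabriel quiver must come with an inverse arrow (Proposition \ref{C Prop simple}(iii)), together with the idempotent truncation trick of Lemma \ref{C Lem idem}(a). First I would recall that since $Q$ is acyclic, the quiver $Q_{T(A)}$ of $T(A)$ is obtained from $Q$ by adding, for each relation in a minimal generating set of $I$, a ``backward'' arrow; more precisely the arrows of $Q_{T(A)}$ between $i$ and $j$ are $\dim_K e_j (\rad A/\rad^2 A) e_i$ forward arrows plus $\dim_K e_i (\rad A/\rad^2 A) e_j + \dim_K \Ext^2_A(S_i,S_j)$-type backward contributions coming from the socle of $DA$. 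The key point is that $T(A)$ being cellular forces $Q_{T(A)}$ to have an involution pairing arrows, and I want to leverage this to kill all relations.

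The core step: suppose $I \neq R_Q^2$, so there is an arrow $\alpha: i \to j$ in $Q$ that is involved in a minimal relation, equivalently there is a path $p$ of length $\geq 2$ from some vertex to some vertex that is a minimal relation. I would pass to the idempotent truncation $e T(A) e$ where $e = e_{i_1} + \dots + e_{i_k}$ collects the vertices on the support of a carefully chosen minimal relation (ideally just two vertices, an arrow $\alpha: 1 \to 2$ with some length-two relation through it), and analyze the resulting small algebra. By Lemma \ref{C Lem idem}(a) this truncation is again cellular. The goal is to show that acyclicity of $Q$ combined with the presence of a length-$\geq 2$ relation produces, in $e T(A) e$, a configuration contradicting one of: the symmetry $\dim \Ext^i(L(\lambda),L(\mu)) = \dim \Ext^i(L(\mu),L(\lambda))$, the constraint $\mathbf{C} = \mathbf{D}^T\mathbf{D}$ with $\det \mathbf{C} > 0$, or the socle/top structure of projectives over a weakly symmetric algebra (as in Lemma \ref{C Lem min}). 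The cleanest route is probably the Ext-symmetry: a non-admissible relation in an acyclic $Q$ tends to create $\Ext^2$ in one direction without the mirror image, because $T(A)$ of an acyclic algebra with relations has an asymmetric $\Ext$-quiver in low degrees unless $I = R_Q^2$.

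Alternatively — and this may be the more robust formulation — I would argue directly. If $I = R_Q^2$ then $A = KQ/R_Q^2$ is a radical-square-zero algebra and $T(A)$ is known to be a Brauer-graph-type / special biserial algebra whose cellularity one checks separately; the content of the lemma is the converse direction, that cellularity of $T(A)$ forbids deeper relations. So assume for contradiction that $R_Q^2 \supsetneq I$ properly, i.e. there is a path $\alpha\beta$ of length two in $KQ$ with $\alpha\beta \notin I$. Then in $A$ there is a vertex $i$ with a nonzero path of length $\geq 2$ out of it, so $\rad^2 P_i^A \neq 0$; dualizing and using the structure of $T(A) = A \ltimes DA$, I would locate a specific non-projective indecomposable or a specific entry of the Cartan matrix of a two- or three-vertex truncation $eT(A)e$ that violates $\mathbf{C} = \mathbf{D}^T \mathbf{D}$ — for instance producing a Cartan matrix like $\begin{pmatrix} 4 & 2 \\ 2 & 2\end{pmatrix}$ or similar whose only candidate decomposition matrix contradicts Lemma \ref{C Lem min} exactly as in the proof of Lemma \ref{F Lemma DB 2}, which I would cite as a template.

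The main obstacle I anticipate is the bookkeeping of which minimal relation in $I$ to use and how to choose the truncating idempotent $e$ so that $eT(A)e$ is small enough to compute explicitly yet still detects the relation; in general $I$ could have many generators of various lengths and supports, and one must reduce to a manageable local configuration. A secondary subtlety is that $T(A)$ can fail to have an admissible presentation matching the loop-deletion discussion after Proposition \ref{C Prop simple}, so when applying the Ext-symmetry criterion I must be careful to first rewrite the presentation (deleting loops coming from valency-one vertices, as the paper notes) before reading off the inverse-arrow condition. I expect the argument ultimately reduces, after truncation, to a handful of small algebras each dispatched by the $\mathbf{D}^T\mathbf{D}$ / weakly-symmetric-socle contradiction already exhibited in Lemma \ref{F Lemma DB 2}.
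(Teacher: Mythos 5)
There is a genuine gap here, compounded by a direction error at the start of your core step. Since $I$ is admissible we always have $I\subseteq R_Q^2$, so the failure of $I=R_Q^2$ produces a path of length at least two that is \emph{not} in $I$, i.e.\ a nonzero path of length $\ge 2$ in $A$ --- not, as you assert, ``a path $p$ of length $\geq 2$ that is a minimal relation.'' (For an acyclic quiver $I=0$ is already admissible, so there may be no relations at all while $I\ne R_Q^2$.) Your second, ``alternatively'' paragraph states the correct negation, but the truncation you actually propose --- to the vertices supporting ``a carefully chosen minimal relation'' --- is aimed at the wrong object: the obstruction lives on the support of a long \emph{nonzero} path. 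Moreover, none of the three contradiction mechanisms you list is carried out; you explicitly defer the reduction to ``a handful of small algebras'' without identifying them, and the route you call cleanest (an $\Ext^2$-asymmetry heuristic) is not the one that works.

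The paper's argument is shorter and purely about $\Ext^1$. Using the bounded quiver presentation of $T(A)$ (via \cite[Lemma 1.3]{A}): the quiver of $T(A)$ is obtained from $Q$ by adding one arrow from $t(\a)$ to $s(\a)$ for each path $\a\notin I$ with $\b\a,\a\b\in I$ for every arrow $\b$. If some path of length $\ge 2$ survives in $A$, it extends to such a maximal path $\a$ of length $\ge 2$, and the added return arrow closes it into a cycle of length $\ge 3$ in $Q_{T(A)}$; acyclicity of $Q$ then forces some arrow of this cycle to have no inverse arrow, contradicting Proposition \ref{C Prop simple}(iii) at the level of arrows of the Gabriel quiver. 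No idempotent truncation, no Cartan or decomposition matrix computation, and no appeal to Lemma \ref{F Lemma DB 2} is needed. Your proposal assembles the right ingredients (the quiver of the trivial extension, the inverse-arrow criterion, Lemma \ref{C Lem idem}) but leaves the essential step --- why a surviving long path in $A$ is incompatible with cellularity of $T(A)$ --- as an unproved expectation.
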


\begin{proof}
We consider a bounded quiver presentation of $T(A)$. 
For this, we need add new arrows to $Q$ which generate $\Hom_K(A,K)$. 
Then, we obtain 
the quiver $Q_{T(A)}$ of $T(A)$ from the quiver of $Q$ by the following 
recipe\footnote{Since $T(A)$ is the quotient of the repetitive algebra 
by the Nakayama automorphism $\nu_A$, 
we may also obtain the presentation by 
using the bounded quiver presentation of the repetitive algebra of $A$ 
given in \cite[Lemma 1.3]{A}.}: 
\begin{quote}
\it
Let $\a$ be a path in $Q$ such that $\a \not \in I$ and $\b \a, \a \b \in I$ for any arrow $\b$ in $Q$.
Then we add an arrow from $t(\a)$ to $s(\a)$, 
where $t(\a)$ is the target of $\a$ and $s(\a)$ is the source of $\a$.  
\end{quote}
Thanks to this construction, 
if there exists a path $\a$ in $Q$ such that the length of $\a$ is at least $2$ and $\a \not\in I$, 
then $Q_{T(A)}$ contains a cycle whose length is at least $3$, 
and this cycle has no inverse arrow. 
This contradicts Proposition \ref{C Prop simple} (\roiii) if $T(A)$ is cellular. 
We have proved $R_Q^2\subset I$. 
The opposite inclusion is clear since $I$ is an admissible ideal.
\end{proof}

%%%%%

\remark 
In \cite{O}, Lemma \ref{F Lemma TA} is proved in more elementary but longer argument. 
We thank Professor O. Iyama for informing us \cite[Lemma 1.3]{A}.  

%%%%% 

\para 
Let $Q=(Q_0,Q_1)$ be a finite acyclic quiver, 
$R_Q$ the arrow ideal of $KQ$. 
We consider the trivial extension $T(KQ/ R_Q^2)$ of $KQ/R_Q^2$. 

By \cite[Lemma 1.3]{A} (see the proof of Lemma \ref{F Lemma TA}), 
we see that the quiver of $T(KQ/R_Q^2)$ is the double quiver 
$\wh{Q}=(\wh{Q}_0, \wh{Q}_1)$ of $Q$, 
namely $\wh{Q}_0=Q_0$ and $\wh{Q}_1$ is obtained from $Q_1$ by adding an additional arrow $\a^*$ 
for each $\a \in Q_1$ such that $s(\a^*) = t(\a)$ and $t(\a^*)=s (\a)$.  

Let $I_{\wh{Q}}$ be the two-sided ideal of $K \wh{Q}$ generated by 
\begin{align*}
&\{ \a \b, \, \b^* \a^* \mid \a, \b \in Q_1\} 
\cup 
\{ \a \b^*, \, \b^* \a \mid \a \not=\b \in Q_1 \} 
\\
&\cup 
\{ \a \a^* - \b^* \b \mid \a, \b \in Q_1 \text{ such that } s (\a) = t (\b)\} 
\\
&\cup 
\{ \a \a^* - \b \b^* \mid \a, \b \in Q_1 \text{ such that } s (\a) = s (\b)\} 
\\
&\cup 
\{ \a^* \a - \b^* \b \mid \a, \b \in Q_1 \text{ such that } t(\a) = t(\b)\}. 
\end{align*}
Then we have the following lemma. 

%%%%%

\begin{lem} 
\label{F Lemma KQ RQ2}
Let $Q=(Q_0,Q_1)$ be a finite acyclic quiver, 
and $R_Q$ be the arrow ideal of $KQ$. 
Then the trivial extension $T(KQ/ R_Q^2)$ of $KQ/ R_Q^2$ 
is isomorphic to $K \wh{Q}/ I_{\wh{Q}}$.
\end{lem}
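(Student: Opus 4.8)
The plan is to exhibit an explicit isomorphism $\varphi : K\wh{Q}/I_{\wh{Q}} \to T(KQ/R_Q^2)$ and verify it is well-defined and bijective by dimension count. Recall that $T(KQ/R_Q^2) = (KQ/R_Q^2) \ltimes D(KQ/R_Q^2)$, where $D(-)=\Hom_K(-,K)$, with multiplication $(a,f)(b,g)=(ab, ag+fb)$. Writing $B=KQ/R_Q^2$, a $K$-basis of $B$ is $\{e_i \mid i\in Q_0\}\cup\{\a \mid \a\in Q_1\}$, and we take the dual basis $\{e_i^\vee\}\cup\{\a^\vee\}$ of $D(B)$. First I would define $\varphi$ on generators: send the vertex idempotent $e_i$ of $K\wh{Q}$ to $(e_i,0)$, send the original arrow $\a\in Q_1$ to $(\a,0)$, and send the new arrow $\a^*$ to $(0,\a^\vee)$. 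Since $K\wh{Q}$ is a path algebra, $\varphi$ extends uniquely to an algebra homomorphism $K\wh{Q}\to T(B)$ once we check it respects the vertex structure (it does: $\varphi(e_i)\varphi(\a)\varphi(e_j)$ computes correctly because $s(\a)=i$, $t(\a)=j$ forces the dual-basis bookkeeping to match, and similarly for $\a^*$ with source and target swapped).

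Next I would check that all the generators of $I_{\wh{Q}}$ lie in $\ker\varphi$. This is a direct computation in $T(B)$: for $\a,\b\in Q_1$ composable, $\varphi(\a\b)=(\a\b,0)=0$ in $B\ltimes D(B)$ since $\a\b\in R_Q^2$; for $\varphi(\b^*\a^*)=(0,\a^\vee)\cdot(0,\b^\vee)=(0,0)$ since $D(B)$ is a square-zero ideal; for $\varphi(\a\b^*)$ with $\a\ne\b$, one gets $(\a,0)(0,\b^\vee)=(0,\a\cdot\b^\vee)$, and $\a\cdot\b^\vee$ is the functional $x\mapsto \b^\vee(x\a)$, which vanishes on the basis of $B$ unless $x\a$ involves $\b$, impossible when $\a\ne\b$ (here the multiplication rules of $R_Q^2=0$ make $x\a$ either $0$ or a scalar multiple of $\a$ itself when $x$ is a vertex); similarly $\b^*\a\mapsto 0$. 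The relations $\a\a^*-\b^*\b$ (for $s(\a)=t(\b)$), $\a\a^*-\b\b^*$ (for $s(\a)=s(\b)$), and $\a^*\a-\b^*\b$ (for $t(\a)=t(\b)$) map to differences of functionals that one checks agree: e.g. $\varphi(\a\a^*)=(0,\a\cdot\a^\vee)$ and $\varphi(\b^*\b)=(0,\b^\vee\cdot\b)$, and both $\a\cdot\a^\vee$ and $\b^\vee\cdot\b$ turn out to equal $e_{s(\a)}^\vee=e_{t(\b)}^\vee$ under the stated incidence condition. Thus $\varphi$ factors through $K\wh{Q}/I_{\wh{Q}}$.

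Then I would argue bijectivity by counting dimensions. On one side, $\dim_K T(KQ/R_Q^2)=2\dim_K(KQ/R_Q^2)=2(|Q_0|+|Q_1|)$. On the other side, I would show that $K\wh{Q}/I_{\wh{Q}}$ is spanned by the images of $\{e_i\}\cup\{\a\}\cup\{\a^*\}\cup\{\,\a\a^*=\a^*\a\text{-type loops at each vertex}\,\}$: the relations force every path of length $\ge 2$ involving a mix of starred and unstarred arrows to be either zero or, in the ``there-and-back'' case, identified with a single vertex-loop element $\ell_i$ depending only on the vertex $i$; paths of length $\ge 3$ are all zero, and paths of length $2$ built from two unstarred or two starred arrows are zero. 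Counting gives $\dim_K K\wh{Q}/I_{\wh{Q}} \le |Q_0|+2|Q_1|+|Q_0|=2(|Q_0|+|Q_1|)$ (the loop $\ell_i$ exists only when vertex $i$ is incident to at least one arrow, but these contribute the ``second copy'' of the vertices), matching the target dimension; since $\varphi$ is surjective (its image contains all generators of $T(B)$), equality of dimensions forces $\varphi$ to be an isomorphism.

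The main obstacle I expect is the bookkeeping in the dimension upper bound for $K\wh{Q}/I_{\wh{Q}}$: one must carefully check that the various relations of type $\a\a^*-\b^*\b$, $\a\a^*-\b\b^*$, $\a^*\a-\b^*\b$ are exactly enough to collapse all the ``round-trip'' length-two paths at a given vertex to a single one-dimensional space, neither over-counting (which would make the algebra too big, contradicting surjectivity onto $T(B)$) nor accidentally killing that space (which would make it too small). Once the spanning set of size $2(|Q_0|+|Q_1|)$ is pinned down and shown to be linearly independent via $\varphi$, the rest is formal. The acyclicity of $Q$ is what guarantees there are no unstarred cycles to worry about and that the recipe in Lemma~\ref{F Lemma TA} genuinely produces $\wh{Q}$ as the Gabriel quiver.
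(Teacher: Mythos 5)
Your proposal is correct and follows essentially the same route as the paper: the paper defines the same homomorphism $\Phi\colon K\wh{Q}/I_{\wh{Q}}\to T(KQ/R_Q^2)$ sending $e_i\mapsto e_i$, $\a\mapsto\a$, $\a^*\mapsto f_\a$, checks well-definedness by the same direct computations (including $\a f_\a=f_{s(\a)}$ and $f_\a\a=f_{t(\a)}$, which give surjectivity), and concludes by comparing dimensions. The only difference is one of explicitness: you spell out the spanning set of $K\wh{Q}/I_{\wh{Q}}$ that underlies the dimension count, which the paper leaves implicit.
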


\begin{proof}
Let $\{f_{i} \mid i \in Q_0\} \cup \{ f_{\a} \mid \a \in Q_1\} \subset \Hom_{K} (KQ/ R_Q^2, K)$ 
be the dual basis of the basis
$\{e_i \mid i \in Q_0\} \cup \{\a \in Q_1\}$ of $KQ/ R_Q^2$,
namely $f_{i}(e_j)=\d_{ij}$, $f_{i}(\a)=0$, $f_{\a}(e_i)=0$ and $f_{\a}(\b) = \d_{\a \b}$, 
for $i,j \in Q_0$ and $\a, \b \in Q_1$. 
Then
\begin{align*}
\{e_i \mid i \in Q_0\} \cup \{\a \in Q_1\} \cup \{f_{i} \mid i \in Q_0\} \cup \{ f_{\a} \mid \a \in Q_1\}
\end{align*}
gives a basis of $T(KQ/ R_Q^2)$. 
Checking the well-definedness by direct calculation,
we can define an algebra homomorphism $\Phi: K \wh{Q}/ I_{\wh{Q}} \ra T (KQ/ R_Q^2)$ by
\begin{align*} 
e_i \mapsto e_i \, (i \in Q_0), \, 
\a \mapsto \a, \, \a^* \mapsto f_{\a} \, (\a \in Q_1). 
\end{align*} 
In particular, we used
$\a f_{\a} = f_{s(\a)}$ and $f_\a \a = f_{t (\a)}$ in $T(KQ/ R_Q^2)$, for $\a \in Q_1$, 
in the calculation, and they also show that
$\Phi$ is surjective. 
By comparing the dimensions of $K \wh{Q}/ I_{\wh{Q}}$ and $T(KQ/ R_Q^2)$, 
we see that $\Phi$ is an isomorphism.  
\end{proof} 

%%%%

Note that the algebra $K \wh{Q}/ I_{\wh{Q}}$ is determined only by the underlying graph of $Q$. 
We denote the underlying graph of $Q$ by $\ol{Q}$. 
Then, we have the following proposition. 

%%%%%

\begin{prop} 
\label{F Prop hatQ hatI}
Let $Q=(Q_0,Q_1)$ be a finite connected acyclic quiver. 
Then, 
the algebra 
$K \wh{Q}/ I_{\wh{Q}}$ is cellular 
if and only if 
the underlying graph $\ol{Q}$ of $Q$ is a straight line with no multiple edge.
Furthermore, if the condition holds then
$K \wh{Q}/ I_{\wh{Q}}$ is isomorphic to the Brauer tree algebra whose
Brauer tree is a straight line with no exceptional vertex.
\end{prop}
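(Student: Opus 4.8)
The plan is to prove the two directions separately, then identify the algebra in the positive case. For the "only if" direction, I would argue contrapositively: if $\ol{Q}$ is not a straight line with no multiple edge, then either $\ol{Q}$ has a vertex of valency at least three, or $\ol{Q}$ contains a multiple edge. In the first case, pick a vertex $i$ with three distinct arrows incident to it in $\wh{Q}$ (counting the starred arrows), localize at the idempotent $e=e_i+e_j+e_k+e_l$ for the neighbours involved, and use Lemma \ref{C Lem idem}(a) to reduce to showing the resulting algebra $eK\wh{Q}I_{\wh{Q}}e$ is not cellular. Concretely this local algebra is the trivial extension $T(KQ'/R_{Q'}^2)$ for the star-shaped subquiver $Q'$ on these four vertices, so it suffices to show that the trivial extension of $K D_4/R^2$ (the "spider") is not cellular — one computes its Cartan matrix, notes $\det\bC>0$ must hold (Proposition \ref{C Prop decom matrix}(iv)) and that the decomposition matrix is forced by $\bC=\bD^T\bD$ together with Lemma \ref{C Lem min}, and derives a contradiction exactly as in Lemma \ref{F Lemma DB 2}. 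For the multiple-edge case, localizing at the two endpoints of the multiple edge gives (a quotient mapping onto) a small algebra whose non-cellularity can be checked by the same Cartan-matrix argument; alternatively, a multiple edge already forces the underlying graph to fail the "straight line" condition and one can invoke Proposition \ref{C Prop simple}(iii) if the doubled quiver $\wh{Q}$ acquires a cycle with no inverse — but the cleanest route is the explicit small-algebra computation.

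For the "if" direction, suppose $\ol{Q}=A_n$ is a straight line with no multiple edge. Then $\wh{Q}$ is exactly the quiver obtained by doubling each edge, i.e.\ the quiver
\[
\xymatrix{
1 \ar[r]<0.5 ex>^{\a_1}  & 2 \ar[l]<0.5 ex>^{\a_1^*} \ar[r]<0.5 ex>^{\a_2} & 3 \ar[l]<0.5 ex>^{\a_2^*} & \cdots \ar[r]<0.5ex>^{\a_{n-1}} & n \ar[l]<0.5 ex>^{\a_{n-1}^*},
}
\]
and $I_{\wh{Q}}$ is generated by $\a_i\a_{i+1}$, $\a_{i+1}^*\a_i^*$, the zero-relations $\a_i\a_j^*=\a_j^*\a_i=0$ for $i\neq j$, and the commutativity relations identifying $\a_i\a_i^*$ with $\a_{i-1}^*\a_{i-1}$ (at interior vertices), with $\a_1\a_1^*$ at vertex $2$, and with $\a_{n-1}^*\a_{n-1}$ at vertex $n$. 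I would then simply observe that this is precisely the presentation of the Brauer tree algebra $A(T)$ where $T$ is the straight line $1 - 2 - \cdots - n$ with $n-1$ edges and no exceptional vertex: reading the cyclic ordering of edges around each vertex of $T$ reproduces exactly the above quiver and relations (this is standard and can be checked by matching generators and relations; cf.\ \cite[paragraph 2.8]{S}). Since Proposition \ref{F Prop BT} tells us such a Brauer tree algebra is cellular, we are done, and this simultaneously yields the "Furthermore" clause.

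The main obstacle is the "only if" direction for a branch vertex: one must be sure that localizing $K\wh{Q}/I_{\wh{Q}}$ at the relevant idempotent really produces the trivial extension of $KD_4/R^2$ (and not merely a quotient or an algebra with extra relations), because $eK\wh{Q}/I_{\wh{Q}}e$ could a priori differ from $T(e'KQe'/R^2)$ due to paths passing through deleted vertices — but since $I_{\wh{Q}}$ already kills every path of length $\ge 2$ except the "commutativity" ones supported at a single vertex, no such paths survive, so the localization is exactly the $D_4$ case. Granting that, the contradiction is obtained by the now-routine recipe: compute $\bC$, solve $\bD^T\bD=\bC$ subject to Proposition \ref{C Prop decom matrix}(i) and Lemma \ref{C Lem min}, then track the ideal filtration \eqref{C seq A} through $-\otimes_A P(\la)$ to pin down socles and reach a contradiction with $\Soc P(\mu)\cong L(\mu)$, exactly as in the proof of Lemma \ref{F Lemma DB 2}.
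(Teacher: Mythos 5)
Your overall strategy (localize at small full subquivers via Lemma \ref{C Lem idem}, exploit the constraints $\bD^T\bD=\bC$, $\det\bC>0$ and Lemma \ref{C Lem min}, and identify the straight-line case with a Brauer tree algebra so that Proposition \ref{F Prop BT} applies) is the paper's strategy, and your treatment of the multiple-edge case, the branch case and the ``if'' direction is sound; for the $D_4$-shaped branch the contradiction is in fact even quicker than the template of Lemma \ref{F Lemma DB 2}, since no non-negative integral $\bD$ with $\bD^T\bD=\bC^{\natural}$ exists at all. However, your case analysis in the ``only if'' direction has a genuine gap: a connected graph that is not a straight line and has no multiple edge need \emph{not} have a vertex of valency at least three --- it can be a cycle, with every vertex of valency two. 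The hypothesis is only that $Q$ is acyclic as a quiver, so its underlying graph $\ol{Q}$ may perfectly well be a cycle of length $l\ge 3$ (for instance the triangle with arrows $1\to 2$, $2\to 3$, $1\to 3$). Your dichotomy ``valency $\ge 3$ or multiple edge'' never touches this configuration, so the proof as written does not exclude it.

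The paper handles this case separately. Localizing at the vertices of the cycle gives $K\wh{Q^{\sharp}}/I_{\wh{Q^{\sharp}}}$, whose Cartan matrix $\bC^{\sharp}$ is the $l\times l$ matrix with $2$'s on the diagonal and $1$'s in the two cyclically adjacent positions. Here the determinant test does not suffice (for odd $l$ one has $\det\bC^{\sharp}=4>0$), so one must go further: the equation $\bD^T\bD=\bC^{\sharp}$ forces each column of $\bD$ to have exactly two entries equal to $1$, adjacent columns to share exactly one row and non-adjacent columns to share none, i.e.\ $\bD$ is the vertex--edge incidence matrix of the $l$-cycle. Proposition \ref{C Prop decom matrix}(i) then produces a cyclic chain of strict inequalities $\la_l<\la_1<\la_2<\dots<\la_{l-1}<\la_l$ in the poset $\vL$, which is impossible. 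You would need to insert an argument of this kind (or otherwise rule out cycles in $\ol{Q}$) before concluding that $\ol{Q}$ is a tree and hence, by your other two cases, a straight line.
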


\begin{proof} 
Suppose that $K \wh{Q}/ I_{\wh{Q}}$ is cellular.
If $\ol{Q}$ has a multiple edge, 
then $Q$ contains a subquiver 
\[
\xymatrix{Q' =  i \ar[rr]<-1.5ex>   \ar[rr]<1.8ex> & \vdots  & j }
\]
with $k$ arrows ($k \geq 2$) by Proposition \ref{C Prop simple}.  
By the defining relations, 
we see that $(e_i + e_j)(K\wh{Q} / I_{\wh{Q}})(e_i + e_j) \cong K \wh{Q'} / I_{\wh{Q'}}$, 
and its Cartan matrix is
\[
\bC'=\begin{pmatrix} 2 & k \\ k & 2 \end{pmatrix}. 
\]
By Proposition \ref{C Prop decom matrix} (\roiv), 
$(e_i + e_j)(K\wh{Q} / I_{\wh{Q}})(e_i + e_j)$ is not cellular 
since $\det \bC' \leq 0$. 
Then, by Lemma \ref{C Lem idem}, 
$K \wh{Q}/ I_{\wh{Q}}$ is not cellular, contradicting our assumption. Hence
$\ol{Q}$ has no multiple edge. 

Note that $Q$ does not have an oriented cycle since $Q$ is acyclic. In particular 
the underlying graph of $Q$ does not contain a loop.
Suppose that $Q$ contains a subquiver $Q^{\sharp}$ 
whose underlying graph is a cycle of length $l\ge2$. We label its vertices as follows. 
\begin{align*}
\xymatrix@R=3pt{
&i_2 \ar@{-}[r] & i_3 \ar@{-}[dr]  &
\\
&&& \ar@{.}[dd]
\\
\ol{Q^{\sharp}} = i_1\ar@{-}[uur] &&& & 
\\
&&& \ar@{-}[dl]
\\
& i_l \ar@{-}[uul]& i_{l-1} \ar@{-}[l] &
}
\end{align*}
Then we see that 
$(e_{i_1} + \dots + e_{i_l}) (K\wh{Q}/ I_{\wh{Q}}) (e_{i_1} + \dots + e_{i_l}) 
	\cong K \wh{Q^{\sharp}}/ I_{\wh{Q^{\sharp}}}$, 
and its Cartan matrix is 
\begin{align*}
\bC^{\sharp} = 
	\begin{pmatrix} 
	2 & 1 & 0 & 0 & 0 & \dots & 0 & 0 & 0 & 1 
	\\ 
	1 & 2 & 1 & 0 & 0 & \dots & 0 & 0 & 0 & 0 
	\\
	0 & 1 & 2 & 1 & 0 & \dots & 0 & 0 & 0 & 0 
	\\
	&&&&& \vdots &&&&
	\\
	0 & 0 & 0 & 0 & 0 & \dots & 0 & 1 & 2 & 1
	\\
	1 & 0 & 0 & 0 & 0 & \dots & 0 & 0 & 1 & 2
	\end{pmatrix}. 
\end{align*}
Suppose that $K \wh{Q^{\sharp}}/ I_{\wh{Q^{\sharp}}}$ is a cellular algebra with cell datum $(\vL,\cT, \cC, \iota)$,  
and let $\bD^{\sharp}$ be its decomposition matrix. 
Then, by Proposition \ref{C Prop decom matrix} (\roiii), 
we have 
\begin{align*}
\bD^{\sharp} = 
	\begin{pmatrix} 
	1 & 0 & 0 & 0 & \dots & 0 & 0 & 1 
	\\
	1 & 1 & 0 & 0 & \dots & 0 & 0 & 0 
	\\
	0 & 1 & 1 & 0 & \dots & 0 & 0 & 0 
	\\
	&&&& \vdots &&&
        \\
        0 & 0 & 0 & 0 & \dots & 1 & 1 & 0
	\\
	0 & 0 & 0 & 0 & \dots & 0 & 1 & 1 
	\end{pmatrix}. 
\end{align*}
modulo rearrangement of rows.

Put $\vL =\{ \la_1, \la_2,\dots, \la_{l}\}$
such that $([\D(\la_i) : L(\la_j)])_{1\leq i,j \leq l} = \bD^{\sharp}$.  
Then, by Proposition \ref{C Prop decom matrix} (\roi), 
we have 
$\la_l < \la_1 < \la_2 < \dots < \la_{l-1} < \la_l$ in the partial order, which is impossible. Hence, 
$(e_{i_1} + \dots + e_{i_l}) (K\wh{Q}/ I_{\wh{Q}}) (e_{i_1} + \dots + e_{i_l})$ 
is not cellular, and 
$K \wh{Q}/ I_{\wh{Q}}$ is not cellular by Lemma \ref{C Lem idem}, contradicting our assumption again.   

Assume that $\ol{Q}$ has no multiple edge and no cycle. 
If $\ol{Q}$ has a branch, 
$Q$ contains a subquiver $Q^{\natural}$ whose underlying graph is of the following form
\begin{align*}
&\xymatrix@R=5pt{ & & i_3 \\ \ol{Q^{\natural}} = i_1 \ar@{-}[r] & i_2 \ar@{-}[ur] \ar@{-}[dr] & & \\ && i_4}
\end{align*}
We see that 
$(e_{i_1}+ \dots + e_{i_4}) K \wh{Q}/ I_{\wh{Q}} (e_{i_1}+ \dots + e_{i_4}) 
	\cong K \wh{Q^{\natural}}/ I_{\wh{Q^{\natural}}}$, 
and its Cartan matrix is 
\begin{align*}
\bC^{\natural} = 
	\begin{pmatrix} 
	2 & 1 & 0 & 0 
	\\
	1 & 2 & 1 & 1 
	\\
	0 & 1 & 2 & 0 
	\\
	0 & 1 & 0 & 2
	\end{pmatrix}.
\end{align*} 
However, 
there is no matrix $\bD$ with non-negative integral entries that satisfies 
$\bD^T \bD = \bC$. 
Thus, 
$(e_{i_1}+ \dots + e_{i_4}) K \wh{Q}/ I_{\wh{Q}} (e_{i_1}+ \dots + e_{i_4})$ 
is not cellular by Proposition \ref{C Prop decom matrix} (\roiii), and 
$K \wh{Q}/ I_{\wh{Q}}$ is not cellular by Lemma \ref{C Lem idem}. 

Now the only graph $\ol{Q}$ without cycles or loops, 
and branch points, is a straight line. Hence we have proved that 
$\ol{Q}$ is a straight line with no multiple edge if $K \wh{Q} / I_{\wh{Q}}$ is cellular. 

Conversely, suppose that $\ol{Q}$ is a straight line with no multiple edge. Then we easily see that 
$K \wh{Q}/ I_{\wh{Q}}$ is isomorphic to the Brauer tree algebra 
whose Brauer tree is a straight line  
with no exceptional vertex, which is cellular by Proposition \ref{F Prop BT}. 
\end{proof}

%%%%%

Note that 
the Brauer tree algebra whose Brauer tree is a straight line with no exceptional vertex is
isomorphic to the trivial extension of a tilted algebra of type $A_n$ for some $n\in\NN$.
Hence, we obtain the following proposition as a special case of Proposition \ref{F Prop hatQ hatI}. 

%%%%%

\begin{prop} 
\label{F Prop TE}
The trivial extension $T(B)$ of a tilted algebra $B$ of Dynkin type 
is cellular 
if and only if 
$T(B)$ is isomorphic to the Brauer tree algebra whose Brauer tree is 
a straight line with no exceptional vertex.
\end{prop}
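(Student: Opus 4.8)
The plan is to reduce the statement to results already proved for the algebras $K\wh{Q}/I_{\wh{Q}}$ — namely Lemma \ref{F Lemma TA}, Lemma \ref{F Lemma KQ RQ2} and Proposition \ref{F Prop hatQ hatI} — together with Proposition \ref{F Prop BT} and the remark preceding the proposition that a straight-line Brauer tree algebra with no exceptional vertex is itself the trivial extension of a tilted algebra of type $A_n$.

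For the \emph{only if} direction, let $B$ be a tilted algebra of Dynkin type and suppose $T(B)$ is cellular. Since $B$ is tilted of Dynkin type, its Gabriel quiver $Q=Q_B$ is connected (the underlying Dynkin diagram is connected) and acyclic by \cite[Corollary 3.4 in Chapter VIII]{ASS}, so we may write $B=KQ/I$ with $Q$ a finite connected acyclic quiver and $I$ an admissible ideal. Applying Lemma \ref{F Lemma TA} to $T(B)=T(KQ/I)$, the hypothesis that $T(B)$ is cellular forces $I=R_Q^2$, whence $T(B)=T(KQ/R_Q^2)\cong K\wh{Q}/I_{\wh{Q}}$ by Lemma \ref{F Lemma KQ RQ2}. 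As $K\wh{Q}/I_{\wh{Q}}$ is cellular, Proposition \ref{F Prop hatQ hatI} shows that $\ol{Q}$ is a straight line with no multiple edge and that $K\wh{Q}/I_{\wh{Q}}$ is isomorphic to the Brauer tree algebra whose Brauer tree is a straight line with no exceptional vertex; hence so is $T(B)$.

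For the \emph{if} direction, suppose $T(B)$ is isomorphic to the Brauer tree algebra whose Brauer tree is a straight line with no exceptional vertex. That algebra is cellular by Proposition \ref{F Prop BT}, and cellularity is transported along algebra isomorphisms (a cellular basis of one algebra pushes forward to a cellular basis of the other), so $T(B)$ is cellular. I do not expect any real obstacle: all the substantive work is contained in the cited statements, and the proposition is essentially their assembly. The only point needing a word of care is the opening reduction of the \emph{only if} direction — presenting $B$ as $KQ/I$ with $Q$ connected acyclic so that Lemmas \ref{F Lemma TA} and \ref{F Lemma KQ RQ2} apply verbatim — which is exactly what is cited from \cite{ASS}; using the note before the proposition one may also remark that both sides of the equivalence are nonvacuous.
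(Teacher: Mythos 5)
Your proof is correct and follows exactly the paper's route: the paper likewise deduces the proposition from Lemma \ref{F Lemma TA}, Lemma \ref{F Lemma KQ RQ2} and Proposition \ref{F Prop hatQ hatI}, together with the observation that the straight-line Brauer tree algebra with no exceptional vertex is the trivial extension of a tilted algebra of type $A_n$. You have merely written out the assembly in more detail than the paper's one-line derivation.
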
 

%%%%%

By \cite[Theorem 3.11]{S}, Proposition \ref{F Prop BT}, Proposition \ref{F Prop MBD} and Proposition \ref{F Prop TE}, 
we have the following proposition. 

%%%%%

\begin{prop}[{\cite{O}}] 
\label{F Prop symmetric}
An algebra $A$ is a basic symmetric cellular algebra of finite type 
if and only if 
$A$ is isomorphic  to a Brauer tree algebra
whose Brauer tree is a straight line with at most one exceptional vertex. 
\end{prop}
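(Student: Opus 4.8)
The plan is to assemble Proposition \ref{F Prop symmetric} from the pieces already in place, so the ``proof'' is really a matter of organizing a case analysis rather than proving anything new. First I would invoke \cite[Theorem 3.11]{S}: a nonsimple basic standard symmetric algebra of finite type is isomorphic to one of exactly three types of algebras -- a Brauer tree algebra with at most one exceptional vertex, a modified Brauer tree algebra $D(T_S)$ with an extreme exceptional vertex of multiplicity $2$, or a trivial extension $T(B)$ of a tilted algebra of Dynkin type -- and recall from the opening of \S3 that by \cite[Theorem 3.8]{S} together with $\cha K\ne2$ the self-injective algebras under consideration are standard, so there is no loss of generality. The simple case is trivial since a simple algebra is a Brauer tree algebra on a single edge (a straight line), so it suffices to handle the three types.

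Next I would dispose of the three types in turn using the propositions already established. For the Brauer tree case, Proposition \ref{F Prop BT} says such an algebra is cellular if and only if the Brauer tree is a straight line with at most one exceptional vertex, which is exactly the desired conclusion. For the modified Brauer tree case, Proposition \ref{F Prop MBD} says $D(T_S)$ is never cellular, so this type contributes nothing. For the trivial extension case, Proposition \ref{F Prop TE} says $T(B)$ is cellular if and only if it is (isomorphic to) the Brauer tree algebra whose Brauer tree is a straight line with no exceptional vertex, which is again of the allowed form. Combining the three: if $A$ is a basic symmetric cellular algebra of finite type, then it falls into one of the three types, and in each case it must be isomorphic to a Brauer tree algebra whose Brauer tree is a straight line with at most one exceptional vertex.

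For the converse direction I would simply note that a Brauer tree algebra whose Brauer tree is a straight line with at most one exceptional vertex is symmetric (Brauer tree algebras are symmetric), of finite representation type (this is classical for Brauer tree algebras), and cellular by Proposition \ref{F Prop BT}; it is basic by construction. Hence every such algebra genuinely occurs, establishing the equivalence. Since the statement is an ``if and only if,'' writing both directions out is all that remains.

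I do not expect any real obstacle here: all the hard work -- the non-cellularity of $D(T_S)$ via the idempotent truncation to the $10$-dimensional algebra of Lemma \ref{F Lemma DB 2}, and the analysis of $T(KQ/R_Q^2)=K\wh Q/I_{\wh Q}$ via Cartan-matrix and decomposition-matrix obstructions in Proposition \ref{F Prop hatQ hatI} -- is already done. The only point requiring a modicum of care is making sure the three types from \cite[Theorem 3.11]{S} are genuinely exhaustive for \emph{standard} algebras and that standardness is available to us, which is why I would state explicitly at the start of the proof that we are citing \cite[Theorem 3.8]{S} and \cite[Theorem 3.11]{S} and that $\cha K\ne2$ guarantees standardness; after that the proof is a one-paragraph bookkeeping exercise chaining Propositions \ref{F Prop BT}, \ref{F Prop MBD} and \ref{F Prop TE}.
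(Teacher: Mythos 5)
Your proposal is correct and follows essentially the same route as the paper: the paper derives Proposition \ref{F Prop symmetric} precisely by combining \cite[Theorem 3.11]{S} with Propositions \ref{F Prop BT}, \ref{F Prop MBD} and \ref{F Prop TE}, with standardness supplied by \cite[Theorem 3.8]{S} and $\cha K\ne 2$ as noted at the start of \S 3. The only cosmetic difference is that you spell out the converse and the (degenerate) simple case explicitly, which the paper leaves implicit.
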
 

%%%%%

Recall from Proposition \ref{C Prop Morita equiv} (\roii) that
a self-injective cellular algebra is weakly symmetric. 
If it is of finite type, we may apply the following result.

%%%%%

\begin{prop} 
\label{F Prop weak}
A weakly symmetric algebra over $K$ of finite type is symmetric. 
\end{prop}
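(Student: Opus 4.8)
The plan is to make the standard reductions and then read the result off the classification of representation-finite self-injective algebras.

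First I would reduce to the case that $A$ is basic, connected and non-simple: weak symmetry, symmetry, self-injectivity and finiteness of representation type are all Morita invariant and are inherited by and from the (finitely many) indecomposable blocks, and the simple case is trivial. Recall that a self-injective algebra $A$ carries a Nakayama automorphism $\nu$, determined up to an inner automorphism by the bimodule isomorphism $DA\cong{}_1A_\nu$, and that $A$ is symmetric if and only if $\nu$ is inner. Since $A$ is basic, $\nu$ may be chosen to fix a complete set of primitive orthogonal idempotents; with this normalisation, $A$ is weakly symmetric precisely when $\nu$ induces the identity permutation on the vertices of the Gabriel quiver of $A$. So it suffices to prove: if $A$ has finite representation type and admits a Nakayama automorphism fixing every vertex, then that automorphism is inner.

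Next I would invoke the structure theory of Riedtmann and Hughes--Waschb\"usch (see \cite{S} and the references there). The finitely many non-standard representation-finite self-injective algebras occur only in small characteristic, and in particular in every application of this proposition in the paper the algebra $A$ is already known to be standard; so we may assume $A$ is standard. A basic connected standard representation-finite self-injective algebra is, up to a socle deformation, an orbit algebra $\widehat{B}/G$, where $B$ is a tilted algebra of Dynkin type, $\widehat{B}$ is its repetitive algebra, and $G=\langle\nu_{\widehat{B}}^{\,n}\varphi\rangle$ for some $n\ge1$ and some rigid automorphism $\varphi$ of $\widehat{B}$ of finite order commuting with the shift automorphism $\nu_{\widehat{B}}$. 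The trivial extension $T(B)=\widehat{B}/\langle\nu_{\widehat{B}}\rangle$ is symmetric, and the Nakayama automorphism of $\widehat{B}/G$ is the one induced by $\nu_{\widehat{B}}$; hence the vertex permutation it induces on $A$ is the permutation induced by $\nu_{\widehat{B}}$ on the set of $G$-orbits of vertices of $\widehat{B}$. Computing these orbits shows that this permutation is trivial exactly when $n=1$ and $\varphi$ fixes every vertex of $\widehat{B}$, in which case $\widehat{B}/G\cong\widehat{B}/\langle\nu_{\widehat{B}}\rangle=T(B)$ is symmetric; the few socle deformations that arise are disposed of in the same way from their explicit description.

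The main obstacle is this last, combinatorial step: pinning down the Nakayama automorphism of the orbit algebra as the one coming from $\nu_{\widehat{B}}$, and then translating ``trivial Nakayama permutation'' via the $G$-orbit computation into the conclusion that $A$ is, up to socle equivalence, a trivial extension, all while tracking the socle deformations. If one prefers not to reproduce this, the statement is available in the literature on Frobenius and self-injective algebras, and citing it there is the shortest route; the sketch above is included only for completeness.
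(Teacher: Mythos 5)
Your route is genuinely different from the paper's. The paper does not touch the covering-theory classification at all: it passes to the Stamm-algebra $A'$ of $A$ (in the sense of Waschb\"usch), uses the fact that $A'$ is symmetric if and only if $A$ is weakly symmetric, and then invokes Kupisch--Scherzler's Theorem C, which says that a basic self-injective algebra of finite type that is \emph{not} symmetric is isomorphic to its own Stamm-algebra; a weakly symmetric non-symmetric $A$ would then be isomorphic to the symmetric algebra $A'$, a contradiction. That argument is three lines and avoids both the reduction to standard algebras and any orbit computation. Your approach, via the Nakayama automorphism of an orbit algebra $\widehat{B}/G$ of a repetitive algebra, is a legitimate alternative and has the merit of explaining \emph{why} the result is true (weak symmetry pins down the admissible group as $\langle\nu_{\widehat{B}}\rangle$, forcing $A$ to be a trivial extension), but it is considerably heavier machinery for the same statement.

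As written, however, the decisive step is asserted rather than proved, and it contains a real gap. Granting that the Nakayama permutation of $\widehat{B}/\langle\nu_{\widehat{B}}^{\,n}\varphi\rangle$ is induced by $\nu_{\widehat{B}}$ on $G$-orbits of vertices, the orbit computation does force $n=1$ and $\varphi$ to fix every vertex of $\widehat{B}$; but from ``$\varphi$ fixes every vertex'' you conclude $\widehat{B}/G\cong\widehat{B}/\langle\nu_{\widehat{B}}\rangle$, which requires $\varphi$ to be trivial (or at least inner), not merely vertex-fixing. Closing this requires appealing to the normal form of admissible automorphisms, in which the torsion part $\varphi$ is induced by a graph automorphism of the Dynkin diagram and is therefore determined by its action on vertices --- a fact you neither state nor cite. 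Similarly, the dismissal of socle deformations ``from their explicit description'' is not an argument; here it is rescued only by the paper's standing hypothesis $\cha K\neq 2$, under which no non-standard representation-finite self-injective algebras exist, and that should be said explicitly rather than inferred from ``every application in the paper''. If you intend to cite the literature for the whole combinatorial step, as you suggest at the end, then the proof reduces to a citation and the sketch adds little; if you intend to prove it, these two points must be filled in.
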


\begin{proof}
Let $A$ be a basic weakly symmetric algebra over $K$ of finite type, 
and $A'$ be its Stamm-algebra (see \cite[Definition 4.4]{W} for the definition of Stamm-algebra). 
By \cite[Coroollary 4.7 (\roiii) in Part I]{W}, $A'$ is symmetric if and only if $A$ is weakly symmetric. 
If $A$ was not a symmetric algebra, then 
$A$ would be isomorphic to $A'$ by \cite[Theorem C]{KS}, 
which contradicts with $A'$ being a symmetric algebra.
\end{proof}

%%%%%

By Proposition \ref{C Prop Morita equiv} (\roii), 
Proposition \ref{F Prop symmetric}
and 
Proposition \ref{F Prop weak}, 
we have the following theorem. 

%%%%%

\begin{thm}\label{F Theorem}
Let $A$ be a basic algebra over $K$. 
The following are equivalent. 
\begin{enumerate} 
\item 
$A$ is a self-injective cellular algebra of finite type. 

\item 
$A$ is isomorphic  to a Brauer tree algebra 
whose Brauer tree is
a straight line with at most one exceptional vertex. 
\end{enumerate}
\end{thm}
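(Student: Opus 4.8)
The plan is to obtain Theorem \ref{F Theorem} by assembling results that have already been established, since the substance of the classification lies in Propositions \ref{F Prop symmetric}, \ref{F Prop MBD} and \ref{F Prop hatQ hatI}; here only a short synthesis is needed.

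For the implication (i)$\Rightarrow$(ii), suppose $A$ is a basic self-injective cellular algebra of finite type. By Proposition \ref{C Prop Morita equiv}(\roii), $A$ is weakly symmetric. Since $A$ is moreover of finite representation type, Proposition \ref{F Prop weak} upgrades this to: $A$ is symmetric. Hence $A$ is a basic symmetric cellular algebra of finite type, and Proposition \ref{F Prop symmetric} (that is, the combination of \cite[Theorem 3.11]{S} with Propositions \ref{F Prop BT}, \ref{F Prop MBD} and \ref{F Prop TE}) identifies it, up to isomorphism, with a Brauer tree algebra whose Brauer tree is a straight line with at most one exceptional vertex.

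For the converse (ii)$\Rightarrow$(i), recall that a Brauer tree algebra is symmetric, hence self-injective; and by Proposition \ref{F Prop BT} such an algebra is cellular precisely when its Brauer tree is a straight line with at most one exceptional vertex, which holds by hypothesis. Since it is also of finite representation type (being a Brauer tree algebra), (i) follows.

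There is no real obstacle in this final step: the only points requiring care are bookkeeping ones, namely that the cited results are applied to basic algebras, and that the hypothesis of finite representation type is genuinely invoked — it enters only through Proposition \ref{F Prop weak}, whose proof rests on the Stamm-algebra dichotomy of \cite{W} together with \cite[Theorem C]{KS}. All the genuine work, in particular ruling out the modified Brauer tree algebras $D(T_S)$ and the trivial extensions of tilted algebras of Dynkin type that otherwise occur in \cite[Theorem 3.11]{S}, was carried out in Propositions \ref{F Prop MBD} and \ref{F Prop hatQ hatI}.
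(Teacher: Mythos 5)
Your synthesis is correct and follows exactly the route the paper takes: it derives the theorem from Proposition \ref{C Prop Morita equiv}(\roii), Proposition \ref{F Prop weak} and Proposition \ref{F Prop symmetric}, with the converse coming from the ``if and only if'' in Proposition \ref{F Prop symmetric} (ultimately Proposition \ref{F Prop BT}) together with the standard facts that Brauer tree algebras are symmetric and of finite type. No gaps.
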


%%%%%%%%%%%%%%%%%%%%%%%%%%%%%%%%%%%%%%%%%%%%%%%%%%%%%%%%%%%%%%%
\section{Self-injective cellular algebras of domestic type} 
%%%%%%%%%%%%%%%%%%%%%%%%%%%%%%%%%%%%%%%%%%%%%%%%%%%%%%%%%%%%%%%

 The aim of this section is to determine basic self-injective cellular algebras of domestic type. 
By Proposition \ref{C Prop decom matrix} and Proposition \ref{C Prop Morita equiv},  
it suffices to consider weakly symmetric algebras of domestic type whose Cartan matrices are nonsingular. 
 
\para {\bf Standard self-injective algebras of domestic type}
First of all, we classify standard self-injective cellular algebras of domestic type. 
Nonstandard self-injective algebras of domestic type will be treated in Proposition \ref{D Prop dom} and Lemma \ref{D nondom}.
The main result in this section is Theorem \ref{D main result}. 

By \cite[Theorem 4.3]{S}, a self-injective algebra is standard domestic of infinite type 
if and only if it is a self-injective algebra of Euclidean type.
Then, non-local weakly symmetric algebras of Euclidean type which have nonsingular Cartan matrices are classified 
in \cite[Theorem 1]{BS} and the result is recalled in \cite[Theorem 1.1]{BHS}. They are
algebras $\Lambda'(T), \Gamma^{(0)}(T,v), \Gamma^{(1)}(T,v), \Gamma^{(2)}(T,v_1,v_2)$ and $\Lambda(T,v_1,v_2)$. 
On the other hand, the classification of local self-injective algebras of Euclidean type is known and 
it is recalled in \cite[Theorem 1.2]{BHS}. They are algebras $A(\lambda)$ with $\lambda\in K\setminus\{0\}$. 

\begin{definition}
Let $\lambda \in K\backslash \{0\}$ and $Q$ the following quiver. 
\[\begin{xy}
(0,0)*[o]+{1}="1",
\SelectTips{eu}{}
\ar @(lu,ld)"1";"1"_{\alpha}
\ar @(ru,rd)"1";"1"^{\beta}
\end{xy}\] 
Then we define $A(\lambda) = KQ/I(\lambda)$, where $I(\lambda)$ is the admissible ideal generated 
by the elements $\alpha^2,\beta^2,\alpha\beta-\lambda\beta\alpha$.
\end{definition}

The five classes of non-local algebras are given by Brauer graphs satisfying some conditions. 
Our convention is that we read the cyclic ordering around each vertex clockwise when we draw a Brauer graph in a plane. 
The common assumption for the five classes of the Brauer graphs is that 
\begin{quote}
\it
the Brauer graphs have at most one cycle, which may be or may not be a loop, and if the cycle exists then it must be of odd length.
\end{quote}
Furthermore, we fix at most one distinguished vertex in the cycle. 
Given a Brauer graph $T$, we denote by $Q_{T}$ the Brauer quiver associated with $T$. 
The cyclic ordering of each vertex in $T$ defines two simple cycles in $Q_{T}$ if it is the endpoint of a loop, and
one simple cycle in $Q_{T}$ otherwise. In particular, 
if a distinguished vertex of $T$ is chosen, then it defines one or two distinguished simple cycles, say $\gamma$-cycle or $\delta$-cycle, in $Q_{T}$, 
and we divide the remaining simple cycles of $Q_{T}$ into $\alpha$-parts and $\beta$-parts 
in such a way that if two cycles intersect at a vertex in $Q_{T}$, then the two cycles belong to different parts. 
The cycles in the $\alpha$-parts (respectively, the $\beta$-parts) 
are called $\alpha$-cycles (respectively, $\beta$-cycles). 
We denote an arrow in the $\alpha$-parts (respectively, the $\beta$-parts, the $\gamma$-cycle, the $\delta$-cycle) 
which starts at a vertex $i$ by $\alpha _{i}$ (respectively, $\beta_{i}$, $\gamma_{i}$, $\delta_{i}$), 
and we denote the target of $\alpha_{i}$ (respectively, $\beta_{i}$, $\gamma_{i}$, $\delta_{i}$) by $\alpha(i)$ (respectively, $\beta(i)$, $\gamma(i)$, $\delta(i)$). We will use the symbol $A_{i}$ (respectively, $B_{i}$, $C_{i}$, $D_{i}$) to denote the $\alpha$-cycle (respectively, $\beta$-cycle, $\gamma$-cycle, $\delta$-cycle) from $i$ to $i$, that is,
\[ \begin{array}{cc}
A_{i}=\alpha_{i}\alpha_{\alpha(i)}\cdots\alpha_{\alpha^{-1}(i)}, &  B_{i}=\beta_{i}\beta_{\beta(i)}\cdots\beta_{\beta^{-1}(i)} ,\\
C_{i}=\gamma_{i}\gamma_{\gamma(i)}\cdots\gamma_{\gamma^{-1}(i)}, &  D_{i}=\delta_{i}\delta_{\delta(i)}\cdots\delta_{\delta^{-1}(i)}.
\end{array}\]

We are ready to introduce the algebras $\Lambda'(T), \Gamma^{(0)}(T,v), \Gamma^{(1)}(T,v), \Gamma^{(2)}(T,v_1,v_2)$ and $\Lambda(T,v_1,v_2)$.
Note however that although we use Brauer graphs, they are not necessarily Brauer graph algebras and 
there are some modifications in the defining relations.

\begin{definition}\label{D defn I}
Let $T$ be a Brauer graph with exactly one cycle, which is of an odd length, and we define $\Lambda'(T)$. 

Suppose that the cycle is not a loop, and that a distinguished vertex $v$ on the cycle is chosen. 
We call the simple cycle in $Q_T$ associated with the cyclic ordering around $v$ the $\gamma$-cycle. 
Then, $\Lambda'(T) = KQ_T/I'(T)$, where $I'(T)$ is the ideal generated by the following elements:
\begin{enumerate}
\item $\alpha_i\beta_{\alpha(i)},$ $\beta_i\alpha_{\beta(i)},$ $\alpha_i\gamma_{\alpha(i)},$ $\gamma_i\alpha_{\gamma(i)},$  
$\gamma_i\beta_{\gamma(i)},$ $\beta_i\gamma_{\beta(i)}$, where $i$ are vertices of $Q_T$,
\item $A_j-B_j$, where $j$ is the intersection of an $\alpha$-cycle and a $\beta$-cycle,
\item $A_j-C_j$, where $j$ is the intersection of an $\alpha$-cycle and the $\gamma$-cycle,
\item $B_j-C_j$, where $j$ is the intersection of a $\beta$-cycle and the $\gamma$-cycle.
\end{enumerate} 

Suppose that the cycle is a loop, and the vertex of the loop is chosen as the distinguished vertex $v$. 
Then we have distinguished simple cycles the $\gamma$-cycle and the $\delta$-cycle in $Q_{T}$ 
which are associated with the cyclic ordering around $v$. 
We denote the intersection of the $\gamma$-cycle and the $\delta$-cycle by $a$.
Then, $\Lambda'(T) = KQ_T/I'(T)$, where $I'(T)$ is the ideal generated by the following elements:
\begin{enumerate}
\item $\alpha_i\beta_{\alpha(i)},$ $\beta_i\alpha_{\beta(i)},$ $\alpha_i\gamma_{\alpha(i)},$ $\gamma_i\alpha_{\gamma(i)},$ 
$\gamma_i\beta_{\gamma(i)},$ $\beta_i\gamma_{\beta(i)},$ $\alpha_i\delta_{\alpha(i)},$ $\delta_i\alpha_{\delta(i)},$  
$\beta_i\delta_{\beta(i)},$ $\delta_i\beta_{\delta(i)}$, where $i$ are vertices of $Q_T$,
\item $A_j-B_j$, where $j$ is the intersection of an $\alpha$-cycle and a $\beta$-cycle,
\item $A_j-\gamma_j\gamma_{\gamma(j)}\cdots\gamma_{\gamma^{-1}(a)}D_{a}\gamma_{a}\cdots\gamma_{\gamma^{-1}(j)}$, where $j$ is the intersection of an $\alpha$-cycle and the $\gamma$-cycle,
\item $A_j-\delta_j\delta_{\delta(j)}\cdots\delta_{\delta^{-1}(a)}C_{a}\delta_{a}\cdots\delta_{\delta^{-1}(j)}$, where $j$ is the intersection of an $\alpha$-cycle and the $\delta$-cycle,
\item $B_j-\gamma_j\gamma_{\gamma(j)}\cdots\gamma_{\gamma^{-1}(a)}D_{a}\gamma_{a}\cdots\gamma_{\gamma^{-1}(j)}$, where $j$ is the intersection of an $\beta$-cycle and the $\gamma$-cycle,
\item $B_j-\delta_j\delta_{\delta(j)}\cdots\delta_{\delta^{-1}(a)}C_{a}\delta_{a}\cdots\delta_{\delta^{-1}(j)}$, where $j$ is the intersection of an $\beta$-cycle and the $\delta$-cycle,
\item $\gamma_{\gamma^{-1}(a)}\gamma_{a}$, $\delta_{\delta^{-1}(a)}\delta_{a}$,
\item $C_{a}D_{a} - D_{a}C_{a}$.
\end{enumerate}
\end{definition}

\begin{definition}\label{D defn II}
Let $T$ be a Brauer graph with exactly one loop. 
We denote the loop and its vertex by $b$ and $u$ respectively. 
We assume that there is an edge $a$ of $T$ which is different from $b$ such that
\begin{itemize}
\item
the endpoints of $a$ are $u$ and $v$,
\item
no other edge than $a$ has $v$ as an endpoint,
\item
the edges $a$ and $b$ are direct successors of the other in the cyclic ordering around the vertex $u$,
\item
the loop $b$ is not a direct successor of itself in the cyclic ordering around the vertex $u$.
\end{itemize}
Then, we choose $u$ as the distinguished vertex and we define $\Gamma^{(0)}(T,v)$. 

Let two simple cycles in $Q_T$ associated with the cyclic ordering around $u$ be 
the $\gamma$-cycle and the $\delta$-cycle. They intersect at the vertex $b$ of $Q_T$. 
By the second condition, there is the unique loop in $Q_T$ which has the vertex $a$ as the endpoint. 
We assume that this loop belongs to $\alpha$-parts. We also assume that 
the simple cycle $b\to a \to b$ is the $\delta$-cycle, so that $D_b=\delta_b\delta_a$. Then,
$\Gamma^{(0)}(T,v) = KQ_T/I^{(0)}(T,v)$, where $I^{(0) }(T,v)$ is the ideal generated by the following elements:
\begin{enumerate}
\item $\alpha_i\beta_{\alpha(i)},$ $\beta_i\alpha_{\beta(i)},$ $\alpha_i\gamma_{\alpha(i)},$ $\gamma_i\alpha_{\gamma(i)},$  
$\gamma_i\beta_{\gamma(i)},$ $\beta_i\gamma_{\beta(i)}$, where $i$ are vertices of $Q_T$, and 
$\alpha_a\delta_a,$ $\delta_b\alpha_a$,
\item $A_j-B_j$, where $j$ is the intersection of an $\alpha$-cycle and a $\beta$-cycle,
\item $A_j-\gamma_j\gamma_{\gamma(j)}\cdots\gamma_{\gamma^{-1}(b)}D_b\gamma_b\cdots\gamma_{\gamma^{-1}(j)}$, 
where $j$ is the intersection of an $\alpha$-cycle and the $\gamma$-cycle,
\item $\alpha_{a} - \delta_{a}C_{b}\delta_{b}$,
\item $B_j-\gamma_j\gamma_{\gamma(j)}\cdots\gamma_{\gamma^{-1}(b)}D_b\gamma_b\cdots\gamma_{\gamma^{-1}(j)}$, 
where $j$ is the intersection of an $\beta$-cycle and the $\gamma$-cycle,
\item $\gamma_{\gamma^{-1}(b)}\gamma_{b}$,
\item $\delta_{a}C_{b}-\delta_{a}D_{b}$,
\item $C_{b}\delta_{b}-D_{b}\delta_{b}$.
\end{enumerate}
\end{definition}

\begin{definition}\label{D defn III}
Let $T$ be a Brauer graph with a unique exceptional vertex $v$, whose multiplicity is two, 
and a unique cycle, which is of length three. When we draw the Brauer graph in a plane, 
we assume that
\begin{itemize}
\item
the label of the edges of the cycle is given as
 \[\begin{xy}
(0,0)*[o]+{v_{3}}="2",(30,0)*[o]+{v_{2}}="3",(15,10)*[o]+{v_{1}}="1",
\ar @{-}"1";"2"_{a}
\ar @{-}"2";"3"^{c}
\ar @{-}"1";"3"^{b}
\end{xy}\] 
and other edges are located outside the triangle, that is, 
$a$ (respectively, $b$, $c$) is the direct successor of $b$ (respectively, $c$, $a$) 
in the cyclic ordering around $v_{1}$ (respectively, $v_{2}$, $v_{3}$),
\item
among the vertices $v_{1}$, $v_{2}$ and $v_{3}$, $v_{3}$ is the nearest vertex
to the exceptional vertex $v$, where $v=v_{3}$ may occur. 
\end{itemize}
Then, we choose $v_1$ as the distinguished vertex and we define $\Gamma^{(1)}(T,v)$.

Note that there is no cycle outside the triangle by the common assumption which we made for the Brauer graphs. 
We call the simple cycle in $Q_{T}$ associated with the cyclic ordering around $v_{1}$ the $\gamma$-cycle, 
and assume that the simple cycle associated with $v_{2}$ (respectively, $v_{3}$) belongs to $\beta$-parts (respectively, $\alpha$-parts). 
We call the simple cycle associated with the cyclic ordering around $v$ \emph{the exceptional cycle}. 
Then, $\Gamma^{(1)}(T,v) = KQ_T/I^{(1)}(T,v)$, where $I^{(1)}(T,v)$ is the ideal 
generated by the following elements:
\begin{enumerate}
\item $\beta_i\alpha_{\beta(i)},$ $\alpha_i\gamma_{\alpha(i)},$ $\gamma_i\beta_{\gamma(i)}$, where $i$ are vertices of $Q_T$,
\item $\alpha_{i}\beta_{\alpha(i)}$, for $i\ne a$,
\item $\gamma_{i}\alpha_{\gamma(i)}$, for $i\ne b$,
\item $\beta_{i}\gamma_{\beta(i)}$, for $i\ne c$,
\item $A_j-B_j$, if the intersection of an $\alpha$-cycle and a $\beta$-cycle is a vertex $j\neq c$ and the two cycles are not exceptional,
\item $A_j-C_j$, if the intersection of an $\alpha$-cycle and the $\gamma$-cycle is a vertex $j\neq a$ and the two cycles are not exceptional,
\item $B_j-C_j$, if the intersection of a $\beta$-cycle and the $\gamma$-cycle is a vertex $j\neq b$ and the two cycles are not exceptional, 
\item $A_j^{2}-B_j$, if the intersection of an $\alpha$-cycle and a $\beta$-cycle is a vertex $j\neq c$ and the $\alpha$-cycle is exceptional,
\item $A_j-B_j^{2}$, if the intersection of an $\alpha$-cycle and a $\beta$-cycle is a vertex $j$ and the $\beta$-cycle is exceptional,
\item $\gamma_{\gamma^{-1}(b)}\gamma_{b}\alpha_{a},$ $\alpha_{\alpha^{-1}(a)}\alpha_{a}\beta_{c}, \beta_{\beta^{-1}(c)}\beta_{c}\gamma_{b}$,
\item $\beta_{b}\beta_{\beta(b)}\cdots\beta_{\beta^{-1}(c)}-\gamma_{b}\alpha_{a},$ $\gamma_{a}\gamma_{\gamma(a)}\cdots\gamma_{\gamma^{-1}(b)}-\alpha_{a}\beta_{c}$,
\item $\alpha_{c}\alpha_{\alpha(c)}\cdots\alpha_{\alpha^{-1}(a)}-\beta_{c}\gamma_{b}$, if the $\alpha$-cycle through the vertex $c$ is not exceptional,
\item $\alpha_{c}\alpha_{\alpha(c)}\cdots\alpha_{\alpha^{-1}(a)}A_{a}-\beta_{c}\gamma_{b}$, if the $\alpha$-cycle through the vertex $c$ is exceptional.
\end{enumerate}
\end{definition}
\remark
In fact, we have $j\ne a, b, c$ in the conditions (v)-(ix): if $j=a$ or $b$ in (v) then one of the cycles is the $\gamma$-cycle, that is, 
it is not an $\alpha$-cycle nor a $\beta$-cycle. The same reasoning excludes $j=b, c$ in (vi), $j=a, c$ in (vii) and $j=a, b$ in (viii). 
We have $j\ne c$ in (ix) because if $j=c$ then the $\beta$-cycle is not exceptional. 
Another remark is that the condition for 
$\alpha_i\beta_{\alpha(i)}$ (respectively, $\gamma_i\alpha_{\gamma(i)}$, $\beta_i\gamma_{\beta(i)}$) appears in (ii) and (xi) 
(respectively, (iii) and (xi), (iv) and (xii), (xiii)).

\begin{definition}\label{D defn IV}
Let $T$ be a Brauer tree  $T$ (in the sense of \cite[Proposition 5.3]{KX1}) with two different exceptional vertices $v_{1}$ and  $v_{2}$ of multiplicity two. 
Then, we do not specify the distingushed vertex, so that there are no $\gamma$-cycle and $\delta$-cycle, 
and we define $\Lambda(T,v_{1},v_{2})$.

We call simple cycles associated with the cyclic ordering around the exceptional vertices $v_{1}$ and $v_{2}$ 
\emph{exceptional cycles}. Then, $\Lambda(T,v_{1},v_{2}) = KQ_T/I(T,v_{1},v_{2})$, 
where $I(T,v_{1},v_{2})$ is the ideal generated by the following elements:
\begin{enumerate}
\item $\alpha_{i}\beta_{\alpha(i)},$ $\beta_{i}\alpha_{\beta(i)}$, where $i$ are vertices of $Q_{T}$,
\item $A_j-B_j$, if the intersection of an $\alpha$-cycle and a $\beta$-cycle is a vertex $j$ and 
the two cycles are not exceptional,
\item $A_j^{2}-B_j$, if the intersection of an $\alpha$-cycle and a $\beta$-cycle is a vertex $j$ and 
only the $\alpha$-cycle is exceptional, 
\item $A_j-B_j^{2}$, if the intersection of an $\alpha$-cycle and a $\beta$-cycle is a vertex $j$ and 
only the $\beta$-cycle is exceptional,
\item $A_j^{2}-B_j^{2}$, if the intersection of an $\alpha$-cycle and a $\beta$-cycle is a vertex $j$ and 
both the $\alpha$-cycle and the $\beta$-cycle are exceptional.
\end{enumerate}
\end{definition}

\begin{definition}\label{D defn V}
Let $T$ be a Brauer tree with 
a unique exceptional vertex $v_{2}$
\footnote{In page 51 of \cite{BHS}, the definition starts with stating that $v_1$ and $v_2$ are exceptional vertices. 
However, it is stated in the middle of the page that the exceptional cycle is the one which is associated with the cyclic ordering around $v_2$. 
Thus, only $v_2$ is exceptional, and we should follow the definition in \cite{BS}.}
, which is of multiplicity two. 
We assume the following:
\begin{itemize}
\item
There is an edge $a$ of $T$ which has endpoints $v_1$ and $u$.
\item
The vertex $v_1$ is different from $v_2$ and $v_1$ is not an endpoint of any other edge than $a$.
\item
Let $b$ be the direct successor of $a$ and $c$ the direct predecessor of $a$ in the cyclic ordering around $u$. 
Then, $b\ne a$ and $c\ne a$.
\item
The vertices $u$, $v_{1}$, $v_{2}$ and the edges $a$, $b$, $c$ determine a subtree 
\[ 
 \begin{xy}
(0,0)*[o]+{v_{3}}="3",(20,0)*[o]+{u}="u",(10,-15)*[o]+{v_{1}}="1",(30,-15)*[o]+{\circ}="000",(40,0)*[o]+{\circ}="001",(50,0)*[o]+{}="dammy1",
(55,0)*[o]+{\cdots}="cdots",(60,0)*[o]+{}="dammy2",(70,0)*[o]+{\circ}="002",(90,0)*[o]+{v_{2}}="2"
\ar @{-}"3";"u"^{b}
\ar @{-}"1";"u"^{a}
\ar @{-}"000";"u"_{c}
\ar @{-}"u";"001"^{e}
\ar @{-}"001";"dammy1"
\ar @{-}"dammy2";"002"
\ar @{-}"002";"2"
\end{xy}
\]
of the Brauer tree $T$, where one of $u=v_{2}$, $v_{3}=v_{2}$ may occur (but cannot be both as $u\ne v_3$), 
and any of $b=e$, $c=e$ and $b=c=e$ may hold. 
\end{itemize}
Then, we do not specify the distinguished vertex and we define $\Gamma^{(2)}(T,v_{1},v_{2})$.

We call the simple cycle of $Q_T$ associated with the cyclic ordering around the vertex $v_2$ \emph{the exceptional cycle}. 
We assume that the simple cycle in $Q_{T}$ associated with the cyclic ordering around the vertex $u$ is an $\alpha$-cycle. 
Then, $\Gamma^{(2)}(T,v_{1},v_{2}) = KQ_T^{(2)}/I^{(2)}(T,v_{1},v_{2})$, where the vertices and the arrows of $Q_{T}^{(2)}$ are given by
\[ Q_{T}^{(2)}=((Q_{T})_{0}\cup \{ w\},\ (Q_{T})_{1}\cup\{ \gamma_{1}:c\to w,\ \gamma_{2}: w\to b,\ \gamma_{3}:w\to w\}),\]
and $I^{(2)}(T,v_{1},v_{2})$ is the ideal generated by the following elements:
\begin{enumerate}
\item $\alpha_{i}\beta_{\alpha(i)}$, $\beta_{i}\alpha_{\beta(i)}$, for all vertices $i$ of $Q_{T}$,
\item $A_j-B_j$, if the intersection of an $\alpha$-cycle and a $\beta$-cycle is a vertex $j$ and the two cycles are not exceptional,
\item $A_j^{2}-B_j$, if the intersection of an $\alpha$-cycle and a $\beta$-cycle is a vertex $j$ and only the $\alpha$-cycle is exceptional, 
\item $A_j-B_j^{2}$, if the intersection of an $\alpha$-cycle and a $\beta$-cycle is a vertex $j$ and only the $\beta$-cycle is exceptional,
\item $\gamma_{2}\beta_{b}$, $\beta_{\beta^{-1}(c)}\gamma_{1}$, $\gamma_{1}\gamma_{3}$, $\gamma_{3}\gamma_{2}$,
\item $\gamma_{2}\alpha_{b}\cdots \alpha_{c}$, $\alpha_{a}\alpha_{b}\cdots\alpha_{\alpha^{-1}(c)}\gamma_{1}$, if $u\ne v_2$, 
\\
(We understand that $\gamma_{2}\alpha_{b}$, $\alpha_{a}\gamma_{1}$, if $u\ne v_2$ and $b=c=e$),
\item $\gamma_{2}\alpha_{b}\cdots \alpha_{\alpha^{-1}(c)}\gamma_{1}\gamma_{2}\alpha_{b}\cdots\alpha_{c}$, 
$\alpha_{a}\alpha_{b}\cdots\alpha_{\alpha^{-1}(c)}\alpha_c\alpha_a\alpha_{b}\cdots\alpha_{\alpha^{-1}(c)}\gamma_{1}$, if $u=v_2$,
\\ (We understand that $\gamma_{2}\gamma_{1}\gamma_{2}\alpha_{b}$, $\alpha_{a}\alpha_{b}\alpha_{a}\gamma_{1}$, if $u=v_2$ and $b=c=e$),
\item $\alpha_{c}\alpha_{a}-\gamma_{1}\gamma_{2}$,
\item $\gamma_{2}\alpha_{b}\alpha_{\alpha(b)}\cdots\alpha_{\alpha^{-1}(c)}\gamma_{1}-\gamma_{3}$, if $u\ne v_2$,
\item $(\gamma_{2}\alpha_{b}\alpha_{\alpha(b)}\cdots\alpha_{\alpha^{-1}(c)}\gamma_{1})^{2}-\gamma_{3}$, if $u=v_2$.
\end{enumerate}
\end{definition}

Now, we recall the classification of weakly symmetric algebras of Euclidean type which have nonsingular Cartan matrices. 

 \begin{thm}[{\cite[Theorem 1,1 and Theorem 1.2]{BHS}}]\label{D thm class}
 Let $A$ be a basic self-injective algebra with a nonsingular Cartan matrix. Then the following statements hold:
 \begin{enumerate}
 \item[(1)]  $A$ is a local self-injective algebra of Euclidean type if and only if $A$ is isomorphic to $A(\lambda)$ for some $\lambda\in K\backslash \{0\}$.
 \item[(2)] $A$ is a non-local weakly symmetric algebra of Euclidean type if and only if $A$ is isomorphic to an algebra of the form $\Lambda (T,v_1,v_2)$, $\Lambda'(T)$, $\Gamma^{(0)}(T,v)$, $\Gamma^{(1)}(T,v)$, or $\Gamma^{(2)}(T,v_1,v_2)$.
 \end{enumerate}
 \end{thm}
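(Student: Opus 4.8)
The plan is to recognise that Theorem \ref{D thm class} is not argued from scratch but is obtained by assembling the known classification of self-injective algebras of Euclidean type together with the extra hypothesis that the Cartan matrix be nonsingular. The natural first move is to separate the statement into its local part (1) and its non-local part (2), and to handle each by citing the appropriate source while explaining why imposing nonsingularity of the Cartan matrix loses nothing.

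For part (1), I would invoke the description of local self-injective algebras of Euclidean type recalled in \cite[Theorem 1.2]{BHS}: such an algebra, being domestic of infinite type, admits a simply connected Galois covering, and analysing the admissible torsion-free groups acting on that covering (equivalently, reading off the shape of the stable Auslander--Reiten quiver) forces the Gabriel quiver to consist of a single vertex with exactly two loops $\alpha,\beta$ and the relations to be equivalent to $\alpha^2=\beta^2=0$, $\alpha\beta=\lambda\beta\alpha$ for some $\lambda\in K\setminus\{0\}$, i.e. $A\cong A(\lambda)$. One then checks directly that $A(\lambda)$ is genuinely of Euclidean type and that its Cartan matrix is the nonsingular $1\times 1$ matrix $(4)$ (since $\dim A(\lambda)=4$), so imposing nonsingularity discards nothing in the local case.

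For part (2), I would cite the Bocian--Skowro\'nski classification \cite[Theorem 1]{BS}, restated in \cite[Theorem 1.1]{BHS}: a non-local weakly symmetric algebra of Euclidean type with nonsingular Cartan matrix is isomorphic to one of $\Lambda(T,v_1,v_2)$, $\Lambda'(T)$, $\Gamma^{(0)}(T,v)$, $\Gamma^{(1)}(T,v)$, $\Gamma^{(2)}(T,v_1,v_2)$, where $T$ ranges over the Brauer graphs satisfying the common assumption (at most one cycle, of odd length, possibly a loop, at most one distinguished vertex) together with the additional hypotheses specified in Definitions \ref{D defn I}--\ref{D defn V}, and where the defining relations are the \emph{modified} ones recorded there rather than the usual Brauer-graph-algebra relations. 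It is also worth recording, via \cite[Theorem 4.3]{S}, that these are precisely the standard self-injective algebras that are domestic of infinite type, which is the form in which the result is used in the rest of the section.

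The genuinely hard part is the underlying combinatorial work in \cite{BS}: producing, type by type, the Brauer graphs together with the exact lists of modified defining relations, and showing that the ``odd cycle'' condition, the choice of distinguished and exceptional vertices, and the passage to nonsingular Cartan matrices together single out exactly these five families. Since all of this is established in the cited papers, the proof within the present paper reduces to citing \cite[Theorem 1]{BS} and \cite[Theorems 1.1 and 1.2]{BHS}, after recording the small computation that $A(\lambda)$ has nonsingular Cartan matrix.
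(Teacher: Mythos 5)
Your proposal matches the paper exactly: Theorem \ref{D thm class} is not proved in the paper but is imported verbatim from \cite[Theorems 1.1 and 1.2]{BHS} (which in turn rest on \cite[Theorem 1]{BS}), with the surrounding text merely recalling, as you do, that standard domestic infinite type coincides with Euclidean type by \cite[Theorem 4.3]{S} and listing the five non-local families and the local family $A(\lambda)$. Your additional observation that $A(\lambda)$ has Cartan matrix $(4)$ is a harmless and correct sanity check, so the proposal is fine.
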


From now on, we classify standard self-injective cellular algebras of domestic type. 
By Theorem \ref{D thm class}, 
it suffices to check whether an algebra from the six classes of algebras 
$A(\lambda)$, $\Lambda (T,v_1,v_2)$, $\Lambda'(T)$, $\Gamma^{(0)}(T,v)$, $\Gamma^{(1)}(T,v)$, and $\Gamma^{(2)}(T,v_1,v_2)$
admits a cellular algebra structure.

%%%%

\para
First, we consider the local case. Let $A$ be a local self-injective algebra of Euclidean type whose Cartan matrix is nonsingular. 
By Theorem \ref{D thm class} (1), we may assume $A=A(\lambda)$ for some $\lambda \in K\backslash\{0\}$. 

Note that $A(1)$ is isomorphic to the Kronecker algebra $K[X,Y]/(X^2,Y^2)$. 
Then, the Kronecker algebra is cellular. To see this, 
let the anti-involution $\iota$ be the identity map, 
$\Lambda=\{\la_1<\la_2<\la_3<\la_4\}$ 
and $\cT(\la_i)=\{1\}$ for $i=1,2,3,4$. Then,
\[
(c^{\la_1}_{1,1})=(1),\quad (c^{\la_2}_{1,1})=(X), \quad (c^{\la_3}_{1,1})=(Y), \quad (c^{\la_4}_{1,1})=(XY)
\]
is a cellular basis.

%%%%

\begin{prop}
\label{D prop local}
The algebra $A(\lambda)$ is cellular if and only if $\lambda=1$. 
In other words, any standard local self-injective cellular algebras of domestic type is isomorphic to the Kronecker algebra $K[X,Y]/(X^2,Y^2)$. 
\end{prop}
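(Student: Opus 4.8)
The plan is to obtain the ``if'' direction immediately from the explicit cellular basis of the Kronecker algebra $K[X,Y]/(X^2,Y^2)\cong A(1)$ displayed in the paragraph just before the statement, and to prove the ``only if'' direction by showing that cellularity of $A(\la)$ forces the anti-involution to be the identity, hence $A(\la)$ to be commutative, which happens only for $\la=1$.

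\textbf{Preliminaries on $A(\la)$.} First I would record the structure of $A(\la)$ by a routine normal-form argument (Bergman's diamond lemma, ordering $\alpha<\beta$ and rewriting $\beta\alpha\mapsto \la^{-1}\alpha\beta$): the set $\{1,\alpha,\beta,\alpha\beta\}$ is a $K$-basis, so $\dim_K A(\la)=4$, $\beta\alpha=\la^{-1}\alpha\beta\neq0$, and every path of length $\ge 3$ vanishes. In particular $A(\la)$ is local with a unique one-dimensional simple module $L=K$, and its Cartan matrix is the $1\times1$ matrix $(4)$.

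\textbf{The counting argument and conclusion.} Then I would suppose $A=A(\la)$ is cellular with cell datum $(\vL,\cT,\cC,\iota)$. Since there is a unique simple module, $\vL^+=\{\mu\}$ is a singleton, and the decomposition matrix $\bD=(d_{\nu\mu})_{\nu\in\vL}$ is a single column, with each $d_{\nu\mu}=\dim_K\D(\nu)=|\cT(\nu)|\ge1$. By Proposition \ref{C Prop decom matrix}(iii) we get $\sum_{\nu\in\vL}d_{\nu\mu}^2=4$, and by Proposition \ref{C Prop decom matrix}(i) we have $d_{\mu\mu}=1$; since any entry $\ge2$ (at $\mu$ it is ruled out directly, at any other $\nu$ it would force the sum of squares to be at least $4+1>4$) is impossible, every entry of $\bD$ equals $1$. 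Hence $|\vL|=4$ and $|\cT(\nu)|=1$ for all $\nu\in\vL$, so for each $\nu$ the only cellular basis element with superscript $\nu$ is $c^{\nu}_{\ft\ft}$ (where $\cT(\nu)=\{\ft\}$), and $\iota$ fixes it; as $\cC$ is a $K$-basis, $\iota=\id_A$. Being an algebra anti-automorphism equal to the identity, $\iota$ forces $A(\la)$ to be commutative, i.e.\ $\alpha\beta=\beta\alpha$; comparing with the relation $\alpha\beta=\la\beta\alpha$ and using $\beta\alpha\neq0$ yields $\la=1$. The only point requiring (mild) care is the normal-form computation pinning down $\dim_K A(\la)=4$ and $\alpha\beta\neq0$; after that the argument is immediate, the decisive observation being that singleton index sets $\cT(\nu)$ force $\iota$ to fix every cellular basis vector and hence make the whole algebra commutative.
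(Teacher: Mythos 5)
Your proposal is correct and follows essentially the same route as the paper's proof: compute that the Cartan matrix is $(4)$, deduce from Proposition \ref{C Prop decom matrix} that the decomposition matrix is $(1\ 1\ 1\ 1)^T$ with all $\cT(\la_i)$ singletons, conclude that $\iota$ fixes every cellular basis element and hence is the identity, and then derive $\la=1$ from $\a\b=\la\b\a$ and $\a\b\neq 0$. The only cosmetic difference is that you phrase the last step as ``$\iota=\id$ forces commutativity'' while the paper applies $\iota$ directly to the relation $\a\b=\la\b\a$; these are the same computation.
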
 
\begin{proof}
Recall that $A(\lambda)=KQ/I$, where 
\[Q=\begin{xy}
(0,0)*[o]+{1}="1",
\SelectTips{eu}{}
\ar @(lu,ld)"1";"1"_{\alpha}
\ar @(ru,rd)"1";"1"^{\beta}
\end{xy}\] 
and $I$ is the admissible ideal generated by $\alpha^{2},$ $\beta^{2},$ $\alpha\beta-\lambda\beta\alpha$.

If $\lambda=1$, then $A(1)$ is cellular. 
Suppose that $A(\lambda)$ is a cellular algebra with cell datum $(\Lambda,\cT,\cC,\iota)$. 
Since $\lambda\neq 0$ and $A$ is local, $A(\lambda)$ is the unique indecomposable projective module, and its basis is given by
\[ A(\la) = Ke_1\oplus K\alpha \oplus K\beta\oplus K\alpha\beta. \]
Thus, the Cartan matrix of $A(\lambda)$ is $(4)$. 
Let $\bD$ be the decomposition matrix of $A(\lambda)$. 
By (i) and (iii) from Proposition \ref{C Prop decom matrix}, we have 
$\bD=(1\ 1\ 1\ 1)^T$. It implies that $\Lambda=\{\lambda_1, \lambda_2, \lambda_3, \lambda_4\}$ and 
we may assume without loss of generality that $\lambda_1$ is minimal in the partial order, and we may extend the partial order 
to the total order $1 < 2 < 3 < 4$. The equality $\dim A(\lambda)=\sum_{i=1}^4 |\cT(\lambda_i)|^{2}$ implies that we may set
\[ \cT(\lambda_1)=\cT(\lambda_2)=\cT(\lambda_3)=\cT(\lambda_4)=\{1\}. \]
The cellular basis is $\cC=\{c^{\lambda_i}_{1,1}\ |\ i=1,2,3,4\}$ and $\iota(c^{\lambda_i}_{1,1})=c^{\lambda_i}_{1,1}$
implies that the anti-involution $\iota$ fixes all elements of $A(\lambda)$. It follows that
\[ \alpha\beta=\iota(\alpha\beta)=\iota(\lambda\beta\alpha)=\lambda\iota(\alpha)\iota(\beta)=\lambda\alpha\beta. \]
Therefore, we obtain $\lambda=1$.
\end{proof}

%%%%

\para
Next, we consider the non-local case. We begin by the algebras $\Lambda'(T)$ from Definition \ref{D defn I}.
Suppose that the Brauer graph has a cycle which is not a loop. 
Since $T$ does not have a loop, Proposition \ref{C Prop simple}(iii) implies that $T$ does not have a branch. 
Hence, $T$ coincides with the cycle of odd length. Its Brauer quiver $Q_{T}$ is given by
\[
\begin{xy}
(0,11)*[o]+{1}="0",(15,5)*[o]+{2}="1",(20,-8)*[o]+{3}="2", (-15,5)*[oo]+{2n+1}="3",(-20,-8)*[o]+{2n}="4",
(17,-20)*[o]+{4}="5",(0,-30)*{}="dammy",
\ar @<1mm> "0";"1"^{\alpha_1}
\ar @<1mm> "1";"0"^{\alpha_2}
\ar @<1mm> "1";"2"^{\beta_2}
\ar @<1mm> "2";"1"^{\beta_3}
\ar @<1mm> "0";"3"^{\gamma_{1}}
\ar @<1mm> "3";"0"^{\gamma_{2n+1}}
\ar @<1mm> "4";"3"^{\beta_{2n}}
\ar @<1mm> "3";"4"^{\beta_{2n+1}}
\ar @<1mm> "2";"5"^{\alpha_{3}}
\ar @<1mm> "5";"2"^{\alpha_{4}}
\ar @{--}@(l,d) "dammy";"4"
\ar @{--}@(l,d) "dammy";"5"
\end{xy}
\]
and the admissible ideal is generated by the following elements.
\begin{enumerate}
\item $\alpha_{2k-1}\beta_{2k}$, $\beta_{2k+1}\alpha_{2k}$, for $1\leq k\leq n$, 
\item $\alpha_{2k+1}\alpha_{2k+2}-\beta_{2k+1}\beta_{2k}$, for $1\leq k\leq n-1$,
\item$\beta_{2n}\gamma_{2n+1}$, $\gamma_1\beta_{2n+1}$, $\gamma_{2n+1}\alpha_1$ and $\alpha_2\gamma_1$, 
\item $\alpha_{2k}\alpha_{2k-1}-\beta_{2k}\beta_{2k+1}$, for $1\leq k\leq n$,
\item $\alpha_{1}\alpha_{2}-\gamma_{1}\gamma_{2n+1}$ and $\beta_{2n+1}\beta_{2n}-\gamma_{2n+1}\gamma_{1}$.
\end{enumerate}

\begin{lem}
\label{D lem I-1}
Let $T$ be a Brauer graph as above. Then the algebra $\Lambda'(T)$ is not cellular.
\end{lem}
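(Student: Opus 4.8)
The plan is to mimic the strategy already established for the finite-type case (Lemma \ref{F Lemma DB 2}, Proposition \ref{F Prop MBD}) and reduce the question to a small idempotent-truncated subalgebra to which the numerical criteria of \S2 apply. First I would choose the two vertices $1$ and $2$ at the "end" of the cycle (the vertices incident to $\gamma_1$ and the adjacent $\alpha$- and $\beta$-arrows), set $e = e_1 + e_2$, and compute bases of $\Lambda'(T)e_1$ and $\Lambda'(T)e_2$ explicitly from the defining relations (i)--(v) above, thereby determining $\dim e\Lambda'(T)e$ and its Cartan matrix. By Lemma \ref{C Lem idem}(a), if $e\Lambda'(T)e$ is not cellular then neither is $\Lambda'(T)$.

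Next I would analyze the truncated algebra $B := e\Lambda'(T)e$. The expectation is that $B$ is independent of the size $n$ of the cycle — the relations force the long paths through the rest of the cycle to collapse in $e\Lambda'(T)e$, just as $(e_1+e_2)D(T_S)(e_1+e_2)$ was independent of $n$ in Proposition \ref{F Prop MBD}. Concretely one computes the Cartan matrix $\bC_B$ of $B$, and then applies Strategy 3: either $\det \bC_B \le 0$ (so $B$ is not cellular by Proposition \ref{C Prop decom matrix}(iv)), or one shows that no nonnegative integral matrix $\bD$ satisfies $\bD^T\bD = \bC_B$ with a $1$ in each column (Proposition \ref{C Prop decom matrix}(i),(iii)), or, failing a purely numerical obstruction, one runs the finer socle/ideal-filtration argument used in Lemma \ref{F Lemma DB 2}: fix a linear extension of the partial order, use Lemma \ref{C Lem min} to pin down which $\lambda\in\vL$ can be maximal/minimal, reconstruct the two-sided ideals $A(\mu_i)$ from Proposition \ref{C Prop decom matrix}(ii), and derive a contradiction from $\Top P(\mu)\cong\Soc P(\mu)$ together with the action of the arrows.

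The main obstacle I anticipate is choosing the idempotent $e$ so that the truncation $e\Lambda'(T)e$ is genuinely small and uniform in $n$, and then doing the (somewhat fiddly) bookkeeping of which monomials survive in $e\Lambda'(T)e$. The relations in Definition \ref{D defn I} identify $\alpha$-cycles, $\beta$-cycles and the $\gamma$-cycle at their intersection points, and the "zero" relations (i) kill mixed products; one must check carefully that, after truncation to two vertices, the surviving paths are exactly $e_1$, $e_2$, the short arrows $\alpha_1,\alpha_2,\beta$ (or $\gamma$) around those two vertices, and the few length-two loops $\alpha_1\alpha_2$, $\alpha_2\alpha_1$, etc., with the cycle-identification relations turning the long way around into the short loop. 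Once the Cartan matrix of this explicit small algebra is in hand, the non-cellularity should follow by one of the three numerical/structural mechanisms above; I would try the determinant test first, then the "no $\bD$ with $\bD^T\bD=\bC$" test, and only resort to the socle argument of Lemma \ref{F Lemma DB 2} if both fail.

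A secondary point to handle is that Proposition \ref{C Prop simple}(iii) has already been used (at the start of the section, invoking "since $T$ does not have a loop, \dots $T$ does not have a branch") to reduce to the case where $T$ is just the odd cycle; so the present lemma only needs to treat that single quiver $Q_T$ with the listed relations, which makes the explicit computation feasible. If the truncation at two vertices turns out to be too coarse to yield a contradiction, the fallback is to truncate at three consecutive vertices of the cycle instead, which still gives an algebra of bounded size independent of $n$ and more room for the numerical obstruction.
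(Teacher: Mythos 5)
Your overall toolbox is the right one, but the central reduction step fails: for this particular algebra, \emph{no} proper idempotent truncation can detect the non-cellularity. After the reduction via Proposition \ref{C Prop simple}(iii), the Brauer graph is the bare odd cycle, every vertex of $Q_T$ lies on exactly two simple cycles, and every indecomposable projective is $4$-dimensional with $c_{ii}=2$, $c_{ij}=1$ for cyclically adjacent $i,j$, and $c_{ij}=0$ otherwise. Consequently, if you take $e=e_1+e_2$ (or more generally $e=e_{i_1}+\dots+e_{i_k}$ for any proper connected arc of the cycle), the relations (i) and (iii) kill all paths that leave the arc and return, and $e\Lambda'(T)e$ comes out isomorphic to the Brauer tree algebra of a straight line with $k$ edges and no exceptional vertex --- which is \emph{cellular} by Proposition \ref{F Prop BT}. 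In particular, for $e=e_1+e_2$ the Cartan matrix is $\left(\begin{smallmatrix}2&1\\1&2\end{smallmatrix}\right)$ with determinant $3>0$ and an admissible $\bD$, and the same happens for your three-vertex fallback; the socle/filtration argument of Lemma \ref{F Lemma DB 2} cannot succeed either, since the truncated algebra genuinely is cellular. So all three of your proposed mechanisms are guaranteed to fail.

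The obstruction here is global rather than local, and that is how the paper argues: it computes the Cartan matrix of the \emph{whole} algebra $\Lambda'(T)$, namely the $(2n+1)\times(2n+1)$ cyclic tridiagonal matrix, and invokes the argument already given for $\bC^{\sharp}$ in the proof of Proposition \ref{F Prop hatQ hatI}. There, $\bD^T\bD=\bC$ forces each column of $\bD$ to contain exactly two $1$'s placed so that the supports of the rows follow the cyclic adjacency, and then Proposition \ref{C Prop decom matrix}(i) (each $d_{\la\mu}\neq 0$ forces $\la\ge\mu$) produces a cyclic chain $\la_l<\la_1<\la_2<\dots<\la_{l-1}<\la_l$ in the poset $\vL$, which is impossible. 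To repair your proof you would need to abandon the truncation and run this poset argument on the full Cartan matrix; the contrast with Proposition \ref{F Prop MBD} and Lemma \ref{F Lemma DB 2} --- where the defect really is visible in a two-vertex corner --- is precisely that here the cycle must be seen in its entirety before any contradiction appears.
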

\begin{proof}
We see that the Cartan matrix of $\Lambda'(T)$ is
\[ \bC = \begin{pmatrix}
2 & 1 & 0 & 0 & \cdots & 0 & 0 & 0 & 1 \\ 
1 & 2 & 1 & 0 & \cdots & 0 & 0 & 0 & 0 \\
0 & 1 & 2 & 1 & \cdots & 0 & 0 & 0 & 0 \\ 
 &  &  &  & \vdots &  &  &  &  \\ 
0 & 0 & 0 & 0 & \cdots & 0 & 1 & 2 & 1 \\ 
1 & 0 & 0 & 0 & \cdots & 0 & 0 & 1 & 2 
\end{pmatrix}. \]
The same Cartan matrix appeared in the proof of Proposition \ref{F Prop hatQ hatI} and 
the argument there shows that $\Lambda'(T)$ cannot be cellular.
\end{proof}

%%%%
\para
Suppose that the Brauer graph has a loop. Then, Proposition \ref{C Prop simple}(iii) implies that
the Brauer graph $T$ is of the form:
\begin{equation}
\label{D I-2 Brauer graph}
\begin{xy}
(0,0)*[o]+{\circ}="0",(10,0)*[o]+{\circ}="1",(20,0)*[o]+{\circ}="2", (30,0)*[o]+{\cdots}="d1",(40,0)*[o]+{\circ}="3",
(50,0)*[o]+{v}="v",(60,0)*[o]+{\circ}="4",(70,0)*[o]+{\circ}="5", (80,0)*[o]+{\cdots}="d3",(90,0)*[o]+{\circ}="6",
(-10,0)*{}="dammy",(100,0)*[o]+{\circ}="7"
\ar @{-}"0";"1"^{-l}
\ar @{-}"1";"2"^{-(l-1)}
\ar @{-}"2";"d1"
\ar @{-}"d1";"3"
\ar @{-}"3";"v"^{-1}
\ar @{-}"v";"4"^{1}
\ar @{-}"4";"5"^{2}
\ar @{-}"5";"d3"
\ar @{-}"d3";"6"
\ar @{-}"6";"7"^{m}
\ar @{-} @(u,u)"v";"dammy"_{0}
\ar @{-} @(d,d)"dammy";"v"
\end{xy}
\end{equation}
with no exceptional vertex.
We denote the above Brauer graph by $T(l,m)$. We introduce the corresponding Brauer quiver as follows.

\begin{definition}
For $l,m\geq 0$, we denote the following quiver by $Q(l,m)$.
\[
\begin{xy}
(-55,0)*[o]+{-l}="-l",(-43,0)*[o]+{}="-l+1", (-40,0)*{\cdots}="d1",(-36,0)*{}="-3",(-25,0)*[o]+{-2}="-2",(-12,0)*[o]+{-1}="-1",
(0,0)*[o]+{0}="0",(10,0)*[o]+{1}="1",(20,0)*[o]+{2}="2", (30,0)*[o]+{}="3",(34,0)*{\cdots}="d1",(38,0)*[o]+{}="m-1",
(50,0)*[o]+{m}="m"
\ar @<1mm> "-l";"-l+1"^{\alpha_{-l}}
\ar @<1mm> "-l+1";"-l"^{\beta_{-l+1}}
\ar @<1mm> "-3";"-2"^{\alpha_{-3}}
\ar @<1mm> "-2";"-3"^{\beta_{-2}}
\ar @<1mm> "-2";"-1"^{\alpha_{-2}}
\ar @<1mm> "-1";"-2"^{\beta_{-1}}
\ar @<1mm> "-1";"0"^{\delta_{-1}}
\ar @<1mm> "0";"-1"^{\delta_{0}}
\ar @<1mm> "0";"1"^{\gamma_{0}}
\ar @<1mm> "1";"0"^{\gamma_{1}}
\ar @<1mm> "1";"2"^{\alpha_{1}}
\ar @<1mm> "2";"1"^{\beta_{2}}
\ar @<1mm> "2";"3"^{\alpha_{2}}
\ar @<1mm> "3";"2"^{\beta_{3}}
\ar @<1mm> "m-1";"m"^{\alpha_{m-1}}
\ar @<1mm> "m";"m-1"^{\beta_{m}}
\SelectTips{eu}{}
\ar @(dl,ul)"-l";"-l"^{\beta_{-l}}
\ar @(ur,dr)"m";"m"^{\alpha_m}
\end{xy}
\]
Here, $\beta_{0}=\delta_{0}$ if $l=0$ and $\alpha_0=\gamma_0$ if $m=0$. 
We label arrows other than the $\gamma$-cycle and the $\delta$-cycle with $\alpha_i$ and $\beta_i$, but 
we do not follow the convention to divide the arrows into $\alpha$-parts and $\beta$-parts in such a way that 
each of the relevant vertices of the Brauer graph is the intersection of an $\alpha$-cycle and a $\beta$-cycle.
\end{definition}

We write $\Lambda'(T(l,m))=KQ(l,m)/I$ and recall the generators of the ideal $I$ for the following five cases. 
Then, from those explicit generators, it is easy to see that $\Lambda'(T(l,m))\cong \Lambda'(T(m,l))$, so that the five cases suffice. 

\begin{enumerate}
\item[(a)] $m\ge l\ge 2$ or $m>l\geq 1$.
\item[(b)] $m=l=1$.
\item[(c)] $l=0$, $m>1$.
\item[(d)] $l=0$, $m=1$.
\item[(e)] $l=m=0$.
\end{enumerate}

%%%%
\para
Suppose that we are in the case (a). Then $I$ is the ideal generated by the following elements:
\begin{enumerate}
\item $\alpha_i\alpha_{i+1}$, for $-l\leq i\leq -3$ or $1\leq i\leq m-1$ (if $l=1$, then $1\leq i\leq m-1$), 
\item $\beta_i\beta_{i-1}$, for $-l+1\leq i\leq -1$ or $3\leq i\leq m$ (if $l=1$, then $3\leq i\leq m$), 
\item $\alpha_m\beta_m$, $\gamma_0\alpha_1$, $\beta_2\gamma_1$ and $\delta_0\beta_{-1}$,
\item $\beta_{-l}\alpha_{-l}$, $\alpha_{-2}\delta_{-1}$ (if $l=1$, then $\beta_{-1}\delta_{-1}$),
\item $\alpha_i\beta_{i+1}-\beta_i\alpha_{i-1}$, for $-l+1\leq i\leq -2$ or $2\leq i\leq m-1$ (if $l=1$, then $2\leq i\leq m-1$), 
\item $\alpha_m-\beta_m\alpha_{m-1}$ and $\beta_{-l}-\alpha_{-l}\beta_{-l+1}$, (if $l=1$, then $\beta_{-l}-\alpha_{-l}\beta_{-l+1}$ does not occur.)
\item $\alpha_{1}\beta_{2}-\gamma_{1}\delta_{0}\delta_{-1}\gamma_{0}$,
\item $\beta_{-1}\alpha_{-2}-\delta_{-1}\gamma_0\gamma_1\delta_0$ (if $l=1$, then $\beta_{-1}-\delta_{-1}\gamma_0\gamma_1\delta_0$),
\item $\gamma_{1}\gamma_{0}$, $\delta_{-1}\delta_{0}$, $\gamma_{0}\gamma_{1}\delta_{0}\delta_{-1}-\delta_{0}\delta_{-1}\gamma_{0}\gamma_{1}$.
\end{enumerate}
By using (vi) or (viii), we delete the arrows $\alpha_{m}$ and $\beta_{-l}$ from $Q(l,m)$ 
to obtain the Gabriel quiver, 
and replace $\alpha_m$ (respectively, $\beta_{-l}$) in the above generators of $I$ with 
$\beta_m\alpha_{m-1}$ (respectively, $\alpha_{-l}\beta_{-l+1}$ or $\delta_{-1}\gamma_0\gamma_1\delta_0$ if $l=1$). 
Explicit computation shows that the indecomposable projective modules are given by
\[\begin{array}{lcl}
P_{-l} &=& Ke_{-l}\oplus K\beta_{-l+1} \oplus K\alpha_{-l}\beta_{-l+1} \;\text{(if $l\ge 2$)}, \\
P_{i} &=& Ke_{i} \oplus  K\alpha_{i-1} \oplus K\beta_{i+1}\oplus K\alpha_{i}\beta_{i+1} \quad (i\notin \{-l, -1,0,1,m\}), \\
P_{-1} &=& \begin{cases}
Ke_{-1}\oplus K\alpha_{-2}\oplus K\delta_{0} \oplus K\gamma_{1}\delta_{0}
\oplus K\gamma_{0}\gamma_{1}\delta_{0} \oplus K\delta_{-1}\gamma_{0}\gamma_1\delta_0 \;&\text{(if $l\ge2$)},\\
Ke_{-1}\oplus K\delta_{0} \oplus K\gamma_{1}\delta_{0}
\oplus K\gamma_{0}\gamma_{1}\delta_{0} \oplus K\delta_{-1}\gamma_{0}\gamma_1\delta_0 \;&\text{(if $l=1$)},
\end{cases}\\
P_{0} &=& Ke_0\oplus K\delta_{-1}\oplus K\gamma_{1} \oplus K\delta_{0}\delta_{-1} \oplus K\gamma_{0}\gamma_{1}\oplus K\gamma_{1}\delta_{0}\delta_{-1} \\
        & & \oplus K\delta_{-1}\gamma_{0}\gamma_{1}\oplus K\gamma_0\gamma_1\delta_0\delta_{-1}, \\
P_1  &=& Ke_1\oplus K\beta_2\oplus K\gamma_0\oplus K\delta_{-1}\gamma_0\oplus K\delta_{0}\delta_{-1}\gamma_0\oplus K\gamma_{1}\delta_0\delta_{-1}\gamma_{0}, \\
P_m &=& Ke_m\oplus K\alpha_{m-1} \oplus K\beta_m\alpha_{m-1}.     
\end{array} \]

Suppose that we are in the case (b). Then $I$ is the ideal generated by the following elements:
\begin{enumerate}
\item  $\alpha_1\gamma_1$, $\gamma_0\alpha_1$, $\beta_{-1}\delta_{-1}$ and $\delta_0\beta_{-1}$, 
\item $\alpha_{1}-\gamma_{1}\delta_{0}\delta_{-1}\gamma_{0}$ and $\beta_{-1}-\delta_{-1}\gamma_0\gamma_1\delta_0$,
\item $\gamma_{1}\gamma_{0}$, $\delta_{-1}\delta_{0}$, $\gamma_{0}\gamma_{1}\delta_{0}\delta_{-1}-\delta_{0}\delta_{-1}\gamma_{0}\gamma_{1}$.
\end{enumerate}
By using (ii), we delete the arrows $\alpha_{1}$ and $\beta_{-1}$ from $Q(1,1)$ to obtain the Gabriel quiver  
and replace $\alpha_1$ (respectively, $\beta_{-1}$) in the above generators of $I$ 
by $\gamma_{1}\delta_{0}\delta_{1}\gamma_{0}$ (respectively, $\delta_{-1}\gamma_0\gamma_1\delta_0$). 
Explicit computation shows 
\[\begin{array}{lcl}
P_{-1} &=&Ke_{-1}\oplus K\delta_{0}\oplus K\gamma_1\delta_{0} \oplus K\gamma_0\gamma_{1}\delta_{0}\oplus K\delta_{-1}\gamma_{0}\gamma_{1}\delta_{0}, \\
P_{0} &=& Ke_0\oplus K\delta_{-1}\oplus K\gamma_{1} \oplus K\delta_{0}\delta_{-1} \oplus K\gamma_{0}\gamma_{1}\oplus K\gamma_{1}\delta_{0}\delta_{-1} \\
        & & \oplus K\delta_{-1}\gamma_{0}\gamma_{1}\oplus K\gamma_0\gamma_1\delta_0\delta_{-1}, \\
P_1  &=& Ke_1\oplus K\gamma_0\oplus K\delta_{-1}\gamma_0\oplus K\delta_{0}\delta_{-1}\gamma_0\oplus K\gamma_{1}\delta_0\delta_{-1}\gamma_{0}. \\   
\end{array} \]

\begin{lem}
\label{D I-2 (a)}
If $m\ge l\geq 1$ then the algebra $\Lambda'(T(l,m))$ is not cellular.
\end{lem}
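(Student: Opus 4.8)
The strategy is the one outlined in Strategy 3 together with the remark after Proposition \ref{C Prop simple}: we extract a small cellular subalgebra of the form $eAe$ and derive a contradiction from the numerical constraints on decomposition matrices of cellular algebras. Concretely, for $\Lambda'(T(l,m))$ with $m\ge l\ge 1$, I would take the idempotent $e=e_{-1}+e_0+e_1$ (the three vertices surrounding the loop) and show that the algebra $B:=e\Lambda'(T(l,m))e$ does not depend on the tails, i.e. $B$ is isomorphic to a fixed small algebra regardless of $l$ and $m$, by using the explicit bases of $P_{-1},P_0,P_1$ recorded just before the statement. From those bases one reads off $\dim e_iBe_j$ for $i,j\in\{-1,0,1\}$ and hence the Cartan matrix $\bC_B$ of $B$; the relations (vii), (ix) in case (a) (resp. (iii) in case (b)) together with the deletion of $\alpha_m,\beta_{-l}$ show that the paths supported on $\{-1,0,1\}$ close up into an algebra on the subquiver
\[
\begin{xy}
(0,0)*[o]+{-1}="A",(15,0)*[o]+{0}="B",(30,0)*[o]+{1}="C",
\ar @<1mm> "A";"B"^{\delta_{-1}}
\ar @<1mm> "B";"A"^{\delta_0}
\ar @<1mm> "B";"C"^{\gamma_1}
\ar @<1mm> "C";"B"^{\gamma_0}
\end{xy}
\]
with the commutativity relation $\gamma_0\gamma_1\delta_0\delta_{-1}=\delta_0\delta_{-1}\gamma_0\gamma_1$ at the central vertex and zero relations coming from $\gamma_1\gamma_0=\delta_{-1}\delta_0=0$ and from $\alpha_1\beta_2=\gamma_1\delta_0\delta_{-1}\gamma_0$, etc. This identifies $B$ up to isomorphism with one explicit algebra independent of $l,m$.

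Next I would compute the Cartan matrix of $B$. Reading the dimensions from the displayed projectives restricted to the three vertices, one gets something of the shape
\[
\bC_B=\begin{pmatrix} * & * & * \\ * & * & * \\ * & * & * \end{pmatrix},
\]
and I expect either $\det\bC_B\le 0$ — in which case Proposition \ref{C Prop decom matrix}(iv) immediately rules out cellularity of $B$ — or, if $\det\bC_B>0$, that no matrix $\bD$ with non-negative integral entries satisfies $\bD^T\bD=\bC_B$ (this is the kind of argument used for $\bC^{\natural}$ in the proof of Proposition \ref{F Prop hatQ hatI}), or that any such $\bD$ forces a cyclic chain of inequalities $\mu_{i_1}<\mu_{i_2}<\dots<\mu_{i_1}$ in the partial order (as in Lemma \ref{F Lemma DB 2} and the cycle case of Proposition \ref{F Prop hatQ hatI}). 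In any of these cases $B$ is not cellular, and then Lemma \ref{C Lem idem} consequence (a) gives that $\Lambda'(T(l,m))$ is not cellular. Because $B$ was shown to be independent of $l$ and $m$, one contradiction settles all cases $m\ge l\ge 1$ at once, and the previously noted isomorphism $\Lambda'(T(l,m))\cong\Lambda'(T(m,l))$ then covers the remaining orderings.

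The main obstacle is the bookkeeping in the first step: one must verify carefully that multiplying out paths that leave $\{-1,0,1\}$ and come back (through $\alpha_1,\beta_2$ on the right tail and through $\alpha_{-2},\beta_{-1}$ on the left tail) reduces — via the generators (i)--(ix) — to words already in $e\Lambda'(T(l,m))e$, so that $B$ genuinely has the small presentation claimed and its dimension is the expected constant. Once $B$ and $\bC_B$ are pinned down, the cellularity obstruction is routine and parallels arguments already carried out in §2 and §3 of the paper; so the proof is essentially: (1) identify $e\Lambda'(T(l,m))e$ with a fixed $3$-vertex algebra, (2) compute its Cartan matrix, (3) apply the relevant part of Proposition \ref{C Prop decom matrix} or Lemma \ref{C Lem min} to contradict cellularity, (4) invoke Lemma \ref{C Lem idem}(a).
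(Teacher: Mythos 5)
Your proposal is correct and follows essentially the same route as the paper: the paper also takes $e=e_{-1}+e_0+e_1$, reads off the Cartan matrix $\bigl(\begin{smallmatrix}2&2&1\\2&4&2\\1&2&2\end{smallmatrix}\bigr)$ of $e\Lambda'(T(l,m))e$ from the listed projectives, and invokes Lemma \ref{C Lem idem}. Of the alternatives you hedge between, the one that actually fires is the Lemma \ref{C Lem min} argument: here $\det\bC=4>0$ and a decomposition matrix $\bD$ with $\bD^T\bD=\bC$ does exist, but only one of its four rows is a unit vector, whereas the four-element poset $\vL$ must have distinct minimal and maximal elements whose rows are both unit vectors.
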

\begin{proof}
Put $e=e_{-1}+e_0+e_1$. By Lemma \ref{C Lem idem}, it suffices to prove that $e\Lambda'(T(l,m))e$ is not cellular. 
Suppose to the contrary that $e\Lambda'(T(l,m))e$ is cellular. 
The computation of indecomposable projective $\Lambda'(T(l,m))$-modules in the cases (a) and (b) 
shows that the Cartan matrix of $e\Lambda'(T(l,m))e$ is
\[
\bC=\begin{pmatrix}
2 & 2 & 1 \\
2 & 4 & 2 \\
1 & 2 & 2 \end{pmatrix}.
\]
Then, the decomposition matrix $\bD$, which satisfies $\bD^T\bD=\bC$, is
\[
\bD=\begin{pmatrix}
 1 & 1 & 0\\
 1 & 1 & 1\\
 0 & 1 & 1\\
 0 & 1 & 0\end{pmatrix}
\]
modulo rearrangement of rows. Let $\Lambda=\{\lambda_1, \lambda_2, \lambda_3, \lambda_4\}\supset \Lambda^+=\{\mu_1,\mu_2,\mu_3\}$ 
such that $[\Delta(\lambda_i):L(\mu_j)]=d_{ij}$. 
Since there are more than three nonzero entries in $\bD$, there exist pairwise distinct elements in $\Lambda$ such that 
one is minimal and the other is maximal in the partial order. 
However, Lemma \ref{C Lem min} tells us that the rows which correspond to the minimal elements and the maximal elements in the partial order 
must have unique nonzero entries, so that they must coincide, a contradiction. 
\end{proof}

%%%%
\para
Assume that we are in the case (c). Then $\Lambda'(T(0,m))=KQ(0,m)/I$, where 
$I$ is the ideal generated by the following elements:
\begin{enumerate}
\item $\alpha_i\alpha_{i+1}$, for $1\leq i\leq m-1$, 
\item $\beta_i\beta_{i-1}$, for  $3\leq i\leq m$, 
\item $\alpha_m\beta_m$, $\gamma_0\alpha_1$ and $\beta_2\gamma_1$,
\item $\alpha_i\beta_{i+1}-\beta_i\alpha_{i-1}$, for $2\leq i\leq m-1$, 
\item $\alpha_m-\beta_m\alpha_{m-1}$,
\item $\alpha_{1}\beta_{2}-\gamma_{1}\delta_{0}\gamma_{0}$,
\item $\gamma_{1}\gamma_{0}$, $\delta_{0}^2$, $\gamma_{0}\gamma_{1}\delta_{0}-\delta_{0}\gamma_{0}\gamma_{1}$.
\end{enumerate}
By using (v), we delete the arrows $\alpha_{m}$ from $Q(0,m)$ to obtain the Gabriel quiver  
and replace $\alpha_m$ in the above generators of $I$ by $\beta_m\alpha_{m-1}$. Then, we have 
\[\begin{array}{lcl}
P_{0} &=& Ke_0\oplus K\delta_{0}\oplus K\gamma_{1} \oplus K\gamma_1\delta_0\oplus K\gamma_{0}\gamma_{1}\oplus K\gamma_{0}\gamma_{1}\delta_{0}, \\
P_1  &=& Ke_1\oplus K\beta_2\oplus K\gamma_0\oplus K\delta_{0}\gamma_0\oplus K\gamma_1\delta_0\gamma_0, \\
P_{i} &=& Ke_{i} \oplus  K\alpha_{i-1} \oplus K\beta_{i+1}\oplus K\alpha_{i}\beta_{i+1} \quad (2\leq i \leq m-1), \\
P_m &=& Ke_m\oplus K\alpha_{m-1} \oplus K\beta_m\alpha_{m-1}.     
\end{array} \]

\begin{lem}
\label{D I-2 (c)}
If $m>1$, then the algebra $\Lambda'(T(0,m))$ is not cellular.
\end{lem}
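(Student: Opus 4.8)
The plan is to follow Strategy 3: pass to a small corner algebra by Lemma \ref{C Lem idem}, compute its Cartan matrix, and contradict Proposition \ref{C Prop decom matrix}. Since $m>1$, the vertices $0,1,2$ of $Q(0,m)$ all exist, so I would set $e=e_{0}+e_{1}+e_{2}$ and, by consequence (a) of Lemma \ref{C Lem idem}, reduce to showing that $e\Lambda'(T(0,m))e$ is not cellular.

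First I would compute the Cartan matrix of $e\Lambda'(T(0,m))e$ by reading off, from the explicit bases of $P_{0}$, $P_{1}$ and $P_{m}$ (or $P_{i}$ for $2\le i\le m-1$) displayed above, the composition factors supported at the vertices $0,1,2$. The projective $P_{0}$ contributes $4L_{0}+2L_{1}$, the projective $P_{1}$ contributes $2L_{0}+2L_{1}+L_{2}$, and $P_{2}$ contributes $L_{1}+2L_{2}$ (its remaining composition factor, $L_{3}$, occurring only when $m\ge 3$ and being discarded). The vanishing entries $[P_{0}:L_{2}]=[P_{2}:L_{0}]=0$ come from the fact that every path between the vertices $0$ and $2$ contains $\gamma_{0}\alpha_{1}$ or $\beta_{2}\gamma_{1}$, both of which lie in $I$. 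Hence
\[
\bC=\begin{pmatrix} 4 & 2 & 0 \\ 2 & 2 & 1 \\ 0 & 1 & 2 \end{pmatrix},
\]
uniformly in $m$: when $m\ge 3$ the passage to the Gabriel quiver only modifies the vertex $m$ and leaves the relations among $0,1,2$ untouched, and when $m=2$ one has $e\Lambda'(T(0,2))e=\Lambda'(T(0,2))$.

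Next, assume $e\Lambda'(T(0,m))e$ is cellular with decomposition matrix $\bD$, and let $\bv_{1},\bv_{2},\bv_{3}$ be its columns, which are vectors of non-negative integers. By Proposition \ref{C Prop decom matrix}(iii), $\bD^{T}\bD=\bC$, and by Proposition \ref{C Prop decom matrix}(i), each $\bv_{j}$ has an entry equal to $1$. From $\langle\bv_{1},\bv_{1}\rangle=4$ and the presence of a $1$ among its entries, $\bv_{1}$ must have exactly four entries equal to $1$ and the rest $0$; likewise $\langle\bv_{2},\bv_{2}\rangle=\langle\bv_{3},\bv_{3}\rangle=2$ forces $\bv_{2}$ and $\bv_{3}$ to have exactly two entries equal to $1$ each. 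Now $\langle\bv_{1},\bv_{2}\rangle=2$ forces both $1$'s of $\bv_{2}$ to lie in the support of $\bv_{1}$, while $\langle\bv_{1},\bv_{3}\rangle=0$ forces both $1$'s of $\bv_{3}$ to lie outside the support of $\bv_{1}$; hence $\bv_{2}$ and $\bv_{3}$ have disjoint supports, so $\langle\bv_{2},\bv_{3}\rangle=0$, contradicting $\langle\bv_{2},\bv_{3}\rangle=1$. Thus $e\Lambda'(T(0,m))e$ is not cellular, and neither is $\Lambda'(T(0,m))$ by Lemma \ref{C Lem idem}.

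The only genuinely computational step, and the one I expect to require the most care, is the determination of $\bC$: one must restrict the indecomposable projectives listed above to the three chosen vertices and verify there are no additional paths among $0,1,2$ — which is exactly where $\gamma_{0}\alpha_{1},\beta_{2}\gamma_{1},\gamma_{1}\gamma_{0},\delta_{0}^{2}\in I$ are used. After that, the combinatorial argument on the columns of $\bD$ is routine and closely parallels the one in the proof of Proposition \ref{F Prop hatQ hatI}; I anticipate no further obstacle.
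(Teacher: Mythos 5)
Your proposal is correct and follows essentially the same route as the paper: the same corner idempotent $e=e_0+e_1+e_2$, the same Cartan matrix of $e\Lambda'(T(0,m))e$, and the same contradiction via Proposition \ref{C Prop decom matrix}. The only difference is that you spell out the support argument showing that no non-negative integral $\bD$ with $\bD^T\bD=\bC$ exists, a verification the paper leaves implicit, and your argument for it is sound.
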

\begin{proof}
Put $e=e_0+e_1+e_2$ and suppose that $\Lambda'(T(0,m))$ is cellular. Then, $e\Lambda'(T(0,m))e$ is cellular by Lemma \ref{C Lem idem}. 
The computation of indecomposable projective $\Lambda'(T(0,m))$-modules 
shows that the Cartan matrix of $e\Lambda'(T(0,m))e$ is 
\[ 
\bC=\begin{pmatrix}
4 & 2 & 0  \\
2 & 2 & 1  \\
0 & 1 & 2  \\
\end{pmatrix},
\]
and there should exist the decomposition matrix $\bD$ satisfying $\bD^T\bD=\bC$. 
However, we find that $\bD$ does not exist, a contradiction.
\end{proof}

In the last two cases (d) and (e), the following lemma holds. 
Note that the algebra $\Lambda'(T(0,0))$ is generated by $\gamma_0$ and $\delta_0$ subject to the relations
\[
\gamma_0^2=0, \;\; \delta_0^2=0, \;\; \gamma_0\delta_0=\delta_0\gamma_0, 
\]
so that $\Lambda'(T(0,0))\cong K[X,Y]/(X^2, Y^2)$. The quiver $Q(0,1)$ is given by
\[
\begin{xy}
(0,0)*[o]+{0}="A", (15,0)*[o]+{1}="B",
\ar @<1mm> "A";"B"^{\gamma_0}
\ar @<1mm> "B";"A"^{\gamma_1}
\SelectTips{eu}{}
\ar @(ul,dl) "A";"A"_{\delta_0}
\ar @(ur,dr) "B";"B"^{\alpha_1}
\end{xy}
\]
and $\Lambda'(T(0,1))=KQ(0,1)/I$, where $I$ is the ideal generated by 
\begin{enumerate}
\item $\gamma_0\alpha_1$ and $\alpha_1\gamma_1$,
\item $\gamma_1\gamma_0$ and $\delta_0^2$,
\item $\alpha_1 - \gamma_1\delta_0\gamma_0$,
\item $\delta_0\gamma_0\gamma_1-\gamma_0\gamma_1\delta_0$.
\end{enumerate}
By using (iii), we delete the arrow $\alpha_1$ of $Q(0,1)$ to obtain the Gabriel quiver  
and replace $\alpha_1$ in the above generators of $I$ with $\gamma_1\delta_0\gamma_0$. 
Explicit computation shows 
\[\begin{array}{lcl}
P_{0} &=& Ke_0\oplus K\delta_{0}\oplus K\gamma_{1} \oplus K\gamma_1\delta_0\oplus K\gamma_{0}\gamma_{1}\oplus K\gamma_{0}\gamma_{1}\delta_{0}, \\
P_1  &=& Ke_1\oplus K\gamma_0\oplus K\delta_{0}\gamma_0\oplus K\gamma_1\delta_0\gamma_0.      
\end{array} \]

\begin{lem}
\label{D I-2 (d)(e)}
The algebras $\Lambda'(T(0,1))$ and $\Lambda'(T(0,0))$ are cellular.
\end{lem}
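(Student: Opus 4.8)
The strategy in both cases is to exhibit a cellular basis explicitly. For $\Lambda'(T(0,0))$ there is nothing new to do: as observed just before Proposition \ref{D prop local}, $\Lambda'(T(0,0))\cong K[X,Y]/(X^2,Y^2)$ is the Kronecker algebra, and a cellular basis for the Kronecker algebra was given there (with $\iota=\mathrm{id}$, a four-element chain, singleton index sets, and cellular basis $1,X,Y,XY$). So the real content is the algebra $A:=\Lambda'(T(0,1))$, and everything below concerns it.

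I would first fix the presentation $KQ(0,1)/I$ obtained by deleting the arrow $\alpha_1$ (so that $\alpha_1$ is replaced by $\gamma_1\delta_0\gamma_0$ throughout), keeping the arrows $\gamma_0\colon 0\to1$, $\gamma_1\colon 1\to0$ and the loop $\delta_0$ at $0$; by the bases of $P_0,P_1$ displayed in the text, $A$ is then a ten-dimensional algebra with a known $K$-basis consisting of paths. Next I would define the anti-involution by $\iota(e_i)=e_i$, $\iota(\gamma_0)=\gamma_1$, $\iota(\gamma_1)=\gamma_0$, $\iota(\delta_0)=\delta_0$, and verify on the finitely many generators of $I$ that $\iota(I)=I$ (the only relations needing the identity $\delta_0\gamma_0\gamma_1=\gamma_0\gamma_1\delta_0$ are the two coming from $\gamma_0\alpha_1$ and $\alpha_1\gamma_1$). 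Hence $\iota$ extends to an algebra anti-automorphism, and it is visibly an involution.

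Then I would give the cell datum: take the poset $\{\lambda_1<\lambda_2<\lambda_3<\lambda_4\}$ totally ordered, with $\cT(\lambda_1)=\cT(\lambda_4)=\{1\}$ and $\cT(\lambda_2)=\cT(\lambda_3)=\{1,2\}$, and set $c^{\lambda_1}_{11}=e_0$, $c^{\lambda_4}_{11}=\gamma_0\gamma_1\delta_0$, $c^{\lambda_2}_{11}=e_1$, $c^{\lambda_2}_{12}=\gamma_1$, $c^{\lambda_2}_{21}=\gamma_0$, $c^{\lambda_2}_{22}=\gamma_0\gamma_1$, $c^{\lambda_3}_{11}=\delta_0$, $c^{\lambda_3}_{12}=\delta_0\gamma_0$, $c^{\lambda_3}_{21}=\gamma_1\delta_0$, $c^{\lambda_3}_{22}=\gamma_1\delta_0\gamma_0$. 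The verifications are: (a) these ten elements form a $K$-basis of $A$, since as a set they coincide with the path basis read off from $P_0$ and $P_1$; (b) the basis is stable under $\iota$ with $\iota(c^\lambda_{st})=c^\lambda_{ts}$, which is precisely why $\iota$ was chosen as above (using $\delta_0\gamma_0\gamma_1=\gamma_0\gamma_1\delta_0$ to see $\iota$ fixes $c^{\lambda_4}_{11}$); (c) the subspaces $A^{>\lambda_1}\supset A^{>\lambda_2}\supset A^{>\lambda_3}$ spanned by the higher-$\lambda$ basis vectors are two-sided ideals, namely the kernel of the algebra map $A\to K$ killing $e_1$ and all arrows, the two-sided ideal generated by $\delta_0$, and $K\gamma_0\gamma_1\delta_0=\Soc(Ae_0)$, respectively; and (d), given (c), axiom (C3) needs to be checked only for $a$ in the generating set $\{e_0,e_1,\gamma_0,\gamma_1,\delta_0\}$ (the idempotent cases being trivial), and then for each arrow $a$ and each basis element $c^\lambda_{st}$ the product $a\,c^\lambda_{st}$ is either $0$, another basis element, or an element of $A^{>\lambda}$, so that \eqref{C a cst} and the independence from $t$ follow from a short finite inspection.

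The one place that requires genuine care — and the main obstacle — is choosing the basis vectors in the two middle layers $A^{>\lambda_1}/A^{>\lambda_2}$ and $A^{>\lambda_2}/A^{>\lambda_3}$ so that in (C3) the coefficients $r^{(a,s)}_u$ are really independent of the second index $t$; equivalently, one must arrange that the cell modules $\D(\lambda_2)$ and $\D(\lambda_3)$ have the uniserial shapes $\begin{smallmatrix}L_1\\ L_0\end{smallmatrix}$ and $\begin{smallmatrix}L_0\\ L_1\end{smallmatrix}$ matching the Loewy structure of $Ae_0$ and $Ae_1$. Once the basis above is fixed, the remaining checks are routine bookkeeping. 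It is worth noting that this basis yields the decomposition matrix $\left(\begin{smallmatrix}1&0\\ 1&1\\ 1&1\\ 1&0\end{smallmatrix}\right)$ and the Cartan matrix $\left(\begin{smallmatrix}4&2\\ 2&2\end{smallmatrix}\right)$ — the same Cartan matrix as the algebra of Lemma \ref{F Lemma DB 2}, which is \emph{not} cellular; this is exactly why cellularity here cannot be decided by a numerical obstruction and the basis must be produced by hand.
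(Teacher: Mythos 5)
Your proposal is correct and follows essentially the same route as the paper: the $T(0,0)$ case is reduced to the Kronecker algebra exactly as in the text, and for $\Lambda'(T(0,1))$ you produce the identical cell datum — the same anti-involution swapping $\gamma_0$ and $\gamma_1$, the same chain $\lambda_1<\lambda_2<\lambda_3<\lambda_4$ with $|\cT(\lambda_2)|=|\cT(\lambda_3)|=2$, and entry-for-entry the same cellular basis. Your extra remarks (the ideal chain realizing the $A^{>\lambda}$, and the observation that the Cartan matrix coincides with that of the non-cellular algebra of Lemma \ref{F Lemma DB 2}) are accurate but not needed beyond what the paper records.
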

\begin{proof}
The algebra $\Lambda'(T(0,0))\cong K[X,Y]/(X^2,Y^2)$ is a cellular algebra as we have proved before. 
Next, we construct a cellular basis of $\Lambda'(T(0,1))$. 
First of all, we can see that the anti-involution on $KQ(0,1)$ defined by
$e_0^*=e_0$, $e_1^*=e_1$, $\delta_0^{\ast}=\delta_0$, $\gamma_0^{\ast}=\gamma_1$ and $\gamma_1^{\ast}=\gamma_0$ 
preserves the ideal $I$. Thus, it induces an anti-involution $\iota:\Lambda'(T(0,1)) \to \Lambda'(T(0,1))$. 
Let $\Lambda=\{\lambda_1 < \lambda_2 < \lambda_3 < \lambda_4\}$ and define
\[  \cT(\lambda_1)=\cT(\lambda_4)=\{1\}, \quad \cT(\lambda_2)=\cT(\lambda_3)=\{1,2\}. \]
Then the following $K$-basis of $\Lambda'(T(0,1))$ is a cellular basis:
\begin{align*}
 & \qquad (c^{\lambda_1}_{1,1})=(e_0), \quad (c^{\lambda_2}_{i,j})_{i,j\in\cT(\la_2)}=
\begin{pmatrix} e_1 & \gamma_1 \\ \gamma_0 & \gamma_0\gamma_1 \end{pmatrix}, \\
&(c^{\lambda_3}_{i,j})_{i,j\in\cT(\la_3)}=
\begin{pmatrix} \delta_0 & \delta_0\gamma_0 \\ \gamma_1\delta_0 & \gamma_1\delta_0\gamma_0 \end{pmatrix},  
\quad (c^{\lambda_4}_{1,1})=(\gamma_0\gamma_1\delta_0). 
\end{align*}
The conditions (C1) and (C2) are easy to verify. To see that the condition (C3) holds, 
it suffices to observe that 
$K\delta_0\oplus K\gamma_1\delta_0\oplus K\gamma_0\gamma_1\delta_0$ and $K\gamma_0\gamma_1\delta_0$ are 
submodules of $P_0$.
Therefore, $\Lambda'(T(0,1))$ is cellular.
\end{proof}

By Lemma \ref{D lem I-1}, Lemma \ref{D I-2 (a)}, Lemma \ref{D I-2 (c)} and Lemma \ref{D I-2 (d)(e)},
we obtain the following proposition.
\begin{prop}
\label{D I-2 cellular}
The algebra $\Lambda'(T)$ is cellular if and only if $T$ is one of $T(0,1)$, $T(1,0)$ and $T(0,0)$.
\end{prop}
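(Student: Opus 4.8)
The plan is to assemble the four lemmas \ref{D lem I-1}, \ref{D I-2 (a)}, \ref{D I-2 (c)} and \ref{D I-2 (d)(e)}, together with the isomorphism $\Lambda'(T(l,m))\cong\Lambda'(T(m,l))$ noted above, into a single case distinction. Since $\Lambda'(T)$ is defined only when the Brauer graph $T$ has exactly one cycle and that cycle has odd length, I would first split the argument according to whether this cycle is a loop.

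First I would treat the case that the cycle is not a loop. Then $T$ has no loop, so Proposition \ref{C Prop simple}(iii) forces $T$ to have no branch, since a branch would create in $Q_T$ a cycle of length at least three with no inverse arrow. Hence $T$ coincides with the odd cycle whose Brauer quiver and relations were written down just before Lemma \ref{D lem I-1}, and that lemma shows $\Lambda'(T)$ is not cellular. So this case produces no cellular algebra.

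Next I would treat the case that the cycle is a loop. Proposition \ref{C Prop simple}(iii) again restricts $T$: as explained in the text, $T$ must be of the shape \eqref{D I-2 Brauer graph}, that is, $T=T(l,m)$ for some $l,m\ge0$. Using $\Lambda'(T(l,m))\cong\Lambda'(T(m,l))$ I may assume $m\ge l$. If $m\ge l\ge1$, Lemma \ref{D I-2 (a)} shows $\Lambda'(T(l,m))$ is not cellular; if $l=0$ and $m>1$, Lemma \ref{D I-2 (c)} shows $\Lambda'(T(0,m))$ is not cellular; and the only remaining cases $l=0,m=1$ and $l=m=0$ are handled by Lemma \ref{D I-2 (d)(e)}, which shows that both $\Lambda'(T(0,1))$ and $\Lambda'(T(0,0))$ are cellular. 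Dropping the reduction $m\ge l$, the Brauer graphs $T$ with loop-cycle for which $\Lambda'(T)$ is cellular are precisely $T(0,0)$, $T(0,1)$ and $T(1,0)$. Combining the two cases gives the claim.

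There is essentially no further obstacle here: all the substantive work --- the Cartan matrix computations, the non-solvability of $\bD^{T}\bD=\bC$ in non-negative integers, and the explicit cellular bases --- has already been carried out in the four lemmas. The only point requiring care is the reduction to the normal form $T(l,m)$ via Proposition \ref{C Prop simple}(iii) and the bookkeeping of the symmetry $l\leftrightarrow m$, so that the two small positive cases $T(0,1)$ and $T(1,0)$ are both accounted for while nothing else slips through.
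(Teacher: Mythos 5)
Your proposal is correct and follows essentially the same route as the paper, which likewise obtains the proposition by combining Lemma \ref{D lem I-1}, Lemma \ref{D I-2 (a)}, Lemma \ref{D I-2 (c)} and Lemma \ref{D I-2 (d)(e)} after the reduction, via Proposition \ref{C Prop simple}(iii) and the symmetry $\Lambda'(T(l,m))\cong\Lambda'(T(m,l))$, to the normal forms treated in those lemmas.
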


%%%%
\para
We consider the algebras $\Gamma^{(0)}(T,v)$ from Definition \ref{D defn II}. Then, by Proposition \ref{C Prop simple}(iii), 
the Brauer graph $T$ is one of the following $T(m)$ or $T(-m)$:
\[
\label{D II Brauer graph}
T(m)=\begin{xy}
(0,0)*[o]+{v}="3", (10,0)*[o]+{u}="v",(20,0)*[o]+{\circ}="4", (30,0)*[o]+{\cdots}="d3",(40,0)*[o]+{\circ}="6",
(-5,0)*{}="dammy",(50,0)*[o]+{\circ}="7"
\ar @{-}"3";"v"^{a}
\ar @{-}"v";"4"^{1}
\ar @{-}"4";"d3"
\ar @{-}"d3";"6"
\ar @{-}"6";"7"^{m}
\ar @{-} @(u,u)"v";"dammy"_{b}
\ar @{-} @(d,d)"dammy";"v"
\end{xy} \;\; T(-m)=\begin{xy}
(0,0)*[o]+{\circ}="0",(10,0)*[o]+{\circ}="1",(20,0)*[o]+{\cdots}="d1",(30,0)*[o]+{\circ}="3",
(40,0)*[o]+{u}="v",(50,0)*[o]+{v}="4",
(-5,0)*{}="dammy",
\ar @{-}"0";"1"^{m}
\ar @{-}"1";"d1"
\ar @{-}"d1";"3"
\ar @{-}"3";"v"^{1}
\ar @{-}"v";"4"^{a}
\ar @{-} @(u,u)"v";"dammy"_{b}
\ar @{-} @(d,d)"dammy";"v"
\end{xy}
\]
where $m\ge 1$. The defining relations for $\Gamma^{(0)}(T(m),v)$, which we will give below, show that  
there exists an isomorphism of algebras $\Gamma^{(0)}(T(m),v)\cong \Gamma^{(0)}(T(-m),v)$. 
Hence, it suffices to consider $T=T(m)$ for $m\ge 1$. We denote the corresponding Brauer quiver by $Q(m)$, which we label 
the arrows as follows:
\[
\begin{xy}
(-12,0)*[o]+{a}="-1",
(0,0)*[o]+{b}="0",(10,0)*[o]+{1}="1",(20,0)*[o]+{2}="2", (30,0)*[o]+{}="3",(34,0)*{\cdots}="d1",(38,0)*[o]+{}="m-1",
(50,0)*[o]+{m}="m"
\ar @<1mm> "-1";"0"^{\delta_a}
\ar @<1mm> "0";"-1"^{\delta_b}
\ar @<1mm> "0";"1"^{\gamma_b}
\ar @<1mm> "1";"0"^{\gamma_{1}}
\ar @<1mm> "1";"2"^{\alpha_{1}}
\ar @<1mm> "2";"1"^{\beta_{2}}
\ar @<1mm> "2";"3"^{\alpha_{2}}
\ar @<1mm> "3";"2"^{\beta_{3}}
\ar @<1mm> "m-1";"m"^{\alpha_{m-1}}
\ar @<1mm> "m";"m-1"^{\beta_{m}}
\SelectTips{eu}{}
\ar @(ul,dl)"-1";"-1"_{\alpha_a}
\ar @(ur,dr)"m";"m"^{\alpha_m}
\end{xy}
\]

Suppose that $m>1$. Then $\Gamma^{(0)}(T(m),v)=KQ(m)/I$, where $I$ is the ideal generated by the following elements:
\begin{enumerate}
\item $\alpha_{i}\alpha_{i+1}$, for $1\leq i\leq m-1$, 
\item $\beta_{i}\beta_{i-1}$, for $3\leq i\leq m$, 
\item $\alpha_{m}\beta_{m}$, $\beta_{2}\gamma_{1}$, $\gamma_b\alpha_1$, $\alpha_a\delta_a$, $\delta_b\alpha_a$,
\item $\alpha_{i}\beta_{i+1}-\beta_{i}\alpha_{i-1}$, for $2\leq i\leq m-1$, 
\item $\alpha_{m}-\beta_{m}\alpha_{m-1}$ and $\alpha_a-\delta_a\gamma_b\gamma_{1}\delta_b$,
\item $\alpha_{1}\beta_{2}-\gamma_{1}\delta_{b}\delta_{a}\gamma_{b}$,
\item $\gamma_{1}\gamma_{b}$,
\item $\delta_{a}\gamma_{b}\gamma_{1}-\delta_{a}\delta_{b}\delta_{a}$,
\item $\gamma_{b}\gamma_{1}\delta_{b}-\delta_{b}\delta_{a}\delta_{b}.$
\end{enumerate}
By using (v), we delete the arrows $\alpha_m$ and $\alpha_a$ from $Q(m)$ to obtain the Gabriel quiver  
and replace $\alpha_m$ (respectively, $\alpha_a$) in the above generators of 
$I$ with $\beta_{m}\alpha_{m-1}$ (respectively, $\delta_{a}\gamma_{b}\gamma_{1}\delta_{b}$). 
Then explicit computation shows 
\[\begin{array}{lcl}
P_{a} &=&Ke_{a}\oplus K\delta_{b} \oplus K\gamma_{1}\delta_{b}\oplus K\delta_{a}\delta_{b} 
\oplus K\gamma_{b}\gamma_{1}\delta_{b} \oplus K\delta_{a}\gamma_{b}\gamma_1\delta_{b}, \\
P_{b} &=& Ke_{b}\oplus K\delta_{a}\oplus K\gamma_{1} \oplus K\delta_{b}\delta_{a} 
\oplus K\gamma_{b}\gamma_{1}\oplus K\gamma_{1}\delta_{b}\delta_{a} \\
        & & \oplus K\delta_{a}\gamma_{b}\gamma_{1}\oplus K\gamma_{b}\gamma_1\delta_{b}\delta_{a}, \\
P_1  &=& Ke_1\oplus K\beta_2\oplus K\gamma_{b}\oplus K\delta_{a}\gamma_{b}
\oplus K\delta_{b}\delta_{a}\gamma_{b}\oplus K\gamma_{1}\delta_{b}\delta_{a}\gamma_{b}, \\
P_{i} &=& Ke_{i} \oplus  K\alpha_{i-1} \oplus K\beta_{i+1}\oplus K\alpha_{i}\beta_{i+1} \quad (i\notin \{-1,0,1,m\}), \\
P_m &=& Ke_m\oplus K\alpha_{m-1} \oplus K\beta_m\alpha_{m-1}.     
\end{array} \]

Suppose that $m=1$. Then, $\Gamma^{(0)}(T(1),v)=KQ/I$, where $Q$ is 
\[
Q=\begin{xy}
(0,0)*[o]+{a}="-1",(15,0)*[o]+{b}="0",(30,0)*[o]+{1}="1", 
\ar @<1mm> "0";"-1"^{\delta_{b}}
\ar @<1mm> "-1";"0"^{\delta_{a}}
\ar @<1mm> "0";"1"^{\gamma_{b}}
\ar @<1mm> "1";"0"^{\gamma_{1}}
\end{xy}
\]
and $I$ is the admissible ideal generated by
\begin{enumerate}
\item $\gamma_{b}\gamma_1\delta_{b}\delta_{a}\gamma_{b}$, 
$\gamma_1\delta_{b}\delta_{a}\gamma_{b}\gamma_1$, 
$\delta_{a}\gamma_{b}\gamma_1\delta_{b}\delta_{a}$, $\delta_{b}\delta_{a}\gamma_{b}\gamma_1\delta_{b}$,
\item $\gamma_1\gamma_{b}$,
\item $\delta_{a}\gamma_{b}\gamma_1-\delta_{a}\delta_{b}\delta_{a}$,
\item $\gamma_{b}\gamma_1\delta_{b}-\delta_{b}\delta_{a}\delta_{b}$.
\end{enumerate}
Moreover, the indecompsable projective modules are given by
\[\begin{array}{lcl}
P_a &=&Ke_a\oplus K\delta_b \oplus K\gamma_{1}\delta_b \oplus K\delta_a\delta_b \oplus K\delta_b\delta_a\delta_b \oplus K\delta_a\delta_b\delta_a\delta_b, \\
P_b &=& Ke_b \oplus K\delta_a \oplus K\gamma_{1} \oplus K\delta_b\delta_a 
\oplus K\gamma_b\gamma_{1} \oplus K\gamma_{1}\delta_b\delta_a \\
        & & \oplus K\delta_a\delta_b\delta_a \oplus K\delta_b\delta_a\delta_b\delta_a, \\
P_1  &=& Ke_1\oplus K\gamma_b \oplus K\delta_a\gamma_b \oplus K\delta_b\delta_a\gamma_b \oplus K\gamma_{1}\delta_b\delta_a\gamma_b.
\end{array} \]

\begin{prop}
\label{D II}
The algebra $\Gamma^{(0)}(T(m),v)$ is cellular if and only if $m=1$.
\end{prop}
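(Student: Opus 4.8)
The plan is to treat the two implications separately, following the strategy of \S2: exhibit a cellular basis when $m=1$, and when $m\ge2$ localize at an idempotent to contradict Proposition \ref{C Prop decom matrix}.

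For the direction $\Leftarrow$, assume $m=1$, so that $\Gamma^{(0)}(T(1),v)=KQ/I$ is the explicit $19$-dimensional algebra with bases of $P_a,P_b,P_1$ recorded above. First I would check that the assignment $e_a\mapsto e_a$, $e_b\mapsto e_b$, $e_1\mapsto e_1$, $\delta_a\mapsto\delta_b$, $\delta_b\mapsto\delta_a$, $\gamma_b\mapsto\gamma_1$, $\gamma_1\mapsto\gamma_b$ extends to an anti-automorphism of $KQ$ which preserves the generators of $I$ (for instance it interchanges $\delta_a\gamma_b\gamma_1-\delta_a\delta_b\delta_a$ with $\gamma_b\gamma_1\delta_b-\delta_b\delta_a\delta_b$, fixes $\gamma_1\gamma_b$, and permutes the four length-five generators), hence induces an involutory anti-automorphism $\iota$ of $\Gamma^{(0)}(T(1),v)$. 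Reading off composition factors shows that the Cartan matrix is $\left(\begin{smallmatrix}3&2&1\\2&4&2\\1&2&2\end{smallmatrix}\right)$ with vertices ordered $a,b,1$, and one checks that, subject to Proposition \ref{C Prop decom matrix}(i), the essentially unique nonnegative integer solution of $\bD^{T}\bD=\bC$ is, up to reordering of rows,
\[
\bD=\begin{pmatrix}1&1&0\\1&1&1\\0&1&1\\0&1&0\\1&0&0\end{pmatrix}.
\]
This forces $|\vL|=5$ with $|\cT(\la)|$ the five row sums $2,3,2,1,1$ and a compatible partial order, and it remains only to write down the cellular basis explicitly: the decisive point is that the spans of suitable final segments of the listed bases of $P_a,P_b,P_1$ are $\Gamma^{(0)}(T(1),v)$-submodules realizing the cell filtration, exactly as in the proof of Lemma \ref{D I-2 (d)(e)}, after which (C1)--(C3) are routine.

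For the direction $\Rightarrow$, assume $m\ge2$; I would show that $B:=e\Gamma^{(0)}(T(m),v)e$ with $e=e_a+e_b+e_1+e_2$ is not cellular, which gives the claim by Lemma \ref{C Lem idem}. Inspection of the bases listed above shows that $P_a$ and $P_b$ already lie in the span of paths through $\{a,b,1\}$, that $eP_1=P_1$, and that $eP_2$ differs from $P_2$ only by the element $\beta_3$ when $m>2$; hence the Cartan matrix of $B$ equals, for every $m\ge2$,
\[
\bC=\begin{pmatrix}3&2&1&0\\2&4&2&0\\1&2&2&1\\0&0&1&2\end{pmatrix}
\]
with vertices ordered $a,b,1,2$. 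If $B$ were cellular, Proposition \ref{C Prop decom matrix}(iii) would produce a nonnegative integer matrix $\bD$ with columns $d_a,d_b,d_1,d_2$ satisfying $\bD^{T}\bD=\bC$. From $\langle d_a,d_2\rangle=\langle d_b,d_2\rangle=0$ the support of $d_2$ is disjoint from those of $d_a$ and $d_b$; since $\langle d_1,d_1\rangle=2$ the vector $d_1$ has exactly two nonzero entries, both equal to $1$; and $\langle d_1,d_2\rangle=1$ forces one of those two positions to lie in the support of $d_2$, hence outside the support of $d_b$. Therefore $\langle d_1,d_b\rangle=2$ must be accounted for by the other position alone, which forces $d_b$ to take the value $2$ there, so $d_b=2\mathbf{e}_j$ for a single index $j$; but then the column $d_b$ contains no entry $1$, contradicting Proposition \ref{C Prop decom matrix}(i). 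Hence no such $\bD$ exists, $B$ is not cellular, and neither is $\Gamma^{(0)}(T(m),v)$.

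The main obstacle is choosing the right idempotent in the direction $\Rightarrow$. The obvious candidate $e_a+e_b+e_1$ fails to detect non-cellularity: the corresponding truncation has Cartan matrix $\left(\begin{smallmatrix}3&2&1\\2&4&2\\1&2&2\end{smallmatrix}\right)$ — the same as the cellular algebra $\Gamma^{(0)}(T(1),v)$, to which one can check it is in fact isomorphic — so a legitimate decomposition matrix exists (the displayed $5\times3$ matrix). One genuinely has to include vertex $2$, which is present precisely because $m\ge2$, to obstruct the equation $\bD^{T}\bD=\bC$. Everything else — extracting the Cartan matrices from the explicit projectives and checking the cellular axioms for $m=1$ — is routine bookkeeping.
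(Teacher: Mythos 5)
Your proposal is correct and follows essentially the same route as the paper: for $m\ge2$ it truncates by the same idempotent $e=e_a+e_b+e_1+e_2$, obtains the same Cartan matrix, and derives the contradiction from the nonexistence of a decomposition matrix (your explicit support argument usefully fills in a step the paper only asserts with ``we find that there is no matrix $\bD$''), while for $m=1$ it uses the same anti-involution and the same idea of a cell filtration by final segments of the projectives. The only thing you leave undone is actually displaying the cellular basis for $m=1$ --- the paper writes it out with $|\cT(\lambda_k)|=1,2,3,2,1$ and checks (C3) by exhibiting the submodules $K\delta_a\delta_b\oplus K\delta_b\delta_a\delta_b\oplus K(\delta_a\delta_b)^2$ and $K(\delta_a\delta_b)^2$ of $P_a$ --- but your sketch identifies exactly this structure (your decomposition matrix agrees with the paper's up to row permutation), so nothing essential is missing.
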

\begin{proof} 
Assume that $m>1$ and put $e=e_a+e_b+e_1+e_2$. Then the Cartan matrix of $e\Gamma^{(0)}(T(m),v)e$ is 
\[ \bC=\begin{pmatrix}
3 & 2 & 1 & 0 \\
2 & 4 & 2 & 0 \\
1 & 2 & 2 & 1  \\
0 & 0 & 1 & 2 
\end{pmatrix}. \]
Suppose that $\Gamma^{(0)}(T(m),v)$ is cellular. Then so is $e\Gamma^{(0)}(T(m),v)e$ by Lemma \ref{C Lem idem}. 
However, we find that there is no matrix $\bD$ with entries in $\ZZ_{\ge0}$ that satisfies the condition $\bD^T\bD=\bC$, a contradiction.

Next, we show that the algebra $\Gamma^{(0)}(T(1),v)$ is cellular. It is easy to see that the anti-involution $\iota$  
induced by swapping $\delta_{a}$ and $\delta_{b}$ (respectively, $\gamma_{b}$ and $\gamma_1$) is well-defined. 
We take a totally ordered set 
$\Lambda=\{\lambda_1<\lambda_2<\lambda_3<\lambda_4<\lambda_5\}$ and define $\cT(\lambda_k)$, for $1 \le k \le 5$, by
\[  \cT(\lambda_1)=\cT(\lambda_5)=\{1\}, \quad \cT(\lambda_2)=\cT(\lambda_4)=\{1,2\},\quad \cT(\lambda_3)=\{1,2,3\}. \]
For each $\lambda_k\in\Lambda$, we define $(c^{\lambda_k}_{i,j})_{i,j\in\cT(k)}$ as follows:
\begin{align*}
& (c^{\lambda_1}_{1,1})=(e_{b}), \quad (c^{\lambda_2}_{i,j})_{i,j\in\cT(\la_2)}=
\begin{pmatrix} e_1 & \gamma_1 \\ \gamma_{b} & \gamma_{b}\gamma_1 \end{pmatrix}, \\[7pt]
& (c^{\lambda_3}_{i,j})_{i,j\in\cT(\la_3)}=
\begin{pmatrix} e_{a} & \delta_{a} & \delta_{a}\gamma_{b} \\ \delta_{b} & \delta_{b}\delta_{a} & \delta_{b}\delta_{a}\gamma_{b} \\ 
\gamma_1\delta_{b} & \gamma_1\delta_{b}\delta_{a} & \gamma_1\delta_{b}\delta_{a}\gamma_{b} \end{pmatrix}, \\[7pt]
& (c^{\lambda_4}_{i,j})_{i,j\in\cT(\la_4)}=
\begin{pmatrix} \delta_{a}\delta_{b} & \delta_{a}\delta_{b}\delta_{a} \\ \delta_{b}\delta_{a}\delta_{b} & \delta_{b}\delta_{a}\delta_{b}\delta_{a} \end{pmatrix}, \\[9pt]
& (c^{\lambda_5}_{1,1})=(\delta_{a}\delta_{b}\delta_{a}\delta_{b}). 
\end{align*}
It is clear that the conditions (C1) and (C2) hold. The condition (C3) holds because 
$K\delta_a\delta_b\oplus K\delta_b\delta_a\delta_b\oplus K(\delta_a\delta_b)^2$ and $K(\delta_a\delta_b)^2$ 
are submodules of $P_a$. Thus, this basis is a cellular basis of $\Gamma^{(0)}(T(1),v)$.
\end{proof}

%%%%
\para
We consider the algebras $\Gamma^{(1)}(T,v)$ from Definition \ref{D defn III}. By Proposition \ref{C Prop simple}(iii), the Brauer graph $T$ is of the form
 \[\begin{xy}
(0,0)*[o]+{v_{3}}="2",(30,0)*[o]+{v_{2}}="3",(15,10)*[o]+{v_{1}}="1",
\ar @{-}"1";"2"_{a}
\ar @{-}"2";"3"^{c}
\ar @{-}"1";"3"^{b}
\end{xy}\] 
with the exceptional vertex $v_{3}$. Then $\Gamma^{(1)}(T,v) = KQ_T/I^{(1)}(T,v)$, where $Q_T$ is the following quiver
\[ \begin{xy}
(0,0)*[oo]+{b}="b",(30,0)*[oo]+{a}="a",(15,-20)*[oo]+{c}="c",
\ar @<1mm> "b";"a"^{\gamma_{b}}
\ar @<1mm> "a";"b"^{\gamma_{a}}
\ar @<1mm> "a";"c"^{\alpha_{a}}
\ar @<1mm> "c";"a"^{\alpha_{c}}
\ar @<1mm> "c";"b"^{\beta_{c}}
\ar @<1mm> "b";"c"^{\beta_{b}}
\end{xy}\] 
and $I^{(1)}(T,v)$ is the ideal generated by the following elements:
\begin{enumerate}
\item $\beta_{b}\alpha_{c}$, $\alpha_{c}\gamma_{a}$, $\gamma_{a}\beta_{b}$,
\item $\gamma_{a}\gamma_{b}\alpha_{a}$, $\alpha_{c}\alpha_{a}\beta_{c}$, $\beta_{b}\beta_{c}\gamma_{b}$,
\item $\beta_{b}-\gamma_{b}\alpha_{a}$, $\gamma_{a}-\alpha_{a}\beta_{c}$,
\item $\alpha_{c}\alpha_{a}\alpha_{c}-\beta_{c}\gamma_{b}$.
\end{enumerate} 
By using $(\mathrm{iii})$, we delete the arrows $\beta_{b}$ and $\gamma_{a}$ from $Q_T$ to obtain the Gabriel quiver. Then, 
Proposition \ref{C Prop simple}(iii) implies the following.

\begin{prop}
\label{D III}
The algebras $\Gamma^{(1)}(T,v)$ are not cellular.
\end{prop}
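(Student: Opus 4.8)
The plan is to read off the Gabriel quiver of $A:=\Gamma^{(1)}(T,v)$ from the presentation $KQ_T/I^{(1)}(T,v)$ displayed above and then to apply Proposition \ref{C Prop simple}(iii) in the form that, in the Gabriel quiver of a cellular algebra, the number of arrows from a vertex $i$ to a vertex $j$ equals the number of arrows from $j$ to $i$.

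First I would compute the Gabriel quiver. The relations in group (iii), namely $\beta_b-\gamma_b\alpha_a$ and $\gamma_a-\alpha_a\beta_c$, express the arrows $\beta_b:b\to c$ and $\gamma_a:a\to b$ of $Q_T$ as paths of length two, so both vanish in $\rad A/\rad^2 A$. Eliminating $\beta_b$ and $\gamma_a$ by the usual reduction procedure (delete them from $Q_T$ and substitute $\gamma_b\alpha_a$ for $\beta_b$ and $\alpha_a\beta_c$ for $\gamma_a$ in the remaining relations) yields a presentation $KQ'/I'$ in which $Q'$ has vertices $a,b,c$ and exactly the four arrows $\gamma_b:b\to a$, $\alpha_a:a\to c$, $\alpha_c:c\to a$, $\beta_c:c\to b$. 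I would check that the rewritten relations, including the rewritten forms of (i) and (ii) and the relation $\alpha_c\alpha_a\alpha_c-\beta_c\gamma_b$ of (iv), all lie in the square of the arrow ideal of $KQ'$, so that $I'$ is admissible (finiteness of $A$, which is on the list of \cite{S}, gives the remaining condition); hence $Q'$ is the Gabriel quiver of $A$, and no further arrows occur since $\gamma_b,\alpha_a,\alpha_c,\beta_c$ connect distinct pairs of vertices and none is killed by the relations.

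Finally I would observe that $Q'$ contains one arrow from $b$ to $a$, namely $\gamma_b$, but no arrow from $a$ to $b$ (the former partner $\gamma_a$ has been eliminated); similarly there is one arrow $\beta_c$ from $c$ to $b$ but none from $b$ to $c$. Hence $\dim_K\Ext^1_A(L_a,L_b)\neq\dim_K\Ext^1_A(L_b,L_a)$, which contradicts Proposition \ref{C Prop simple}(iii). Therefore $\Gamma^{(1)}(T,v)$ is not cellular.

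There is no genuine obstacle here; the only work is the bookkeeping of the first step, i.e.\ confirming that $\beta_b$ and $\gamma_a$ are precisely the arrows that get eliminated and that the rewritten ideal is admissible, which is immediate once the relations are written out explicitly.
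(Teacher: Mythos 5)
Your proposal is correct and follows essentially the same route as the paper: the paper likewise uses the relations $\beta_b-\gamma_b\alpha_a$ and $\gamma_a-\alpha_a\beta_c$ to delete the arrows $\beta_b$ and $\gamma_a$ and obtain the Gabriel quiver, and then invokes Proposition \ref{C Prop simple}(iii) to conclude from the resulting arrows without inverse arrows that $\Gamma^{(1)}(T,v)$ is not cellular. The additional bookkeeping you describe (admissibility of the rewritten ideal) is routine and consistent with what the paper leaves implicit.
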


%%%%
\para
We consider the algebras $\Lambda(T,v_1,v_2)$ from Definition \ref{D defn IV}. 
Since they are nothing but Brauer tree algebras with two different exceptional vertices, 
we may apply \cite[Proposition 5.3]{KX1}.

%%%% 

\begin{prop}
\label{D IV}
The algebra $\Lambda(T,v_1,v_2)$ is cellular if and only if the Brauer graph $T$ is a straight line.
\end{prop}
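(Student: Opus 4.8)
The plan is to reduce the proposition to the known characterization of cellular Brauer tree algebras. As the sentence preceding the statement records, $\Lambda(T,v_1,v_2)$ is — despite the way its relations are written in Definition \ref{D defn IV} — nothing but the Brauer tree algebra attached to $T$ with the two exceptional vertices $v_1,v_2$ of multiplicity two, and by \cite[Proposition 5.3]{KX1}, which (as noted in the remark after Proposition \ref{F Prop BT}) applies to Brauer trees with an arbitrary number of exceptional vertices, such an algebra is cellular if and only if its Brauer tree is a straight line. So the first step is merely to confirm that identification, matching the relations of Definition \ref{D defn IV} — the turning relations $\alpha_i\beta_{\alpha(i)}$, $\beta_i\alpha_{\beta(i)}$ together with the four families $A_j-B_j$, $A_j^2-B_j$, $A_j-B_j^2$, $A_j^2-B_j^2$ read off according to which of the two cycles through $j$ is exceptional — against the standard presentation of a Brauer tree algebra. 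Once this is done the proposition follows at once.

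If a self-contained argument is preferred, I would prove the two directions in the style of \S3 and the earlier parts of \S4. For the ``only if'' direction, assume $\Lambda(T,v_1,v_2)$ is cellular and suppose $T$ is not a straight line, so $T$ has a vertex $w$ of valency $d\ge 3$. The simple cycle of $Q_T$ around $w$ is then an oriented cycle $e_1\to e_2\to\dots\to e_d\to e_1$ of length $d\ge 3$ through the edges incident to $w$, and since $T$ is a tree no two of the $e_i$ meet at a second vertex, so none of these arrows admits an inverse arrow in $Q_T$; moreover, by the discussion following Proposition \ref{C Prop simple}, passing from $KQ_T/I(T,v_1,v_2)$ to its bounded quiver (Gabriel) presentation only deletes loops coming from valency-one vertices of $T$, so these arrows survive. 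Hence the Gabriel quiver of $\Lambda(T,v_1,v_2)$ contains an arrow with no inverse arrow, contradicting Proposition \ref{C Prop simple}(iii). One could equally well localize at the idempotent $\sum e_i$ over the edges at $w$ and obtain a contradiction from the Cartan matrix, in the manner of the proofs of Proposition \ref{F Prop hatQ hatI} and Proposition \ref{D II}.

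For the ``if'' direction, suppose $T$ is a straight line; here I would exhibit an explicit cellular basis, just as in \cite[Proposition 5.3]{KX1} and as was done for the small algebras in \S4 (for instance $\Gamma^{(0)}(T(1),v)$): take the anti-involution $\iota$ induced by the reflection of $Q_T$ interchanging the two orientations of the line, index the cell datum by the sub-paths of the line ordered by length, and check (C1)--(C3), the content of (C3) being that the spans of the tails of the indecomposable projective modules are submodules.

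The only point requiring genuine care is the claim used in the ``only if'' direction that no arrow of the cycle around a branch vertex is lost when $I(T,v_1,v_2)$ is rewritten as an admissible ideal, i.e. that the redundant arrows are exactly the loops at valency-one vertices of $T$ and nothing more. This is already settled by the general discussion after Proposition \ref{C Prop simple}, so in practice the proof is short and the bulk of the work is bookkeeping in matching presentations rather than any real obstacle.
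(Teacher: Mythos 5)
Your primary argument is exactly the paper's proof: the paper disposes of Proposition \ref{D IV} in one sentence by observing that $\Lambda(T,v_1,v_2)$ is nothing but a Brauer tree algebra with two exceptional vertices and invoking \cite[Proposition 5.3]{KX1}, which is precisely your first paragraph. The additional self-contained argument you sketch is a reasonable fallback but is not needed and does not appear in the paper.
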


By the assumption on $\Lambda(T,v_1,v_2)$ in Definition \ref{D defn IV}, the multiplicities of the exceptional vertices
$v_1$ and $v_2$ are both two.

%%%%
\para
Finally, we consider the algebras $\Gamma^{(2)}(T,v_1,v_2)$ from Definition \ref{D defn V}.
Then, by Proposition \ref{C Prop simple}(iii), the Brauer tree $T$ is of the form
\[T_{l}^{m}=
 \begin{xy}
(0,0)*[o]+{v_{1}}="3",(15,0)*[o]+{u}="u",(30,0)*[o]+{v_3}="001",(40,0)*[o]+{}="dammy1",
(45,0)*[o]+{\cdots}="cdots",(50,0)*[o]+{}="dammy2",(60,0)*[o]+{\circ}="002",(75,0)*[o]+{v_{2}}="2",(85,0)*[o]+{}="dammya",(90,0)*[o]+{\cdots}="\cdots",(95,0)*[o]+{}="dammyb",(105,0)*[o]+{\circ}="02",(120,0)*[o]+{\circ}="m",
\ar @{-}"3";"u"^{a}
\ar @{-}"u";"001"^{b}
\ar @{-}"001";"dammy1"^{1}
\ar @{-}"dammy2";"002"^{l-1}
\ar @{-}"002";"2"^{l}
\ar @{-}"2";"dammya"^{l+1}
\ar @{-}"dammyb";"02"^{m-1}
\ar @{-}"m";"02"_{m}
\end{xy}
\]
where $b=c=e$, and $v_2$ is the unique exceptional vertex, whose multiplicity is two. 
We always have $v_1\ne v_2$ by the assumption, but $u=v_2$ or $v_3=v_2$ may occur. When $u=v_2$ (respectively, $v_3=v_2$) we understand that $l=-1$ 
(respectively $l=0$). 

%%%%
\para
The Brauer quiver $Q_{T}^{(2)}$ for $l\ge1$ is as follows. 
For $u=v_2$ or $v_3=v_2$, we have the same underlying quiver 
and the only difference is that the location of the vertex $l$ is either $l=-1$ or $l=0$.
\[\begin{xy}
(-55,0)*[o]+{-1}="a",(-43,0)*[o]+{0}="b",(-33,0)*[o]+{1}="1",(-23,0)*[o]+{2}="2", (-13,0)*[o]+{}="3", (-10,0)*{\cdots}="dot1",(-6,0)*{}="l-2",(1,0)*{}="d1",(7,0)*[o]+{l-1}="l-1",(12,0)*{}="d2",(21,0)*[o]+{l}="l",(29,0)*{}="d3",(34,0)*[o]+{l+1}="l+1",(39,0)*{}="d4",(44,0)*{}="d5", (49,0)*{\cdots}="dot2",(52,0)*{}="m-1",(61,0)*[o]+{}="d6",(63,0)*[o]+{m}="m",
(-43,-13)*[o]+{w}="w"
\ar @<1mm> "a";"b"^{\alpha_{-1}}
\ar @<1mm> "b";"a"^{\beta_{0}}
\ar @<1mm> "b";"1"^{\alpha_{0}}
\ar @<1mm> "1";"b"^{\beta_{1}}
\ar @<1mm> "1";"2"^{\alpha_{1}}
\ar @<1mm> "2";"1"^{\beta_{2}}
\ar @<1mm> "2";"3"^{\alpha_{2}}
\ar @<1mm> "3";"2"^{\beta_{3}}
\ar @<1mm> "l-2";"d1"^{\alpha_{l-2}}
\ar @<1mm> "d1";"l-2"^{\beta_{l-1}}
\ar @<1mm> "d2";"l"^{\alpha_{l-1}}
\ar @<1mm> "l";"d2"^{\beta_{l}}
\ar @<1mm> "l";"d3"^{\alpha_{l}}
\ar @<1mm> "d3";"l"^{\beta_{l+1}}
\ar @<1mm> "d4";"d5"^{\alpha_{l+1}}
\ar @<1mm> "d5";"d4"^{\beta_{l+2}}
\ar @<1mm> "m-1";"d6"^{\alpha_{m-1}}
\ar @<1mm> "d6";"m-1"^{\beta_{m}}
\ar @<-1mm> "b";"w"_{\gamma_{1}}
\ar @<-1mm> "w";"b"_{\gamma_{2}}
\SelectTips{eu}{}
\ar @(dl,ul)"a";"a"^{\beta_{-1}}
\ar @(ur,dr)"m";"m"^{\alpha_m}
\ar @(dl,dr)"w";"w"_{\gamma_{3}}
\end{xy}
\]
Note that we do not use the labels $\alpha_i$ and $\beta_i$ for the $\alpha$-part and the $\beta$-part 
as we did in Definition \ref{D defn V}: 
if $i$ is a positive even integer then $\alpha_i$ belongs to the $\beta$-part and 
$\beta_i$ belongs to the $\alpha$-part. 
Moreover, we denote by $0$ (respectively, $-1$) the vertex $b=c=e$ (respectively, $a$). 
Thus $\alpha_{-1}=\alpha_a$, $\beta_{-1}=\beta_a$, $\alpha_0=\beta_b$, $\beta_0=\alpha_b$. 
The exceptional cycles are $\alpha_{l}\beta_{l+1}$ and $\beta_{l+1}\alpha_l$.
By the definition of $\Gamma^{(2)}(T,v_1,v_2)$, the cycles $\alpha_{-1}\beta_{0}$ and $\beta_o\alpha_{-1}$ are $\alpha$-cycles. 
We denote by $\Gamma^{(2)}(l,m)$ the algebra $\Gamma^{(2)}(T_{l}^{m},v_1,v_2)$.

%%%%
\para
Assume that $m\geq 1$. Then the ideal $I^{(2)}(T_{l}^{m},v_{1},v_{2})$ is generated by the following elements.
\begin{itemize}
\item[(a)]
If $1\leq l\le m-1$ then
\begin{enumerate}
\item $\alpha_{i}\alpha_{i+1}$, for $-1\leq i\leq m-1$, 
\item $\beta_{i}\beta_{i-1}$, for $0\leq i\leq m$,  
\item $\beta_{-1}\alpha_{-1}$ and $\alpha_m\beta_m$,
\item $\alpha_{i}\beta_{i+1}-\beta_{i}\alpha_{i-1}$, for $0\le i\le m-1$ and $i\neq l,$ $l+1$, 
\item $\beta_{-1}-\alpha_{-1}\beta_{0}$ and $\alpha_{m}-\beta_{m}\alpha_{m-1}$,
\item $(\alpha_{l}\beta_{l+1})^{2}-\beta_{l}\alpha_{l-1}$ and $\alpha_{l+1}\beta_{l+2}-(\beta_{l+1}\alpha_{l})^{2}$,
\item $\gamma_{2}\alpha_{0}$, $\beta_{1}\gamma_{1}$, $\gamma_{1}\gamma_{3}$, 
\item $\gamma_{3}\gamma_{2}$, $\gamma_{2}\beta_{0}$ and $\alpha_{-1}\gamma_{1}$,
\item $\beta_{0}\alpha_{-1}-\gamma_{1}\gamma_{2}$,
\item $\gamma_{2}\beta_{0}\alpha_{-1}\gamma_{1}-\gamma_{3}$.
\end{enumerate}
\item[(b)]
If $l=m$, then 
\begin{enumerate}
\item $\alpha_{i}\alpha_{i+1}$, for $-1\leq i\leq m-1$,
\item $\beta_{i}\beta_{i-1}$, for $0\leq i\leq m$,  
\item $\beta_{-1}\alpha_{-1}$ and $\alpha_m\beta_m$,
\item $\alpha_{i}\beta_{i+1}-\beta_{i}\alpha_{i-1}$, for $0 \leq i \leq m-1$, 
\item $\beta_{-1}-\alpha_{-1}\beta_{0}$ and $\alpha_{m}^2-\beta_{m}\alpha_{m-1}$,
\item $\gamma_{2}\alpha_{0}$, $\beta_{1}\gamma_{1}$, $\gamma_{1}\gamma_{3}$, 
\item $\gamma_{3}\gamma_{2}$, $\gamma_{2}\beta_{0}$ and $\alpha_{-1}\gamma_{1}$,
\item $\beta_{0}\alpha_{-1}-\gamma_{1}\gamma_{2}$,
\item $\gamma_{2}\beta_{0}\alpha_{-1}\gamma_{1}-\gamma_{3}$.
\end{enumerate}
\end{itemize}

In (a), we delete the arrows $\alpha_{m}$, $\beta_{-1}$ and $\gamma_{3}$ to obtain the Gabriel quiver, and 
replace $\alpha_{m}$ (respectively, $\beta_{-1}$, $\gamma_{3}$) in the above generators 
with $\beta_{m}\alpha_{m-1}$ (respectively, $\alpha_{-1}\beta_{0}$, $0$). 

In (b), we delete the arrows $\beta_{-1}$ and $\gamma_{3}$ to obtain the Gabriel quiver, and 
replace $\beta_{-1}$ (respectively, $\gamma_{3}$) in the above generators 
with $\alpha_{-1}\beta_{0}$ (respectively, $0$).

\begin{lem}
\label{D V-1}
If $m\ge 1$ and $1\leq l\leq m$, then the algebra $\Gamma^{(2)}(l,m)$ is not cellular.
\end{lem}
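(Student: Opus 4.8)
The plan is to use Strategy~3: pass to a corner algebra $eAe$ via Lemma~\ref{C Lem idem} and contradict the identity $\bC=\bD^T\bD$ of Proposition~\ref{C Prop decom matrix}(iii). Write $A=\Gamma^{(2)}(l,m)$ and let $0$ denote the vertex $b=c=e$, so that (after the deletions that produce the Gabriel quiver) the arrows around the vertices $-1,0,1,w$ are $\alpha_{-1}\colon -1\to 0$, $\beta_0\colon 0\to -1$, $\alpha_0\colon 0\to 1$, $\beta_1\colon 1\to 0$, $\gamma_1\colon 0\to w$, $\gamma_2\colon w\to 0$ (plus the loop $\alpha_m$ at $1$ when $m=1$). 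I would take $e=e_{-1}+e_0+e_1+e_w$ and set $B=eAe$.

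First I would compute the Cartan entries $c_{ij}=[P_i:L_j]$ of $A$ for $i,j\in\{-1,0,1,w\}$, which coincide with those of $B$. Since $l\ge1$, the exceptional cycle is attached away from the vertex $0$, so the exceptional-cycle relations do not interfere here; from the listed generators of $I^{(2)}(T_l^m,v_1,v_2)$ (in particular $\alpha_{-1}\alpha_0=0$, $\alpha_{-1}\gamma_1=0$, $\beta_1\gamma_1=0$, $\gamma_2\alpha_0=0$, $\gamma_2\beta_0=0$, the relations expressing $\beta_0\alpha_{-1}=\alpha_0\beta_1=\gamma_1\gamma_2$, and the relations forcing $\beta_{-1}=\alpha_{-1}\beta_0$, $\gamma_3=0$, hence $\alpha_{-1}\beta_0\alpha_{-1}=0$ and $\gamma_2\gamma_1\gamma_2=0$) one reads off that $P_{-1}$ and $P_w$ are uniserial of length three, $P_{-1}=L_{-1}/L_0/L_{-1}$ and $P_w=L_w/L_0/L_w$, while $P_0$ has radical layers $L_0$, $L_{-1}\oplus L_1\oplus L_w$, $L_0$. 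Hence
\[
c_{-1,-1}=c_{0,0}=c_{w,w}=2,\qquad c_{0,-1}=c_{0,1}=c_{0,w}=1,\qquad c_{-1,1}=c_{-1,w}=c_{1,w}=0,
\]
and the value of $c_{1,1}$ is irrelevant.

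Now suppose, for contradiction, that $B$ is cellular. Its quiver has four vertices and is connected, so $B$ has four simple modules, and by Proposition~\ref{C Prop decom matrix}(iii) there is a decomposition matrix $\bD$ with $\bD^T\bD=\bC_B$; denote its columns $v_{-1},v_0,v_1,v_w$, so $\langle v_i,v_j\rangle=c_{ij}$. As the entries of $\bD$ are non-negative integers and $\langle v_0,v_0\rangle=2$, the column $v_0$ has exactly two entries equal to $1$, in rows $p$ and $q$ say. From $\langle v_{-1},v_{-1}\rangle=2$ and $\langle v_{-1},v_0\rangle=1$, the column $v_{-1}$ also has exactly two $1$-entries, precisely one of which lies in $\{p,q\}$; likewise for $v_w$. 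Since $\langle v_{-1},v_w\rangle=0$ these columns have disjoint supports, so after relabelling $p\in\supp v_{-1}$, $q\in\supp v_w$, while $q\notin\supp v_{-1}$ and $p\notin\supp v_w$. Finally $\langle v_1,v_0\rangle=1$ gives $(v_1)_p+(v_1)_q=1$; but $(v_1)_p=1$ forces $\langle v_1,v_{-1}\rangle\ge1$, contradicting $c_{1,-1}=0$, and $(v_1)_q=1$ forces $\langle v_1,v_w\rangle\ge1$, contradicting $c_{1,w}=0$. This contradiction shows $B$ is not cellular, whence $A=\Gamma^{(2)}(l,m)$ is not cellular by Lemma~\ref{C Lem idem}.

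The one genuinely laborious step is the Cartan computation: one must verify that the relations produce no extra nonzero paths at $-1$, $0$, $w$ (so these projectives really have length three and the diagonal entries are $2$), that there is no nonzero path $-1\to1$, $-1\to w$ or $1\to w$, and --- the only place the hypotheses enter --- that the exceptional-cycle relations, living at the vertex $l\ge1$, leave all the entries above unchanged; the edge case $m=1$, where $1$ carries the loop $\alpha_m$ with $\alpha_m^2=\beta_m\alpha_{m-1}$, needs a separate quick check but affects only $c_{1,1}$. Once these are in place the combinatorial argument above is immediate, and, unlike several earlier cases in this section, one does not need the finer ideal-filtration argument of Lemma~\ref{F Lemma DB 2}.
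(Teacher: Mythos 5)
Your proposal is correct and follows essentially the same route as the paper: pass to the corner algebra at $e=e_{-1}+e_0+e_w+e_1$, compute its Cartan matrix (which agrees with the one in the paper, with $c_{1,1}=2$ for $l\ge2$ and $c_{1,1}=3$ for $l=1$), and derive a contradiction from the non-existence of a non-negative integral $\bD$ with $\bD^T\bD=\bC$, invoking Lemma \ref{C Lem idem}. The only (welcome) difference is that you make explicit the support argument showing no such $\bD$ exists --- which the paper leaves as ``we find that'' --- and you observe that this argument is insensitive to $c_{1,1}$, so the cases $l\ge2$, $l=1$ and $l=m$ are handled uniformly.
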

\begin{proof}
Let $e=e_{-1}+e_0+e_w+e_1$. By Lemma \ref{C Lem idem}, 
it suffices to show that $e\Gamma^{(2)}(l,m)e$ is not cellular.
If $l\geq 2$, then indecomposable projective $e\Gamma^{(2)}(l,m)e$-modules are
\[\begin{array}{lcl}
P_{-1} &=& Ke_{-1}\oplus K \beta_{0}\oplus K\alpha_{-1}\beta_{0}, \\
P_{0} &=& Ke_0\oplus K\alpha_{-1}\oplus K\gamma_{2} \oplus K\beta_{1}\oplus K\beta_{0}\alpha_{-1}, \\
P_{w} &=& Ke_{w} \oplus K\gamma_{1}\oplus K\gamma_2\gamma_1, \\
P_{1} &=& Ke_{1} \oplus  K\alpha_{0}\oplus K\beta_{1}\alpha_{0}, \\
\end{array} \]
and the Cartan matrix of $e\Gamma^{(2)}(l,m)e$ is 
\[ \bC=\begin{pmatrix}
2 & 1 & 0 & 0 \\
1 & 2 & 1 & 1 \\
0 & 1 & 2 & 0 \\
0 & 1 & 0 & 2 \end{pmatrix}. \]
If $l=1$, then indecomposable projective $e\Gamma^{(2)}(1,m)e$-modules are
\[\begin{array}{lcl}
P_{-1} &=& Ke_{-1}\oplus K \beta_{0}\oplus K\alpha_{-1}\beta_{0}, \\
P_{0} &=& Ke_0\oplus K\alpha_{-1}\oplus K\gamma_{2} \oplus K\beta_{1}\oplus K\beta_{0}\alpha_{-1}, \\
P_{w} &=& Ke_{w} \oplus K\gamma_{1}\oplus K\gamma_2\gamma_1, \\
P_{1} &=& Ke_{1} \oplus  K\alpha_{0}\oplus K\alpha_{1}\beta_{2}\oplus K\beta_{1}\alpha_{0}, \\
\end{array} \]
and the Cartan matrix of $e\Gamma^{(2)}(1,m)e$ is 
\[ \bC=\begin{pmatrix}
2 & 1 & 0 & 0 \\
1 & 2 & 1 & 1 \\
0 & 1 & 2 & 0 \\
0 & 1 & 0 & 3 \end{pmatrix}. \]
 
Now, we find that there is no matrix $\bD$ with entries in $\ZZ_{\ge0}$ that satisfies 
$\bD^T\bD=\bC$ in both cases. 
Thus, $e\Gamma^{(2)}(l,m)e$ is not cellular. 
\end{proof}

If $l=0$ and $m\ge1$, then the ideal $I^{(2)}(T_{0}^{m},v_{1},v_{2})$ is generated by the following elements:
\begin{enumerate}
\item $\alpha_{i}\alpha_{i+1}$, for $-1\leq i\leq m-1$,
\item $\beta_{i}\beta_{i-1}$, for $0\leq i\leq m$,  
\item $\beta_{-1}\alpha_{-1}$ and $\alpha_m\beta_m$,
\item $\alpha_{i}\beta_{i+1}-\beta_{i}\alpha_{i-1}$, for $2\leq i\leq m-1$, 
\item $\beta_{-1}-\alpha_{-1}\beta_{0}$ and $\alpha_{m}-\beta_{m}\alpha_{m-1}$ if $m\ge2$,
\item $(\alpha_{0}\beta_{1})^{2}-\beta_{0}\alpha_{-1}$ and 
$\alpha_{1}-(\beta_{1}\alpha_{0})^2$ if $m=1$, $\alpha_{1}\beta_{2}-(\beta_{1}\alpha_{0})^{2}$ if $m\ge2$,
\item $\gamma_{2}\alpha_{0}$, $\beta_{1}\gamma_{1}$, $\gamma_{1}\gamma_{3}$, $\gamma_{3}\gamma_{2}$,
\item $\gamma_{2}\beta_{0}$ and $\alpha_{-1}\gamma_{1}$,
\item $\beta_{0}\alpha_{-1}-\gamma_{1}\gamma_{2}$,
\item $\gamma_{2}\beta_{0}\alpha_{-1}\gamma_{1}-\gamma_{3}$.
\end{enumerate}
We delete the arrows $\alpha_{m}$, $\beta_{-1}$ and $\gamma_{3}$ 
to obtain the Gabriel quiver, and replace 
$\alpha_m$ (respectively, $\beta_{-1}$, $\gamma_{3}$) in the above generators of 
$I^{(2)}(T_{0}^{m},v_{1},v_{2})$ with $\beta_{m}\alpha_{m-1}$ if $m\ge2$ and 
$(\beta_1\alpha_0)^2$ if $m=1$ (respectively, $\alpha_{-1}\beta_{0}$ , $0$). 

\begin{lem}\label{D V-2}
If $m\ge1$ and $v_3=v_2$, then the algebra $\Gamma^{(2)}(0,m)$ is not cellular.
\end{lem}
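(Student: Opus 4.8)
The plan is to imitate the proof of Lemma \ref{D V-1}. By Lemma \ref{C Lem idem} it is enough to produce an idempotent $e$ for which $e\Gamma^{(2)}(0,m)e$ fails to be cellular, and the natural candidate is $e=e_{-1}+e_0+e_w+e_1$, since in the Brauer quiver $Q_T^{(2)}$ the vertex $0$ is joined precisely to the three vertices $-1$, $w$ and $1$.

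First I would, using the explicit presentation of $I^{(2)}(T_0^m,v_1,v_2)$ recorded just above, write down $K$-bases of the left modules $\Gamma^{(2)}(0,m)e_{-1}$, $\Gamma^{(2)}(0,m)e_0$, $\Gamma^{(2)}(0,m)e_w$ and $\Gamma^{(2)}(0,m)e_1$, and then cut each down to its $e$-component to obtain bases of the indecomposable projective $e\Gamma^{(2)}(0,m)e$-modules. Care is needed here: the relations at the vertex $0$ involve the exceptional cycle — namely $(\alpha_0\beta_1)^2=\beta_0\alpha_{-1}=\gamma_1\gamma_2$ and $\gamma_3=\gamma_2\beta_0\alpha_{-1}\gamma_1$ — so the projectives at $0$, $1$ and $w$ are longer than in a non-exceptional situation, and the cases $m=1$ and $m\ge2$ carry slightly different relations (compare $\alpha_1=(\beta_1\alpha_0)^2$ for $m=1$ with $\alpha_1\beta_2=(\beta_1\alpha_0)^2$ for $m\ge2$) and should be run in parallel. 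From these bases I then read off the Cartan matrix $\bC$ of $e\Gamma^{(2)}(0,m)e$; because $-1$, $w$ and $1$ are leaves attached to $0$ in the truncated quiver, I expect $\bC$ to have the block shape
\[
\bC=\begin{pmatrix} 2 & 1 & 0 & 0 \\ 1 & * & 1 & 1 \\ 0 & 1 & * & 0 \\ 0 & 1 & 0 & * \end{pmatrix},
\]
with the three diagonal entries marked $*$ supplied by the computation, in close analogy with the matrix $\bC^{\natural}$ of Proposition \ref{F Prop hatQ hatI} and with the Cartan matrices occurring in Lemma \ref{D V-1}.

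Finally I would appeal to Proposition \ref{C Prop decom matrix}: either $\det\bC\le 0$, so part (iv) forbids cellularity immediately, or else a short finite check — exactly of the kind carried out for $\bC^{\natural}$ and for the matrices in the proof of Lemma \ref{D V-1} — shows that no matrix $\bD$ with entries in $\ZZ_{\ge 0}$ satisfies $\bD^T\bD=\bC$, so part (iii) applies. In either case $e\Gamma^{(2)}(0,m)e$ is not cellular, hence $\Gamma^{(2)}(0,m)$ is not cellular by Lemma \ref{C Lem idem}. The only substantial work, and thus the main obstacle, is the bookkeeping required to pin down the indecomposable projectives (and hence $\bC$) correctly at and around the vertex $0$ in the presence of the exceptional cycle, uniformly in $m$ and in the two sub-cases $m=1$ and $m\ge2$; once $\bC$ is in hand the conclusion is immediate.
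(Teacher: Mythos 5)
Your proposal follows essentially the same route as the paper's proof: the paper takes exactly $e=e_{-1}+e_0+e_w+e_1$, writes down the indecomposable projective $e\Gamma^{(2)}(0,m)e$-modules, obtains the Cartan matrix $\bC=\left(\begin{smallmatrix}2&1&0&0\\1&3&1&2\\0&1&2&0\\0&2&0&3\end{smallmatrix}\right)$, and concludes from the non-existence of a nonnegative integral $\bD$ with $\bD^T\bD=\bC$ together with Lemma \ref{C Lem idem}. The only inaccuracy is your anticipated shape of $\bC$: because the exceptional vertex $v_3=v_2$ is shared by the edges $0$ and $1$, the off-diagonal entries linking those two vertices are $2$ rather than $1$; moreover $\det\bC=8>0$, so Proposition \ref{C Prop decom matrix}(iv) does not apply and the argument genuinely needs the finite check ruling out $\bD$, as in the paper.
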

\begin{proof}
Let $e=e_{-1}+e_0+e_{w}+e_{1}$. Then, indecomposable projective $e\Gamma^{(2)}(0,m)e$-modules are given by
\[\begin{array}{lcl}
P_{-1} &=& Ke_{-1}\oplus K \beta_{0}\oplus K\alpha_{-1}\beta_{0}, \\
P_{0} &=& Ke_0\oplus K\alpha_{-1}\oplus K\gamma_{2} \oplus K\beta_{1}\oplus K\alpha_0\beta_1\oplus K\beta_1\alpha_0\beta_1\oplus K(\alpha_{0}\beta_{1})^2, \\
P_{w} &=& Ke_{w} \oplus K\gamma_{1}\oplus K\gamma_2\gamma_1, \\
P_{1} &=& Ke_{1} \oplus  K\alpha_{0}\oplus  K\beta_{1}\alpha_{0}\oplus K\alpha_0\beta_1\alpha_0\oplus K(\beta_1\alpha_0)^2. \\
\end{array} \]
Thus, the Cartan matrix of $e\Gamma^{(2)}(0,m)e$ is
\[ \bC=\begin{pmatrix}
2 & 1 & 0 & 0 \\
1 & 3 & 1 & 2 \\
0 & 1 & 2 & 0 \\
0 & 2 & 0 & 3 \end{pmatrix}. \]
Suppose that $\Gamma^{(2)}(0,m)$ is cellular. Then so is $e\Gamma^{(0)}(0,m)e$ by Lemma \ref{C Lem idem}. 
However, it is impossible to find the decomposition matrix $\bD$ which satisfies $\bD^T\bD=\bC$.
\end{proof}

If $l=-1$ and $m\ge1$, then exceptional simple cycles are $\alpha_{-1}\beta_0$ and $\beta_0\alpha_{-1}$. 
In this case, the ideal $I^{(2)}(T_{-1}^{m},v_{1},v_{2})$ is generated by the following elements:
\begin{enumerate}
\item $\alpha_{i}\alpha_{i+1}$, for $-1\leq i\leq m-1$,
\item $\beta_{i}\beta_{i-1}$, for $0\leq i\leq m$,  
\item $\beta_{-1}\alpha_{-1}$ and $\alpha_m\beta_m$,
\item $\alpha_{i}\beta_{i+1}-\beta_{i}\alpha_{i-1}$, for $1\leq i\leq m-1$, 
\item $\beta_{-1}-(\alpha_{-1}\beta_{0})^{2}$ and $\alpha_{m}-\beta_{m}\alpha_{m-1}$,
\item $\alpha_{0}\beta_{1}-(\beta_{0}\alpha_{-1})^{2}$,
\item $\gamma_{2}\alpha_{0}$, $\beta_{1}\gamma_{1}$, $\gamma_{1}\gamma_{3}$ and $\gamma_{3}\gamma_{2}$,
\item $\gamma_2\gamma_1\gamma_2\beta_0$ and $\alpha_{-1}\beta_0\alpha_{-1}\gamma_1$,
\item $\beta_0\alpha_{-1}-\gamma_1\gamma_2$,
\item $(\gamma_2\beta_0\alpha_{-1}\gamma_1)^2-\gamma_3$.
\end{enumerate}
We delete the arrows $\alpha_{m}$, $\beta_{-1}$ and $\gamma_{3}$ to obtain the Gabriel quiver, 
and replace $\alpha_{m}$ (respectively, $\beta_0$, $\gamma_{3}$) in the above generators of 
$I^{(2)}(T_{-1}^{m},v_{1},v_{2})$  with $\beta_{m}\alpha_{m-1}$ 
(respectively, $(\alpha_{-1}\beta_{0})^{2}$, $0$).

\begin{lem}\label{D V-3}
If $m\ge1$ and $u=v_2$, then the algebra $\Gamma^{(2)}(-1,m)$ is not cellular.
\end{lem}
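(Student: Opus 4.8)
The plan is to mimic the proofs of Lemmas \ref{D V-1} and \ref{D V-2}: exhibit an idempotent $e$ of $\Gamma^{(2)}(-1,m)$ for which $e\Gamma^{(2)}(-1,m)e$ fails to be cellular, and then conclude by Lemma \ref{C Lem idem}. Following the two preceding lemmas, the natural choice is $e=e_{-1}+e_0+e_w+e_1$, which isolates the two vertices $-1$ and $0$ lying on the exceptional $2$-cycle $\alpha_{-1}\beta_0$, $\beta_0\alpha_{-1}$, together with the extra vertex $w$ and the first ordinary vertex $1$.

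First I would pass to the Gabriel quiver by deleting the loops $\alpha_m$, $\beta_{-1}$, $\gamma_3$ and substituting $\beta_m\alpha_{m-1}$, $(\alpha_{-1}\beta_0)^2$, $0$ respectively, exactly as prescribed after the displayed generators of $I^{(2)}(T_{-1}^{m},v_1,v_2)$. Then I would compute bases of the indecomposable projective $e\Gamma^{(2)}(-1,m)e$-modules $P_{-1},P_0,P_w,P_1$ by listing the surviving residues of paths among the four vertices. Since $-1$ and $0$ lie on the exceptional cycle, the relevant cyclic paths there are traversed twice, so $P_{-1}$ and $P_0$ pick up the extra composition factors coming from $(\alpha_{-1}\beta_0)^2$ and $(\beta_0\alpha_{-1})^2$; one computes that the Cartan matrix, with the idempotents ordered $e_{-1},e_0,e_w,e_1$, is
\[
\bC=\begin{pmatrix} 3 & 2 & 0 & 0 \\ 2 & 3 & 1 & 1 \\ 0 & 1 & 2 & 0 \\ 0 & 1 & 0 & 2 \end{pmatrix},
\]
with at most a harmless variant in the degenerate case $m=1$. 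Note $\det\bC=20>0$, so the determinant test of Proposition \ref{C Prop decom matrix}(iv) does not suffice and one must use the finer factorization constraint; but the only features of $\bC$ the rest of the argument uses are that $0$ is the unique vertex adjacent to all of $-1,w,1$ while $-1,w,1$ are pairwise non-adjacent, that the diagonal entries at $-1$ and $0$ equal $3$, and that $\bC_{-1,0}=2$.

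Finally I would show that $\bD^{T}\bD=\bC$ has no solution $\bD$ with entries in $\ZZ_{\ge0}$; by Proposition \ref{C Prop decom matrix}(iii) this proves $e\Gamma^{(2)}(-1,m)e$ is not cellular, hence neither is $\Gamma^{(2)}(-1,m)$. A column of $\bD$ of squared length $2$ has exactly two entries equal to $1$, and one of squared length $3$ has exactly three entries equal to $1$. Write $S_{-1},S_0,S_w,S_1$ for the $1$-supports of the columns attached to $-1,0,w,1$, so $|S_{-1}|=|S_0|=3$, $|S_w|=|S_1|=2$, $|S_{-1}\cap S_0|=2$, and $S_w,S_1$ are each disjoint from $S_{-1}$ while meeting $S_0$ in exactly one point and each other not at all. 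Since $S_w$ and $S_1$ avoid $S_{-1}\supseteq S_{-1}\cap S_0$, both must meet $S_0$ in the single point $x$ of $S_0\setminus S_{-1}$; then $x\in S_w\cap S_1$, contradicting $S_w\cap S_1=\emptyset$. Hence no such $\bD$ exists.

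The main obstacle I anticipate is the bookkeeping in the projective-module computation — tracking precisely which long paths through $w$ and through the former loops survive in $e\Gamma^{(2)}(-1,m)e$, and verifying that the small case $m=1$ (where several of the generators of $I^{(2)}(T_{-1}^{m},v_1,v_2)$ collapse) does not alter the Cartan matrix in a way that affects the argument. Once $\bC$ is in hand, the non-existence of $\bD$ is the same kind of short combinatorial check already carried out in Lemmas \ref{D V-1} and \ref{D V-2} and in the proof of Proposition \ref{F Prop hatQ hatI}.
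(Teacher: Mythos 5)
Your overall strategy is exactly the paper's: take $e=e_{-1}+e_0+e_w+e_1$, compute the Cartan matrix of $e\Gamma^{(2)}(-1,m)e$, and show that no nonnegative integral $\bD$ satisfies $\bD^T\bD=\bC$. However, your Cartan matrix is wrong, and this is not a harmless bookkeeping slip. When $u=v_2$ the exceptional cycle is precisely $\beta_0\alpha_{-1}=\gamma_1\gamma_2$ at the vertex $0$, so the relations that cut off the paths through $w$ are the \emph{long} ones $\gamma_2\gamma_1\gamma_2\beta_0=0$ and $\alpha_{-1}\beta_0\alpha_{-1}\gamma_1=0$ (item (viii) of the $l=-1$ list), not $\gamma_2\beta_0=0$ and $\alpha_{-1}\gamma_1=0$ as in the $l\ge1$ case. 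Consequently $\gamma_2\beta_0$, $\alpha_{-1}\gamma_1$, $\gamma_1\gamma_2\gamma_1$ and $(\gamma_2\gamma_1)^2$ are all nonzero: one finds $\dim P_{-1}=6$, $\dim P_0=8$, $\dim P_w=6$, $\dim P_1=3$, and the Cartan matrix (in the order $-1,0,w,1$) is
\[
\bC=\begin{pmatrix} 3 & 2 & 1 & 0 \\ 2 & 3 & 2 & 1 \\ 1 & 2 & 3 & 0 \\ 0 & 1 & 0 & 2 \end{pmatrix},
\]
not the matrix you wrote (which has $w$ non-adjacent to $-1$, $\bC_{0,w}=1$ and $\bC_{w,w}=2$). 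Your support argument is tailored to your matrix --- it uses $|S_w|=2$ and $S_w\cap S_{-1}=\emptyset$ --- and it does not transfer to the correct one, where the column of $w$ has squared length $3$ and meets the column of $-1$.

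The lemma is still salvageable by the same method, but you must redo the combinatorics for the correct matrix. For instance: all of the columns $c_{-1},c_0,c_w$ of $\bD$ must be $0/1$ vectors with exactly three $1$'s, and $c_1$ has exactly two $1$'s. From $|S_0\cap S_{-1}|=2$, $|S_0\cap S_w|=2$ and $|S_{-1}\cap S_w|=1$ one deduces $|S_0\cap S_{-1}\cap S_w|=1$ and hence $S_0\subseteq S_{-1}\cup S_w$; since $S_1$ must be disjoint from both $S_{-1}$ and $S_w$ yet meet $S_0$, this is a contradiction. So the conclusion holds, but as written your proof contains a genuine computational error in the projective modules that invalidates the final step.
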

\begin{proof}
Let $e=e_{-1}+e_0+e_{w}+e_{1}$ as before. Then, indecomposable projective $e\Gamma^{(2)}(-1,m)e$-modules are given by
\begin{align*}
P_{-1} &= Ke_{-1}\oplus K \beta_{0}\oplus K\alpha_{-1}\beta_{0} \oplus K\gamma_2\beta_0 \oplus K\beta_0\alpha_{-1}\beta_0\oplus K (\alpha_{-1}\beta_0)^2, \\
P_{0} &= Ke_0\oplus K\alpha_{-1}\oplus K\gamma_{2} \oplus K\beta_{1}\oplus K\beta_0\alpha_{-1}\oplus K\alpha_{-1}\beta_0\alpha_{-1}\oplus K\gamma_2\beta_0\alpha_{-1}\oplus K(\beta_0\alpha_{-1})^2, \\
P_{w} &= Ke_{w} \oplus K\gamma_{1}\oplus K\alpha_{-1}\gamma_1\oplus K\gamma_2\gamma_1\oplus K\gamma_1\gamma_2\gamma_1\oplus K(\gamma_2\gamma_1)^2, \\
P_{1} &= Ke_{1} \oplus  K\alpha_{0}\oplus  K\beta_{1}\alpha_{0}. \\
\end{align*}
Thus, the Cartan matrix of $e\Gamma^{(2)}(-1,m)e$ is
\[ \bC=\begin{pmatrix}
3 & 2 & 1 & 0 \\
2 & 3 & 2 & 1 \\
1 & 2 & 3 & 0 \\
0 & 1 & 0 & 2 \end{pmatrix}, \]
and we cannot find the decomposition matrix $\bD$ again. 
\end{proof}

%%%%
\para
Assume that $m=0$. If $l=0$, then  the ideal $I^{(2)}(T_{0}^{0},v_{1},v_{2})$ is generated by the following elements:
\begin{enumerate}
\item $\alpha_{-1}\alpha_{0}$, $\beta_{0}\beta_{-1}$, $\beta_{-1}\alpha_{-1}$ and $\alpha_{0}\beta_{0}$,
\item $\alpha_0^2-\beta_0\alpha_{-1}$,
\item $\beta_{-1}-\alpha_{-1}\beta_0$,
\item $\gamma_2\alpha_0$, $\alpha_0\gamma_1$, $\gamma_1\gamma_3$ and $\gamma_3\gamma_2$,
\item $\gamma_2\beta_0$ and $\alpha_{-1}\gamma_1$,
\item $\beta_0\alpha_{-1}-\gamma_1\gamma_2$,
\item $\gamma_3-\gamma_2\beta_0\alpha_{-1}\gamma_1$.
\end{enumerate}
By using (iii) and (vii), we delete the arrows $\beta_{-1}$ and $\gamma_3$ {to obtain the Gabriel quiver, and replace $\beta_{-1}$ (respectively, $\gamma_3$) 
in the above generators of $I^{(2)}(T_{0}^{0},v_{1},v_{2})$ with $\alpha_{-1}\beta_0$ (respectively, $0$).

If $l=-1$, then  the ideal $I^{(2)}(T_{-1}^{0},v_{1},v_{2})$ is generated by the following elements:
\begin{enumerate}
\item $\alpha_{-1}\alpha_{0}$, $\beta_{0}\beta_{-1}$, $\beta_{-1}\alpha_{-1}$ and $\alpha_{0}\beta_{0}$,
\item $\alpha_0-(\beta_0\alpha_{-1})^2$,
\item $\beta_{-1}-(\alpha_{-1}\beta_0)^2$,
\item $\gamma_2\alpha_0$, $\alpha_0\gamma_1$, $\gamma_1\gamma_3$ and $\gamma_3\gamma_2$,
\item $\gamma_2\gamma_1\gamma_2\beta_0$ and $\alpha_{-1}\beta_0\alpha_{-1}\gamma_1$,
\item $\beta_0\alpha_{-1}-\gamma_1\gamma_2$,
\item $\gamma_3-(\gamma_2\beta_0\alpha_{-1}\gamma_1)^2$.
\end{enumerate}
By using (ii), (iii) and (vii), we delete the arrows $\alpha_{0}$, $\beta_{-1}$ and $\gamma_3$ 
to obtain the Gabriel quiver, 
and replace $\alpha_{0}$ (respectively, $\beta_{-1}$, $\gamma_3$) in the generators of $I^{(2)}(T_{-1}^{0},v_{1},v_{2})$ 
with $(\beta_0\alpha_{-1})^2$ (respectively, $(\alpha_{-1}\beta_0)^2$, $0$).

\begin{lem}\label{D V-5} For each $l=-1,0$, the algebra $\Gamma^{(2)}(T_{l}^{0},v_1,v_2)$ is cellular.
\end{lem}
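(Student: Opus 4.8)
The plan is to prove cellularity of each of the two algebras directly, by exhibiting an anti-involution together with an explicit cellular basis and then verifying the axioms (C1)--(C3); this is the same scheme used above for $\Lambda'(T(0,1))$ and for $\Gamma^{(0)}(T(1),v)$. Both algebras $\Gamma^{(2)}(T_{0}^{0},v_{1},v_{2})$ and $\Gamma^{(2)}(T_{-1}^{0},v_{1},v_{2})$ have exactly three vertices, so after passing to the Gabriel quiver (for $l=0$ one deletes $\beta_{-1}$ and $\gamma_{3}$; for $l=-1$ one deletes $\alpha_{0}$, $\beta_{-1}$ and $\gamma_{3}$) the three indecomposable projectives $P_{-1}$, $P_{0}$, $P_{w}$ have small dimension and can be written down term by term from the relations displayed just above the lemma. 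I would first record explicit bases of these projectives and the Cartan matrices; after relabelling the vertices one finds that the algebra for $l=0$ is the algebra $(5)$ of the main theorem (the loop $\alpha_{0}$ survives in the Gabriel quiver) and the algebra for $l=-1$ is the algebra $(4)(\mathrm{b})$, which is how these two algebras enter the statement of the main theorem.

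The anti-involution in both cases is the mirror symmetry of the underlying Brauer tree: define $\iota$ on the path algebra of the Gabriel quiver by fixing every $e_{i}$, swapping $\alpha_{-1}\leftrightarrow\beta_{0}$ and $\gamma_{1}\leftrightarrow\gamma_{2}$, and fixing $\alpha_{0}$ when $l=0$. One checks by direct substitution that $\iota$ maps each generator of the defining ideal into the ideal --- using $\iota\big((\alpha_{-1}\beta_{0})^{2}\big)=(\beta_{0}\alpha_{-1})^{2}$ to handle the replacements of the deleted arrows in the $l=-1$ case --- so that $\iota$ descends to an algebra anti-involution of $\Gamma^{(2)}(T_{l}^{0},v_{1},v_{2})$.

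Next I would write down a chain $\Lambda=\{\lambda_{1}<\dots<\lambda_{N}\}$, the index sets $\cT(\lambda_{k})$, and the family $(c^{\lambda_{k}}_{s,t})_{s,t\in\cT(\lambda_{k})}$, by grouping the chosen basis vectors of the projectives into $\iota$-stable rectangles: paths with a common source form the rows and paths with a common target form the columns of a block, and $\iota$ transposes the block. As in the two earlier verifications, the constraint $\dim\Gamma^{(2)}(T_{l}^{0},v_{1},v_{2})=\sum_{k}|\cT(\lambda_{k})|^{2}$ together with the requirement (forced by Lemma \ref{C Lem min}) that the extremal cell modules be one-dimensional essentially pins down the grouping; with such a choice (C1) and (C2) are immediate.

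The genuine work, and the step I expect to be the main obstacle, is (C3). Exactly as in the earlier lemmas it is enough to check that a short ascending chain of explicitly described subspaces of each $P_{i}$ --- namely the spans of the ``lower'' rectangles, which will be the submodules $A(\mu_{k})e_{i}$ obtained by tensoring the filtration \eqref{C seq A} with $P_{i}$ --- consists of $A$-submodules; this guarantees that left multiplication by an arrow sends each rectangle, with its second index fixed, into a combination of the basis vectors of the same rectangle with coefficients independent of that index, modulo $A^{>\lambda_{k}}$. Concretely, for $l=0$ one verifies that the socles of $P_{-1}$, $P_{0}$, $P_{w}$ and a single intermediate layer are submodules, while for $l=-1$ there are two further layers because the loops of the Brauer quiver have been replaced by the paths $(\alpha_{-1}\beta_{0})^{2}$ and $(\beta_{0}\alpha_{-1})^{2}$. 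I would carry this out arrow by arrow using the defining relations; the only delicate bookkeeping is to remember that the deleted arrows $\beta_{-1}$, $\gamma_{3}$ (and $\alpha_{0}$ when $l=-1$) act through their replacements, so that, for instance, $\gamma_{2}$ annihilates everything arising from $\beta_{0}$. Once these submodule chains are in place, (C3) follows and both algebras are cellular.
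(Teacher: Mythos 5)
Your plan coincides with the paper's own proof: the paper computes the three indecomposable projectives, takes exactly the anti-involution you describe (swapping $\alpha_{-1}\leftrightarrow\beta_0$ and $\gamma_1\leftrightarrow\gamma_2$, fixing $\alpha_0$), exhibits a cellular basis along a chain $\Lambda=\{\lambda_1<\dots<\lambda_5\}$ with $|\cT(\lambda_i)|\in\{1,2\}$ for $l=0$ and $\{1,2,3\}$ for $l=-1$, and verifies (C3) precisely by checking that the spans of the lower rectangles are submodules of $P_{-1}$ and $P_0$. The only slip is in your illustrative aside: $\gamma_2$ does \emph{not} annihilate everything arising from $\beta_0$ when $l=-1$ (the relation there is $\gamma_2\gamma_1\gamma_2\beta_0$, and $\gamma_2\beta_0$ is a nonzero basis element of $P_{-1}$), but this does not affect the structure of the argument.
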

\begin{proof} Assume that $l=0$. Then, indecomposable projective modules are 
\[\begin{array}{lcl}
P_{-1} &=& Ke_{-1}\oplus K \beta_{0}\oplus K\alpha_{-1}\beta_{0}, \\
P_{0} &=& Ke_0\oplus K\alpha_{-1}\oplus K\gamma_{2} \oplus K\alpha_0\oplus K\beta_0\alpha_{-1}, \\
P_{w} &=& Ke_{w} \oplus K\gamma_{1}\oplus K\gamma_2\gamma_1. \\
\end{array} \]
Let $\iota$ be the anti-involution of $\Gamma^{(2)}(T_{0}^{0},v_1,v_2)$ induced by 
swapping $\alpha_{-1}$ and $\beta_0$, $\gamma_1$ and $\gamma_2$, and fixing $\alpha_0$. 
Put $\Lambda=\{ \lambda_1<\lambda_2<\lambda_3<\lambda_4<\lambda_5\}$ and define
\[
\cT(\lambda_i)=\left\{\begin{array}{ll}
\{1\} & \text{if $i=1,2,5$}, \\
\{1,2\} & \text{otherwise.}\end{array}\right. 
\]
Then one can construct a cellular basis of $\Gamma^{(2)}(T_{0}^{0},v_1,v_2)$ 
whose cell datum is $(\Lambda, \cT, \cC, \iota)$ as follows.
\begin{align*}
& (c_{1,1}^{\lambda_1})=(e_w),\ 
(c_{i,j}^{\lambda_2})_{i,j\in\cT(\lambda_{3})}=
\begin{pmatrix}
e_0 &  \gamma_1 \\
\gamma_2 & \gamma_2\gamma_1 \end{pmatrix},\ 
(c_{1,1}^{\lambda_3})=(\alpha_0), \\[7pt]
& (c_{i,j}^{\lambda_4})_{i,j\in\cT(\lambda_{4})}=
\begin{pmatrix}
e_{-1} &  \alpha_{-1} \\
\beta_0 & \beta_{0}\alpha_{-1} \end{pmatrix},\ (c_{1,1}^{\lambda_5})=(\alpha_{-1}\beta_0).
\end{align*}
It is clear that (C1) and (C2) hold. To see that (C3) holds, observe that 
\begin{enumerate}
\item $K\alpha_0\oplus K\alpha_{-1}\oplus K\beta_0\alpha_{-1}$ and $K\beta_0\alpha_{-1}$ are submodules of $P_0$.
\item $K\alpha_{-1}\beta_0$ is a submodule of $P_{-1}$.
\end{enumerate}

Assume that $l=-1$. Then, indecomposable projective modules are
\[\begin{array}{lcl}
P_{-1} &=& Ke_{-1}\oplus K \beta_{0}\oplus K\alpha_{-1}\beta_{0}\oplus K\gamma_2\beta_0\oplus K\beta_0\alpha_{-1}\beta_0\oplus K(\alpha_{-1}\beta_0)^2, \\
P_{0} &=& Ke_0\oplus K\alpha_{-1}\oplus K\gamma_{2} \oplus K\beta_0\alpha_{-1}\oplus K\alpha_{-1}\beta_0\alpha_{-1}\oplus K\gamma_2\beta_0\alpha_{-1}\oplus K(\beta_0\alpha_{-1})^2, \\
P_{w} &=& Ke_{w} \oplus K\gamma_{1}\oplus \oplus \alpha_{-1}\gamma_1\oplus K\gamma_2\gamma_1\oplus K\beta_0\alpha_{-1}\gamma_1\oplus K(\gamma_2\gamma_1)^2. \\
\end{array} \]
Let $\iota$ be the anti-involution of $\Gamma^{(2)}(T_{-1}^{0},v_1,v_2)$ induced by 
swapping $\alpha_{-1}$ and $\beta_0$, $\gamma_1$ and $\gamma_2$, and fixing $\alpha_0$. 
Put $\Lambda=\{ \lambda_1<\lambda_2<\lambda_3<\lambda_4<\lambda_5\}$ and define
\[ \cT(\lambda_i)=\left\{\begin{array}{ll}
\{1\} & \text{if $i=1,5$}, \\
\{1,2\} & \text{if $i=2,4$}, \\
\{1,2,3\} & \text{if $i=3$}.\end{array}\right. \]
Noting that $K\alpha_{-1}\beta_0\oplus K\beta_0\alpha_{-1}\beta_0\oplus K(\alpha_{-1}\beta_0)^2$ and $K(\alpha_{-1}\beta_0)^2$ are 
submodules of $P_{-1}$, we can check that the following gives a cellular basis with cell datum $(\Lambda, \cT, \cC, \iota)$.
\begin{align*}
& \qquad (c_{1,1}^{\lambda_1})=(e_w),\quad (c_{i,j}^{\lambda_2})_{i,j\in\cT(\lambda_{2})}=
\begin{pmatrix}
e_0 &  \gamma_1 \\
\gamma_2 & \gamma_2\gamma_1 \end{pmatrix},\\[7pt]
& \qquad (c_{i,j}^{\lambda_3})_{i,j\in\cT(\lambda_{3})}=
\begin{pmatrix}
e_{-1} &  \alpha_{-1} & \alpha_{-1}\gamma_1 \\
\beta_0 & \beta_0\alpha_{-1} & \beta_{0}\alpha_{-1}\gamma_1 \\
\gamma_2\beta_0 & \gamma_2\beta_0\alpha_{-1} & (\gamma_2\gamma_1)^2
 \end{pmatrix},\\[9pt]
& (c_{i,j}^{\lambda_4})_{i,j\in\cT(\lambda_{4})}=
\begin{pmatrix}
\alpha_{-1}\beta_0 &  \alpha_{-1}\beta_0\alpha_{-1} \\
\beta_0\alpha_{-1}\beta_0 & (\beta_{0}\alpha_{-1})^2 \end{pmatrix},\ (c_{1,1}^{\lambda_5})=
((\alpha_{-1}\beta_0)^2).
\end{align*}
Thus, $\Gamma^{(2)}(T_{l}^{0},v_1,v_2)$ is cellular.
\end{proof}

%%%%
\para
It remains to consider non-standard self-injective algebras of domestic type. 
Let $T$ be a Brauer graph with exactly one cycle, which is a loop, and the loop is its direct successor. 
Thus, $T$ is of the following form:
\[ T= \begin{xy}
(0,0)*[o]+{v}="0", (8,17)*[o]+{\circ}="1",
(11,24)*+{T_{2}}*\cir<8mm>{ul_l},
(19,4)*[o]+{\circ}="2", 
(25,6.5)*+{T_{3}}*\cir<8mm>{u_ul},
(13,-1)*[o]+{\vdots}="d1",
(20,-10)*[o]+{\circ}="v1", 
(22.7,-16)*+{T_{n}}*\cir<8mm>{r_ur},
\ar @{-}"0";"1"^{2}
\ar @{-}"0";"2"^{3}
\ar @{-}"0";"v1"_{n}
\ar @{-} @(ul,dl)"0";"0"_{1}
\end{xy} \]
We denote by $Q_T$ the corresponding Brauer quver. Then there are two distinguished simple cycles, say $C_1$ and $C_2$, 
associated with the cyclic ordering around the vertex $v$. 
We assume that $C_1$ is the loop from $1$ to its direct successor $1$. Hence, $Q_T$ has the following subquiver:
\[
\begin{xy}
(0,11)*[o]+{3}="0",(15,5)*[o]+{4}="1", (-15,5)*[oo]+{2}="3",(-20,-8)*[o]+{1}="4",(-40,-8)*{}="damy",
(-15,-20)*[o]+{n}="n",(15,-24)*{}="a",(-30,-8)*{C_1}="c1",(0,-8)*{C_2}="c2",
\ar  "0";"1"^{\beta_3}
\ar  "3";"0"^{\beta_2}
\ar "4";"3"_{\beta_{1}}
\ar "n";"4"_{\beta_{n}}
\ar @{-}@(u,u)"4";"damy"_{\alpha_1}
\SelectTips{eu}{}
\ar @(d,d)"damy";"4"
\ar @{--}@(dr,dl) "n";"a"
\ar @{--}@(ur,dr) "a";"1"
\end{xy}
\]
For each vertex $i\in (Q_T)_0$, we denote by $A_i$ and $B_i$ the usual simple cycles in $Q_T$ around $i$. 
Moreover, let $B_j' = \beta_j\beta_{j+1}\cdots\beta_n\alpha_1\beta_1\cdots\beta_{j-1}$, 
if $j$ is a vertex on the cycle $C_2$ and $j\ne 1$. Then, we define $\Omega(T) = KQ_T/ J(T)$, 
where $J(T)$ is the ideal generated by the following elements:
\begin{enumerate}
\item $\alpha\beta$ and $\beta\alpha$, for $\alpha$ in the $\alpha$-part and $\beta$ in the $\beta$-part 
such that the middle point of $\alpha\beta$ and the middle point of $\beta\alpha$ are not the vertex $1$,
\item $\beta_n\beta_1$,
\item $A_i-B_i$, for a vertex $i$ which does not lie on the cycle $C_2$,
\item $A_i-B_i'$, for a vertex $i$ on the cycle $C_2$ which is different from the vertex $1$,
\item $A_1^2-A_1B_1$ and $A_1B_1+B_1A_1$.
\end{enumerate}
Then the following statement holds.
\begin{prop}[{\cite[Theorem 4.14]{S}}]
\label{D Prop dom}
Let $A$ be a self-injective algebra. Then, the following statements are equivalent:
\begin{enumerate}
\item[(1)] $A$ is non-standard domestic of infinite type.
\item[(2)] $A\cong \Omega(T)$ for some Brauer graph $T$ with one loop.
\end{enumerate}
\end{prop}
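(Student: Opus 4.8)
The statement is \cite[Theorem 4.14]{S}, so strictly speaking it only needs to be cited; nonetheless here is the shape the argument takes, and it is the one I would follow. The plan is to combine the structure theory of self-injective algebras of polynomial growth with Riedtmann's theory of socle deformations. First I would invoke the general structure result: a connected self-injective algebra $A$ of infinite representation type is domestic if and only if its stable Auslander--Reiten quiver has the form $\ZZ\wt\Delta/G$ for some Euclidean diagram $\wt\Delta\in\{\wt A_n,\wt D_n,\wt E_6,\wt E_7,\wt E_8\}$ and some infinite cyclic admissible group $G$, equivalently $A$ admits a Galois covering by the repetitive algebra $\wh B$ of a tilted algebra $B$ of Euclidean type (this is what lies behind \cite[Theorem 4.3]{S}, already used above). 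The \emph{standard} such algebras are exactly the orbit algebras $\wh B/G$; an algebra sharing the stable AR quiver of some $\wh B/G$ but not isomorphic to it is by definition non-standard, so the task is to enumerate these.

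Next I would use the fact that a non-standard algebra is a socle deformation of a standard one: it has the same Cartan invariants, the same dimension and the same stable AR quiver as some $\wh B/G$, but its Nakayama-twisted multiplication is deformed. Riedtmann's analysis shows that such a deformation can be nontrivial only when the combinatorics permit it, and in the self-injective domestic setting this forces the Brauer graph $T$ underlying the standard algebra to contain a loop that is moreover its own direct successor in the cyclic order at its vertex --- precisely the configuration in the definition of $\Omega(T)$. Tracing the deformation through the Brauer quiver $Q_T$, the only relations that change are those at the loop vertex: the ``linear'' identification $A_1=B_1$ together with $A_1B_1=B_1A_1=0$ is replaced by $A_1^2-A_1B_1$ and $A_1B_1+B_1A_1$. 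This yields exactly $\Omega(T)=KQ_T/J(T)$, and one checks that the deformation is genuine, i.e. $\Omega(T)\not\cong\wh B/G$ (this requires $\cha K=2$, consistent with, though not needed for, our standing hypothesis).

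For the converse one verifies directly that every $\Omega(T)$ is self-injective, domestic, of infinite type and non-standard. Self-injectivity --- in fact weak symmetry --- follows by computing the indecomposable projectives $\Omega(T)e_i$ from the generators of $J(T)$ and checking $\Soc\cong\Top$ at each vertex; domesticity and infiniteness follow because $\Omega(T)$ has the same dimension, Cartan matrix and (via the associated covering) the same stable AR quiver as the standard orbit algebra attached to $T$, which is domestic of infinite type by the Euclidean-type classification; non-standardness is the deformation argument of the previous paragraph read backwards.

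The main obstacle is the middle step: controlling \emph{all} socle deformations and showing that a loop which is its own direct successor is both necessary and sufficient for a nontrivial one to exist. This is where the real content of \cite{S} lies --- it rests on the full Auslander--Reiten and Galois-covering apparatus for algebras of polynomial growth together with Riedtmann's classification of the deformations of the relevant mesh algebras --- and carrying it out in detail would amount to reproducing that part of \cite{S}. By comparison, the self-injectivity and domesticity of the $\Omega(T)$, while requiring some bookkeeping with $Q_T$ and $J(T)$, are routine.
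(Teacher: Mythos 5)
The paper gives no proof of this proposition: it is quoted directly from \cite[Theorem 4.14]{S}, exactly as you observe, so your ``strictly speaking it only needs to be cited'' is already the same approach the authors take. Your sketch of what lies behind the citation --- Galois coverings by repetitive algebras of tilted algebras of Euclidean type, together with a Riedtmann-style socle deformation concentrated at a loop that is its own direct successor, yielding the modified relations $A_1^2-A_1B_1$ and $A_1B_1+B_1A_1$ --- is a fair description of the Bocian--Skowro\'nski argument. One parenthetical is off, however: the claim that the deformation is genuine only when $\cha K=2$ conflates this case with the finite representation type one, where \cite[Theorem 3.8]{S} does confine non-standard algebras to characteristic $2$ (and the paper uses exactly that to dismiss them in \S 3). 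No such restriction holds for domestic infinite type; if it did, then under the standing hypothesis $\cha K\neq 2$ condition (1) could never be satisfied, the equivalence with (2) would fail (the algebras $\Omega(T)$ certainly exist as self-injective algebras in odd characteristic), and Lemma \ref{D nondom}, which the paper proves precisely to establish Theorem \ref{D main result}(ii), would be vacuous. The characteristic restrictions in this circle of results are: $2$ for finite type, none for domestic infinite type, and $3$ for non-domestic polynomial growth.
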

By Proposition \ref{C Prop simple}, we may assume that $T$ is of the form
\[ T(n)=\begin{xy}
(0,0)*[o]+{\circ}="0", (10,0)*[o]+{\circ}="1",(20,0)*[o]+{\circ}="2", (30,0)*[o]+{\cdots}="d1",(40,0)*[o]+{\circ}="v1", (50,0)*[o]+{\circ}="d2", 
\ar @{-}"0";"1"^{2}
\ar @{-}"1";"2"^{3}
\ar @{-}"2";"d1"
\ar @{-}"d1";"v1"^{n-1}
\ar @{-}"v1";"d2"^{n}
\ar @{-} @(ul,dl)"0";"0"_{1}
\end{xy} \]
where $n\geq 1$. We denote the cycles by $C_1=\gamma$ and $C_2=\alpha_1\beta_1$. 
Note that if $n=1$, then one can show $\Omega(T(1))\cong K[X,Y]/(X^2-XY, XY+YX, Y^2)$. 
Assume that $n\geq 2$. Then, $\Omega(T(n)) = KQ_{T(n)}/I$, where $Q_{T(n)}$ is the Brauer quiver
\[
\begin{xy}
(0,0)*[o]+{1}="0",(10,0)*[o]+{2}="1",(20,0)*[o]+{3}="2", (30,0)*[o]+{}="3",(34,0)*{\cdots}="d1",(38,0)*[o]+{}="m-1",
(50,0)*[o]+{n}="m"
\ar @<1mm> "0";"1"^{\alpha_{1}}
\ar @<1mm> "1";"0"^{\beta_{1}}
\ar @<1mm> "1";"2"^{\alpha_{2}}
\ar @<1mm> "2";"1"^{\beta_{2}}
\ar @<1mm> "2";"3"^{\alpha_{3}}
\ar @<1mm> "3";"2"^{\beta_{3}}
\ar @<1mm> "m-1";"m"^{\alpha_{n-1}}
\ar @<1mm> "m";"m-1"^{\beta_{n-1}}
\SelectTips{eu}{}
\ar @(ur,dr) "m";"m"^{\beta_n}
\ar @(dl,ul)"0";"0"^{\gamma}
\end{xy}\]
and $I$ is the ideal in $KQ_{T(n)}$ generated by 
\begin{enumerate}
\item $\alpha_i\alpha_{i+1}$, for $1\leq i\leq n-2$,
\item $\beta_{i+1}\beta_{i}$, for $1\leq i \leq n-1$,
\item $\beta_1\alpha_1$ and $\alpha_{n-1}\beta_n$,
\item $\alpha_{i}\beta_{i}-\beta_{i-1}\alpha_{i-1}$, for $3\leq i\leq n-1$,
\item $\alpha_{2}\beta_{2}-\beta_1\gamma\alpha_1$ and $\beta_n-\beta_{n-1}\alpha_{n-1}$,
\item $\gamma^2-\gamma\alpha_1\beta_1$ and $\gamma\alpha_1\beta_1+\alpha_1\beta_1\gamma$.
\end{enumerate}

\begin{lem}\label{D nondom}
For every $n\geq 1$, the algebra $\Omega(T(n))$ is not cellular.
\end{lem}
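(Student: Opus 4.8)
The plan is to dispose of the local case $n=1$ by a direct argument and then to reduce every $n\ge 2$ to it by idempotent truncation. For $n=1$, recall from just before the lemma that $\Omega(T(1))\cong K[X,Y]/(X^2-XY,\,XY+YX,\,Y^2)$; this is a local algebra with $K$-basis $1,X,Y,XY$ (so $\dim_K\Omega(T(1))=4$) and it is \emph{not} commutative, since $YX=-XY\neq XY$ (here we use $XY\neq 0$ and $\cha K\neq 2$). Its Cartan matrix is $(4)$. If $\Omega(T(1))$ were cellular with cell datum $(\vL,\cT,\cC,\iota)$, then Proposition \ref{C Prop decom matrix}~(i),(iii) would force the decomposition matrix to be $\bD=(1\ 1\ 1\ 1)^T$ up to reordering of rows, hence $|\vL|=4$; comparing $\dim_K\Omega(T(1))=4$ with $\sum_{\la\in\vL}|\cT(\la)|^2$ would then force $|\cT(\la)|=1$ for every $\la\in\vL$. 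But then condition (C2) makes $\iota$ fix each of the four elements of the cellular basis $\cC$, so $\iota=\id$ and $\Omega(T(1))$ is commutative --- a contradiction. (This is the same device used in the proof of Proposition \ref{D prop local}.) Hence $\Omega(T(1))$ is not cellular.

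For $n\ge 2$, I would compute the idempotent subalgebra $e_1\Omega(T(n))e_1$, i.e.\ the span of the nonzero paths in $\Omega(T(n))$ from the vertex $1$ to itself. Using the defining relations, the relation $\alpha_1\alpha_2=0$ (and, when $n=2$, the identification $\beta_n=\beta_{n-1}\alpha_{n-1}$) shows that no nonzero loop at the vertex $1$ runs into the rest of the quiver past $\alpha_1\beta_1$; together with $\beta_1\alpha_1=0$ this gives $(\alpha_1\beta_1)^2=0$ and $\gamma^3=0$, while relations (vi) give $\gamma^2=\gamma\alpha_1\beta_1=-\alpha_1\beta_1\gamma$. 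Thus $e_1\Omega(T(n))e_1$ has $K$-basis $e_1,\gamma,\gamma^2,\alpha_1\beta_1$, and writing $x:=\gamma$, $y:=\alpha_1\beta_1$ it satisfies precisely $x^2=xy$, $y^2=0$, $xy+yx=0$. Consequently $X\mapsto x$, $Y\mapsto y$ defines an algebra isomorphism $\Omega(T(1))\cong e_1\Omega(T(n))e_1$. Since $\Omega(T(1))$ is not cellular by the previous paragraph, Lemma \ref{C Lem idem} shows that $\Omega(T(n))$ is not cellular either.

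The main work --- and the only real obstacle --- is the explicit verification that $\dim_K e_1\Omega(T(n))e_1=4$ uniformly in $n$, i.e.\ that the four elements listed above are indeed a basis and that no further nonzero loop at the vertex $1$ survives. This is a short inspection of which paths at $1$ are killed by the generators of the ideal, but it is slightly case-sensitive for small $n$ (in particular for $n=2$ one must check that the loop $\beta_n$ at the vertex $n$ contributes nothing once it has been identified with $\beta_{n-1}\alpha_{n-1}$, and that $\gamma^2=\gamma\alpha_1\beta_1$ is genuinely nonzero in $\Omega(T(n))$). Once this computation is in hand, both the reduction to the case $n=1$ and the anti-involution argument there are immediate.
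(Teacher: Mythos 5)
Your proof is correct, and for $n\ge 2$ it takes a genuinely different and substantially shorter route than the paper. The treatment of $n=1$ is the same in both: the Cartan matrix $(4)$ forces $\bD=(1\ 1\ 1\ 1)^T$ and singleton sets $\cT(\la_i)$, so $\iota$ fixes a basis and is therefore the identity, contradicting $XY=-YX\neq YX$. For $n\ge 2$ the paper truncates to $B=(e_1+e_2)\Omega(T(n))(e_1+e_2)$, pins down the decomposition matrix and the cell chain $B\supset B(\la_1)\supset B(\la_2)\supset B(\la_4)\supset 0$, normalizes $\iota$ to fix $e_1,e_2$ via \cite[Proposition 8.2]{KX2}, and derives a contradiction from the characteristic polynomial of the map induced by $\iota$ on $B(\la_2)/B(\la_4)$. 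You instead truncate further, to $e_1\Omega(T(n))e_1$, observe that it is four-dimensional with basis $e_1,\gamma,\alpha_1\beta_1,\gamma^2$ and satisfies exactly the relations of $\Omega(T(1))$ (namely $x^2=xy$, $y^2=0$, $xy+yx=0$ with $x=\gamma$, $y=\alpha_1\beta_1$), and then quote the $n=1$ case together with consequence (a) of Lemma \ref{C Lem idem}. The dimension claim you flag as the remaining work is confirmed by the paper's own computation: in its list of projective $B$-modules, the loops at the vertex $1$ appearing in the basis of $P_1$ are precisely $e_1,\gamma,\alpha_1\beta_1,\gamma^2$. What your approach buys is the elimination of the entire cell-chain and eigenvalue analysis; what the paper's buys is independence from the identification $e_1\Omega(T(n))e_1\cong\Omega(T(1))$, at the cost of considerable length.

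One small caution on the $n=2$ bookkeeping: the relation that kills excursions from vertex $1$ into the loop at vertex $n$ is $\alpha_{n-1}\beta_n=\alpha_1\beta_2=0$ from item (iii) of the presentation, not the identification $\beta_n=\beta_{n-1}\alpha_{n-1}$ (which, read literally for $n=2$, would collide with $\beta_1\alpha_1=0$ and force $\beta_2=0$; the correct substitution there involves the loop $\gamma$). This does not affect your conclusion, since either reading yields $\alpha_1\beta_2=0$ and hence the same four-dimensional truncation.
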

\begin{proof} If $n=1$, then $\Omega(T(1)) = K1\oplus KX\oplus KY \oplus KXY$ and 
the Cartan matrix is $(4)$. Suppose that $\Omega(T(1))$ is cellular with cell detum $(\Lambda, \cT,\cC,\iota)$.  
Then, the decomposition matrix is $\bD=(1\ 1\ 1\ 1)^T$ by Proposition \ref{C Prop decom matrix},  
and we may write $\Lambda=\{\lambda_1, \lambda_2, \lambda_3, \lambda_4\}$. 
Moreover, $4=\sum_{i=1}^4 |\cT(\lambda_i)|^2$ implies that we may set $\cT(\lambda_i)=\{1\}$, for $i=1,2,3,4$.
Put $\cC=\{c^{\lambda_i}_{1,1}\ |\ i=1,2,3,4\}$. By the definition of cellular algebras, the anti-involution fixes all elements of $\Omega(T(1))$. Hence, we have
\[ XY=\iota(XY)=\iota(-YX)=-\iota(X)\iota(Y)=-XY, \]
a contradiction. 

Assume that $n\geq 2$. Let $e=e_1+e_2$ and $B=eAe$. Then, indecomposable projective $B$-modules are
\[\begin{array}{lcl}
P_{1} &=& Ke_{1}\oplus K \beta_{1}\oplus K\gamma\oplus K\alpha_{1}\beta_{1}\oplus K\beta_1\gamma \oplus K\gamma^2, \\
P_{2} &=& Ke_2\oplus K\alpha_{1}\oplus K\gamma\alpha_1\oplus K\beta_{1}\gamma\alpha_1, \\
\end{array} \] 
and the Cartan matrix of $B$ is 
\[ \bC=\begin{pmatrix}
4 & 2 \\
2 & 2 \end{pmatrix}.\]
Our goal is to show that $B$ is not cellular. Thus, we assume to the contrary that 
$B$ is a cellular algebra with a cell datum $(\Lambda, \cT,\cC,\iota)$. 
Then, \cite[Proposition 8.2]{KX2} implies that 
we may assume that the anti-involution $\iota$ fixes $e_1$ and $e_2$. Further, 
by (i) and (iii) from Proposition \ref{C Prop decom matrix}, we obtain
\[ \bD=\begin{pmatrix}
1 & 1 \\
1 & 1 \\
1 & 0 \\
1 & 0 \end{pmatrix}\]
modulo rearrangement of rows. We label $\Lambda=\{\lambda_1,\lambda_2,\lambda_3,\lambda_4\}\supset
\Lambda^+=\{\mu_1,\mu_2\}$ in such a way that $[\Delta(\lambda_i):L(\mu_j)]=d_{ij}$. 
As $\dim \Delta(\lambda_i)=|\cT(\lambda_i)|$, we may define 
\[ \cT(\lambda_{1})=\cT(\lambda_{2})=\{1,2\},\quad \cT(\lambda_{3})=\cT(\lambda_{4})=\{1\}.\]
By Lemma \ref{C Lem min}, we may assume that $\lambda_3$ is minimal and $\lambda_4$ is maximal 
in the partial order, and we may extend the partial order to the total order $3<1<2<4$. 
Since $\lambda_3$ is minimal, we have $\lambda_3\in \Lambda^+$. Then, $d_{3,1}=1$ and $d_{3,2}=0$ imply 
$\mu_1=\lambda_3$ and $\Delta(\lambda_4)\cong L(\lambda_3)$ implies that $\lambda_4\not\in\Lambda^+$.
On the other hand, $1<2$ in the linear extension implies that $\Delta(\lambda_2)$ is isomorphic to 
a submodule of $P(\mu_2)$, so that $\Delta(\lambda_2)$ is uniserial, 
$L(\mu_2)$ is its socle and $L(\mu_1)$ is its cosocle. 
Thus, we obtain $\lambda_2\not\in\Lambda^+$ by $\lambda_2\ne \lambda_3=\mu_1$. 
We conclude that $\mu_1=\lambda_3$ and $\mu_2=\lambda_1$. 
Then, since $\Delta(\lambda_1)$ is a factor module of $P(\mu_2)=P_2$, 
$\Delta(\lambda_1)$ is uniserial, $L_2$ is its cosocle and $L_1$ is its socle. 

Now we consider the sequence of two-sided ideals associated with the linear extension $3<1<2<4$:
\[
B=B(\lambda_3) \supset B(\lambda_1) \supset B(\lambda_2) \supset B(\lambda_4) \supset 0.
\]
Since $B(\lambda_4)e_1\cong L_1$ and $B(\lambda_4)e_2=0$, $B(\lambda_4)=K\gamma^2$ follows. 
Moreover, arguing in the similar manner as in the paragraph after (\ref{F DB 2-2}), we obtain 
\[ 
B(\lambda_2)/B(\lambda_4)\equiv  
K(\gamma+\lambda\alpha_1\beta_1)\oplus K\beta_1\gamma \oplus K\gamma\alpha_1\oplus K\beta_1\gamma\alpha_1
\quad \mod B(\lambda_4), 
\]
for some $\lambda\in K$. We compute the linear map on $B(\lambda_2)/B(\lambda_4)$ which is induced by 
the anti-involution $\iota$. As $\iota(e_1)=e_1$ and $\iota(e_2)=e_2$, we may write
\[
\iota(\alpha_1)=x\beta_1+y\beta_1\gamma, \quad \iota(\beta_1)=z\alpha_1+w\gamma\alpha_1, \quad 
\iota(\gamma)=s\gamma+t\alpha_1\beta_1+u\gamma^2,
\]
for some $x,y,z,w,s,t,u\in K$. Since the anti-involution $\iota$ must preserve the relations 
$\gamma^2=\gamma\alpha_1\beta_1$, $\beta_1\alpha_1=0$, $\gamma\alpha_1\beta_1+\alpha_1\beta_1\gamma=0$, 
we have $xw+yz=0$ and $s^2=-szx$. Further, since $c^{\lambda_4}_{1,1}=\gamma^2 \in B(\lambda_4)=K\gamma^2$ must hold, up to a nonzero 
scalar multiple, $\iota(\gamma^2)=\gamma^2$ implies $s^2=1$. We compute
\[
\begin{cases}
\iota(\gamma+\lambda\alpha_1\beta_1) &= s(\gamma-\lambda\alpha_1\beta_1)+t\alpha_1\beta_1+(u-\lambda yz+\lambda xw)\gamma^2,\\
\iota(\beta_1\gamma) &= sz\gamma\alpha_1, \\
\iota(\gamma\alpha_1) &= sx\beta_1\gamma, \\
\iota(\beta_1\gamma\alpha_1) &= szx\beta_1\gamma\alpha_1 = -\beta_1\gamma\alpha_1.
\end{cases}
\]
Then, as $B(\lambda_2)/B(\lambda_4)$ is stable under the anti-involution $\iota$, we must have $t=2\lambda s$ and if we 
represent the linear map on $B(\lambda_2)/B(\lambda_4)$ induced by the anti-involution $\iota$ with respect to 
the basis $(\gamma+\lambda\alpha_1\beta_1, \beta_1\gamma, \gamma\alpha_1, \beta_1\gamma\alpha_1)$ ({\rm mod} $B(\lambda_4)$), then 
\[
\iota(\gamma+\lambda\alpha_1\beta_1, \beta_1\gamma, \gamma\alpha_1, \beta_1\gamma\alpha_1)
=(\gamma+\lambda\alpha_1\beta_1, \beta_1\gamma, \gamma\alpha_1, \beta_1\gamma\alpha_1)M
\]
where the matrix $M$ is given by
\[
M=\begin{pmatrix}
s & 0 & 0 & 0 \\
0 & 0 & sx & 0 \\
0 & sz & 0 & 0 \\
0 & 0 & 0 & -1 \end{pmatrix}.
\]
The characteristic polynomial of $M$ is $\varphi_M(X)=(X-s)(X+1)(X^2+s)$. Since the anti-involution $\iota$ 
fixes $c^{\lambda_2}_{1,1}$ and $c^{\lambda_2}_{2,2}$, $X=1$ is a multiple root of $\varphi_M(X)$, 
but $s=\pm 1$ is not compatible with this requirement. We have reached a contradiction. 
\end{proof}

By Proposition \ref{D prop local}, Proposition \ref{D I-2 cellular}, Proposition \ref{D II}, 
Proposition \ref{D IV}, Lemma \ref{D V-5} and other lemmas in this section, we obtain the following theorem.

\begin{thm}
\label{D main result}
Assume that $\cha K\neq 2$.
\begin{enumerate}
\item Let $A$ be a basic $K$-algebra of infinite type. Then the following are equivalent.
\begin{enumerate}
\item $A$ is a standard domestic self-injective cellular algebra.
\item $A$ is isomorphic to one of the algebras 
$K[X,Y]/(X^2, Y^2)$, $\Lambda'(T(0,1))$, $\Gamma^{(0)}(T(1),v)$, 
$\Lambda(T,v_{1},v_{2})$ where $T$ is a straight line, or $\Gamma^{(2)}(T_{l}^{0},v_1,v_2)$ for $l\in\{-1,0\}$.
\end{enumerate}

\item There is no non-standard domestic self-injective cellular algebra.
\end{enumerate}
\end{thm}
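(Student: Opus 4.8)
The plan is to prove Theorem \ref{D main result} by assembling the classification results obtained in this section, keeping the two parts separate. For part (1), the starting point is Theorem \ref{D thm class}: together with Proposition \ref{C Prop decom matrix}(iv) and Proposition \ref{C Prop Morita equiv}, a standard self-injective algebra of domestic infinite type that could possibly be cellular must be (up to isomorphism, after passing to the basic algebra, which is legitimate by Proposition \ref{C Prop Morita equiv}(i) since $\cha K\neq 2$) one of $A(\lambda)$, $\Lambda'(T)$, $\Gamma^{(0)}(T,v)$, $\Gamma^{(1)}(T,v)$, $\Lambda(T,v_1,v_2)$, or $\Gamma^{(2)}(T,v_1,v_2)$. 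I would then walk through the six families in turn, invoking: Proposition \ref{D prop local} for $A(\lambda)$ (only $A(1)\cong K[X,Y]/(X^2,Y^2)$ is cellular); Proposition \ref{D I-2 cellular} for $\Lambda'(T)$ (only $T(0,1)$, $T(1,0)$, $T(0,0)$, the last again giving $K[X,Y]/(X^2,Y^2)$, so the listed representatives are $K[X,Y]/(X^2,Y^2)$ and $\Lambda'(T(0,1))$); Proposition \ref{D II} for $\Gamma^{(0)}(T,v)$ (only $\Gamma^{(0)}(T(1),v)$); Proposition \ref{D III} for $\Gamma^{(1)}(T,v)$ (never cellular); Proposition \ref{D IV} for $\Lambda(T,v_1,v_2)$ (cellular iff $T$ is a straight line); and Lemma \ref{D V-5} together with Lemmas \ref{D V-1}, \ref{D V-2}, \ref{D V-3} for $\Gamma^{(2)}(T,v_1,v_2)$ (only $\Gamma^{(2)}(T_l^0,v_1,v_2)$ with $l\in\{-1,0\}$). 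Collecting the survivors yields exactly the list in (1)(b), and conversely each algebra on that list has been shown cellular, so the equivalence (a)$\Leftrightarrow$(b) holds.

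For part (2), I would invoke Proposition \ref{D Prop dom}: a non-standard self-injective algebra of domestic infinite type is isomorphic to $\Omega(T)$ for some Brauer graph $T$ with exactly one loop which is its own direct successor. Proposition \ref{C Prop simple}(iii) forces $T$ to have no branching — any vertex of valency at least two off the loop, or a valency-$\geq 3$ endpoint, would produce in the Gabriel quiver a cycle of length $\geq 3$ with no inverse arrow — so $T$ must be one of the graphs $T(n)$, $n\geq 1$. Then Lemma \ref{D nondom} says $\Omega(T(n))$ is never cellular. Hence there is no non-standard domestic self-injective cellular algebra.

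The bulk of the work has of course already been done in the lemmas and propositions of this section; the proof of the theorem itself is just bookkeeping. The one point that needs a little care, rather than being purely mechanical, is making sure the reduction to the standard/basic setting and the case list is airtight: one must check that passing to the basic algebra preserves being standard, being domestic of infinite type, and being cellular (the last by Proposition \ref{C Prop Morita equiv}(i), and the others are standard facts about Morita equivalence and the covering-theoretic description of standardness), and one must confirm that the "only one cycle, of odd length" hypothesis built into the definitions of $\Lambda'(T),\Gamma^{(i)}(T,\cdot)$ is compatible with the graphs surviving the valency restriction of Proposition \ref{C Prop simple}(iii). I expect the main (very mild) obstacle to be purely expository: stating the collection of survivors in a way that matches the normalization of representatives used in the earlier propositions (for instance that $\Lambda'(T(0,0))$, $A(1)$, and $\Omega(T(1))$ restrictions all collapse to $K[X,Y]/(X^2,Y^2)$ or are excluded), so that the list in (1)(b) is neither redundant nor missing a case.

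\begin{proof}
By Proposition \ref{C Prop Morita equiv}(i) and $\cha K\neq 2$, an algebra is cellular if and only if its basic algebra is, and passing to the basic algebra preserves self-injectivity and the representation type. So it suffices to classify \emph{basic} self-injective cellular algebras of domestic infinite type.

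(1) Let $A$ be a basic standard self-injective algebra of domestic infinite type. By Proposition \ref{C Prop decom matrix}(iv), if $A$ is cellular then its Cartan matrix is nonsingular, so by Theorem \ref{D thm class} the algebra $A$ is isomorphic to one of $A(\lambda)$, $\Lambda(T,v_1,v_2)$, $\Lambda'(T)$, $\Gamma^{(0)}(T,v)$, $\Gamma^{(1)}(T,v)$, or $\Gamma^{(2)}(T,v_1,v_2)$. We examine each family. By Proposition \ref{D prop local}, among the $A(\lambda)$ only $A(1)\cong K[X,Y]/(X^2,Y^2)$ is cellular. By Proposition \ref{D I-2 cellular}, $\Lambda'(T)$ is cellular exactly when $T\in\{T(0,1),T(1,0),T(0,0)\}$; here $\Lambda'(T(0,0))\cong K[X,Y]/(X^2,Y^2)$ and $\Lambda'(T(1,0))\cong\Lambda'(T(0,1))$, so the cellular ones are $K[X,Y]/(X^2,Y^2)$ and $\Lambda'(T(0,1))$. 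By Proposition \ref{D II}, among the $\Gamma^{(0)}(T,v)$ only $\Gamma^{(0)}(T(1),v)$ is cellular. By Proposition \ref{D III}, no $\Gamma^{(1)}(T,v)$ is cellular. By Proposition \ref{D IV}, $\Lambda(T,v_1,v_2)$ is cellular exactly when $T$ is a straight line. Finally, by Lemma \ref{D V-1}, Lemma \ref{D V-2} and Lemma \ref{D V-3}, $\Gamma^{(2)}(T,v_1,v_2)$ is not cellular unless $m=0$, and by Lemma \ref{D V-5} the remaining algebras $\Gamma^{(2)}(T_l^0,v_1,v_2)$ with $l\in\{-1,0\}$ are cellular. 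Collecting these, $A$ is a standard domestic self-injective cellular algebra if and only if $A$ is isomorphic to one of $K[X,Y]/(X^2,Y^2)$, $\Lambda'(T(0,1))$, $\Gamma^{(0)}(T(1),v)$, $\Lambda(T,v_1,v_2)$ with $T$ a straight line, or $\Gamma^{(2)}(T_l^0,v_1,v_2)$ with $l\in\{-1,0\}$. This proves the equivalence in (1).

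(2) Suppose $A$ is a basic non-standard self-injective algebra of domestic infinite type. By Proposition \ref{D Prop dom}, $A\cong\Omega(T)$ for some Brauer graph $T$ with exactly one cycle, which is a loop that is its own direct successor. If $T$ had a branching vertex, then by Proposition \ref{C Prop simple}(iii) the Gabriel quiver of $\Omega(T)$ would contain a cycle of length at least three admitting no inverse arrow, contradicting cellularity; hence, as in the discussion preceding Lemma \ref{D nondom}, if $\Omega(T)$ is cellular then $T\cong T(n)$ for some $n\geq 1$. But Lemma \ref{D nondom} shows that $\Omega(T(n))$ is not cellular for any $n\geq 1$. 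Therefore there is no non-standard domestic self-injective cellular algebra.
\end{proof}
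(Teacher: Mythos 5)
Your proposal is correct and follows essentially the same route as the paper, which proves this theorem simply by assembling Proposition \ref{D prop local}, Proposition \ref{D I-2 cellular}, Proposition \ref{D II}, Proposition \ref{D III}, Proposition \ref{D IV}, Lemmas \ref{D V-1}--\ref{D V-5}, Proposition \ref{D Prop dom} and Lemma \ref{D nondom}, after the reduction (stated at the start of the section) to weakly symmetric basic algebras with nonsingular Cartan matrix via Proposition \ref{C Prop decom matrix} and Proposition \ref{C Prop Morita equiv}. Your bookkeeping of the survivors, including the identifications $\Lambda'(T(0,0))\cong K[X,Y]/(X^2,Y^2)$ and $\Lambda'(T(1,0))\cong\Lambda'(T(0,1))$, matches the paper's intended argument.
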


\remark
We review the definitions of the algebras in Theorem \ref{D main result}. The algebra $\Lambda(T,v_1,v_2)$ 
is a Brauer graph algebra whose Brauer graph $T$ has exactly two exceptional vertices such that 
their multiplicities are both two, and the other algebras are as follows. 
\begin{itemize}
\item
$\Lambda'(T(0,1))=KQ/I$ where the quiver $Q$ is
\[
\begin{xy}
(0,0) *[o]+{0}="A", (15,0) *[o]+{1}="B",
\ar @<1mm> "A";"B"^{\gamma_0}
\ar @<1mm> "B";"A"^{\gamma_1}
\SelectTips{eu}{}
\ar @(dl,ul) "A";"A"^{\delta_0}
\ar @(ur,dr) "B";"B"^{\alpha_1}
\end{xy}
\]
and the ideal $I$ is generated by 
$\gamma_0\alpha_1$, $\alpha_1\gamma_1$, $\gamma_1\gamma_0$, $\delta_0^2$, 
$\alpha_1-\gamma_1\delta_0\gamma_0$, $\delta_0\gamma_0\gamma_1-\gamma_0\gamma_1\delta_0$,
\item
$\Gamma^{(0)}(T(1),v)=KQ/I$ where the quiver $Q$ is
\[
\begin{xy}
(0,0) *[o]+{a}="A", (15,0) *[o]+{b}="B", (30,0) *[o]+{1}="C",
\ar @<1mm> "A";"B"^{\delta_a}
\ar @<1mm> "B";"C"^{\gamma_b}
\ar @<1mm> "B";"A"^{\delta_b}
\ar @<1mm> "C";"B"^{\gamma_1}
\end{xy}
\]
and the ideal $I$ is generated by 
$\gamma_b\gamma_1\delta_b\delta_a\gamma_b$, $\gamma_1\delta_b\delta_a\gamma_b\gamma_1$, 
$\delta_a\gamma_b\gamma_1\delta_b\delta_a$, $\delta_b\delta_a\gamma_b\gamma_1\delta_b$, $\gamma_1\gamma_b$,
$\delta_a\gamma_b\gamma_1-\delta_a\delta_b\delta_a$, $\gamma_b\gamma_1\delta_b-\delta_b\delta_a\delta_b$,
\item
$\Gamma^{(2)}(T^0_{-1},v_1,v_2)=KQ/I$ where the quiver $Q$ is
\[
\begin{xy}
(0,0) *[o]+{-1}="A", (15,0) *[o]+{0}="B", (30,0) *[o]+{w}="C",
\ar @<1mm> "A";"B"^{\alpha_{-1}}
\ar @<1mm> "B";"C"^{\gamma_1}
\ar @<1mm> "B";"A"^{\beta_0}
\ar @<1mm> "C";"B"^{\gamma_2}
\end{xy}
\]
and the ideal $I$ is generated by 
$\alpha_{-1}(\beta_0\alpha_{-1})^2$, $\gamma_2(\beta_0\alpha_{-1})^2$, $(\beta_0\alpha_{-1})^2\beta_0$, 
$(\beta_0\alpha_{-1})^2\gamma_1$, $\gamma_2\gamma_1\gamma_2\beta_0$,
$\alpha_{-1}\beta_0\alpha_{-1}\gamma_1$, $\beta_0\alpha_{-1}-\gamma_1\gamma_2$,
\item
$\Gamma^{(2)}(T^0_0,v_1,v_2)=KQ/I$ where the quiver $Q$ is
\[
\begin{xy}
(0,0) *[o]+{-1}="A", (15,0) *[o]+{0}="B", (30,0) *[o]+{w}="C",
\ar @<1mm> "A";"B"^{\alpha_{-1}}
\ar @<1mm> "B";"C"^{\gamma_1}
\ar @<1mm> "B";"A"^{\beta_0}
\ar @<1mm> "C";"B"^{\gamma_2}
\SelectTips{eu}{}
\ar @(ru,lu) "B";"B"_{\alpha_0}
\end{xy}
\]
and the ideal $I$ is generated by
$\alpha_{-1}\alpha_0$, $\alpha_0\beta_0$, $\alpha_0^2-\beta_0\alpha_{-1}$, 
$\alpha_{-1}\gamma_1$, $\gamma_2\beta_0$, $\alpha_0\gamma_1$, 
$\gamma_2\alpha_0$ and $\beta_0\alpha_{-1}-\gamma_1\gamma_2$. 
\end{itemize}

%%%%%%%%%%%%%%%%%%%%%%%%%%%%%%%%%%%%%%%%%%%%%%%%%%%%%%%%%%%%%%%
\section{Self-injective cellular algebras of polynomial growth} 
%%%%%%%%%%%%%%%%%%%%%%%%%%%%%%%%%%%%%%%%%%%%%%%%%%%%%%%%%%%%%%%
In this section, we determine basic self-injective cellular algebras of polynomial growth 
which are not domestic.
By Proposition\;\ref{C Prop decom matrix} and  Proposition\;\ref{C Prop Morita equiv}, they are
weakly symmetric and have nonsingular Cartan matrix.  
In \cite{S}, the following two classes of self-injective $K$-algebras are classified ($\cha K\neq 2$).
\begin{itemize}
\item Basic standard nondomestic weakly symmetric algebras of polynomial growth having nonsingular 
Cartan matrices.
\item Basic non-standard nondomestic self-injective algebras of polynomial growth.
\end{itemize}
%%%

Hence, our task is to check whether an algebra from the two classification results admits a cellular algebra structure. 

Before we recall the two classification results in the next two theorems, assuming that the base field $K$ has an odd characteristic, 
several remarks are in order.

\remark
In the classification of basic standard nondomestic weakly symmetric algebras of polynomial growth 
in \cite[Theorem 5.9]{S}, 
the defining relations for the algebra $A_{13}$ contains $\gamma\beta=0$, but we correct it to $\beta\gamma=0$, following \cite{BiS}.

\remark
There are basic non-standard nondomestic self-injective algebras of polynomial growth in characteristic two. They are 
$\Lambda_3(\lambda)$ and $\Lambda_m$, for $4\le m\le 10$ in \cite[Theorem 5.16]{S}.

\remark
Basic standard nondomestic weakly symmetric algebras of polynomial growth having singular Cartan matrices are also classified. 
See \cite[Theorem 5.12]{S}.

%%%%%%%%%%%%%%%%%%%%%%%%%%%%

\begin{thm}[{\cite[Theorem\;5.13, Theorem\;5.16]{S}}]
\label{P theorem nonstandard nondomestic}
Assume that $\cha K\neq 2$. Then
a basic $K$-algebra $A$ is non-standard nondomestic self-injective of polynomial
 growth if and only if $\cha K=3$ and $A$ is isomorphic to one of the following bound quiver algebras.
\[\Lambda_1:
\begin{xy}
(0,0) *[o]+{1}="A", (10,0) *[o]+{2}="B",
\ar @(lu,ld) "A";"A"_{\alpha}
\ar @<2pt> "A";"B"^{\gamma}
\ar @<2pt> "B";"A"^{\beta}
\end{xy}
\tiny{ \begin{array}{c}
  \alpha^2=\gamma\beta \\
 \beta\alpha\gamma=\beta\alpha^2\gamma \\
 \beta\alpha\gamma\beta=0\\
 \gamma\beta\alpha\gamma=0\\
\end{array}
}
\hspace{10pt}
\Lambda_2:
\begin{xy}
(0,0) *[o]+{1}="A", (10,0) *[o]+{2}="B",
\ar @(lu,ld) "A";"A"_{\alpha}
\ar @<2pt> "A";"B"^{\gamma}
\ar @<2pt> "B";"A"^{\beta}
\end{xy}
\tiny{ \begin{array}{c}
  \alpha^2\gamma=0 \\
  \beta\alpha^2=0\\
  \gamma\beta\gamma=0\\
  \beta\gamma\beta=0\\
   \beta\gamma=\beta\alpha\gamma \\
   \alpha^3=\gamma\beta\\
\end{array}
}
\] 

\end{thm}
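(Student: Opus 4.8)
The plan is to treat this as the classification theorem of Skowroński that it is, so a genuine proof is long; what follows is the strategy one would carry out. The first step is to invoke the dichotomy between \emph{standard} and \emph{non-standard} self-injective algebras: a basic connected self-injective algebra $A$ is standard precisely when its repetitive category $\wh{A}$ admits a simply connected Galois covering, equivalently when $A$ is an orbit algebra $\wh{B}/G$ for a suitable tilted (or tubular) algebra $B$ and an admissible torsion-free group $G$ of $K$-linear automorphisms. Non-standard algebras occur only as exceptional deformations of standard ones, and the obstruction to lifting an isomorphism through the Galois covering is a cohomological invariant that vanishes unless $\cha K\in\{2,3\}$. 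Since $\cha K\ne 2$ is assumed, one is left with $\cha K=3$.

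The second step is to pin down the shapes that can occur. A nondomestic self-injective algebra of polynomial growth is, up to stable equivalence, of tubular type, i.e.\ built from a tubular algebra (canonical type $(2,2,2,2)$, $(3,3,3)$, $(2,4,4)$ or $(2,3,6)$) by the trivial extension / orbit algebra construction. One first classifies the standard such algebras --- this is the companion result \cite[Theorem 5.9]{S} quoted in this section --- together with their stable Auslander--Reiten quivers, and then determines which of them carry a socle deformation that is not isomorphic to the undeformed algebra. The mesh relations in the universal cover may be twisted only at the vertices fixed by the Nakayama automorphism, and in characteristic $3$ exactly one family of covers admits such a non-trivial twist; carrying this out produces precisely the two bound quiver algebras $\Lambda_1$ and $\Lambda_2$ in the statement, all other candidates being either isomorphic to the standard algebra or not self-injective of polynomial growth.

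The final step is verification on the two candidates: one checks from the bound quiver presentations that $\Lambda_1$ and $\Lambda_2$ are self-injective (each indecomposable projective has simple socle), that they are of polynomial growth (their stable categories agree with that of the relevant tubular algebra), that they are genuinely non-standard (the fundamental group obstruction of the universal cover is non-trivial in $\cha K=3$), and that $\Lambda_1\not\cong\Lambda_2$ (the deformed relations $\alpha^2=\gamma\beta$ and $\alpha^3=\gamma\beta$ cannot be matched under any admissible change of arrows). The main obstacle is the covering-theoretic content of the second step: deciding which deformations of the mesh relations destroy standardness rests on Riedtmann's and Skowroński's analysis of simple connectedness and of socle deformations in small characteristic, and in this paper we take the outcome as a black box from \cite{S}, since it is needed only to test cellularity of the resulting algebras afterwards.
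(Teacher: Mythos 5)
The paper does not prove this statement at all: it is imported verbatim from \cite[Theorems 5.13 and 5.16]{S} and used only as input for the cellularity analysis, so the ``proof'' in the paper is simply the citation. Your proposal, which sketches the covering-theoretic classification (standard versus non-standard dichotomy, tubular types, socle deformations in characteristic $3$) but explicitly takes the outcome as a black box from \cite{S}, is in substance the same move, and your summary of the method behind the cited result is a fair description of the literature.
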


%%%%%

\begin{thm}[{\cite[Theorem\;5.3, Theorem\;5.9]{S}}]
\label{P theorem standard nondomestic}
A basic $K$-algebra $A$ is standard nondomestic weakly symmetric of polynomial growth and has 
 nonsingular Cartan matrix if and only if $A$ is isomorphic to one of the following bound quiver algebras.

\[\begin{array}{c}

\begin{array}{c}
A_1(\lambda):\\
{\text {\tiny $\lambda\in K\setminus\{0,1\}$}}\\
\end{array}
\begin{xy}
(0,0) *[o]+{1}="A", (8,0)*[o]+{2}="B", (16,0)*[o]+{3}="C",
\ar @<2pt> "A";"B"^{\alpha}
\ar @<2pt> "B";"A"^{\gamma}
\ar @<2pt> "B";"C"^{\sigma }
\ar @<2pt> "C";"B"^{\beta}
\end{xy}
{\tiny \begin{array}{c}
\alpha\gamma\alpha=\alpha\sigma\beta\\
\beta\gamma\alpha=\lambda\beta\sigma\beta\\
  \gamma\alpha\gamma=\sigma\beta\gamma \\
 \gamma\alpha\sigma=\lambda\sigma\beta\sigma \\
\end{array}
}
\hspace{5pt}
 \begin{array}{c}
A_2(\lambda):\\
{\text {\tiny $\lambda\in K\setminus\{0,1\}$}}\\
\end{array}

\begin{xy}
(0,0) *[o]+{1}="A", (10,0)*[o]+ {2}="B",
\ar @(lu,ld) "A";"A"_{\alpha}
\ar @(ru,rd) "B";"B"^{\beta}
\ar @<2pt> "A";"B"^{\sigma}
\ar @<2pt> "B";"A"^{\gamma}
\end{xy}
\tiny{ \begin{array}{c}
  \alpha^2=\sigma\gamma \\
 \lambda\beta^2=\gamma\sigma \\
 \gamma\alpha=\beta\gamma \\
 \sigma\beta=\alpha\sigma \\
\end{array}
}

\\\\
\begin{array}{lll}

A_3:
\begin{xy}
(0,-3)*[o]+{1}="A", (6,1)*[o]+{2}="B", (6,9)*[o]+{3}="C", (12,-3)*[o]+{4}="D", 
\ar @<2pt>"A";"B"^{\alpha}
\ar @<2pt>"B";"A"^{\beta}
\ar @<2pt>"B";"C"^{\delta}
\ar @<2pt>"C";"B"^{\gamma}
\ar @<2pt>"B";"D"^{\epsilon}
\ar @<2pt>"D";"B"^{\zeta}
\end{xy}
{\tiny \begin{array}{c}
  \beta\alpha+\delta\gamma+\epsilon\zeta=0 \\
  \alpha\beta=0 \\
  \gamma\delta=0 \\
 \zeta\epsilon=0 \\
 \end{array}
}

&

A_4:
\begin{xy}
(0,-3) *[o]+{1}="A", (6,1) *[o]+{2}="B", (6,9) *[o]+{3}="C", (12,-3) *[o]+{4}="D", 
\ar @<2pt>"A";"B"^{\alpha}
\ar @<2pt>"B";"A"^{\beta}
\ar @<2pt>"B";"C"^{\delta}
\ar @<2pt>"C";"B"^{\gamma}
\ar @<2pt>"B";"D"^{\epsilon}
\ar @<2pt>"D";"B"^{\zeta}
\end{xy}
{\tiny \begin{array}{c}
  \beta\alpha+\delta\gamma+\epsilon\zeta=0 \\
  \alpha\beta=0 \\
  \gamma\epsilon=0 \\
 \zeta\delta=0 \\
 \end{array}
}

&

A_5:
\begin{xy}
(0,0) *[o]+{1}="A", (10,0) *[o]+{2}="B",
\ar @(lu,ld) "A";"A"_{\alpha}
\ar @<2pt> "A";"B"^{\gamma}
\ar @<2pt> "B";"A"^{\beta}
\end{xy}
\tiny{ \begin{array}{c}
  \alpha^2=\gamma\beta \\
 \beta\alpha\gamma=0 \\
\end{array}
}\\\\

A_6:
\begin{xy}
(0,0) *[o]+{1}="A", (10,0) *[o]+{2}="B",
\ar @(lu,ld) "A";"A"_{\alpha}
\ar @<2pt> "A";"B"^{\gamma}
\ar @<2pt> "B";"A"^{\beta}
\end{xy}
\tiny{ \begin{array}{c}
  \alpha^3=\gamma\beta \\
 \beta\gamma=0 \\
 \beta\alpha^2=0 \\
 \alpha^2\gamma=0 \\
\end{array}
}
 &
A_7:
\begin{xy}
(0,0) *[o]+{1}="A", (7,0) *[o]+{2}="B", (14,0) *[o]+{3}="C", (21,0) *[o]+{4}="D",
\ar @<2pt> "A";"B"^{\alpha}
\ar @<2pt> "B";"A"^{\beta}
\ar @<2pt> "B";"C"^{\delta}
\ar @<2pt> "C";"B"^{\gamma}
\ar @<2pt> "C";"D"^{\epsilon}
\ar @<2pt> "D";"C"^{\zeta}
\end{xy}
{\tiny \begin{array}{c}
  \beta\alpha=\delta\gamma \\
 \gamma\delta=\epsilon\zeta \\
 \alpha\delta\epsilon=0 \\
 \zeta\gamma\beta=0 \\
\end{array}
}
&
A_8:
\begin{xy}
(0,5) *[o]+{1}="A", (0,-5) *[o]+{2}="B", (10,-5) *[o]+{3}="C", (10,5) *[o]+{4}="D",
\ar "A";"B"_{\sigma}
\ar "B";"C"_{\zeta}
\ar "C";"D"_{\gamma}
\ar "D";"A"_{\delta}
\ar @<2pt> "A";"C"^{\alpha}
\ar @<2pt> "C";"A"^{\beta}
\end{xy}
{\tiny \begin{array}{c}
  \alpha\beta\alpha=\sigma\zeta \\
  \beta\alpha\beta=\gamma\delta\\
  \zeta\beta\alpha=0 \\
 \delta\alpha\beta=0 \\
 \beta\alpha\gamma=0 \\
 \alpha\beta\sigma=0 \\
 \zeta\gamma=0 \\
 \delta\sigma=0\\
\end{array}
}\\\\

A_9:
\begin{xy}
(0,5) *[o]+{1}="A", (0,-5) *[o]+{2}="B", (10,-5) *[o]+{3}="C", (10,5) *[o]+{4}="D",
\ar "A";"B"_{\alpha}
\ar @<-2pt>"B";"C"_{\sigma}
\ar @<-2pt>"C";"B"_{\beta}
\ar @<-2pt> "C";"D"_{\gamma}
\ar @<-2pt> "D";"C"_{\epsilon}
\ar "D";"A"_{\delta}
\end{xy}
{\tiny \begin{array}{c}
  \delta\alpha=\epsilon\beta \\
  \gamma\epsilon=\beta\sigma\\
 \alpha\sigma\beta=0 \\
 \epsilon\gamma\delta=0 \\
 \sigma\gamma\epsilon\gamma=0 \\
 
\end{array}
}
&
A_{10}:
\begin{xy}
(8,10) *[o]+{1}="A", (8,1) *[o]+{2}="B", (16,-5) *[o]+{3}="C", (0,-5) *[o]+{4}="D",
\ar @<2pt>"A";"B"^{\beta}
\ar @<2pt>"B";"A"^{\alpha}
\ar "B";"C"^{\delta}
\ar "C";"D"^{\gamma}
\ar "D";"B"^{\epsilon}
\end{xy}
{\tiny \begin{array}{c}
  \epsilon\alpha\beta=\epsilon\delta\gamma\epsilon \\
  \alpha\beta\delta=\delta\gamma\epsilon\delta\\
 \beta\alpha=0 \\
 (\gamma\epsilon\delta)^2\gamma=0 \\
\end{array}
}

&
A_{11}:
\begin{xy}
(0,0) *[o]+{1}="A", (7,0) *[o]+{2}="B", (14,0) *[o]+{3}="C", (21,0) *[o]+{4}="D",
\ar @<2pt> "A";"B"^{\beta}
\ar @<2pt> "B";"A"^{\alpha}
\ar @<2pt> "B";"C"^{\eta }
\ar @<2pt> "C";"B"^{\gamma}
\ar @<2pt> "C";"D"^{\zeta}
\ar @<2pt> "D";"C"^{\delta}
\end{xy}
{\tiny \begin{array}{c}
  \gamma\alpha\beta=\gamma\eta\gamma \\
 \alpha\beta\eta=\eta\gamma\eta \\
 \beta\alpha=0 \\
 \delta\gamma=0 \\
 \eta\zeta=0\\
 (\gamma\eta)^2=\zeta\delta
\end{array}
}\\\\

A_{12}:
\begin{xy}
(0,-5)*[o]+{1}="A",(8,7)*[o]+{2}="B", (16,-5)*[o]+{3}="C", 
\ar "A";"B"^{\alpha}
\ar "B";"C"^{\gamma}
\ar @<2pt>"C";"A"^{\beta}
\ar @<2pt>"A";"C"^{\delta}
\end{xy}
{\tiny \begin{array}{c}
\delta\beta\delta=\alpha\gamma \\
\gamma\beta\alpha=0 \\
\beta(\delta\beta)^3=0\\
\end{array}
}

&

A_{13}:
\begin{xy}
(0,0) *[o]+{1}="A", (8,0) *[o]+{2}="B", (16,0) *[o]+{3}="C",
\ar @(ul,ur) "B";"B"^{\alpha}
\ar @<2pt> "A";"B"^{\beta}
\ar @<2pt> "B";"A"^{\gamma}
\ar @<2pt> "B";"C"^{\delta }
\ar @<2pt> "C";"B"^{\sigma}
\end{xy}
{\tiny \begin{array}{c}
\alpha^2=\gamma\beta\\
\beta\delta=0\\
 \beta\gamma=0 \\
 \sigma\gamma=0 \\
 \alpha\delta=0 \\
 \sigma\alpha=0 \\
 \alpha^3=\delta\sigma \\
\end{array}
}

&
A_{14}:
\begin{xy}
(0,0) *[o]+{1}="A", (8,0) *[o]+{2}="B", (16,0) *[o]+{3}="C",
\ar @<2pt> "A";"B"^{\alpha}
\ar @<2pt> "B";"A"^{\beta}
\ar @<2pt> "B";"C"^{\delta }
\ar @<2pt> "C";"B"^{\gamma}
\end{xy}
{\tiny \begin{array}{c}
\beta\alpha=(\delta\gamma)^2 \\
\alpha\delta\gamma\delta=0 \\
  \gamma\delta\gamma\beta=0 \\
 \alpha\beta=0 \\
 \end{array}
}

\end{array}\\\\

A_{15}:
\begin{xy}
(0,-5) *[o]+{1}="A",(8,7) *[o]+{2}="B", (16,-5) *[o]+{3}="C", 
\ar @(lu,ld) "A";"A"_{\alpha} 
\ar "A";"B"^{\sigma}
\ar "B";"C"^{\gamma}
\ar @<2pt>"C";"A"^{\beta}
\ar @<2pt>"A";"C"^{\delta}
\end{xy}
{\tiny \begin{array}{c}
\gamma\beta\alpha=0 \\
\alpha^2=\delta\beta \\
\beta\delta=0 \\
\alpha\sigma=0\\
\alpha\delta=\sigma\gamma\\
\end{array}
} 
\hspace{10pt}

A_{16}:
\begin{xy}
(0,-5) *[o]+{1}="A",(8,7) *[o]+{2}="B", (16,-5) *[o]+{3}="C", 
\ar @(lu,ld) "A";"A"_{\alpha} 
\ar "B";"A"_{\sigma}
\ar "C";"B"_{\gamma}
\ar @<2pt>"A";"C"^{\beta}
\ar @<2pt>"C";"A"^{\delta }
\end{xy}
{\tiny \begin{array}{c}
\alpha\beta\gamma=0 \\
\alpha^2=\beta\delta \\
\delta\beta=0 \\
\sigma\alpha=0\\
\delta\alpha=\gamma\sigma\\
\end{array}
}\\

\end{array}
\] 
\end{thm}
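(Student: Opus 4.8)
The plan is to prove this classification by realizing the algebras in question as orbit algebras of repetitive categories of tubular algebras, which is the structure underlying the whole theory of self-injective algebras of polynomial growth used in \cite{S}. For the forward direction, I would first invoke the structure theorem for standard self-injective algebras: a basic connected standard self-injective algebra $A$ is of polynomial growth if and only if $A \cong \wh{B}/G$, where $\wh{B}$ is the repetitive category of a tilted algebra $B$ and $G=\langle\varphi\rangle$ is an admissible infinite cyclic group generated by a strictly positive automorphism $\varphi$ of $\wh{B}$. Under this description the representation type of $A$ is governed by that of $B$: $A$ is of finite type, domestic infinite type, or non-domestic of polynomial growth according as $B$ is tilted of Dynkin, Euclidean, or tubular type. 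Since we assume $A$ is non-domestic of polynomial growth, this reduces the problem to the case where $B$ is (derived equivalent to) a tubular algebra.

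Next I would use Ringel's classification of tubular algebras, which fall into the four tubular types $(2,2,2,2)$, $(3,3,3)$, $(2,4,4)$, $(2,3,6)$, and fix a canonical representative $B$ for each type. Because the orbit algebra $\wh{B}/\langle\varphi\rangle$ depends on $B$ only through its derived equivalence class (the repetitive category $\wh{B}$ being a derived invariant), it suffices to run through these finitely many representatives. For each one I would determine the group of admissible positive automorphisms of $\wh{B}$ and then impose the two defining constraints of the theorem. Weak symmetry forces the permutation of the simple modules induced by $\varphi$ (the Nakayama permutation) to be trivial, so that $\varphi=\nu_{\wh{B}}\,\psi$ with $\psi$ fixing the objects of $\wh{B}$ and strictly positive.

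The continuous families $A_1(\lambda)$ and $A_2(\lambda)$ then arise because a tubular family of $\wh{B}$ is parametrized by $\PP^1(K)$: the object-fixing positive automorphisms $\psi$ act on this $\PP^1$, and after normalizing the three distinguished slopes to $0,1,\infty$ the remaining freedom is a single cross-ratio parameter $\lambda\in K\setminus\{0,1\}$, while the finitely many rigid choices of $\psi$ produce the discrete algebras $A_3,\dots,A_{16}$. For each resulting $\varphi$ I would compute the bound quiver presentation of $\wh{B}/\langle\varphi\rangle$ explicitly and impose that its Cartan matrix be nonsingular; since the Cartan matrix of an orbit algebra is controlled by the Coxeter transformation of $B$ together with $\varphi$, this is a spectral condition that retains exactly the algebras in the stated list and discards the degenerate ones (those being classified separately in the singular-Cartan case). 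Conversely, for completeness I would check directly that every listed bound quiver algebra $A_i$ (and $A_i(\lambda)$) is realized as such an orbit algebra, hence is standard, weakly symmetric, non-domestic of polynomial growth with nonsingular Cartan matrix, and that the list contains no repetitions by separating the algebras through their Gabriel quivers, the dimension vectors of the indecomposable projectives, and the Cartan determinants.

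The hard part is this explicit enumeration and computation: listing all admissible positive automorphisms of each repetitive category $\wh{B}$, carrying out the case-by-case computation of the orbit-algebra presentations so as to recover the defining relations exactly as displayed, verifying weak symmetry and nonsingularity in each case, and eliminating the isomorphic duplicates that the different tubular types and automorphisms produce. Essentially all of the technical weight of the classification lives in these steps, which is why the full argument fills the cited source \cite{S}.
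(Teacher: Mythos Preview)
The paper does not prove this theorem at all: it is quoted verbatim from \cite[Theorem\;5.3, Theorem\;5.9]{S} as an input to the classification, with no argument given. So there is no ``paper's own proof'' to compare against; the authors simply take the list of bound quiver algebras as a black box and then, in the subsequent lemmas, decide one by one which of $A_1(\lambda),\dots,A_{16}$ are cellular.

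Your sketch is a reasonable summary of the structure theory behind \cite{S}: standard self-injective algebras of polynomial growth are orbit algebras $\wh{B}/\langle\varphi\rangle$, non-domestic polynomial growth corresponds to tubular $B$, and the tubular types $(2,2,2,2)$, $(3,3,3)$, $(2,4,4)$, $(2,3,6)$ together with the admissible automorphisms produce the finite list plus the one-parameter families. That is indeed the framework of the cited source, and you correctly flag that the actual content --- enumerating the automorphisms, computing presentations, and separating isomorphism classes --- is a long case analysis carried out there rather than something one can compress into a short argument. One small caution: the weak-symmetry and nonsingular-Cartan conditions are not imposed in \cite{S} via a spectral criterion on the Coxeter transformation as you suggest, but are checked directly on the resulting presentations; and the split between the nonsingular list (Theorem~5.9) and the singular list (Theorem~5.12) in \cite{S} is done by hand on the Cartan matrices, not by an a priori structural argument.
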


%%%%%%%%%%%%%%%%%%%%%%%%%%%%

The main result of this section is the following.

%%%

\begin{thm}
\label{P main}
Assume that $\cha K\neq 2$.
\begin{enumerate} 
\item Let $A$ be a basic $K$-algebra. Then the following are equivalent.
\begin{enumerate}
\item $A$ is a standard nondomestic self-injective cellular algebra
 of polynomial growth.
\item $A$ is isomorphic to either $A_1(\lambda)$, $A_2(\lambda)$ 
$(\lambda\in K\setminus\{0,1\})$, $A_4$, $A_7$ or $A_{11}$.
\end{enumerate}

\item There is no non-standard nondomestic self-injective cellular algebra
 of polynomial growth. 
 \end{enumerate}
\end{thm}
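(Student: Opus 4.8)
The plan is to go through the two classification lists (Theorem~\ref{P theorem standard nondomestic} for the standard case and Theorem~\ref{P theorem nonstandard nondomestic} for the non-standard case) algebra by algebra, and for each one either exhibit a cellular basis or derive a contradiction with one of the properties of cellular algebras collected in \S2. The principal tool for ruling algebras out will be Lemma~\ref{C Lem idem}(a): given a candidate $A=KQ/I$, choose a small idempotent $e$ (typically a sum of two or three $e_i$), compute the Cartan matrix of $eAe$ by writing out bases of the relevant indecomposable projectives, and then check whether there is a nonnegative integer matrix $\bD$ with $\bD^T\bD$ equal to that Cartan matrix and with every column containing a $1$ (Proposition~\ref{C Prop decom matrix}(i),(iii)), or whether $\det\bC>0$ fails (Proposition~\ref{C Prop decom matrix}(iv)), or whether the constraints of Lemma~\ref{C Lem min} on rows corresponding to maximal/minimal elements force a contradiction. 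A second recurring tool is Proposition~\ref{C Prop simple}(iii): any arrow of the Gabriel quiver must admit an inverse arrow, so algebras whose quiver (after rewriting $I$ to be admissible) has a loop or an arrow without a companion in the opposite direction, or a cycle of length~$\ge3$ with no inverse, are immediately excluded; this kills many of the $A_i$ at a glance.

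For the positive direction, I expect only $A_1(\lambda)$, $A_2(\lambda)$, $A_4$, $A_7$, $A_{11}$ to survive, and for each of these I would produce an explicit cell datum $(\vL,\cT,\cC,\iota)$ in the same style as the proofs of Lemma~\ref{D I-2 (d)(e)}, Proposition~\ref{D II} and Lemma~\ref{D V-5}: specify the anti-involution $\iota$ as an involution of the quiver preserving $I$ (for $A_1(\lambda)$ and $A_2(\lambda)$ it will interchange the two arrows between a pair of vertices and involve $\lambda$ in a controlled way, which is precisely why these algebras, unlike $A(\lambda)$ in Proposition~\ref{D prop local}, remain cellular for all $\lambda$), then linearly order $\vL$, assign the sets $\cT(\la)$ so that $\sum_{\la}|\cT(\la)|^2=\dim A$, write the matrices $(c^{\la}_{\fs\ft})$ out of the computed bases of the projectives, and verify (C1)--(C3). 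As in the earlier sections, checking (C3) reduces to observing that certain explicitly listed subspaces of the indecomposable projectives are submodules, which is a short direct computation.

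For the non-standard case (part (2)), Theorem~\ref{P theorem nonstandard nondomestic} says a non-standard nondomestic self-injective algebra of polynomial growth forces $\cha K=3$ and $A\cong\Lambda_1$ or $A\cong\Lambda_2$, each with two vertices. Here I would argue exactly as in Lemma~\ref{D nondom} and Lemma~\ref{F Lemma DB 2}: compute $Ae_1$, $Ae_2$ and hence the Cartan matrix $\bC$, deduce the shape of $\bD$ from $\bD^T\bD=\bC$ together with Proposition~\ref{C Prop decom matrix}(i) and Lemma~\ref{C Lem min}, pin down $\vL$, $\vL^+$ and a linear extension, construct the ideal filtration \eqref{C seq A} and the induced filtrations of the projectives \eqref{C seq Pla}, and then examine the action of the putative anti-involution $\iota$ on a subquotient such as $A(\la_2)/A(\la_4)$. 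The relation that makes these algebras non-commutative in a way incompatible with $\iota$ (for $\Lambda_1$ the relations involving $\beta\alpha\gamma$ and $\alpha^2=\gamma\beta$; for $\Lambda_2$ those involving $\beta\gamma$ and $\alpha^3=\gamma\beta$) should, after representing $\iota$ by a matrix on the relevant subquotient and computing its characteristic polynomial, be seen to contradict the requirement that $\iota$ fixes the diagonal cellular basis elements $c^{\la}_{\fs\fs}$. I expect the main obstacle to be precisely this last computation in the non-standard case and in the one or two borderline standard algebras whose quiver does pass the inverse-arrow test (the analogues of $A_5$, $A_6$, $A_{13}$ type algebras with a loop at one vertex): there the Cartan-matrix obstruction alone is not enough, and one must run the full anti-involution argument, tracking how $\iota$ acts on socles and on the ideal filtration, to extract the contradiction. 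The bookkeeping of which $e_i$ to pick and which subquotient to analyze for each algebra is the part that requires care rather than cleverness.
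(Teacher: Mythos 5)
Your overall strategy coincides with the paper's: exclude $A_8$, $A_9$, $A_{10}$, $A_{12}$, $A_{15}$, $A_{16}$ at once by the inverse-arrow criterion of Proposition~\ref{C Prop simple}(iii); kill $A_3$ (and, via the idempotent $e_1+e_2$ and Lemma~\ref{F Lemma DB 2}, also $A_{13}$ and $A_{14}$) by the Cartan/decomposition-matrix constraints together with Lemma~\ref{C Lem min}; and exhibit explicit cell data for $A_1(\lambda)$, $A_2(\lambda)$, $A_4$, $A_7$, $A_{11}$ exactly as you describe. The one substantive divergence is how the remaining hard cases ($A_5$, $A_6$ in the standard list and $\Lambda_1$, $\Lambda_2$ in the non-standard list) are closed out. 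You propose to run the anti-involution argument of Lemma~\ref{D nondom}: represent $\iota$ on a subquotient $A(\la_2)/A(\la_4)$ and derive a contradiction from its characteristic polynomial. That argument works for $\Omega(T(n))$ precisely because of the anti-commutation relation $\gamma\alpha_1\beta_1+\alpha_1\beta_1\gamma=0$, which forces a $-1$ eigenvalue; the algebras $A_5$, $A_6$, $\Lambda_1$, $\Lambda_2$ have no such relation, and it is not clear the eigenvalue computation produces any contradiction there. The paper instead uses a purely module-theoretic argument needing no analysis of $\iota$ at all: the decomposition matrix forces $\Delta(\la_1)$ (resp.\ $\Delta(\la_2)$) to embed in $P_2$, hence to be uniserial with socle $L_2$, while an explicit computation of $P(\mu_1)_{\la_1}$ inside the cell filtration shows a power of the loop $\alpha$ survives in the socle, putting $L_1$ there --- a contradiction. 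Your hedge about ``tracking how $\iota$ acts on socles and on the ideal filtration'' is in the right neighbourhood, but you should drop the $\iota$-eigenvalue mechanism for these cases and run the socle-of-$\Delta$ comparison instead; as written, the decisive step of your plan for part (ii) and for $A_5$, $A_6$ rests on a tool that has not been shown to apply.
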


%%%
\subsection{Proof of Theorem\;\ref{P main}\;(i)}
In this subsection, we prove Theorem \ref{P main}(i).
By Proposition \ref{C Prop simple}(iii), we may exclude the 
algebras $A_8$, $A_9$, $A_{10}$, $A_{12}$, $A_{15}$ and $A_{16}$ 
immediately, so that it suffices to consider 
$A_1(\lambda)$, $A_2(\lambda)$, $A_3$, $A_4$, $A_5$, $A_6$, $A_7$, $A_{11}$, $A_{13}$ and $A_{14}$.
%%%%%%%%%%%%%%%%%%%%%%%%%%%%%%%%%%%%%%%%%%%%%%%%%%%%%%%%%%%%%%%
%%%%%%%%%%%%%%%%%%%%%%%%%%%%%%%% A_1 %%%%%%%%%%%%%%%%%%%%%%%%%%
\begin{lem}
\label{P A1}
$A_1(\lambda)$ is cellular.
\end{lem}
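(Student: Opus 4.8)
The statement asserts that $A_1(\lambda)$ is cellular, where $A_1(\lambda) = KQ/I$ with quiver $Q$ having vertices $1,2,3$, arrows $\alpha\colon 1\to 2$, $\gamma\colon 2\to 1$, $\sigma\colon 2\to 3$, $\beta\colon 3\to 2$, and relations $\alpha\gamma\alpha=\alpha\sigma\beta$, $\beta\gamma\alpha=\lambda\beta\sigma\beta$, $\gamma\alpha\gamma=\sigma\beta\gamma$, $\gamma\alpha\sigma=\lambda\sigma\beta\sigma$. Following Strategy 2 from the paper, the plan is to exhibit an explicit cellular basis. First I would compute bases of the three indecomposable projective modules $P_1, P_2, P_3$ directly from the relations, reading off how each path acts; this pins down $\dim A_1(\lambda)$ and the Cartan matrix, which will guide the cell structure. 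I expect $P_1$ and $P_3$ to be uniserial-like of small dimension and $P_2$ to be the big one; the Cartan matrix should match the known symmetric-algebra data for this algebra. This bookkeeping step is routine but must be done carefully because the subsequent choices of $\cT(\lambda_i)$ depend on the exact dimensions.

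Next I would define the anti-involution $\iota$. The natural candidate is the $K$-algebra anti-automorphism of $KQ$ fixing $e_1,e_2,e_3$ and swapping $\alpha\leftrightarrow\gamma$ and $\sigma\leftrightarrow\beta$ (this is the "reflection" of the quiver that reverses every arrow). One checks it preserves the ideal $I$: applying the anti-automorphism to $\alpha\gamma\alpha=\alpha\sigma\beta$ gives $\gamma\alpha\gamma=\sigma\beta\gamma$, which is another relation; similarly $\beta\gamma\alpha=\lambda\beta\sigma\beta$ maps to $\gamma\alpha\sigma=\lambda\sigma\beta\sigma$, again a relation; and the remaining two relations are the images of the first two, so the ideal is indeed $\iota$-stable. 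Hence $\iota$ descends to an anti-involution of $A_1(\lambda)$.

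Then I would set up a poset $\vL$ (a chain should suffice, say $\lambda_1 < \lambda_2 < \dots$) and assign sets $\cT(\lambda_i)$ so that $\sum_i |\cT(\lambda_i)|^2 = \dim A_1(\lambda)$, mirroring the pattern already used in the paper for $\Lambda'(T(0,1))$, $\Gamma^{(0)}(T(1),v)$, and $\Gamma^{(2)}(T^0_l,v_1,v_2)$ (Lemmas \ref{D I-2 (d)(e)}, \ref{D II}, \ref{D V-5}): one-element sets for the minimal and maximal cells, and larger sets in the middle corresponding to the projective $P_2$. For each $\lambda_i$ I would write down an explicit matrix $(c^{\lambda_i}_{\fs\ft})$ of paths, chosen so that $\iota$ swaps $c^{\lambda_i}_{\fs\ft} \leftrightarrow c^{\lambda_i}_{\ft\fs}$ (this forces the matrix to be "symmetric under transpose" with respect to $\iota$, which dictates which paths go in which slot). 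Conditions (C1) and (C2) are then immediate by construction. The main obstacle is condition (C3): I must verify that left multiplication by any $a\in A_1(\lambda)$ on $c^{\lambda}_{\fs\ft}$ stays, modulo the span of higher cells $A^{>\lambda}$, within the row indexed by $\ft$ with coefficients independent of $\ft$. In practice, as the paper notes in the analogous lemmas, this reduces to checking that certain spans of paths (the "tails" $c^{\lambda}_{\ast\ft}$ for fixed $\ft$, read inside a projective) are left submodules of the relevant $P_j$ — a finite check using the explicit projective-module bases computed in the first step. I expect the appearance of the parameter $\lambda$ in the relations $\beta\gamma\alpha=\lambda\beta\sigma\beta$ and $\gamma\alpha\sigma=\lambda\sigma\beta\sigma$ to be the delicate point: the cellular basis elements in the middle cells must be chosen compatibly with these scalars (possibly rescaling some paths by powers of $\lambda$) so that the submodule property holds for all $\lambda\in K\setminus\{0,1\}$ simultaneously. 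Once the spans are checked to be submodules, (C3) follows and the proof is complete.
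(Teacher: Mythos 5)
Your proposal follows essentially the same route as the paper's proof: compute the indecomposable projectives, take the reflection anti-involution swapping $\alpha\leftrightarrow\gamma$ and $\beta\leftrightarrow\sigma$ (which you correctly check preserves the relations), and build a cellular basis over a chain $\lambda_1<\dots<\lambda_6$ with $|\cT(\lambda_i)|=1,2,2,3,1,1$, the cells being carried by $e_2$, the tops of $P_1$ and $P_3$, the radical-square layer inside $P_2$, and the two socle elements. Your anticipated $\lambda$-rescaling is exactly what the paper does (the middle cell has $(2,2)$-entry $\lambda\beta\sigma$ so that the $\iota$-symmetry and condition (C3) hold), so the only thing left to supply is the explicit matrices themselves.
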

\begin{proof}
Let $A=A_1(\lambda)$ with $\lambda\in K\setminus\{0,1\}$.
Since $\lambda\neq 1$, we have 
\begin{enumerate}
\item $\beta\sigma\beta\gamma=\beta\gamma\alpha\gamma=\lambda\beta\sigma\beta\gamma=0.$
\item $\alpha\gamma\alpha\gamma\alpha=\alpha\sigma\beta\gamma\alpha=\lambda\alpha\sigma\beta\sigma\beta=\lambda\alpha\gamma\alpha\sigma\beta=\lambda\alpha\gamma\alpha\gamma\alpha=0.$
\item $\alpha\sigma\beta\sigma=\alpha\gamma\alpha\sigma=\lambda\alpha\sigma\beta\sigma=0.$
\end{enumerate}
Then, we may read the radical series of $P_1$ from the $K$-basis
\[e_1, \; \gamma, 
\begin{array}{cc}
 \alpha\gamma\\
 \beta\gamma\\
\end{array},\;
\gamma\alpha\gamma=\sigma\beta\gamma,\;
 \alpha\gamma\alpha\gamma=\alpha\sigma\beta\gamma,
  \]
and we see that $P_1$ has submodules 
$K\alpha\gamma\oplus K\beta\gamma\oplus K\gamma\alpha\gamma\oplus K(\alpha\gamma)^2$ 
and $K(\alpha\gamma)^2$. Similarly, $P_2$ and $P_3$ have the $K$-bases
\[e_2, 
\begin{array}{cc}
 \alpha\\ 
 \beta\\
\end{array}, 
\begin{array}{cc}
 \gamma\alpha\\
 \sigma\beta\\
\end{array},
\begin{array}{cc} 
  \alpha\gamma\alpha=\alpha\sigma\beta\\
  \beta\gamma\alpha=\lambda\beta\sigma\beta\\
\end{array},\;  
\gamma\alpha\gamma\alpha,
\]
\[e_3,\; \sigma, 
\begin{array}{cc}
 \alpha\sigma\\
 \beta\sigma\\
\end{array},\;
\gamma\alpha\sigma=\lambda\sigma\beta\sigma,
 \beta\gamma\alpha\sigma=\lambda\beta\sigma\beta\sigma,\\
\]

\medskip
\noindent
and $P_3$ has submodules 
$K\alpha\sigma\oplus K\beta\sigma\oplus K\gamma\alpha\sigma\oplus K\beta\gamma\alpha\sigma$ 
and $K\beta\gamma\alpha\sigma$.

Let $\iota$ be the anti-involution induced by swapping $\alpha$ and $\gamma$,
$\beta$ and $\sigma$, respectively. It is straightforward to check that it is well-defined.
Take a totally ordered set $\Lambda=\{\lambda_1<\lambda_2<\lambda_3<\lambda_4<\lambda_5<\lambda_6\}$ 
and define
\[\cT(\lambda_1)=\cT(\lambda_5)=\cT(\lambda_6)=\{1\},\ \cT(\lambda_2)=\cT(\lambda_3)=\{1,2\},\ \cT(\lambda_4)=\{1,2,3\}.\]
Then we can construct a cellular basis of $A$ as follows.
\begin{align*}
& (c_{1,1}^{\lambda_1})=(e_2),\  
(c_{i,j}^{\lambda_2})_{i,j\in \cT(\la_2)}
    =\begin{pmatrix}
    e_1 & \alpha \\
    \gamma & \gamma\alpha \\
    \end{pmatrix},\ 
(c_{i,j}^{\lambda_3})_{i,j\in \cT(\la_3)}
    =\begin{pmatrix}
    e_3 & \beta \\
    \sigma & \sigma\beta \\
    \end{pmatrix},\\[7pt]
& (c_{i,j}^{\lambda_4})_{i,j\in \cT(\la_4)}
    =\begin{pmatrix}
    \alpha\gamma & \alpha\sigma & \alpha\gamma\alpha \\
    \beta\gamma & \lambda\beta\sigma & \beta\gamma\alpha \\
    \gamma\alpha\gamma & \gamma\alpha\sigma & \gamma\alpha\gamma\alpha \\
    \end{pmatrix},\ 
 (c_{1,1}^{\lambda_5})=(\alpha\gamma\alpha\gamma),\ 
 (c_{1,1}^{\lambda_6})=(\beta\gamma\alpha\sigma).\\
 \end{align*}
Hence, $A_1(\la)$ is cellular.
\end{proof}

%%%%%%%%%%%%%%%%%%%%%%%%%%%%%%%%%%%%%%%%%%%%%%%%%%%%%%%%%%%%%%%%%%%%%%%%%%%%%%%%%%%%%
%%%%%%%%%%%%%%%%%%%%%%%%%%%%%%%%%%%%%%%%% A_2 %%%%%%%%%%%%%%%%%%%%%%%%%%%%%%%%%%%%%%%

\begin{lem}
\label{P A2}
$A_2(\lambda)$ is cellular.
\end{lem}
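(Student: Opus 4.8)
The plan is to imitate the proof of Lemma \ref{P A1}: exhibit an explicit anti-involution on $A_2(\lambda)$ and an explicit cellular basis adapted to the radical filtrations of the two indecomposable projective modules. First I would compute bases of $P_1$ and $P_2$ from the relations $\alpha^2=\sigma\gamma$, $\lambda\beta^2=\gamma\sigma$, $\gamma\alpha=\beta\gamma$, $\sigma\beta=\alpha\sigma$. Chasing these relations one finds, after checking that longer paths vanish (e.g.\ $\alpha^3=\alpha\sigma\gamma=\sigma\beta\gamma=\sigma\gamma\alpha=\alpha^3$ forces no immediate collapse, so one must track the socle carefully), that each projective has Loewy length $\le 5$ and dimension matching the Cartan matrix $\bC=\begin{pmatrix}4&2\\2&4\end{pmatrix}$ with $\dim A_2(\lambda)=12$. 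The point is that the paths through the two loops and the two arrows organize into a symmetric pattern that mirrors the symmetry $1\leftrightarrow 2$, $\alpha\leftrightarrow\beta$, $\gamma\leftrightarrow\sigma$.

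Next I would define $\iota$ to be the anti-involution induced by $e_1\mapsto e_1$, $e_2\mapsto e_2$, $\alpha\leftrightarrow\alpha$ reversed, $\beta\leftrightarrow\beta$ reversed, $\gamma\leftrightarrow\sigma$; more precisely the algebra anti-automorphism of $KQ$ sending $\alpha\mapsto\alpha$, $\beta\mapsto\beta$, $\gamma\mapsto\sigma$, $\sigma\mapsto\gamma$, and check that it preserves the ideal $I$: the relation $\alpha^2=\sigma\gamma$ maps to $\alpha^2=\gamma\sigma$, which we need to be a consequence — here the scalar $\lambda$ intervenes, and one likely needs to rescale the arrows (replace $\sigma$ by $\lambda\sigma$ or similar) so that $\iota$ is genuinely an involution fixing the relations. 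This rescaling step is where I expect the only real subtlety: one must choose the normalization of the four arrows so that the two ``loop'' relations and the two ``commutation'' relations are simultaneously $\iota$-symmetric. Since $\lambda\ne 0$ this is always possible, and since $\lambda\ne 1$ there is no degeneration, so $A_2(\lambda)$ is genuinely of the expected type.

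Then I would write down the cellular basis. By analogy with Lemma \ref{P A1}, take a totally ordered $\Lambda=\{\lambda_1<\dots<\lambda_k\}$ (I expect $k=6$, with $\vL^+$ of size $2$ and the decomposition matrix a $6\times 2$ matrix of $0$'s and $1$'s satisfying $\bD^T\bD=\bC$, e.g.\ the columns having weights $4$ and $4$ with overlap $2$), assign $\cT(\lambda_i)$ so that $\sum_i|\cT(\lambda_i)|^2=12$, and fill in the matrices $(c^{\lambda_i}_{\fs\ft})$ with monomials in the arrows chosen symmetrically under $\iota$ and compatibly with the submodule structure of $P_1,P_2$ computed in the first step. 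Conditions (C1) and (C2) will be immediate from the construction; for (C3) it suffices, exactly as in the previous lemmas, to observe that certain explicit subspaces spanned by the ``lower'' basis elements (the tails of the radical filtrations of $P_1$ and $P_2$) are $A$-submodules.

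The main obstacle will be the bookkeeping in identifying the correct radical layers of $P_1$ and $P_2$ — in particular pinning down which products of length $3$ and $4$ coincide and which vanish, so that the socle of each projective is simple (as it must be, $A_2(\lambda)$ being weakly symmetric) — and then choosing the arrow normalization so that $\iota$ is a well-defined involution. Once those two computations are done, assembling the cellular basis is routine and parallels Lemma \ref{P A1} verbatim.
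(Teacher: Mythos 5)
Your plan is essentially the paper's proof: the paper computes the bases of $P_1$ and $P_2$ (dimension $12$, Cartan matrix $\begin{pmatrix}4&2\\2&4\end{pmatrix}$), takes the anti-involution fixing $\alpha$ and $\beta$ and swapping $\gamma$ and $\sigma$, and writes down a cellular basis with $\vL=\{\la_1<\dots<\la_6\}$, $|\cT(\la_i)|=1,1,2,2,1,1$, verifying (C3) from the submodule structure of the projectives. One correction: no rescaling of arrows is needed, because an anti-automorphism reverses products, so under $\gamma\leftrightarrow\sigma$ one has $\iota(\sigma\gamma)=\iota(\gamma)\iota(\sigma)=\sigma\gamma$ and $\iota(\gamma\sigma)=\gamma\sigma$; thus $\alpha^2-\sigma\gamma$ and $\lambda\beta^2-\gamma\sigma$ are fixed and the two commutation relations $\gamma\alpha-\beta\gamma$, $\sigma\beta-\alpha\sigma$ are swapped, so $\iota$ preserves $I$ exactly as written and the scalar $\lambda$ causes no difficulty.
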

\begin{proof}
Let $A=A_2(\lambda)$ with $\lambda\in K\setminus\{0,1\}$.
Since $\lambda\neq 1$, we have 
\begin{enumerate}
\item $\beta^2\gamma=\beta\gamma\alpha=\gamma\alpha^2=\gamma\sigma\gamma=\lambda\beta^2\gamma=0.$
\item $\alpha^2\sigma=\sigma\gamma\sigma=\lambda\sigma\beta^2=
\lambda\alpha\sigma\beta=\lambda\alpha^2\sigma=0.$
\end{enumerate}
Then, $P_1$ has the following $K$-basis.
\[e_1, 
\begin{array}{cc}
 \alpha\\
 \gamma\\
\end{array},
\begin{array}{cc}
 \alpha^2=\sigma\gamma\\
 \gamma\alpha=\beta\gamma\\
\end{array},\;
\alpha^3=\sigma\gamma\alpha=\alpha\sigma\gamma.
  \]
On the other hand, $P_2$ has the following $K$-basis.
\[e_2, 
\begin{array}{cc}
 \sigma\\
 \beta\\
\end{array},
\begin{array}{cc}
 \alpha\sigma=\sigma\beta\\
 \gamma\sigma=\lambda\beta^2\\
\end{array},\;
\gamma\alpha\sigma=\gamma\sigma\beta=\lambda\beta^3.
  \]
Let $\iota$ be the anti-involution induced by fixing $\alpha$ and $\beta$, and swapping 
$\sigma$ and $\gamma$. Take a totally ordered set
 $\Lambda=\{\lambda_1<\lambda_2<\lambda_3<\lambda_4<\lambda_5<\lambda_6\}$ 
and define
\[\cT(\lambda_1)=\cT(\lambda_2)=\cT(\lambda_5)=\cT(\lambda_6)=\{1\},\ 
\cT(\lambda_3)=\cT(\lambda_2)=\{1,2\}.\]
Then one can construct a cellular basis of $A$ as follows.
\begin{align*}
& (c_{1,1}^{\lambda_1})=(e_1),\
 (c_{1,1}^{\lambda_2})=(e_2),\quad \  
(c_{i,j}^{\lambda_3})_{i,j\in \cT(\la_3)}
    =\begin{pmatrix}
    \alpha & \sigma \\
    \gamma & \beta \\
    \end{pmatrix},\\[7pt] 
& (c_{i,j}^{\lambda_4})_{i,j\in \cT(\la_4)}
    =\begin{pmatrix}
    \alpha^2 & \alpha\sigma \\
    \gamma\alpha & \beta^2 \\
    \end{pmatrix},\ 
(c_{1,1}^{\lambda_5})
    =(\alpha^3),\ 
 (c_1^{\lambda_6})=(\beta^3).
 \end{align*}
Hence, $A_2(\la)$ is cellular.
\end{proof}
%%%%%%%%%%%%%%%%%%%%%%%%%%%%%%%%%%%%%%%%%%%%%%%%%%%%%%%%%%%%%%%%%%%%%%%%%%%%%%%%%%%%%
%%%%%%%%%%%%%%%%%%%%%%%%%%%%%%%%%%%%%% A_3 %%%%%%%%%%%%%%%%%%%%%%%%%%%%%%%%%%%%%%%%%%
\begin{lem}
\label{P A3}
$A_3$ is not cellular.
\end{lem}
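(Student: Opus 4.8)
The plan is to run Strategy~3: apply consequence (a) of Lemma~\ref{C Lem idem} with a cleverly chosen idempotent to cut $A_3$ down to an algebra with only three simple modules whose Cartan matrix is incompatible with cellularity. Concretely, I would put $e=e_1+e_2+e_3$ and prove that $eA_3e$ is not cellular; since $\cha K\neq2$, Lemma~\ref{C Lem idem}(a) then forces $A_3$ itself to be non-cellular. The point of choosing $e_1+e_2+e_3$ (rather than, say, $e_1+e_3+e_4$) is that it produces exactly the Cartan matrix already treated in Lemma~\ref{D I-2 (a)}.

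The first task is to compute the Cartan matrix of $eA_3e$. As in the remark after Lemma~\ref{C Lem idem}, for $i,j\in\{1,2,3\}$ one has $\dim_K e_jA_3e_i=[P_i:L_j]$, and the same formula holds inside $eA_3e$, so it suffices to write down the indecomposable projective $A_3$-modules $P_1,P_2,P_3$ (equivalently the spaces $e_jA_3e_i$). Two observations make this routine: first, $A_3$ is weakly symmetric by Proposition~\ref{C Prop Morita equiv}(ii), so each $P_i$ has simple top and simple socle both isomorphic to $L_i$; second, the defining relations $\alpha\beta=\gamma\delta=\zeta\epsilon=0$ and $\beta\alpha+\delta\gamma+\epsilon\zeta=0$ are invariant under the $S_3$-action permuting the outer vertices $\{1,3,4\}$ (the transposition $1\leftrightarrow3$ swapping $\alpha\leftrightarrow\gamma$, $\beta\leftrightarrow\delta$, etc.), so it is enough to analyze $P_1$ and $P_2$ and transport the rest by symmetry. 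Rewriting long paths by means of $\beta\alpha=-\delta\gamma-\epsilon\zeta$ together with the quadratic relations (for instance $\alpha\delta\gamma=-\alpha\beta\alpha-\alpha\epsilon\zeta=-\alpha\epsilon\zeta$, $\alpha\delta\gamma\delta=0$, and so on) one finds that $P_1$ has Loewy layers $L_1/L_2/(L_3\oplus L_4)/L_2/L_1$, hence $[P_1:L_1]=[P_1:L_2]=2$ and $[P_1:L_3]=1$, while $P_2$ has layers $L_2/(L_1\oplus L_3\oplus L_4)/L_2^{2}/(L_1\oplus L_3\oplus L_4)/L_2$, hence $[P_2:L_2]=4$ and $[P_2:L_1]=[P_2:L_3]=2$. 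Consequently the Cartan matrix of $eA_3e$ is
\[
\bC=\begin{pmatrix} 2 & 2 & 1 \\ 2 & 4 & 2 \\ 1 & 2 & 2 \end{pmatrix}.
\]

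This is precisely the Cartan matrix appearing in the proof of Lemma~\ref{D I-2 (a)}. By the argument there, any decomposition matrix $\bD$ with non-negative integer entries satisfying $\bD^{T}\bD=\bC$ must equal $\begin{pmatrix}1&1&0\\1&1&1\\0&1&1\\0&1&0\end{pmatrix}$ up to rearrangement of rows (a middle column with a single entry $2$ is excluded by Proposition~\ref{C Prop decom matrix}(i)); but this $\bD$ has only one unit-vector row, whereas a four-element poset $\vL$ has distinct minimal and maximal elements, each of whose rows must be a unit vector by Lemma~\ref{C Lem min}, a contradiction. Hence $eA_3e$ is not cellular, and therefore $A_3$ is not cellular. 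The only real work is the explicit determination of the projective modules of $A_3$; it is elementary but requires some care with the composition convention and with systematic use of the relation $\beta\alpha+\delta\gamma+\epsilon\zeta=0$, which is where I expect the bulk of the bookkeeping to lie.
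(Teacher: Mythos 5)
Your proof is correct, and it reaches the contradiction by a slightly different route than the paper. The paper works with the full four-vertex algebra: it computes all of $P_1,\dots,P_4$, obtains the $4\times 4$ Cartan matrix $\bigl(\begin{smallmatrix}2&2&1&1\\2&4&2&2\\1&2&2&1\\1&2&1&2\end{smallmatrix}\bigr)$, lists the two possible $4\times 4$ decomposition matrices $\bD$ with $\bD^T\bD=\bC$, and observes that neither has enough unit-vector rows to be compatible with Lemma \ref{C Lem min}. You instead truncate by $e=e_1+e_2+e_3$, invoke Lemma \ref{C Lem idem}, and reduce to the $3\times 3$ Cartan matrix already handled in Lemma \ref{D I-2 (a)}; your computation of the relevant composition multiplicities ($[P_1:L_j]=(2,2,1,1)$, $[P_2:L_j]=(2,4,2,2)$, and $[P_3:L_j]$ by the evident symmetry of the relations under permuting the outer vertices) agrees with the paper's explicit bases for the projectives, and your parenthetical justification that the middle column of $\bD$ cannot be a single entry $2$ is exactly the appeal to Proposition \ref{C Prop decom matrix}(i) that the uniqueness claim in Lemma \ref{D I-2 (a)} rests on. The trade-off is minor: your version recycles an existing lemma and only needs three of the four projectives, at the cost of the extra truncation step, while the paper's direct argument avoids Lemma \ref{C Lem idem} entirely but must enumerate the $4\times 4$ factorizations itself. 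Both contradictions ultimately come from the same source, namely that a four-element poset forces at least two distinct rows of $\bD$ (one for a minimal and one for a maximal element) to be unit vectors.
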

\begin{proof}
Let $A=A_3$. We put $x=\beta\alpha$, $y=\delta\gamma$ and $z=\epsilon\zeta$.
Since $x+y+z=0$ and $x^2=y^2=z^2=0$, we obtain
\[xy=yz=zx=-yx=-zy=-xz.\]
This shows that $xy\in \Soc A$. 
Hence, we have
\begin{align*}
P_1 &=
Ke_1\oplus K\beta\oplus K\gamma\beta\oplus K\zeta\beta\oplus Ky\beta\oplus K\alpha y \beta, \\
P_2 &=
Ke_2\oplus K\alpha\oplus K\gamma\oplus K\zeta\oplus Kx\oplus Ky\oplus 
K\gamma x\oplus K\zeta y\oplus K\alpha z\oplus Kxy, \\
P_3 &=
Ke_3\oplus K\delta\oplus K\alpha\delta\oplus K\zeta\delta\oplus Kx\delta\oplus K\gamma x \delta, \\
P_4 &=
Ke_4\oplus K\epsilon\oplus K\alpha\epsilon\oplus K\gamma\epsilon\oplus Kx\epsilon\oplus K\zeta x \epsilon,
\end{align*}
and the Cartan matrix is given by
\[\bC=
\begin{pmatrix}
2 & 2 & 1 & 1\\
2 & 4 & 2 & 2\\
1 & 2 & 2 & 1\\
1 & 2 & 1 & 2\\ 
\end{pmatrix}.
\]
 Suppose that $A$ is a cellular algebra. Then, the decomposition matrix is either
\[ \bD=\begin{pmatrix}
1 & 1 & 1 & 1\\
1 & 1 & 0 & 0\\
0 & 1 & 1 & 0\\
0 & 1 & 0 & 1\\ 
\end{pmatrix} \quad \text{or} \quad
\begin{pmatrix}
1 & 1 & 1 & 0\\
1 & 1 & 0 & 1\\
0 & 1 & 1 & 1\\
0 & 1 & 0 & 0\\ 
\end{pmatrix}.\] 
modulo rearrangement of rows, by Proposition \ref{C Prop decom matrix}.
This contradicts Lemma\;\ref{C Lem min}.
\end{proof}
%%%%%%%%%%%%%%%%%%%%%%%%%%%%%%%%%%%%%%%%%%%%%%%%%%%%%%%%%%%%%%%%%%%%%%%%%%%%%%%%%%%%

%%%%%%%%%%%%%%%%%%%%%%%%%%%%%%%%%%%%%%%%%%%%%%%%%%%%%%%%%%%%%%%%%%%%%%%%%%%%%%%%%%%%
%%%%%%%%%%%%%%%%%%%%%%%%%%%%%%%%%%%%%%%% A_4 %%%%%%%%%%%%%%%%%%%%%%%%%%%%%%%%%%%%%%%

\begin{lem}
\label{P A4}
$A_4$ is cellular.
\end{lem}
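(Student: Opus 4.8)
The plan is to prove that $A_4$ is cellular by exhibiting an explicit cellular basis, exactly as was done for $A_1(\lambda)$, $A_2(\lambda)$ and the domestic algebras in \S3. First I would compute the radical series of all four indecomposable projective modules $P_1, P_2, P_3, P_4$ using the defining relations $\beta\alpha+\delta\gamma+\epsilon\zeta=0$, $\alpha\beta=0$, $\gamma\epsilon=0$, $\zeta\delta=0$. Setting $x=\beta\alpha$, $y=\delta\gamma$, $z=\epsilon\zeta$ as in the proof of Lemma \ref{P A3}, the relations give $x+y+z=0$; I would check which products of $x,y,z$ vanish (note that now the "long" relations $\alpha\beta=0$, $\gamma\epsilon=0$, $\zeta\delta=0$ differ from those of $A_3$, so the socle computation changes) and thereby pin down the $K$-bases of the $P_i$ and the Cartan matrix. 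This is the bookkeeping step; the key point I expect is that the Cartan matrix of $A_4$ admits a decomposition matrix $\bD$ with $\bD^T\bD=\bC$ that is \emph{compatible} with Lemma \ref{C Lem min} (unlike $A_3$), which is what leaves room for a cellular structure to exist.

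Next I would identify the anti-involution. By symmetry of the quiver of $A_4$ (vertices $1$ and $\{3,4\}$ play symmetric roles through the central vertex $2$), the natural candidate is the anti-automorphism $\iota$ of $KQ$ that swaps $\alpha\leftrightarrow\beta$, $\gamma\leftrightarrow\zeta$, $\delta\leftrightarrow\epsilon$ (or some sign-twisted variant of this, chosen so that the relation $\beta\alpha+\delta\gamma+\epsilon\zeta=0$ is preserved — I would verify that $\iota$ sends this relator to $\alpha\beta+\zeta\epsilon+\delta\gamma$, which is zero modulo the ideal since $\alpha\beta=0$, $\zeta\epsilon=0$ follow, leaving $\delta\gamma$; so possibly a rescaling or a different pairing is needed, and checking well-definedness on the ideal is the first thing to get right). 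Then I would choose a totally ordered set $\Lambda=\{\lambda_1<\lambda_2<\lambda_3<\lambda_4<\lambda_5<\lambda_6\}$ (six elements, matching the six simple modules counted with the decomposition-matrix rows for $A_4$, which has $4$ simples but whose cellular poset has the size forced by $\sum|\cT(\lambda_i)|^2=\dim A_4$) and assign the sets $\cT(\lambda_i)$ so that $\sum_i |\cT(\lambda_i)|^2 = \dim_K A_4$ and $\sum_i |\cT(\lambda_i)|\cdot d_{\lambda_i\mu}$ reproduces the column sums of $\bD$.

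Then I would write down the cellular basis $\cC=\{c^{\lambda_i}_{\fs\ft}\}$ explicitly as matrices of basis elements (products of arrows) of the appropriate $e_jAe_k$-components, arranged so that $\iota(c^{\lambda_i}_{\fs\ft})=c^{\lambda_i}_{\ft\fs}$ by construction — this is automatic once the basis is listed in an $\iota$-symmetric way, as in Lemma \ref{P A1} and Lemma \ref{P A2}. Conditions (C1) and (C2) will then be immediate. The only substantive check is (C3): I would reduce it, exactly as in the previous proofs, to verifying that the $K$-spans corresponding to the "lower" cells form $A$-submodules of the relevant projectives $P_j$ — concretely, that certain explicit subspaces $K\{$monomials$\}$ inside $P_1,\dots,P_4$ are closed under left multiplication by arrows.

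The main obstacle I anticipate is twofold: first, getting the anti-involution exactly right, since the relation $\beta\alpha+\delta\gamma+\epsilon\zeta=0$ is not symmetric under the obvious arrow swap and may force a sign twist (e.g. $\epsilon\mapsto -\delta$, $\delta\mapsto-\epsilon$, or swapping $\delta\leftrightarrow\epsilon$ while keeping the others, whichever makes the relator $\iota$-stable); and second, organizing the $10$-or-so dimensional basis of $P_2$ (the central vertex, whose projective will be the largest) into cell-indexed blocks that are simultaneously $\iota$-symmetric and filtered by submodules. I expect the verification of (C3) to come down to a short list of submodule containments analogous to "$K\alpha\gamma\oplus K\beta\gamma\oplus\cdots$ and $K(\alpha\gamma)^2$ are submodules of $P_1$" in Lemma \ref{P A1}, and once the basis is correctly laid out this is routine. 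No genuinely new idea beyond the template already established in this section is needed; the work is entirely in the explicit combinatorics of $A_4$.
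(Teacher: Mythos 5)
Your proposal follows essentially the same route as the paper's proof: set $x=\beta\alpha$, $y=\delta\gamma$, $z=\epsilon\zeta$, compute the radical series of the $P_i$, take a six-element totally ordered $\Lambda$ with $\cT(\lambda_1)=\cT(\lambda_6)=\{1\}$, $\cT(\lambda_2)=\cT(\lambda_5)=\{1,2\}$, $\cT(\lambda_3)=\cT(\lambda_4)=\{1,2,3\}$, and exhibit an explicit $\iota$-symmetric cellular basis whose condition (C3) reduces to a short list of submodule containments. The one correction: the anti-involution is $\alpha\leftrightarrow\beta$, $\gamma\leftrightarrow\delta$, $\epsilon\leftrightarrow\zeta$ fixing all vertices (your pairing $\gamma\leftrightarrow\zeta$, $\delta\leftrightarrow\epsilon$ does not reverse arrows consistently since $\gamma\colon 3\to 2$ would have to map to an arrow $2\to 3$); with the correct choice the relator $\beta\alpha+\delta\gamma+\epsilon\zeta$ is fixed termwise and the relations $\alpha\beta=0$, $\gamma\epsilon=0$, $\zeta\delta=0$ are permuted, so no sign twist is needed.
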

\begin{proof}
Let $A=A_4$. We put $x=\beta\alpha$, $y=\delta\gamma$ and $z=\epsilon\zeta$.
Since $x+y+z=0$ and $x^2=yz=zy=0$, we obtain
\[xy=yx=z^2=-y^2=-xz=-zx.\]
This shows that $xy\in \Soc A$.
Then, we may read the radical series of $P_1$ from the following $K$-basis.
\[
e_1,\; \beta,
\begin{array}{c}
\gamma\beta\\
\zeta\beta\\
\end{array},\;
y\beta=-z\beta,\; \alpha y \beta=-\alpha z \beta.
\] 
Similarly, $P_2$, $P_3$ and $P_4$ have the following $K$-bases.

\[e_2,
\begin{array}{c}
\alpha\\
\gamma\\
\zeta\\
\end{array},
\begin{array}{c}
x\\
y\\
\end{array},
\begin{array}{c}
\zeta x=-\zeta z\\
\gamma y=-\gamma x\\
\alpha z=-\alpha y\\
\end{array},\;
 xy=yx=z^2.
\] 

\[
e_3,\; \delta,
\begin{array}{c}
\alpha\delta\\
\gamma\delta\\
\end{array},
x\delta=-y\delta,\; \gamma x \delta=-\gamma y \delta.
\]

\[
e_4,\; \epsilon,
\begin{array}{c}
\alpha\epsilon\\
\zeta\epsilon\\
\end{array},
x\epsilon=-z\epsilon,\; \zeta x \epsilon=-\zeta z \epsilon.
\]

Let $\iota$ be the anti-involution induced by swapping $\alpha$ and $\beta$, 
$\gamma$ and $\delta$, $\epsilon$ and $\zeta$, respectively.  
Take a totally ordered set
 $\Lambda=\{\lambda_1<\lambda_2<\lambda_3<\lambda_4<\lambda_5<\lambda_6\}$ and define
\[\cT(\lambda_1)=\cT(\lambda_6)=\{1\},\ \cT(\lambda_2)=\cT(\lambda_5)=\{1,2\},\
\cT(\lambda_3)=\cT(\lambda_4)=\{1,2,3\}.\]
Then we can construct a cellular basis of $A$ as follows.
\begin{align*}
& (c_{1,1}^{\lambda_1})=(e_3),\
 (c_{i,j}^{\lambda_2})_{i,j\in \cT(\la_2)}
    =\begin{pmatrix}
    e_2 & \delta \\
    \gamma & \gamma\delta \\
    \end{pmatrix},\\[7pt]
& (c_{i,j}^{\lambda_3})_{i,j\in \cT(\la_3)}
    =\begin{pmatrix}
    e_1 & \alpha & \alpha\delta \\
    \beta & \beta\alpha & \beta\alpha\delta \\
    \gamma\beta & \gamma\beta\alpha & \gamma\beta\alpha\delta \\
    \end{pmatrix},\   
  (c_{i,j}^{\lambda_4})_{i,j\in \cT(\la_4)}
    =\begin{pmatrix}
    e_4 & \zeta & \zeta\beta \\
    \epsilon & \epsilon\zeta & \epsilon\zeta\beta \\
    \alpha\epsilon & \alpha\epsilon\zeta & \alpha\epsilon\zeta\beta \\
    \end{pmatrix},\\[7pt]
& (c_{i,j}^{\lambda_5})_{i,j\in \cT(\la_5)}
    =\begin{pmatrix}
    \zeta\epsilon & \zeta\epsilon\zeta \\
    \epsilon\zeta\epsilon & \epsilon\zeta\epsilon\zeta \
    \end{pmatrix},\
  (c_{1,1}^{\lambda_6})=(\zeta\epsilon\zeta\epsilon).
 \end{align*}
Hence, $A_4$ is cellular.
\end{proof}

%%%%%%%%%%%%%%%%%%%%%%%%%%%%%%%%%%%%%%%%%%%%%%%%%%%%%%%%%%%%%%%%%%%%%%%%%%%%%%%%%%%%%
%%%%%%%%%%%%%%%%%%%%%%%%%%%%%%%%%%%%%%%%% A_5 %%%%%%%%%%%%%%%%%%%%%%%%%%%%%%%%%%%%%%%

\begin{lem}
\label{P A5}
$A_5$ is not cellular.
\end{lem}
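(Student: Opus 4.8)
The plan is to show that $A_5$ fails a numerical constraint coming from Proposition \ref{C Prop decom matrix}(iii), namely that the Cartan matrix must factor as $\bD^T\bD$ for a decomposition matrix $\bD$ with nonnegative integer entries. First I would compute a $K$-basis of each indecomposable projective module. The algebra $A_5 = KQ/I$ has quiver with one vertex $1$ carrying a loop $\alpha$, a second vertex $2$, and arrows $\gamma: 1\to 2$, $\beta: 2\to 1$, subject to $\alpha^2=\gamma\beta$ and $\beta\alpha\gamma=0$. Using these relations one derives the further consequences $\alpha^3 = \alpha\gamma\beta = \gamma\beta\alpha$, $\beta\alpha^2 = \beta\gamma\beta$, and so on; pushing this through I expect $P_1$ and $P_2$ to have dimensions that yield a Cartan matrix of the shape $\bC = \begin{pmatrix} a & b \\ b & c\end{pmatrix}$ with a small determinant — most likely $\bC = \begin{pmatrix} 4 & 2 \\ 2 & 2\end{pmatrix}$, the same matrix that appeared in Lemma \ref{F Lemma DB 2} and in the proof of Lemma \ref{D nondom}.

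Second, assuming $A_5$ is cellular with cell datum $(\vL,\cT,\cC,\iota)$, I would invoke Proposition \ref{C Prop decom matrix}(iii) together with (i) (every column of $\bD$ contains a $1$) to pin down $\bD$ up to rearrangement of rows; for the matrix above this forces $\bD = \begin{pmatrix} 1 & 1 \\ 1 & 1 \\ 1 & 0 \\ 1 & 0 \end{pmatrix}$. Then I would run essentially the argument of Lemma \ref{F Lemma DB 2} (or of the $n\ge 2$ case of Lemma \ref{D nondom}): label $\vL = \{\la_1,\la_2,\la_3,\la_4\}$, $\vL^+ = \{\mu_1,\mu_2\}$ with $[\D(\la_i):L(\mu_j)] = d_{ij}$, use Lemma \ref{C Lem min} to locate the minimal and maximal elements (they must give unit rows, so they are among $\la_3,\la_4$), extend to a linear order $3 < 1 < 2 < 4$, identify $\mu_1 = \la_3$, $\mu_2 = \la_1$, and then examine the two-sided ideal filtration of $A_5$ coming from Proposition \ref{C Prop decom matrix}(ii). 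The key tension is that $\D(\la_2)$ sits inside $P(\mu_2)$, forcing it to be uniserial with socle $L(\mu_2)$, while $\D(\la_4)\cong L(\mu_1)$; tracking how $\iota$ must act on the relevant subquotient and comparing socles (or, as in Lemma \ref{D nondom}, computing the induced involution on a four-dimensional subquotient and finding its characteristic polynomial incompatible with $\iota$ fixing the diagonal cell basis elements $c^{\la_2}_{1,1}, c^{\la_2}_{2,2}$) produces a contradiction.

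The main obstacle I anticipate is the bookkeeping in the final step: unlike the local algebra $\Omega(T(1))$, here $A_5$ is not basic-trivial and one genuinely has to trace the anti-involution through the ideal filtration. There is a real possibility that the bare numerical obstruction ($\bD^T\bD = \bC$ has no nonnegative integer solution) already fails for $A_5$ — in that case the proof is one line, as in Lemma \ref{P A3} — so the first order of business is to decide which regime we are in by honestly computing $\bC$. If $\bC = \begin{pmatrix} 4 & 2 \\ 2 & 2\end{pmatrix}$ (determinant $4 > 0$, and $\bD$ exists), then the argument must mirror Lemma \ref{D nondom}, and the delicate point is verifying that the relation $\beta\alpha\gamma = 0$ (which differs from the relation $\gamma\alpha_1\beta_1 + \alpha_1\beta_1\gamma = 0$ in $\Omega(T(n))$, there being no anti-commutation sign here) still obstructs a compatible $\iota$ — one expects instead that the loop relation $\alpha^2 = \gamma\beta$ forces $\iota(\alpha) = \pm\alpha$ up to lower terms and then the induced map on the socle layers cannot be an involution fixing the required cell generators. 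I would write it up by explicitly setting $\iota(\alpha) = x\alpha + (\text{higher degree})$, $\iota(\gamma) = y\beta$, $\iota(\beta) = z\gamma$, imposing preservation of $\alpha^2 = \gamma\beta$ and $\beta\alpha\gamma = 0$ to get relations among $x,y,z$, and then deriving the contradiction from the cellular-basis constraint exactly as in Lemma \ref{D nondom}.
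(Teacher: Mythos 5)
Your plan founders on the very first computation: the Cartan matrix of $A_5$ is not $\begin{pmatrix}4&2\\2&2\end{pmatrix}$ but $\bC=\begin{pmatrix}5&3\\3&3\end{pmatrix}$. Indeed, from $\alpha^2=\gamma\beta$ and $\beta\alpha\gamma=0$ one gets $\alpha^5=\gamma(\beta\alpha\gamma)\beta=0$, $\beta\alpha^3=0$ and $\alpha^3\gamma=0$, but $\alpha^4=\gamma\beta\gamma\beta$ and $\beta\alpha^2\gamma=\beta\gamma\beta\gamma$ survive; so $P_1$ has basis $e_1,\alpha,\beta,\beta\alpha,\alpha^2,\alpha^3,\beta\alpha^2,\alpha^4$ (dimension $8$) and $P_2$ has basis $e_2,\gamma,\alpha\gamma,\beta\gamma,\alpha^2\gamma,\beta\alpha^2\gamma$ (dimension $6$). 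Since you flagged that "the first order of business is to decide which regime we are in," you have not actually decided it, and everything downstream of that guess collapses: there is no $4\times 2$ decomposition matrix, no four-element poset $\{\la_1,\dots,\la_4\}$, and no analogue of the $\begin{pmatrix}1&1\\1&1\\1&0\\1&0\end{pmatrix}$ configuration. For the correct $\bC$ the candidates for $\bD$ are $\begin{pmatrix}2&1\\1&1\\0&1\end{pmatrix}$ and the $5\times 2$ matrix with rows $(1,1),(1,1),(1,1),(1,0),(1,0)$; the first is killed at once by Lemma \ref{C Lem min} (too few unit rows to accommodate distinct minimal and maximal elements), while the second requires a genuine argument with a five-element poset and linear extension $4<3<2<1<5$, which your proposal does not set up.

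The second gap is in your proposed endgame. Your fallback is to trace the anti-involution $\iota$ through the filtration and derive a contradiction from its characteristic polynomial, as in Lemma \ref{D nondom}; but that is not forced here and you give no evidence it would close. The contradiction that actually works is purely module-theoretic and never mentions $\iota$: writing $\mu_1,\mu_2$ for the two elements of $\vL^+$ with $P(\mu_1)=P_1$, one has $P(\mu_1)_{\la_5}=K\alpha^4$ and $P(\mu_1)_{\la_1}/P(\mu_1)_{\la_5}\cong\D(\la_1)\cong P(\mu_2)_{\la_1}$; computing $P(\mu_1)_{\la_1}$ inside $P_1$ forces it to contain $\alpha^3$ modulo $K\alpha^4$, so $L_1$ occurs in $\Soc\D(\la_1)$, whereas $\D(\la_1)\cong P(\mu_2)_{\la_1}\subseteq P_2$ forces $\Soc\D(\la_1)\cong L_2$. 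You gesture at a socle comparison as one of two options, but with the wrong $\bD$ and the wrong labels it cannot be carried out as stated, so the proposal as written does not constitute a proof.
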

\begin{proof}
Suppose that $A=A_5$ is a cellular algebra with cell datum $(\Lambda,\cT,\cC,\iota)$. Then, 
explicit computation shows
\begin{align*}
P_1 &=
Ke_1\oplus K\alpha\oplus K\beta\oplus K\beta\alpha\oplus K\alpha^2\oplus K\alpha^3
\oplus K\beta\alpha^2\oplus K\alpha^4, \\ 
P_2 &=
Ke_2\oplus K\gamma\oplus K\alpha\gamma\oplus K\beta\gamma\oplus K\alpha^2\gamma\oplus K\beta\alpha^2\gamma.
\end{align*}
Thus, the Cartan matrix is given by
\[
\bC=\begin{pmatrix}5& 3 \\3 & 3\\\end{pmatrix}
\]
and the decomposition matrix is either
 \[
\bD=\bordermatrix{
& \mu_1 & \mu_2 \cr
\lambda_1 & 2 & 1 \cr
\lambda_2 & 1 & 1 \cr
\lambda_3 & 0 & 1 \cr
}
\quad \text{or}\quad
\bordermatrix{
& \mu_1 & \mu_2 \cr
\lambda_1 & 1 & 1 \cr
\lambda_2 & 1 & 1 \cr
\lambda_3 & 1 & 1 \cr
\lambda_4 & 1 & 0 \cr
\lambda_5 & 1 & 0 \cr
}. 
\] 
modulo rearrangement of rows. 
The first case contradicts with Lemma\;\ref{C Lem min}. In the second case,
we may assume that $\la_4$ is minimal and 
$\la_5$ is maximal in the partial order, and we may extend the partial order to the total order 
$4<3<2<1<5$, by Lemma\;\ref{C Lem min}.
Then Proposition\;\ref{C Prop decom matrix} implies that
$P(\mu_1)\supset P(\mu_1)_{\lambda_5}\cong \D(\lambda_5)\cong L(\mu_1)$ and $P(\mu_2)_{\lambda_5}=0$, 
so that we have $P(\mu_1)_{\lambda_5}=K \alpha^4$. Proposition\;\ref{C Prop decom matrix} 
also implies that 
\[P(\mu_1)_{\lambda_1}/P(\mu_1)_{\lambda_5}\cong \D(\lambda_1)
\cong P(\mu_2)_{\lambda_1}/P(\mu_2)_{\lambda_5}=P(\mu_2)_{\lambda_1}.\]
Since $[\D(\lambda_1):L(\mu_1)]=[\D(\lambda_1):L(\mu_2)]=1$, we have
\[P(\mu_1)_{\lambda_1}=
K(xe_1+y\alpha+z\alpha^2+w\alpha^3)
\oplus K(x'\beta+y'\beta\alpha+z'\beta\alpha^2)\oplus K\alpha^4,\]
for some $x,y,z,w,x',y',z'\in K$.
Arguing in the similar way as in the paragraph after (\ref{F DB 2-2}), we see
that $x=y=z=0$. In particular, $L_1$ appears in $\Soc \D(\lambda_1)$.
However, $\Soc \D(\lambda_1)\cong \Soc P(\mu_2)_{\lambda_1}\cong L_2$. This is a contradiction.
\end{proof}

%%%%%%%%%%%%%%%%%%%%%%%%%%%%%%%%%%%%%%%%%%%%%%%%%%%%%%%%%%%%%%%
%%%%%%%%%%%%%%%%%%%%%%%%%%%% A6 %%%%%%%%%%%%%%%%%%%%%%%%%%%%%%%

\begin{lem}
\label{P A6}
$A_6$ is not cellular.
\end{lem}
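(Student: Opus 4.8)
The plan is to compute the Cartan matrix of $A_6$, to use $\bD^{T}\bD=\bC$ together with Proposition \ref{C Prop decom matrix} to reduce a putative decomposition matrix to two possibilities, and then to eliminate each by comparing the cell filtrations of Proposition \ref{C Prop decom matrix}(ii) with the actual Loewy structure of $P_1$ and $P_2$, in the spirit of the proof of Lemma \ref{P A5}. First I would record that, writing $A=A_6=KQ/I$, one has $\alpha^{5}=0$, $\alpha^{2}\gamma=\beta\alpha^{2}=0$, and
\[
P_1=Ke_1\oplus K\alpha\oplus K\beta\oplus K\alpha^{2}\oplus K\beta\alpha\oplus K\alpha^{3}\oplus K\alpha^{4},\qquad
P_2=Ke_2\oplus K\gamma\oplus K\alpha\gamma\oplus K\beta\alpha\gamma,
\]
so $\dim A=11$ and $\bC=\begin{pmatrix}5&2\\2&2\end{pmatrix}$. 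Here $P_2$ is uniserial with Loewy layers $L_2,L_1,L_1,L_2$, whereas $\rad P_1=A\alpha+A\beta$ satisfies $\rad P_1/\rad^{2}P_1\cong L_1\oplus L_2$; moreover $A\alpha=\langle\alpha,\alpha^{2},\alpha^{3},\alpha^{4},\beta\alpha\rangle$ is $5$-dimensional, it is a maximal submodule of $\rad P_1$, and $A\alpha/A\alpha^{2}$ is uniserial with $\Top\cong L_1$ and $\Soc\cong L_2$. An inspection of the $A$-module structure of $P_1$ (the only arrow $L_1\to L_2$ inside $\rad P_1$ is the $\beta$-action $\alpha\mapsto\beta\alpha$) shows that $A\alpha$ is the only proper submodule $N$ of $\rad P_1$ for which $N/N'$ is uniserial with top $L_1$ and socle $L_2$ for some submodule $N'\subseteq N$.

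Now suppose $A$ is cellular with decomposition matrix $\bD$. Since $5=2^{2}+1^{2}$ and $2=1^{2}+1^{2}$ are, up to order and zeros, the only ways of writing $5$ and $2$ as sums of squares, the equation $\bD^{T}\bD=\bC$ of Proposition \ref{C Prop decom matrix}(iii), together with each column of $\bD$ containing a $1$ by Proposition \ref{C Prop decom matrix}(i), forces $\bD$ up to permutation of rows to be
\[
\bD_A=\begin{pmatrix}2&1\\1&0\\0&1\end{pmatrix}\qquad\text{or}\qquad
\bD_B=\begin{pmatrix}1&1\\1&1\\1&0\\1&0\\1&0\end{pmatrix}.
\]
Assume first $\bD=\bD_A$. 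Using $d_{\mu\mu}=1$, the row of $\mu_1$ has a $1$ in column $1$ and the row of $\mu_2$ has a $1$ in column $2$, so $\mu_1$ has row $(1,0)$ and $\mu_2$ has row $(2,1)$ or $(0,1)$. If $\mu_2$ has row $(0,1)$, then by Proposition \ref{C Prop decom matrix}(i) the element with row $(2,1)$ lies above $\mu_1$ and $\mu_2$, hence is the greatest element of $\vL$; as $(2,1)$ is not a unit vector this contradicts Lemma \ref{C Lem min}(ii). If $\mu_2$ has row $(2,1)$, then $\mu_1<\mu_2<\lambda_3$ where $\vL\setminus\vL^{+}=\{\lambda_3\}$; a dimension count forces $P(\mu_1)\cong P_1$, and Proposition \ref{C Prop decom matrix}(ii) gives $\rad P_1\cong\D(\mu_2)^{\oplus 2}$, so $\rad P_1/\rad^{2}P_1\cong L(\mu_2)^{\oplus 2}$ is isotypic, contradicting $\rad P_1/\rad^{2}P_1\cong L_1\oplus L_2$.

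Assume now $\bD=\bD_B$. A dimension count forces $P(\mu_1)\cong P_1$, $P(\mu_2)\cong P_2$, and $\dim L(\mu_1)=\dim L(\mu_2)=1$. Let $\lambda_c$ be the element of $\vL\setminus\vL^{+}$ whose row is $(1,1)$; by Proposition \ref{C Prop decom matrix}(i) one has $\mu_2<\lambda_c$, and $\mu_2$ is the least element on which the second column of $\bD$ is supported. Applying Proposition \ref{C Prop decom matrix}(ii) to the uniserial module $P_2$ yields $\D(\mu_2)\cong P_2/\rad^{2}P_2$ (top $L_2$) and then $\D(\lambda_c)\cong\rad^{2}P_2$, which is uniserial with $\Top\cong L_1$ and $\Soc\cong L_2$. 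Applying it to $P_1$, the module $\D(\lambda_c)$ occurs in the cell filtration as $P(\mu_1)_{\lambda_c}/P(\mu_1)_{\nu}$ with $\nu$ the successor of $\lambda_c$; since $\mu_1$ and $\mu_2$ precede $\lambda_c$, $P(\mu_1)_{\lambda_c}$ is a proper submodule of $\rad P_1$, so by the structural fact above the presence of this subquotient forces $P(\mu_1)_{\lambda_c}=A\alpha$, of dimension $5$. But the filtration layers of $P_1$ strictly below $\lambda_c$ are cell modules $\D(\nu')$ with $\nu'\in\vL\setminus\vL^{+}$ and $\D(\nu')\cong L(\mu_1)$ of dimension $1$, and there are at most two of them, whence $\dim P(\mu_1)_{\lambda_c}=\dim\D(\lambda_c)+\dim P(\mu_1)_{\nu}\le 2+2=4<5$, a contradiction. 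Hence $A_6$ is not cellular.

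I expect the case $\bD=\bD_B$ to be the main obstacle: all coarse data (dimensions, positions of minimal and maximal elements, the equation $\bD^{T}\bD=\bC$) are consistent there, so the contradiction genuinely rests on the fine submodule structure of $P_1$ — namely that $\rad P_1$ is not uniserial yet $A\alpha$ is its only proper submodule admitting a subquotient that is uniserial with top $L_1$ and socle $L_2$.
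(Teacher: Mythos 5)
Your proof is correct, and its skeleton coincides with the paper's: compute $\bC=\left(\begin{smallmatrix}5&2\\2&2\end{smallmatrix}\right)$, reduce to the two candidate matrices $\bD_A,\bD_B$, and kill each. For $\bD_A$ your two subcases match the paper's two linear orders; your second subcase uses $\rad P_1\cong\D(\mu_2)^{\oplus2}$ and the non-isotypic top of $\rad P_1$, where the paper instead observes that $P_1\supset\D(\lambda_1)^{\oplus2}$ contradicts the simplicity of $\Soc P_1$ --- both work, though yours quietly invokes the standard fact that $\D(\lambda)$ has simple top $L(\lambda)$ for $\lambda\in\vL^+$. The genuine divergence is in the main case $\bD_B$: the paper enumerates the three admissible linear extensions $3<1<2<4<5$, $3<1<4<2<5$, $3<4<1<2<5$ and, in each, writes $P(\mu_1)_{\lambda_2}$ with undetermined coefficients to force $L_1$ into $\Soc\D(\lambda_2)$, contradicting $\Soc\D(\lambda_2)\cong L_2$ read off from $P_2$. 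You bypass the case split on linear extensions entirely: you identify $\D(\lambda_c)\cong\rad^2P_2$ once and for all from the uniseriality of $P_2$, then argue that any proper submodule of $\rad P_1$ admitting a $2$-dimensional uniserial subquotient with top $L_1$ and socle $L_2$ must equal $A\alpha$ (since $\beta\cdot\alpha=\beta\alpha$ is the only passage from the $L_1$-part to the $L_2$-part, any generator of such a subquotient has nonzero $\alpha$-coefficient and hence generates all of $A\alpha$, and $\rad P_1/A\alpha$ is simple), which is $5$-dimensional, against the bound $\dim P(\mu_1)_{\lambda_c}\le 4$ coming from the cell filtration. This buys a shorter, more structural argument at the cost of one ``inspection'' that a referee would want written out --- namely the uniqueness of $A\alpha$ among proper submodules of $\rad P_1$ with such a subquotient --- but that verification does go through, so there is no gap.
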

\begin{proof}
Suppose that $A=A_6$ is a cellular algebra with cell datum $(\Lambda,\cT,\cC,\iota)$. 
Then, explicit computation shows
\begin{align*}
P_1 &=
Ke_1\oplus K\alpha\oplus K\beta\oplus K\beta\alpha\oplus K\alpha^2\oplus K\alpha^3\oplus K\alpha^4, \\
P_2 &=
Ke_2\oplus K\gamma\oplus K\alpha\gamma\oplus K\beta\alpha\gamma,
\end{align*}
and the Cartan matrix is given by
\[
\bC=\begin{pmatrix} 5 & 2 \\ 2 & 2 \end{pmatrix}.
\]
Then, 
Proposition\;\ref{C Prop decom matrix} implies that the decomposition matrix is either
 \[\bD_1=\bordermatrix{
& \mu_1 & \mu_2 \cr
\lambda_1 & 2 & 1 \cr
\lambda_2 & 1 & 0 \cr
\lambda_3 & 0 & 1 \cr
}
\quad \text{or} \quad 
\bD_2=\bordermatrix{
& \mu_1 & \mu_2 \cr
\lambda_1 & 1 & 1 \cr
\lambda_2 & 1 & 1 \cr
\lambda_3 & 1 & 0 \cr
\lambda_4 & 1 & 0 \cr
\lambda_5 & 1 & 0 \cr
}
\]
modulo rearrangement of rows.

Assume that $\bD=\bD_1$.
By Lemma\;\ref{C Lem min}, either $\lambda_2<\lambda_1<\lambda_3$ or
$\lambda_3<\lambda_1<\lambda_2$ occurs.
If $\lambda_2<\lambda_1<\lambda_3$, then we have $P_1\supset \Delta(\lambda_1)^{\oplus 2}$.
However, $P_1$ has simple socle, a contradiction.
Therefore, we assume that $\lambda_3<\lambda_1<\lambda_2$. 
Then Proposition\;\ref{C Prop decom matrix} shows that
$\lambda_1\geq \mu_1,\mu_2$, so that $\lambda_1\in \Lambda^+$. This contradicts 
$[\Delta(\lambda_1):L(\lambda_1)]=1$.
 
Assume that $\bD=\bD_2$. Then it suffices to consider 
the following three cases for the linear extension of the partial order, without loss of generality.
\begin{itemize}
\item[(a)]
$3<1<2<4<5$.
\item[(b)]
$3<1<4<2<5$.
\item[(c)]
$3<4<1<2<5$.
\end{itemize}
Note that we have $\Delta(\lambda_5)\simeq P(\mu_1)_{\lambda_5}=K \alpha^4$
and $P(\mu_2)_{\lambda_5}=0$ in the three cases.  

In the cases (b) and (c), we have
\[P(\mu_1)_{\lambda_2}=K(xe_1+y\alpha+z\alpha^2+w\alpha^3)\oplus K(x'\beta+y'\beta\alpha)\oplus 
K\alpha^4,\]
for some $x,y,z,w,x',y'\in K$. It is easy to check that $x=y=z=0$. Therefore,
$L_1$ appears in $\Soc \Delta(\lambda_2)\cong P(\mu_2)_{\lambda_2}.$
This is a contradiction.

In the case (a), we have $P(\mu_1)_{\lambda_4}=K\alpha^3\oplus K\alpha^4$
 and $P(\mu_2)_{\lambda_4}=0$. We also obtain
\[P(\mu_1)_{\lambda_2}=K(xe_1+y\alpha+z\alpha^2)\oplus K(x'\beta+y'\beta\alpha)\oplus K\alpha^3\oplus K\alpha^4,\]
for some $x,y,z,x',y'\in K$. Then one sees that $x=y=0$. Thus 
 $L_1$ appears in $\Soc \Delta(\lambda_2)\cong \Soc P(\mu_2)_{\lambda_2}\cong L_2.$
This is a contradiction.
\end{proof}

%%%%%%%%%%%%%%%%%%%%%%%%%%%%%%%%%%%%%%%%%%%%%%%%%%%%%%%%%%%%%%%%%%%%%%%%%%%%%%%%%%
%%%%%%%%%%%%%%%%%%%%%%%%%%%%%%%%%% A7 %%%%%%%%%%%%%%%%%%%%%%%%%%%%%%%%%%%%%%%%%%%%
\begin{lem}
\label{P A7}
$A_7$ is cellular.
\end{lem}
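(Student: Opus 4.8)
The plan is to exhibit an explicit cellular basis of $A=A_7$, in the spirit of the proofs for $A_1(\lambda)$, $A_2(\lambda)$ and $A_4$. First I would compute the indecomposable projective modules by reducing paths with the relations $\beta\alpha=\delta\gamma$, $\gamma\delta=\epsilon\zeta$, $\alpha\delta\epsilon=0$, $\zeta\gamma\beta=0$; for example $(\beta\alpha)^2=\delta\gamma\delta\gamma=\delta\epsilon\zeta\gamma$, $(\beta\alpha)^3=\delta\epsilon\zeta\gamma\beta\alpha=\delta\epsilon(\zeta\gamma\beta)\alpha=0$, and $\gamma(\beta\alpha)^2=\gamma\delta\epsilon\zeta\gamma=\epsilon\zeta\epsilon\zeta\gamma=\epsilon\zeta\gamma\delta\gamma=\epsilon\zeta\gamma\beta\alpha=0$. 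One obtains
\[
P_1=Ke_1\oplus K\beta\oplus K\alpha\beta\oplus K\gamma\beta\oplus K\delta\gamma\beta\oplus K\alpha\delta\gamma\beta,
\]
\[
P_2=Ke_2\oplus K\alpha\oplus K\gamma\oplus K\beta\alpha\oplus K\zeta\gamma\oplus K\alpha\beta\alpha\oplus K\gamma\beta\alpha\oplus K(\beta\alpha)^2,
\]
and $P_3,P_4$ come from $P_2,P_1$ via the algebra automorphism $\phi$ of $A$ interchanging $1\leftrightarrow4$, $2\leftrightarrow3$, $\alpha\leftrightarrow\zeta$, $\beta\leftrightarrow\epsilon$, $\gamma\leftrightarrow\delta$ (which visibly preserves $I$, so it halves the bookkeeping). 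In particular $\dim A=6+8+8+6=28$ and $\Soc P_i\cong L_i$.

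Next I would set up the cell datum. Let $\iota$ be the linear involution of $A$ fixing every $e_i$ and interchanging $\alpha\leftrightarrow\beta$, $\gamma\leftrightarrow\delta$, $\epsilon\leftrightarrow\zeta$; since it fixes the generators $\beta\alpha-\delta\gamma$, $\gamma\delta-\epsilon\zeta$ of $I$ and swaps $\alpha\delta\epsilon$ with $\zeta\gamma\beta$, it induces an anti-automorphism of $A$. Take $\Lambda=\{\lambda_1<\lambda_2<\lambda_3<\lambda_4<\lambda_5<\lambda_6\}$ with $\cT(\lambda_1)=\cT(\lambda_6)=\{1\}$, $\cT(\lambda_2)=\cT(\lambda_5)=\{1,2\}$, $\cT(\lambda_3)=\cT(\lambda_4)=\{1,2,3\}$, so that $\sum_k|\cT(\lambda_k)|^2=28$, and put $c^{\lambda_k}_{\fs\ft}=x^{(k)}_\fs\,\iota(x^{(k)}_\ft)$ from the ``left halves'' $x^{(1)}_1=e_4$; $x^{(2)}_1=e_3$, $x^{(2)}_2=\zeta$; $x^{(3)}_1=e_2$, $x^{(3)}_2=\gamma$, $x^{(3)}_3=\zeta\gamma$; $x^{(4)}_1=e_1$, $x^{(4)}_2=\beta$, $x^{(4)}_3=\gamma\beta$; $x^{(5)}_1=\alpha$, $x^{(5)}_2=\beta\alpha$; $x^{(6)}_1=\alpha\beta$. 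Carrying out the products gives $c^{\lambda_1}_{11}=e_4$, $c^{\lambda_6}_{11}=(\alpha\beta)^2$, $(c^{\lambda_2}_{\fs\ft})=\begin{pmatrix}e_3&\epsilon\\\zeta&\zeta\epsilon\end{pmatrix}$, $(c^{\lambda_5}_{\fs\ft})=\begin{pmatrix}\alpha\beta&\alpha\beta\alpha\\\beta\alpha\beta&(\beta\alpha)^2\end{pmatrix}$, and
\[
(c^{\lambda_3}_{\fs\ft})=\begin{pmatrix}e_2&\delta&\delta\epsilon\\\gamma&\gamma\delta&\gamma\delta\epsilon\\\zeta\gamma&\zeta\gamma\delta&\zeta\gamma\delta\epsilon\end{pmatrix},\qquad
(c^{\lambda_4}_{\fs\ft})=\begin{pmatrix}e_1&\alpha&\alpha\delta\\\beta&\beta\alpha&\beta\alpha\delta\\\gamma\beta&\gamma\beta\alpha&\gamma\beta\alpha\delta\end{pmatrix}.
\]

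To finish I would check (C1)--(C3). For (C1), working block by block in the $e_iAe_j$ one verifies that these $28$ elements are linearly independent, hence a $K$-basis of $A$ — e.g. $(c^{\lambda_4}_{\fs\ft})$ together with $\alpha\beta$, $\beta\alpha\beta$, $(\alpha\beta)^2$ recover the basis of $P_1$ above, and similarly for $P_2,P_3,P_4$. Condition (C2) holds because $\iota(c^{\lambda_k}_{\fs\ft})=\iota(\iota(x^{(k)}_\ft))\,\iota(x^{(k)}_\fs)=x^{(k)}_\ft\,\iota(x^{(k)}_\fs)=c^{\lambda_k}_{\ft\fs}$. For (C3), using \eqref{C cts a} it suffices to compute the left action of $A$ on the families $\{c^{\lambda_k}_{\fs\ft}\}_\fs$ and observe that the structure constants are independent of $\ft$: one gets $\D(\lambda_1)\cong L_4$, the uniserial cell modules $\D(\lambda_2)\cong L_3/L_4$, $\D(\lambda_3)\cong L_2/L_3/L_4$, $\D(\lambda_4)\cong L_1/L_2/L_3$, and $\D(\lambda_5)\cong L_1/L_2$, $\D(\lambda_6)\cong L_1$ governed by the chain $P_1\supset A\alpha\beta=K\alpha\beta\oplus K\beta\alpha\beta\oplus K(\alpha\beta)^2\supset K(\alpha\beta)^2=\Soc P_1$ of $A$-submodules of $P_1$ (the analogues inside $P_2,P_3,P_4$ following from $\phi$). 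The decomposition matrix one reads off has rows $(1,0,0,0)$, $(1,1,0,0)$, $(1,1,1,0)$, $(0,1,1,1)$, $(0,0,1,1)$, $(0,0,0,1)$, and $\bD^T\bD$ equals the Cartan matrix, consistently with Proposition \ref{C Prop decom matrix}. The real work is the first step: reducing the a priori infinitely many paths to the $28$-dimensional monomial basis above (the identity $\gamma(\beta\alpha)^2=0$, needed to bound $P_2$, is not obvious and uses all four relations), and then choosing the left halves $x^{(k)}_\fs$ so that the $28$ products $x^{(k)}_\fs\iota(x^{(k)}_\ft)$ fall exactly onto this basis while the submodule chain in $P_1$ used for (C3) is in place; once the projective modules are in hand, the remaining verifications are routine bookkeeping.
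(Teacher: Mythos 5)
Your proposal is correct and follows essentially the same route as the paper: you use the same anti-involution $\iota$ (swapping $\alpha\leftrightarrow\beta$, $\gamma\leftrightarrow\delta$, $\epsilon\leftrightarrow\zeta$), the same poset $\Lambda$ with the same sets $\cT(\lambda_k)$, and your matrices $(c^{\lambda_k}_{\fs\ft})$ coincide exactly with the cellular basis exhibited in the paper's proof. The extra touches — generating the basis as products $x^{(k)}_{\fs}\,\iota(x^{(k)}_{\ft})$, exploiting the automorphism interchanging the two ends of the quiver, and spelling out the submodule chain $P_1\supset A\alpha\beta\supset K(\alpha\beta)^2$ for condition (C3) — are consistent with and slightly more explicit than the paper's presentation.
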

\begin{proof}
Let $A=A_7$. Then we may read the radical series of $P_1$ from the following $K$-basis.

\[
e_1,\; \beta,
\begin{array}{c}
\alpha\beta\\
\gamma\beta\\
\end{array},
\beta\alpha\beta=\delta\gamma\beta,\; \alpha \beta\alpha\beta.
\]
Similarly, $P_2$ has the $K$-basis
\[e_2,
\begin{array}{c}
\alpha\\
\gamma\\
\end{array}
,
\begin{array}{c}
\beta\alpha=\delta\gamma\\
\zeta\gamma\\
\end{array},
\begin{array}{c}
\alpha\beta\alpha\\
\gamma\beta\alpha=\epsilon\zeta\gamma\\
\end{array},\;
\beta\alpha\beta\alpha=\delta\gamma\beta\alpha, \] 
$P_3$ has the $K$-basis
\[e_3,
\begin{array}{c}
\delta\\
\zeta\\
\end{array},\;
\begin{array}{c}
\alpha\delta\\
\gamma\delta=\epsilon\zeta\\
\end{array},\;
\begin{array}{c}
\beta\alpha\delta=\delta\gamma\delta\\
\zeta\gamma\delta\\
\end{array},\;
\gamma\beta\alpha\delta=\epsilon\zeta\gamma\delta,
\]
and $P_4$ has the $K$-basis
\[
e_4,\; \epsilon,
\begin{array}{c}
\delta\epsilon\\
\zeta\epsilon\\
\end{array},\;
\gamma\delta\epsilon=\epsilon\zeta\epsilon, \;
\zeta \gamma\delta\epsilon.
\] 

Let $\iota$ be the anti-involution induced by swapping $\alpha$ and $\beta$, 
$\gamma$ and $\delta$, $\epsilon$ and $\zeta$, respectively. Take a totally ordered set
 $\Lambda=
 \{\lambda_1<\lambda_2<\lambda_3<\lambda_4<\lambda_5<\lambda_6\}$ and define
\[\cT(\lambda_1)=\cT(\lambda_6)=\{1\},\ \cT(\lambda_2)=\cT(\lambda_5)=\{1,2\},\
\cT(\lambda_3)=\cT(\lambda_4)=\{1,2,3\}.\]
Then we obtain a cellular basis of $A$ as follows.
\begin{align*}
& (c_{1,1}^{\lambda_1})=(e_4),\  
  (c_{i,j}^{\lambda_2})_{i,j\in \cT(\la_2)}
    =\begin{pmatrix}
    e_3 & \epsilon \\
    \zeta & \zeta\epsilon \\
    \end{pmatrix},\\[7pt] 
& (c_{i,j}^{\lambda_3})_{i,j\in \cT(\la_3)}
    =\begin{pmatrix}
    e_2 & \delta & \delta\epsilon \\
    \gamma & \gamma\delta & \gamma\delta\epsilon \\
    \zeta\gamma & \zeta\gamma\delta & \zeta\gamma\delta\epsilon \\
    \end{pmatrix},\   
 (c_{i,j}^{\lambda_4})_{i,j\in \cT(\la_4)}
    =\begin{pmatrix}
    e_1 & \alpha & \alpha\delta \\
    \beta & \beta\alpha & \beta\alpha\delta \\
    \gamma\beta & \gamma\beta\alpha & \gamma\beta\alpha\delta \\
    \end{pmatrix},\\[9pt] 
& (c_{i,j}^{\lambda_5})_{i,j\in \cT(\la_5)}
    =\begin{pmatrix}
    \alpha\beta & \alpha\beta\alpha \\
    \beta\alpha\beta & \beta\alpha\beta\alpha \
    \end{pmatrix},\
 (c_{1,1}^{\lambda_6})=(\alpha\beta\alpha\beta).
 \end{align*}
Hence, $A_7$ is cellular.
 \end{proof}
%%%%%%%%%%%%%%%%%%%%%%%%%%%%%%%%%%%%%%%%%%%%%%%%%%%%%%%%%%%%%%%%%
%%%%%%%%%%%%%%%%%%%%%%%%%%%% A11 %%%%%%%%%%%%%%%%%%%%%%%%%%%%%%%%
\begin{lem}
\label{P A11}
$A_{11}$ is cellular.
\end{lem}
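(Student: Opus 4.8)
The plan is to follow the same five-step template that established that $A_1(\la)$, $A_2(\la)$, $A_4$ and $A_7$ are cellular: determine the indecomposable projectives, exhibit an anti-involution, fix a linearly ordered index set, write down a candidate cellular basis made of paths, and verify (C1)--(C3).

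First I would compute $P_1,\dots,P_4$ by listing the paths ending at each vertex that survive modulo the defining ideal; the relations $\beta\alpha=0$, $\delta\gamma=0$, $\eta\zeta=0$ together with $\gamma\alpha\beta=\gamma\eta\gamma$, $\alpha\beta\eta=\eta\gamma\eta$ and $(\gamma\eta)^2=\zeta\delta$ make this a short but slightly delicate calculation---for instance $\beta\eta\gamma\eta=\beta(\eta\gamma\eta)=\beta(\alpha\beta\eta)=(\beta\alpha)\beta\eta=0$, which is the kind of cancellation one must not overlook. One finds $\dim_K A_{11}=23$, with $P_1$ and $P_4$ uniserial of lengths $5$ and $3$, $\dim P_2=8$ and $\dim P_3=7$, and reads off the (symmetric, nonsingular) Cartan matrix $\bC$. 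Feeding this into $\bD^T\bD=\bC$ and using Proposition \ref{C Prop decom matrix} and Lemma \ref{C Lem min} pins down, uniquely up to row permutation, a $6\times 4$ decomposition matrix $\bD$ whose rows have sums $3,2,2,2,1,1$; this dictates the choice $\Lambda=\{\lambda_1<\dots<\lambda_6\}$ with $|\cT(\lambda_i)|$ the corresponding numbers.

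Next I would record the anti-involution $\iota$ of $KQ$ fixing the idempotents and sending $\alpha\leftrightarrow\beta$, $\eta\leftrightarrow\gamma$, $\zeta\leftrightarrow\delta$, and check $\iota(I)=I$: a direct computation shows $\iota$ interchanges the two long relations $\gamma\alpha\beta-\gamma\eta\gamma$ and $\alpha\beta\eta-\eta\gamma\eta$, interchanges $\delta\gamma$ and $\eta\zeta$, and fixes $\beta\alpha$ and $(\gamma\eta)^2-\zeta\delta$, so $\iota$ descends to an anti-involution of $A_{11}$. Then I would write out the cellular basis $\cC=\{c^{\lambda_i}_{\fs\ft}\}$ as six blocks of paths---one $3\times3$ block, three $2\times2$ blocks and two $1\times1$ blocks---arranged exactly as in the proofs of Lemma \ref{P A4} and Lemma \ref{P A7}: each $(\fs,\ft)$ entry is a path, the $(\ft,\fs)$ entry is its $\iota$-image, the diagonal entries are $\iota$-fixed, and deeper radical layers carry larger indices. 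Axioms (C1) and (C2) then hold by construction once the list is checked to be a $K$-basis, and (C3) reduces, as in the earlier cellular cases, to verifying that for each $\lambda_i$ the span of the last column of its block is a copy of the cell module $\D(\lambda_i)$ inside the appropriate $P_j$, together with the fact that the spans of the tails of the blocks are $A$-submodules of the $P_j$; the instances one needs to record are the submodule chains in $P_1$, $P_2$ and $P_3$ coming from $\zeta\delta=(\gamma\eta)^2$ and its left multiples.

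The main obstacle is the one common to every cellular case already treated: producing a single family of paths that is simultaneously a $K$-basis, is permuted in the prescribed way by $\iota$, and realizes the cell-module filtration of the projectives demanded by (C3). Because $(\gamma\eta)^2=\zeta\delta$ and $\alpha\beta\eta=\eta\gamma\eta$ identify several paths in $A_{11}$ while $\beta\alpha=0$, $\delta\gamma=0$, $\eta\zeta=0$ kill others, care is needed both in computing the radical layers of $P_1,\dots,P_4$ and in selecting $\iota$-symmetric representatives in each block; but once the projectives are written out explicitly this verification is routine.
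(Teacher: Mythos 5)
Your proposal reproduces the paper's proof of this lemma essentially step for step: the same computation of the indecomposable projectives (dimensions $5$, $8$, $7$, $3$, total $23$, with $P_1$ and $P_4$ uniserial), the same anti-involution swapping $\alpha\leftrightarrow\beta$, $\gamma\leftrightarrow\eta$, $\delta\leftrightarrow\zeta$ (your check that it preserves the ideal is correct), the same block sizes $1,2,3,2,2,1$, and the same method of verifying (C3) through submodule chains in the projectives. The only item you leave implicit is the explicit list of the six blocks of paths, but your recipe --- $\iota$-symmetric blocks of surviving paths ordered by radical depth, modelled on the proofs for $A_4$ and $A_7$ --- produces exactly the basis the paper writes down, so the argument is essentially identical.
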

\begin{proof}
Let $A=A_{11}$. Then, we may read the radical series of $P_1$ from the following $K$-basis.
\[
e_1,\; \alpha,\; \gamma\alpha,\; \eta\gamma\alpha,\; \beta\eta\gamma\alpha.
\]
Similarly, $P_2$ has the $K$-basis
\[e_2,
\begin{array}{c}
\beta\\
\gamma\\
\end{array},
\begin{array}{c}
\alpha\beta \\
\eta\gamma \\
\end{array},
\begin{array}{c}
\gamma\alpha\beta=\gamma\eta\gamma\\
\beta\eta\gamma\\
\end{array},\;
\eta\gamma\alpha\beta=\eta\gamma\eta\gamma=\alpha\beta\eta\gamma, \] 
$P_3$ has the $K$-basis
\[e_3,
\begin{array}{c}
\eta\\
\delta\\
\end{array},
\begin{array}{c}
\beta\eta\\
\gamma\eta\\
\end{array},\;
\alpha\beta\eta=\eta\gamma\eta,
\gamma\alpha\beta\eta=\gamma\eta\gamma\eta=\zeta\delta,
\]
and $P_4$ has the $K$-basis $e_4,\zeta,\delta\zeta$.

Let $\iota$ be the anti-involution induced by swapping 
$\alpha$ and $\beta$, $\gamma$ and $\eta$, $\delta$ and $\zeta$, respectively.
Take a totally ordered set
 $\Lambda=
 \{\lambda_1<\lambda_2<\lambda_3<\lambda_4<\lambda_5<\lambda_6\}$ and define
\[\cT(\lambda_1)=\cT(\lambda_6)=\{1\},\ \cT(\lambda_2)=\cT(\lambda_4)=\cT(\lambda_5)=\{1,2\},\
\cT(\lambda_3)=\{1,2,3\}.\]
Then we can construct a cellular basis of $A$ as follows.
\begin{align*}
& (c_{1,1}^{\lambda_1})=(e_2),\  
  (c_{i,j}^{\lambda_2})_{i,j\in \cT(\la_2)}
    =\begin{pmatrix}
    e_1 & \beta \\
    \alpha & \alpha\beta \\
    \end{pmatrix},\ 
 (c_{i,j}^{\lambda_3})_{i,j\in \cT(\la_3)}
    =\begin{pmatrix}
    e_3 & \gamma & \gamma\alpha \\
    \eta & \eta\gamma & \eta\gamma\alpha \\
    \beta\eta & \beta\eta\gamma & \beta\eta\gamma\alpha \\
    \end{pmatrix},\\[7pt]   
& (c_{i,j}^{\lambda_4})_{i,j\in \cT(\la_4)}
    =\begin{pmatrix}
    \gamma\eta & \gamma\eta\gamma \\
    \eta\gamma\eta & \eta\gamma\eta\gamma \\
    \end{pmatrix},\ 
 (c_{i,j}^{\lambda_5})_{i,j\in \cT(\la_5)}
    =\begin{pmatrix}
    e_4 & \delta \\
    \zeta & \zeta\delta \\
    \end{pmatrix},\ 
 (c_{1,1}^{\lambda_6})=(\delta\zeta).
 \end{align*}
Hence, $A_{11}$ is cellular.
 \end{proof}
%%%%%%%%%%%%%%%%%%%%%%%%%%%%%%%%%%%%%%%%%%%%%%%%%%%%%%%%%%%%%%%%%%%%%%%%%%%
%%%%%%%%%%%%%%%%%%%%%%%%%%%%%% A13 A14 %%%%%%%%%%%%%%%%%%%%%%%%%%%%%%%%%%%%
\begin{lem}
\label{P A13A14}
Neither $A_{13}$ nor $A_{14}$ is cellular.
\end{lem}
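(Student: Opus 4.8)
The plan is to dispatch $A_{13}$ and $A_{14}$ in one uniform way, reducing both to Lemma \ref{F Lemma DB 2} via Lemma \ref{C Lem idem}, exactly along the lines of the proof of Proposition \ref{F Prop MBD}. In each case I would take the idempotent $e=e_1+e_2$ and prove that the truncation $eAe$ is isomorphic to the $10$-dimensional non-cellular algebra of Lemma \ref{F Lemma DB 2}; the non-cellularity of $A_{13}$ and $A_{14}$ then follows from Lemma \ref{C Lem idem}. The proof of such an isomorphism has the same two parts as in Proposition \ref{F Prop MBD}: exhibit a surjective algebra homomorphism from the algebra of Lemma \ref{F Lemma DB 2} onto $eAe$ by checking that its defining relations are satisfied, and then match dimensions.

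For $A=A_{13}$, the first step is to read off from the relations a $K$-basis of $eAe$, i.e. of $e_1Ae_1\oplus e_2Ae_1\oplus e_1Ae_2\oplus e_2Ae_2$. A routine reduction gives that the nonzero paths between the vertices $1$ and $2$ are $e_1,\ \beta\alpha\gamma$; then $\gamma,\ \alpha\gamma$; then $\beta,\ \beta\alpha$; and $e_2,\ \alpha,\ \alpha^2(=\gamma\beta),\ \alpha^3(=\delta\sigma)$. Here one has to check that $\alpha^3=\delta\sigma\neq 0$: since $A_{13}$ is weakly symmetric, $\Soc(A_{13}e_2)\cong L_2$, whereas $\delta\sigma=0$ would put the path $\sigma$, whose source is $3$, into $\Soc(A_{13}e_2)$, a contradiction. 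Thus $\dim_K eAe=10$, its Cartan matrix is $\begin{pmatrix}2&2\\2&4\end{pmatrix}$ (the Cartan matrix of Lemma \ref{F Lemma DB 2} after interchanging the two vertices), and its Gabriel quiver is the two vertices with an arrow in each direction together with a loop at the vertex $2$. Sending the loop of the algebra of Lemma \ref{F Lemma DB 2} to $\alpha$ and its two arrows to $\gamma$ and $\beta$ defines an algebra homomorphism, because the relations $\beta_1\alpha_1=0$ and $\gamma^2=\alpha_1\beta_1$ there become $\beta\gamma=0$ and $\alpha^2=\gamma\beta$, both of which hold in $A_{13}$; it is surjective, and as both algebras are $10$-dimensional it is an isomorphism.

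For $A=A_{14}$ I would argue identically with $e=e_1+e_2$ and $\mu:=\delta\gamma\in e_2Ae_2$. The relations give that the nonzero paths between $1$ and $2$ are $e_1,\ \alpha\mu\beta$; then $\beta,\ \mu\beta$; then $\alpha,\ \alpha\mu$; and $e_2,\ \mu,\ \mu^2(=\beta\alpha),\ \mu^3$, where $\mu^4=(\delta\gamma)^4=0$ follows easily from $\alpha\delta\gamma\delta=0$, $\beta\alpha=(\delta\gamma)^2$ and $\alpha\beta=0$, and $\mu^3=(\delta\gamma)^3\neq 0$ again by weak symmetry (otherwise admissibility of the relations forces a composition factor $L_1$ into $\Soc(A_{14}e_2)$ via $\alpha\delta\gamma$, contradicting $\Soc(A_{14}e_2)\cong L_2$). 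Hence $\dim_K eAe=10$, its Cartan matrix is $\begin{pmatrix}2&2\\2&4\end{pmatrix}$, and its Gabriel quiver is again the two vertices with an arrow each way and a loop $\mu$ at the vertex $2$. Sending the loop of the algebra of Lemma \ref{F Lemma DB 2} to $\mu$ and its two arrows to $\beta$ and $\alpha$ gives a surjective algebra homomorphism, since the relations $\beta_1\alpha_1=0$ and $\gamma^2=\alpha_1\beta_1$ become $\alpha\beta=0$ and $\mu^2=\beta\alpha$, which hold in $A_{14}$; a dimension count upgrades it to an isomorphism. Therefore $eAe$ is not cellular, so $A_{14}$ is not cellular by Lemma \ref{C Lem idem}.

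The only genuine work, and the main obstacle, is the bookkeeping in these two path computations — above all verifying that the long socle elements $\alpha^3$ (for $A_{13}$) and $(\delta\gamma)^3$ (for $A_{14}$) are nonzero, since this is precisely what makes $eAe$ ten-dimensional and hence isomorphic to the algebra of Lemma \ref{F Lemma DB 2}. If either of these vanished the truncation would be strictly smaller and the reduction would have to be reorganised; everything else is a routine normalisation of paths modulo the relations.
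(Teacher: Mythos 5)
Your proposal is correct and follows essentially the same route as the paper: the paper likewise truncates by $e=e_1+e_2$, exhibits a surjection from the $10$-dimensional algebra of Lemma \ref{F Lemma DB 2} (presented with the loop $z$ at vertex $2$ and relations $z^2=yx$, $xy=0$, sent to $\alpha$ resp.\ $\delta\gamma$), upgrades it to an isomorphism by a dimension count, and concludes via Lemma \ref{C Lem idem}. Your extra care in verifying that $\alpha^3=\delta\sigma\neq 0$ and $(\delta\gamma)^3\neq 0$ via weak symmetry is a welcome elaboration of the paper's terse ``by comparing dimensions'' step, but it is not a different argument.
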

\begin{proof}
Let $B$ be the following bound quiver algebra.
\[\begin{xy}
(0,0) *[o]+{1}="A", (10,0) *[o]+{2}="B",
\ar @(ru,rd) "B";"B"^{z}
\ar @<2pt> "A";"B"^{x}
\ar @<2pt> "B";"A"^{y}
\end{xy}
\tiny{ \begin{array}{c}
  z^2=yx \\
 xy=0 \\
\end{array}
}\]
Then we have a surjective algebra homomorphism $B\twoheadrightarrow (e_1+e_2) A_{13} (e_1+e_2)$
given by 
$x\mapsto \beta$, $y\mapsto \gamma$ and $z\mapsto \alpha$. 
We also have a surjective algebra homomorphism $B\twoheadrightarrow (e_1+e_2) A_{14} (e_1+e_2)$
given by 
$x\mapsto \alpha$, $y\mapsto \beta$ and $z\mapsto \delta\gamma$. 
By comparing dimensions, one sees that these are isomorphism of algebras.
By Lemma \ref{F Lemma DB 2}, $B$ is not cellular. Therefore, 
 the assertion follows by Lemma\;\ref{C Lem idem}.
\end{proof}
%%%%%%%%%%%%%%%%%%%%%%%%%%%%%%%%%%%%%%%%%%%%%%%%%%%%%%%%%%%%%%%%%%%%%%%%%%%%%%
%%%%%%%%%%%%%%%%%%%%%%%%%%%%%%%% main b %%%%%%%%%%%%%%%%%%%%%%%%%%%%%%%%%%%%%%

\subsection{Proof of Theorem\;\ref{P main}\;(ii)}
In this subsection, we prove Theorem\;\ref{P main}\;(b).
By Theorem\;\ref{P theorem nonstandard nondomestic}, 
it is sufficient to show that the algebras $\Lambda_1$ and $\Lambda_2$ are not cellular. 
%%%%%%%%%%%%%%%%%%%%%%%%%%%%%%%%%%%%%%%%%%%%%%%%%%%%%%%%%%%%%%%%%%%%%%%%%%%%%%%
%%%%%%%%%%%%%%%%%%%%%%%%%%%%%%%%% lambda1 %%%%%%%%%%%%%%%%%%%%%%%%%%%%%%%%%%%%%
\begin{lem}
\label{P lambda1}
$\Lambda_1$ is not cellular.
\end{lem}
\begin{proof}
Suppose that $A=\Lambda_1$ is a cellular algebra with cell datum $(\Lambda,\cT,\cC,\iota)$.
Then,
\begin{align*}
P_1 &=
Ke_1\oplus K\alpha\oplus K\beta\oplus K\beta\alpha\oplus K\alpha^2\oplus K\alpha^3
\oplus K\beta\alpha^2\oplus K\alpha^4, \\
P_2 &=
Ke_2\oplus K\gamma\oplus K\alpha\gamma\oplus K\beta\gamma\oplus K\alpha^2\gamma
\oplus K\beta\alpha\gamma,
\end{align*}
and the Cartan matrix is given by
\[ \bC=\begin{pmatrix}
5& 3 \\
3 & 3\\
\end{pmatrix}.
\]
Let $\bD$ be the decomposition matrix. 
By the same argument we used in the proof of Lemma\;\ref{P A5}, we may assume that 
 \[\bD=\bordermatrix{
& \mu_1 & \mu_2 \cr
\lambda_1 & 1 & 1 \cr
\lambda_2 & 1 & 1 \cr
\lambda_3 & 1 & 1 \cr
\lambda_4 & 1 & 0 \cr
\lambda_5 & 1 & 0 \cr
} 
\] 
and that we may choose the linear extension $4<3<2<1<5$ of the partial order, and 
$P(\mu_1)_{\lambda_5}=K \alpha^4$, $P(\mu_2)_{\lambda_5}=0$.
 Proposition\;\ref{C Prop decom matrix} also implies that 
\[P(\mu_1)_{\lambda_1}/P(\mu_1)_{\lambda_5}\cong \D(\lambda_1)
\cong P(\mu_2)_{\lambda_1}/P(\mu_2)_{\lambda_5}=P(\mu_2)_{\lambda_1}.\]
Since $[\D(\lambda_1):L(\mu_1)]=[\D(\lambda_1):L(\mu_2)]=1$, it is easy to check that
\[P(\mu_1)_{\lambda_1}
=K(x\beta+y\beta\alpha+z\beta\alpha^2)\oplus K\alpha^3\oplus K\alpha^4,\]
for some $x,y, z\in K$. As $\Delta(\lambda_1)$ is isomorphic to a submodule of $P_2$, 
it is uniserial and $\Soc \Delta(\la_1)\cong L_2$. However, 
$\alpha^3\in \Soc P(\mu_1)_{\lambda_1}/P(\mu_1)_{\lambda_5}$ implies that 
$\Soc \Delta(\la_1)\cong L_1$. This is a contradiction.
\end{proof}
%%%%%%%%%%%%%%%%%%%%%%%%%%%%%%%%%%%%%%%%%%%%%%%%%%%%%%%%%%%%%%%%%%%%%%%%%%%%%%
%%%%%%%%%%%%%%%%%%%%%%%%%%%%%% lambda2 %%%%%%%%%%%%%%%%%%%%%%%%%%%%%%%%%%%%%%%
\begin{lem}
$\Lambda_2$ is not cellular.
\end{lem}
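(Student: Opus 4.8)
The plan is to imitate the proof of Lemma~\ref{P A6} for $A_6$: the algebra $\Lambda_2$ is a non-standard deformation of $A_6$, and, as I shall check first, it has the same Cartan matrix and the same submodule lattices on its indecomposable projective modules, so the very same cellularity obstruction applies. Using the relations $\alpha^3=\gamma\beta$, $\alpha^2\gamma=0$, $\beta\alpha^2=0$, $\gamma\beta\gamma=0$, $\beta\gamma\beta=0$, $\beta\gamma=\beta\alpha\gamma$ (so $\alpha^5=\gamma\beta\alpha^2=0$), one computes
\[
P_1=Ke_1\oplus K\alpha\oplus K\beta\oplus K\beta\alpha\oplus K\alpha^2\oplus K\alpha^3\oplus K\alpha^4,\qquad
P_2=Ke_2\oplus K\gamma\oplus K\alpha\gamma\oplus K\beta\alpha\gamma
\]
(with $\beta\gamma=\beta\alpha\gamma$), so $\dim\Lambda_2=11$ and the Cartan matrix is $\bC=\begin{pmatrix}5&2\\2&2\end{pmatrix}$; moreover $P_1$ has simple socle $K\alpha^4$ and $P_2$ is uniserial with socle $K\beta\alpha\gamma\cong L_2$ — all exactly as for $A_6$. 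Consequently, if $\Lambda_2$ were cellular with decomposition matrix $\bD$, then $\bD^{T}\bD=\bC$ together with Proposition~\ref{C Prop decom matrix} would force $\bD$, up to a rearrangement of rows, to be one of $\bD_1=\begin{pmatrix}2&1\\1&0\\0&1\end{pmatrix}$ or $\bD_2=\begin{pmatrix}1&1\\1&1\\1&0\\1&0\\1&0\end{pmatrix}$, the two matrices encountered in the proof of Lemma~\ref{P A6}.

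Next I would rule out $\bD=\bD_1$: by Lemma~\ref{C Lem min} the weight with row $(2,1)$ must lie strictly between a minimal and a maximal element, and Proposition~\ref{C Prop decom matrix}(ii) then realizes $\D(\lambda)^{\oplus2}$ as a submodule of the projective $P(\mu_1)\in\{P_1,P_2\}$, which is impossible because a projective module over a weakly symmetric algebra has simple socle. For $\bD=\bD_2$ I would, as in Lemma~\ref{P A6}, reduce to three possibilities for the linear extension, say $3<1<2<4<5$, $3<1<4<2<5$ and $3<4<1<2<5$; a dimension count on the $\D$-filtration of $P_2$ coming from Proposition~\ref{C Prop decom matrix}(ii) forces $\mu_1=L_1$, $\mu_2=L_2$, $\D(\lambda_5)\cong P(\mu_1)_{\lambda_5}=K\alpha^4$ and $P(\mu_2)_{\lambda_5}=0$. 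In each of the three cases I would identify $N:=P(\mu_1)_{\lambda_2}$ as an $A$-submodule of $P_1$ with a prescribed number of $L_1$-composition factors, write a general element $xe_1+y\alpha+z\alpha^2+\cdots$ of $e_1N$, and run the coefficient-peeling argument from the paragraph after \eqref{F DB 2-2} (also used in the proof of Lemma~\ref{P A5}) to conclude that the coefficients of $e_1$ and $\alpha$ — and of $\alpha^2$ in two of the cases — all vanish. This forces $L_1$ to occur in $\Soc\D(\lambda_2)$. On the other hand $\D(\lambda_2)\cong P(\mu_2)_{\lambda_2}$ is a submodule of the uniserial module $P_2$, whose socle is $L_2$; since $L_1\ne L_2$ this is a contradiction, and $\Lambda_2$ is therefore not cellular.

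The only genuine work is the last step — the three linear-extension cases together with the coefficient-peeling — but it is routine, being a copy of the computation already performed for $A_6$ in Lemma~\ref{P A6}. Indeed, the single structural difference between $\Lambda_2$ and $A_6$, the relation $\beta\gamma=\beta\alpha\gamma$ in place of $\beta\gamma=0$, lives in the corner $e_2\Lambda_2 e_2$ and affects neither the module structure of $P_1$ nor the uniseriality of $P_2$; one could even just record these coincidences and invoke the proof of Lemma~\ref{P A6} verbatim.
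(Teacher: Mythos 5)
Your proof follows the paper's argument for $\Lambda_2$ essentially verbatim: the same bases for $P_1$ and $P_2$, the same Cartan matrix $\begin{pmatrix}5&2\\2&2\end{pmatrix}$, the same reduction to the three linear extensions $3<1<2<4<5$, $3<1<4<2<5$, $3<4<1<2<5$, and the same coefficient-peeling forcing $L_1$ into $\Soc\D(\lambda_2)$ against $\Soc\D(\lambda_2)\cong\Soc P(\mu_2)_{\lambda_2}\cong L_2$; the main line is correct and identical in approach (and your observation that the only structural difference from $A_6$, namely $\beta\gamma=\beta\alpha\gamma$ in place of $\beta\gamma=0$, does not affect $P_1$ or the uniseriality of $P_2$ is accurate). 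The one place where your wording outruns the facts is the elimination of the $3\times 2$ candidate decomposition matrix with rows $(2,1),(1,0),(0,1)$ -- a case the paper's own $\Lambda_2$ proof silently omits but which you rightly include. Proposition \ref{C Prop decom matrix}(ii) exhibits $\D(\lambda_1)^{\oplus2}$ as a genuine \emph{submodule} of $P(\mu_1)$ only when the maximal weight $\lambda$ of the linear extension satisfies $d_{\lambda\mu_1}=0$, i.e.\ when the weight with row $(0,1)$ is maximal; if instead the weight with row $(1,0)$ is maximal, then $\D(\lambda_1)^{\oplus2}$ is only a subquotient sitting above the nonzero submodule $P(\mu_1)_{\lambda}\cong\D(\lambda)$, and the simple-socle argument does not apply. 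That sub-case must be excluded as in the proof of Lemma \ref{P A6}: the minimal weight, having row $(0,1)$, equals $\mu_2$, so $\mu_1$ can only be the maximal weight (the row-$(2,1)$ weight is disqualified by $d_{\mu_1\mu_1}=1$), and then $d_{\lambda_1\mu_1}=2\neq0$ would force $\lambda_1\geq\mu_1$, contradicting maximality of $\mu_1$. Since you explicitly offer to invoke the proof of Lemma \ref{P A6} verbatim, where this is done correctly, this is a slip of phrasing rather than a missing idea.
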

\begin{proof}
Suppose that $A=\Lambda_2$ is a cellular algebra with cell datum $(\Lambda,\cT,\cC,\iota)$. Then,
\begin{align*}
P_1 &=
Ke_1\oplus K\alpha\oplus K\beta\oplus K\beta\alpha\oplus K\alpha^2\oplus K\alpha^3\oplus K\alpha^4, \\
P_2 &=
Ke_2\oplus K\gamma\oplus K\alpha\gamma\oplus K\beta\gamma,
\end{align*}
and the Cartan matrix is given by
\[ \bC=\begin{pmatrix}
5 & 2 \\
2 & 2\\
\end{pmatrix}. \]
Then, modulo rearrangement of rows, the decomposition matrix is given by
  \[\bD=\bordermatrix{
& \mu_1 & \mu_2 \cr
\lambda_1 & 1 & 1 \cr
\lambda_2 & 1 & 1 \cr
\lambda_3 & 1 & 0 \cr
\lambda_4 & 1 & 0 \cr
\lambda_5 & 1 & 0 \cr
}. 
\]
Thus it is sufficient to consider the following three linear extensions of the partial order, 
without loss of generality.
\begin{itemize}
\item[(a)] $3<1<2<4<5$. 
\item[(b)] $3<1<4<2<5$. 
\item[(c)] $3<4<1<2<5$.
\end{itemize}
We have $\Delta(\lambda_5)\simeq P(\mu_1)_{\lambda_5}=K \alpha^4$
and $P(\mu_2)_{\lambda_5}=0$ in the three cases.
  
In the cases (b) and (c), we have
\[P(\mu_1)_{\lambda_2}=
K(xe_1+y\alpha+z\alpha^2+w\alpha^3)\oplus K(x'\beta+y'\beta\alpha)\oplus K\alpha^4,\]
for some $x,y,z,w,x',y'\in K$.
One can easily check that $x=y=z=0$.
Thus, $L_1$ appears in $\Soc \Delta(\lambda_2)\cong \Soc P(\mu_2)_{\la_2}\cong L_2$.
This is a contradiction.

In the case (a), we have $P(\mu_1)_{\lambda_4}=K\alpha^3\oplus K\alpha^4$
 and $P(\mu_2)_{\lambda_4}=0$. We also obtain
\[P(\mu_1)_{\lambda_2}=K(xe_1+y\alpha+z\alpha^2)
\oplus K(x'\beta+y'\beta\alpha)\oplus K\alpha^3\oplus K\alpha^4,\]
for some $x,y,z,x',y'\in K$.
It is easy to check that $x=y=0$. Hence, $L_1$ appears in 
$\Soc \Delta(\lambda_2)\cong \Soc P(\mu_2)_{\lambda_2}\cong L_2$.
This is a contradiction.
\end{proof}

%%%%%%%%%%%%%%%%%%%%%%%%%%%%%%%%%%%%%%%%%%%%%%%%%%%%%%%%%%%%%%%

%%%%%%%%%%%%%%%%%%%%%%%%%%%%%%%%%%%%%%%%%%%%%%%%%%%%%%%%%%%%%%%

%%%%%%%%%%%%%%%%%%%%%%%%%%%%%%%%%%%%%%%%%%%%%%%%%%%%%%%%%%%%%%%

%%%%%%%%%%%%%%%%%%%%%%%%%%%%%%%%%%%%%%%%%%%%%%%%%%%%%%%%%%%%%%%
\end{document}